\renewcommand\subsubsection{\@secnumfont}{\bfseries}%
\renewcommand\subsubsection{\@startsection{subsubsection}{3}
  \z@{.5\linespacing\@plus.7\linespacing}{-.5em}%
  {\normalfont\bfseries}}
\newcommand{\mel}{\MoveEqLeft}
\newcounter{dummy}
\newtheorem{claim}[dummy]{Claim}
\newtheorem{theorem}{Theorem}[section]
\newtheorem{remark}[theorem]{Remark} 
\newtheorem{example*}{Example\textsuperscript{*}}
\newtheorem{proposition*}{Proposition\textsuperscript{*}}
\newtheorem{corollary}[theorem]{Corollary}
\newtheorem{corollary*}{Corollary\textsuperscript{*}}
\newtheorem{proposition}[theorem]{Proposition}
\newtheorem{assumption}[theorem]{Assumption}
\newtheorem{lemma}[theorem]{Lemma}
\numberwithin{equation}{section}
\newenvironment{manualtheorem}[1]{%
  \IfBlankTF{#1}
    {}
    {}%
  \manualtheoreminner
}{\endmanualtheoreminner}
\def\Limes#1#2 {\lim\limits_{#1\rightarrow #2}}
\DeclareMathOperator{\sgn}{sgn}
\def\eps{\epsilon}
\DeclareMathOperator*{\essinf}{essinf}
\DeclareMathOperator*{\esssup}{esssup}
\DeclareMathOperator{\dist}{dist}
\DeclareMathOperator{\dom}{Dom}
\DeclareMathOperator{\ran}{Ran}
\def\R{\mathbb{R}}
\def\N{\mathbb{N}}
\def\XXint#1#2#3{{\setbox0=\hbox{$#1{#2#3}{\int}$ }
\vcenter{\hbox{$#2#3$ }}\kern-.59\wd0}}
\renewcommand{\div}{\grad\cdot}
\DeclareMathOperator{\diam}{Diam}
\DeclareMathOperator{\BV}{BV}
\def\norm#1{\left\lVert #1 \right\rVert}
\def\scalar#1#2{\langle #1,#2 \rangle}
\def\de{\partial}
\renewcommand{\div}{\operatorname{div}}
\def\dd{\,\mathrm{d}}
\def\dx{\,\mathrm{d}x}
\def\dy{\,\mathrm{d}y}
\def\dH{\,\mathrm{d}\mathcal{H}^{d-1}}
\def\hatz{\hat{z}_{x}}
\def\hatu{\hat{u}_{x}}
\newcommand{\red}[1]{{\textcolor{red}{#1}}}
\title[Boundary attainment in variational problems with linear growth]{On the attainment of boundary data in variational problems with linear growth}
\author{David Meyer}
\address{Instituto de Ciencias Matem\'aticas, Consejo Superior de Investigaciones Cient\'ificas, Calle Nicolás Cabrera 13-15, Campus de Cantoblanco, UAM, 28049,  Madrid, Spain}
\email{david.meyer@icmat.es}
\date{\today}
\keywords{}
\subjclass[2020]{}
\begin{document}

\begin{abstract}
It is well-known that convex variational problems with linear growth and Dirichlet boundary conditions might not have minimizers if the boundary condition is not suitably relaxed.

 We show that for a wide range of integrands, including the least gradient problem and the non-parametric Plateau problem, and under suitable mean-convexity conditions of the boundary, minimizers of the relaxed problem attain the boundary data in the trace sense if it lies in $BV$ or $W^{\alpha,p}$ with $\alpha p\geq 2$ without any kind of continuity assumption. Unlike previous works, our methods are also able to treat systems under a certain quasi-isotropy assumption on the integrand. We further show that without this quasi-isotropy assumption, smooth counterexamples on uniformly convex domains exist.
 
Further applications to the uniqueness of minimizers and to open problems about the ROF functional with Dirichlet boundary conditions, and to the trace space of functions of least gradient are given. 
\end{abstract}

\maketitle

\section{Introduction}
We are concerned with the boundary behaviour of convex variational problems with linear growth and Dirichlet boundary conditions, that is we consider integrands $f:\overline{\Omega}\times \R^{n\times d}\rightarrow \R$, where $\Omega\subset \R^{d}$ is some bounded domain with a sufficiently regular boundary, of which we assume that they are convex in the second variable and there are constants $C_0$ and $C_0'$ such that \begin{align*}
C_0|\xi|-C_0'\leq f(y,\xi)\leq C_0'(1+|\xi|)
\end{align*}
for all $y$ and $\xi$.

For these, we study the minimization problem (potentially also with lower order terms to be specified later on) \begin{align}
\min_{u\in W_{u_0}^{1,1}(\Omega, \R^m)} \int_\Omega f(x,\mathrm{D}u(x))\dx\label{org prob}
\end{align}
for some boundary datum $u_0$. It is classical that this problem will in general not have minimizers due to the failure of weak\textsuperscript{$\ast$}-closedness of $W_{u_0}^{1,1}$ and elementary counterexamples exist\footnote{\label{fn1}It is for instance quite easy to see by symmetry considerations that for $\Omega=B_2(0)\backslash B_1(0)$ and $f(y,\xi)=|\xi|$ with boundary values $1$ on $\de B_1(0)$ and $0$ on $\de B_2(0)$, the limit of any minimizing sequence is $0$.}. Instead, to obtain a solution, one has to replace the functional with its lower semicontinuous hull, yielding   the problem \begin{equation}\begin{aligned}
\mel\min_{u\in BV(\Omega, \R^n)} \int_\Omega f(x,\mathrm{D}^au(x))\dx+\int_\Omega f^\infty(x,\frac{\mathrm{d}\mathrm{D}^s u}{\mathrm{d}|\mathrm{D}^s u|}(x))\dd|\mathrm{D}^s u|(x)\\
&+\int_{\de \Omega} f^\infty(x,(u_0-u)(x)\otimes \nu_x)\dH(x),\label{rel prob}
\end{aligned}\end{equation}
where $\nu_x$ is the (outer) normal  of $\de \Omega$ at $x$ and \begin{align}f^\infty(x,\xi)=\lim_{x'\rightarrow x, t\rightarrow +\infty} \frac{1}{t}f(x',t\xi)\label{rec func}
\end{align}
is the recession function (see Section \ref{S2} for the other definitions), to which the direct method in the calculus of variations is applicable, and a (not necessarily unique) minimizer always exists. \smallskip


It is a natural question under which circumstances these modifications are actually necessary.
The issue of the prevalence of the space $BV$ is reasonably well understood, one can show that under extra ellipticity conditions on the integrand, interior Sobolev or even Hölder regularity does hold, see e.g.\  \cite{bildhauer2003convex,marcellini2006nonlinear,beck2015interior,fussangel2024singular,gmeineder2019partial}, while without ellipticity, easy counterexamples exist.

This work is concerned with the other modification, that is the replacement of the Dirichlet boundary condition with the penalty term $\int_{\de \Omega} f^\infty(x,(u_0-u)(x)\otimes \nu_x)\dH(x)$, whose necessity is a much less explored question and only understood in special cases and only in the scalar case $n=1$.\smallskip

Whether or not the boundary datum is attained largely hinges on curvature conditions on the boundary.

The two most classical cases in which the boundary behaviour is understood are that of the non-parametric Plateau problem, which consists of finding a function whose graph is a minimal surface with a given boundary, and which corresponds to $n=1$ and $f(y,\xi)=\sqrt{1+|\xi|^2}$ and the least gradient problem, which is the Dirichlet problem for the $1$-Laplacian, corresponding to $n=1$ and $f(y,\xi)=|\xi|$.

For the Plateau problem, the first existence results for $d=2$ were obtained by Bernstein over a century ago \cite{bernstein1910surfaces}, while Serrin and Jenkins \cite{serrin1968dirichlet} showed in 1968 that for $d\geq 2$, in domains whose boundary has non-negative mean curvature, a $C^2$ solution exists for any $C^2$ boundary datum, while for every domain with boundary parts of negative mean curvature counterexamples exist.

These classical results can also be shown to work with merely continuous (or a.e.\ continuous) boundary data \cite{miranda1971principio,miranda1974dirichlet} and can also be extended to the prescribed mean curvature equation \cite{giusti1976boundary} (that is the problem \eqref{org prob}/\eqref{rel prob} with $f(y,\xi)=\sqrt{1+|\xi|^2}$ and an additional term $\int_\Omega Hu\dx $, corresponding to a right-hand side term in the Euler-Lagrange equation) or anisotropic versions \cite{corsato2016dirichlet}. We refer the reader e.g.\ to the textbook \cite{giusti1984minimal} for further information. On the other hand, for boundary data which is merely $L^1$, Baldo and Modica constructed an example in \cite{baldo1991non} where the boundary data is not attained in a trace sense and in which the solution is also not unique.\smallskip

For the least gradient problem, which is relevant e.g.\ for the computational methods for minimal surfaces \cite{parks1977explicit}, optimal design  \cite{kohn1986constrained}, or image denoising and inpainting \cite{rudin1992nonlinear,chan2005variational}, existence of solutions for continuous boundary data in domains whose boundary has positive mean curvature was first shown by Sternberg, Williams and Ziemer in 1992 in \cite{sternberg1992existence}, who also established that under these conditions solutions are unique. Again, positive mean curvature is a necessary condition here (the function in the footnote \ref{fn1} earlier is a counterexample for negative curvature).
Spradlin and Tamasan showed in \cite{spradlin2014not} that having some kind of regularity assumption on the boundary data is also necessary, as there exists $L^1$-boundary data for which there is no minimizer attaining the boundary data in the trace sense. G\'orny, on the other hand, showed in \cite{gorny2021existence} that continuity of the boundary datum can be weakened to continuity a.e.

For discontinuous boundary data, uniqueness of minimizers is in general wrong (see e.g.\ \cite[Example 2.7]{mazon2014functions}), though quite a lot of structure on the set of minimizers is enforced \cite{moradifam2018existence}.

The generalisation to the anisotropic least gradient problem, in which $f$ is an $x$-dependent norm, was, motivated by applications in conductivity imaging, considered by Jerrard, Moradifam, and Nachman in \cite{jerrard2018existence}, where it is shown that solutions exist for continuous boundary data if the mean curvature condition is suitably adapted to $f$ and are unique under additional regularity assumptions on $f$.
 
We refer to \cite{gorny2017special,hakkarainen2015stability,dweik2019p,zuniga2019continuity, mercier2018continuity,mazon2016nonlocal} for some further recent developments and to the textbook \cite{gorny2024functions} for general background reading.

A few works on the boundary behaviour of other integrands with linear growth do exist, for instance in \cite{beck2018globally} Beck, Bul\'i\v{c}ek and Maringov\'a showed that under stronger ellipticity conditions on the integrand, which do not hold in the aformentioned special cases, solutions attaining the boundary values do exist even without a curvature condition on the boundary, see also \cite{bulivcek2015existence} for an earlier similar result. In \cite{lledos2025study}, plasticity-type problems were considered.\smallskip

The purpose of this work is to prove existence results in a more general setting, which covers both the minimal surface equation and the least gradient problem, as well as other integrands with linear growth, and extends the aforementioned results to boundary values that have (fractional) Sobolev or $BV$ regularity instead of continuity and to non-scalar-valued problems. 
To the best of the author's knowledge, these are the first results for boundary attainment both for fully discontinuous data and for systems.


\subsection{Precise Set-up and main results}
\subsubsection{The scalar case $n=1$}

Our approach is the following: We consider $u$ which are minimizers of the problem \begin{equation}\begin{aligned}
\mel\min_{\substack{u\in BV(\Omega),\\ \int_\Omega \lambda |u|^2\dx<\infty}}\int_\Omega f(x,\mathrm{D}^au(x))\dx+\int_\Omega f^\infty(x,\frac{\mathrm{d}\mathrm{D}^s u}{\mathrm{d}|\mathrm{D}^s u|}(x))\dd|\mathrm{D}^s u|(x)\\
&+\int_{\de \Omega}f^\infty(x,(u_0-u)\otimes \nu_x)\dH(x)+\int_\Omega gu+\frac{\lambda(x)}{2}|u-h|^2\dx\label{gen rel prob}
\end{aligned}\end{equation}
(definitions can be found in Section \ref{S2}), which is the relaxation of the problem \begin{equation}\begin{aligned}
\mel\min_{\substack{u\in W_{u_0}^{1,1}(\Omega),\\ \int_\Omega \lambda |u|^2\dx<\infty}}\int_\Omega f(x,\mathrm{D}u(x))\dx+\int_\Omega gu+\frac{\lambda(x)}{2}|u-h|^2\dx.\label{gen org prob0}
\end{aligned}\end{equation}
A more detailed account of the relation between these problems is given in Section \ref{S3}.

As already mentioned, the relaxed problem \eqref{gen rel prob} is amenable to the direct method, presuming $g$ is sufficiently small and $\lambda\geq 0$ (see Proposition \ref{ex min} for details), and therefore the existence of minimizers is quite straightforward, though they might not be unique.

Our results will then show that \textit{any} minimizer of the problem \eqref{gen rel prob} must have trace $u_0$ under suitable assumptions.

Regarding the domain, we assume the following:

\begin{manualtheorem}{A1}\label{A1} $\Omega\subset \R^d$ is open, bounded and has Lipschitz boundary. We let $U\subset\de \Omega$ be an open subset of the boundary, of which we assume that it is $C^2$ and that its (relative) boundary in $\de \Omega$ is $C^2$. 
\end{manualtheorem}

The assumption about the relative boundary is not restrictive because $u=u_0$ is a purely local property and therefore, if $\de U$ is not regular, one can cover $U$ with more regular subsets and apply the theorem to each of them.

Our assumptions on the integrand $f$ in the scalar setting $n=1$ are the following:

\begin{manualtheorem}{A2}\label{A2} $f:\overline{\Omega}\times \R^d\rightarrow \R$ is such that:
\begin{itemize}
\item $f(x,\cdot)$ is convex for each $x\in \overline{\Omega}$. 
\item There is a constant $C_1>0$, not depending on $x$ or $\xi$, such that for all $x,\xi$, it holds that \begin{align}
\frac{1}{C_1}|\xi|-C_1\leq f(x,\xi)\leq C_1(|\xi|+1).\label{bd gf1}
\end{align}
\item There is some $R>0$, such that for each $x\in\overline{\Omega}$, the function $f(x,\cdot)$ is in $C^2(\R^{d}\backslash B_R(0))$ and it holds that \begin{align}
\sup_{\xi:\, \xi\geq |a|}|\xi|\int_{|\xi|}^\infty \scalar{\mathrm{D}_\xi^2f(x,s\frac{\xi}{|\xi|})\frac{\xi}{|\xi|}}{\frac{\xi}{|\xi|}}\dd s\xrightarrow{|a|\rightarrow +\infty} 0\label{conv df}
\end{align}
uniformly in $x$ in a neighborhood of $U$.
\item $f$ is continuous in the joint variable $(x,\xi)\in \overline{\Omega}\times \R^d$. 
\item The limit \eqref{rec func} in the definition of $f^\infty$ exists for all $x\in \overline{\Omega}$ and all $\xi\in \R^d$.
\item Regarding the regularity of the recession function, defined in \eqref{rec func}, we assume that\footnote{Here $C_x^1C_\xi^1$ denotes the space of continuous functions $v$ for which the derivatives $\mathrm{D}_x v,\mathrm{D}_\xi v,\mathrm{D}_x\mathrm{D}_\xi v$ exist as classical derivatives and are continuous in the joint variable $(x,\xi)$} \begin{enumerate}
\item $f^\infty\in C_x^1C_\xi^1(\overline{\Omega},B_2(0)\backslash B_{\frac{1}{2}}(0))$ 
\item $f^\infty \in C_\xi^2(B_2(0)\backslash B_{\frac{1}{2}}(0))$ and it holds that $\sup_{x\in \overline{\Omega}} \norm{f(x,\cdot)}_{C^2(B_2(0)\backslash B_{\frac{1}{2}}(0))}<\infty$.
\end{enumerate}
\end{itemize}
\end{manualtheorem}

This includes the cases $f(x,\xi)=|\xi|, \sqrt{1+|\xi|^2}$ as well as $x$-dependent versions thereof (presuming the $x$-dependence is sufficiently regular), and the function $\min(|\xi|^2,|\xi|)$ used for Hencky plasticity as well as many others.\smallskip

Obviously, the mean curvature condition needs to be adapted to $f$. We let $\mathfrak{d}:\Omega\rightarrow \R_{>0}$ denote the distance to the boundary of $\de \Omega$, which is a $C^2$ function in a neighborhood of each $x\in U$ by the implicit function theorem. We then set \begin{align}
H_{\de\Omega,f}(x):=\min\left(-\div(\mathrm{D}_\xi f^\infty(x,\mathrm{D}\mathfrak{d}(x))),\,\div(\mathrm{D}_\xi f^\infty(x,-\mathrm{D}\mathfrak{d}(x)))\right)\label{def hf}
\end{align}
for $x\in U$.
If $f^\infty$ is even, both terms in the minimum agree, and this is essentially the same definition as used by Jerrard, Moradifam, and Nachman for the anisotropic least gradient problem in \cite{jerrard2018existence}. In this case, this generalizes the mean curvature (normalized to be $d-1$ for the unit sphere) as it is the first variation of the perimeter-type functional $\int_{\de \Omega} f^\infty(x,\nu_x)\dH(x)$. In particular, if $f^\infty=|\cdot|$, then this is the classical mean curvature.

On the other hand, if $f$ is not even, these terms might not be the same, and it is not difficult to construct one-dimensional examples showing that one needs to take both terms into account.
Let us also note that, as a consequence of \ref{A1} and \ref{A2}, $H_{\de\Omega,f}$ is continuous on $U$.

Regarding $g$ and the curvature we assume that: \begin{manualtheorem}{A3}\label{A3}
$g\in L^{\max(2,d)}(\Omega)$ is bounded in a neighborhood of $U$ and there is a $c>0$ such that it holds that\footnote{Here we define the essential supremum near a point as  $\essinf_{\Omega\ni y\rightarrow x}a(y):=\sup_{m\rightarrow \infty}\essinf_{B_{\frac{1}{m}}(x)\cap \Omega} a$ for any measurable function $a$} \begin{align}
\essinf_{\Omega\ni y\rightarrow x}  H_{\de\Omega,f}(x)-|g(y)|>c
\end{align}
for all $x\in U$.
\end{manualtheorem}

In particular this implies that \begin{align*}
\inf_{x\in U}H_{\de\Omega,f}(x)>0.
\end{align*}
Let us also point out that $g\in L^d(\Omega)$ ensures that the integral $\int gu\dx$ in the functional exists because $BV\hookrightarrow L^{\frac{d}{d-1}}$.

We make the following assumptions on $\lambda$ and $h$:
\begin{manualtheorem}{A4}\label{A4}
 $ \lambda\in L^\infty(\Omega)$ and $\lambda\geq 0$ and $h\in L^2(\Omega)$.
\end{manualtheorem}

\begin{manualtheorem}{A5}\label{A5}
\textbf{At least one}, but not necessarily more, of the following three statements holds: \begin{align}
&\lambda=0 \quad  \text{in a neighborhood of $U$.}\\
&h\in BV  \quad  \text{in a neighborhood of $U$.}\\ 
& h\in C^0\quad  \text{in a neighborhood of $U$.}
\end{align}
In the second or third case, we also require \begin{align}
h=u_0\text{ on $U$.}
\end{align}
\end{manualtheorem}

We remark that if the third condition holds, then $u_0$ must be automatically continuous.

Regarding the regularity of the boundary datum, we consider the following cases: \begin{manualtheorem}{A6}\label{A6}
\textbf{At least one} (but not necessarily more) of the following three conditions holds: \begin{itemize}
\item $u_0\in BV(U)\cap L^1(\de\Omega)$
\item $u_0\in W^{\alpha,p}(U)\cap L^1(\de\Omega)$ for some $\alpha\in (0,1)$ and $p\in (2,\infty)$ with $\alpha p\geq 2$
\item $u_0\in C^0(U)\cap L^1(\de\Omega)$ and ($f=f^\infty$ and ($\lambda=0$ or $h\in C^0$ in a neighborhood of $U$))
\end{itemize}
\end{manualtheorem}
The definitions of the spaces are also recalled in Sections \ref{S22} and \ref{S24}. Let us also stress that, unlike previous works, this does not require $u_0$ (or any of its representatives) to be continuous at any point.

 We also point out that the case $\lambda=g=h=0$ is of course included here, and in this case the assumptions \ref{A3}-\ref{A5} just reduce to $\inf H_{\de\Omega,f}>0$.

\begin{theorem}\label{T1}
Suppose that the assumptions \ref{A1}, \ref{A2}, \ref{A3}, \ref{A4}, \ref{A5} and \ref{A6} above hold and that $u\in BV(\Omega)\cap L^2(\Omega)$ is a minimizer of the problem \eqref{gen rel prob}. Then \begin{align*}
u=u_0 \quad \text{ in $U$}
\end{align*}
in the trace sense.
\end{theorem}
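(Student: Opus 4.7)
The plan is to argue by contradiction. Assume $u$ is a minimizer of \eqref{gen rel prob} but its interior trace $u|_U$ differs from $u_0$ on a set of positive $\mathcal{H}^{d-1}$-measure, and build a competitor $\tilde u \in BV(\Omega) \cap L^2(\Omega)$ whose functional value is strictly smaller. Localizing to a $C^2$ subdomain $U' \subset\subset U$ on which the defect concentrates, one parametrizes the tubular neighborhood $N_{\delta_0} := \{x : 0 < \mathfrak{d}(x) < \delta_0,\ \pi(x) \in U'\}$ by $x = y + t\,\mathrm{D}\mathfrak{d}(x)$ with $y \in U'$ and $t \in (0,\delta_0)$, the Jacobian being $J(y,t) = \det(I - t A_y)$ where $A_y$ is the Weingarten map at $y$. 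For a layer thickness $\delta \ll \delta_0$ and a mollification scale $\eta$, define
$$\tilde u(x) := \begin{cases} \bigl(1 - \psi(\mathfrak{d}(x)/\delta)\bigr)\, u_0^\eta(\pi(x)) + \psi(\mathfrak{d}(x)/\delta)\, u^\delta(\pi(x)) & \text{if } \mathfrak{d}(x) < \delta,\\ u(x) & \text{if } \mathfrak{d}(x) \geq \delta, \end{cases}$$
where $\psi\colon[0,1]\to[0,1]$ is a smooth cutoff with $\psi(0) = 0$, $\psi(1) = 1$, where $u_0^\eta$ is an $\eta$-mollification of $u_0$ on $\de\Omega$, and where $u^\delta$ denotes the $L^1$-trace of $u$ on the inner face $\{\mathfrak{d} = \delta\}$.

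Since the trace of $\tilde u$ on $U'$ equals $u_0^\eta$, the boundary-penalty contribution on $U'$ drops from $\int_{U'} f^\infty(y, (u_0 - u|_U)\otimes \nu_y) \dH$ to $\int_{U'} f^\infty(y, (u_0 - u_0^\eta)\otimes \nu_y) \dH$, the latter being $o_\eta(1)$. The added bulk cost on $N_\delta$ is computed in tubular coordinates; the normal derivative $\frac{\psi'(\mathfrak{d}/\delta)}{\delta}(u^\delta - u_0^\eta)(\pi(x))$ is the dominant component of $\nabla\tilde u$ on the part of $U'$ where $|u^\delta - u_0^\eta|$ is not too small. Using the $C^2$-structure of $f^\infty$ on $B_2 \setminus B_{1/2}$ from \ref{A2} together with the radial flatness \eqref{conv df} to replace $f$ by $f^\infty$ for large $\xi$, and combining the Jacobian expansion $J(y,t) = 1 - t\operatorname{tr}(A_y) + O(t^2)$ with the tangential second-order contributions from $\mathrm{D}_\xi^2 f^\infty$, the leading-order cost picks up exactly the anisotropic factor $1 - \tfrac{\delta}{2} H_{\de\Omega,f}(y) + O(\delta^2)$, with $H_{\de\Omega,f}$ as in \eqref{def hf}. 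Combined with the removed boundary penalty and the strict positivity in \ref{A3}, this yields a decisive gain of order $c\,\delta\int_{U'}|u_0 - u|_U|\,\dH$.

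The main obstacle is controlling the tangential gradient of $\tilde u$, namely $(1 - \psi)\nabla_\tau u_0^\eta + \psi\,\nabla_\tau u^\delta$, which adds a cost of order $\delta \int_{U'}(|\nabla_\tau u_0^\eta| + |\nabla_\tau u^\delta|)\,\dH$. Since $u \in BV(\Omega)$, the second term is bounded by $|Du|(N_{\delta_0})$ and is of strictly smaller order than $\delta$; the first term is the crux and dictates the regularity hypotheses in \ref{A6}. For $u_0 \in BV(U)$ the estimate $\|\nabla_\tau u_0^\eta\|_{L^1(U)} \leq |Du_0|(U)$ holds uniformly in $\eta$, so the error is $O(\delta)$, absorbed by the main gain after choosing $\eta(\delta) \to 0$ slowly enough that the residue $\|u_0 - u_0^\eta\|_{L^1(U)}$ is subleading. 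In the $W^{\alpha,p}$ case, mollifier estimates give $\|\nabla_\tau u_0^\eta\|_{L^p(U)} \lesssim \eta^{\alpha-1}\|u_0\|_{W^{\alpha,p}(U)}$ and $\|u_0 - u_0^\eta\|_{L^p(U)} \lesssim \eta^\alpha\|u_0\|_{W^{\alpha,p}(U)}$; the exponent condition $\alpha p \geq 2$ is precisely what permits a power-law choice $\eta = \delta^\beta$ to defeat both error sources simultaneously after interpolating with $L^1$. Finally, the lower-order terms produce $O(\delta)$ perturbations absorbed by the dichotomy in \ref{A5} (the quadratic piece vanishes at leading order whenever $\lambda = 0$ or $h = u_0$ near $U$) and by the sharp bound $H_{\de\Omega,f} - |g| > c$ in \ref{A3}. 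Hence $F(\tilde u) < F(u)$ for $\delta$ small, contradicting minimality.
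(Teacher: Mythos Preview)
Your proposal has a genuine gap in the curvature identification. The Jacobian $J(y,t)=1-t\tr(A_y)+O(t^2)$ contributes only the \emph{Euclidean} mean curvature $\tr(A_y)$ to the layer cost, whereas $H_{\de\Omega,f}$ in \eqref{def hf} contains the additional term $-\tr\bigl(\mathrm{D}_\xi^2 f^\infty(y,\pm\nu_y)\,\mathrm{D}^2\mathfrak{d}(y)\bigr)$, i.e.\ the $\xi$-Hessian of $f^\infty$ contracted with the second fundamental form. Your construction produces no such term: the only $\xi$-variation of $\nabla\tilde u$ away from the normal direction comes from $(1-\psi)\nabla_\tau u_0^\eta+\psi\nabla_\tau u^\delta$, which you afterwards bound as a \emph{separate} error. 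To see the failure concretely, take $g=\lambda=0$, constant $u_0$ (so all tangential gradients vanish), an autonomous $f^\infty(\xi)=(\xi^TA\xi)^{1/2}$ with $A$ anisotropic, and a boundary point with principal curvatures $\kappa_1=-\kappa_2>0$: the Euclidean mean curvature is zero and your competitor yields no first-order gain, yet $H_{\de\Omega,f}>0$ can be arranged, so Assumption \ref{A3} is satisfied but your argument collapses. Even in the isotropic case the tangential budget does not close: for $u_0\in BV(U)$ the error $\delta\norm{\nabla_\tau u_0^\eta}_{L^1}\approx\delta|\mathrm{D}u_0|(U')$ is \emph{uniform in $\eta$} (mollification does not reduce the $BV$ seminorm) and of the same order $\delta$ as the claimed gain $c\delta\int_{U'}|u_0-u|\dH$, with no mechanism to compare the constants; moreover $u^\delta$ is only an $L^1$-trace of a $BV$ function on a hypersurface, so $\nabla_\tau u^\delta$ need not even exist as a measure.

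The paper proceeds very differently, through the Euler--Lagrange characterisation: a minimizer comes with a vector field $z\in X_2(\Omega,\R^d)$ satisfying \eqref{cs1}--\eqref{cs5} (Lemmata \ref{sca char} and \ref{sc equiv}). After transforming $U$ to a piece of a large sphere (Lemma \ref{trans lemma}), one compares $z$ on small caps $D_x^\eps$ with the calibration $\hat z_x=\mathrm{D}_\xi f^\infty(\cdot,-\nu_x)$, whose divergence is by construction the anisotropic curvature appearing in \eqref{impr cc}, so the correct $H_{\de\Omega,f}$ enters automatically. The key Lemma \ref{cont z1} exploits the uniform convexity of $\overline{\ran\de_\xi f^\infty}$ (Lemma \ref{fen lemma}) to force $z\approx\hat z_x$ in $D_x^\eps$ once the boundary identity \eqref{good bd cond} holds on most of $C_x^\eps$. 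A Morse covering of $\{u\ne u_0\}$, de-la-Vall\'ee-Poussin control on the extension $\hat u_x$ of $u_0$ along the characteristics of $\hat z_x$ (Lemma \ref{dlvp}), and a trace lemma (Lemma \ref{trace lemma}) then close the argument without ever constructing an explicit competitor.
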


The extra assumption that $u\in L^2$ is a technical limitation due to the fact that the version of the Euler-Lagrange equation which is used in the proof (see Lemma \ref{sca char}) is in full generality only available for $u\in L^2$. We expect that using the ideas from the $L^1$-theory of the parabolic versions of these problems \cite{andreu2001minimizing,andreu2001dirichlet}, it is probably possible to remove this assumption, at least in some cases. In fact, in the simplest special case of the least gradient problem, we will show in Corollary \ref{cor sans l2} that this can be removed with an easy truncation argument.

 Nevertheless, it is usually not difficult to verify that $u\in L^2$, it is for instance always true if there is a maximum principle (see \cite[Appendix D]{beck2013dirichlet} for a detailed discussion of when this is true), if $\lambda> 0$, or if $d=2$ it follows from the Sobolev inequality since $u\in BV(\Omega)\hookrightarrow L^2(\Omega)$ if $d=2$.
 
\begin{corollary}\label{Cor1}
Assume that \ref{A1}-\ref{A6} hold with $U=\de \Omega$. Assume that $u\in L^2(\Omega)$ is a minimizer of \eqref{gen rel prob}, then $u$ is also a minimizer of the problem \begin{equation}\begin{aligned}
\min_{\substack{\bar{u}\in BV_{u_0}(\Omega),\\ \int_\Omega \lambda \bar{u}^2\dx<\infty}}\int_\Omega f(x,\mathrm{D}^a\bar{u}(x))\dx+\int_\Omega f^\infty(x,\frac{\mathrm{d}\mathrm{D}^s \bar{u}}{\mathrm{d}|\mathrm{D}^s \bar{u}|}(x))\dd|\mathrm{D}^s \bar{u}|(x)+\int_\Omega g\bar{u}+\frac{\lambda(x)}{2}|\bar{u}-h|^2\dx.\label{gen org prob}
\end{aligned}\end{equation}
If also $u\in W^{1,1}(\Omega)$, then $u$ is also a minimizer for the original problem \eqref{gen org prob0}.
\end{corollary}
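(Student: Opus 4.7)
The plan is to deduce the corollary directly from Theorem \ref{T1}, so nothing substantial needs to be proved beyond rewriting the functionals and comparing admissible classes. Since the hypotheses \ref{A1}--\ref{A6} hold on all of $U=\de\Omega$, Theorem \ref{T1} applied to the given minimizer $u$ of \eqref{gen rel prob} yields that the trace of $u$ on $\de\Omega$ equals $u_0$. Consequently, the boundary penalty term
\begin{align*}
\int_{\de \Omega} f^\infty(x,(u_0-u)\otimes \nu_x)\dH(x)
\end{align*}
vanishes for this particular $u$, because $f^\infty(x,0)=0$ by the positive homogeneity of $f^\infty$. Hence the value of the functional in \eqref{gen rel prob} at $u$ coincides with the value of the functional in \eqref{gen org prob} at $u$, and moreover $u\in BV_{u_0}(\Omega)$, so $u$ is admissible in \eqref{gen org prob}.

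For any other competitor $\bar{u}\in BV_{u_0}(\Omega)$ with $\int_\Omega \lambda |\bar u|^2\dx<\infty$, the trace of $\bar u$ on $\de\Omega$ equals $u_0$ by definition of $BV_{u_0}$, so again the boundary penalty term vanishes and the value of \eqref{gen rel prob} at $\bar u$ agrees with the value of \eqref{gen org prob} at $\bar u$. Since $\bar u$ is also admissible for the unconstrained relaxed problem \eqref{gen rel prob} (for which $u$ is a minimizer by assumption), we conclude that the value of \eqref{gen org prob} at $u$ is less than or equal to its value at $\bar u$. This shows that $u$ minimizes \eqref{gen org prob}.

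For the second assertion, suppose $u\in W^{1,1}(\Omega)$. Then $\mathrm{D}^s u=0$, so the singular integral in \eqref{gen org prob} vanishes, and the functional \eqref{gen org prob} at $u$ reduces to the functional of the original unrelaxed problem \eqref{gen org prob0}. Any competitor $v\in W^{1,1}_{u_0}(\Omega)$ with $\int_\Omega \lambda|v|^2\dx<\infty$ lies in $BV_{u_0}(\Omega)$ and satisfies $\mathrm{D}^s v=0$, so the value of \eqref{gen org prob0} at $v$ also equals the value of \eqref{gen org prob} at $v$. Combining these identities with the minimality established in the previous paragraph gives that $u$ minimizes \eqref{gen org prob0} among such $v$. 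The only step requiring any care is the verification that $f^\infty(x,0)=0$ and that the class inclusions $W^{1,1}_{u_0}\subset BV_{u_0}$ and the vanishing of singular parts for Sobolev functions fit together to make the value of the three functionals agree at the candidate minimizer; everything else is immediate from Theorem \ref{T1}.
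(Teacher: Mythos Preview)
Your argument is correct and in fact slightly more self-contained than the paper's. The paper deduces the corollary by invoking Proposition~\ref{same inf}, which asserts that the three problems \eqref{gen org prob0}, \eqref{gen org prob} and \eqref{gen rel prob} have the same infimum; combined with $u\in BV_{u_0}(\Omega)$ from Theorem~\ref{T1}, this immediately gives that $u$ is a minimizer of \eqref{gen org prob}. Your route instead observes directly that the relaxed and constrained functionals agree on $BV_{u_0}(\Omega)$ (since the boundary penalty vanishes there), so the minimality of $u$ for the larger class in \eqref{gen rel prob} trivially restricts to the smaller class in \eqref{gen org prob}. This bypasses the approximation argument underlying Proposition~\ref{same inf} and is arguably the cleaner way to phrase the deduction; the two approaches are, however, equivalent in spirit.
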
 
\begin{proof} 
 It is clear from the theorem that $u\in BV_{u_0}(\Omega)$. 
 It follows from the general theory for the relaxation of linear growth problems (see Proposition \ref{same inf} below) that the problems \eqref{gen org prob0}, \eqref{gen org prob}, and \eqref{gen rel prob} all have the same infimum and hence $u$ is a minimizer of \eqref{gen org prob}.
\end{proof}

\subsubsection{The vectorial case $n\geq 1$}
In the vectorial case, we do not include lower order terms and assume that $f$ is autonomous, i.e.\ we consider minimizers $u$ of the following problem: \begin{equation}\begin{aligned}
\mel\min_{u\in BV(\Omega, \R^n)} \int_\Omega f(\mathrm{D}^au(x))\dx+\int_\Omega f^\infty(\frac{\mathrm{d}\mathrm{D}^s u}{\mathrm{d}|\mathrm{D}^s u|}(x))\dd|\mathrm{D}^s u|(x)\\
&+\int_{\de \Omega} f^\infty((u_0-u)(x)\otimes \nu_x)\dH(x),\label{vproblem}
\end{aligned}\end{equation}
which is the relaxed version of the problem \begin{align}
\mel\min_{u\in W_{u_0}^{1,1}(\Omega, \R^n)} \int_\Omega f(\mathrm{D}u(x))\dx.\label{og vproblem}
\end{align}
Again, \eqref{vproblem} is the natural extension of Problem \eqref{og vproblem} and has (not necessarily unique) minimizers by the direct method (cf.\ Propositions \ref{same inf} and \ref{ex min}).

Regarding the domain, we shall assume that:
 \begin{manualtheorem}{B1}\label{B1}
$\Omega\subset \R^d$ is a bounded domain with Lipschitz boundary. $U\subset \de\Omega$ is a subset of the boundary which can be written as a graph of a uniformly convex $C^2$-function and whose relative boundary in $\de\Omega$ is $C^2$.
\end{manualtheorem}

Again, the assumption that $U$ is a graph with a regular boundary is not restrictive, as the theorem's statement is purely local.

Regarding the integrand $f$ we assume:

\begin{manualtheorem}{B2}\label{B2}
 \begin{itemize}
\item $f:\R^{n\times d}\rightarrow \R$ is convex. 
\item There is a constant $C_1>0$, such that \begin{align}
\frac{1}{C_1}|\xi|-C_1\leq f(\xi)\leq C_1(|\xi|+1).\label{bd gf2}
\end{align}
 \item There is an $R>0$, such that $f\in C^2(\R^{n \times d}\backslash B_R(0))\cap C^1(\R^{n\times d}\backslash \{0\})$, and it holds that \begin{align}
\sup_{\xi:\, \xi\geq |a|}|\xi|\int_{|\xi|}^\infty \scalar{\mathrm{D}_\xi^2f(s\frac{\xi}{|\xi|})\frac{\xi}{|\xi|}}{\frac{\xi}{|\xi|}}\dd s\xrightarrow{|a|\rightarrow +\infty} 0.\label{almost hom}
\end{align}
\item $f^\infty\in C^2(\R^{n\times d}\backslash\{0\})$.
\item There are positively $0$-homogeneous and odd functions $\mu_1$ and $\mu_2$ such that \begin{align}
\mathrm{D}_\xi f^\infty(v_1\otimes v_2)=\mu_1(v_1)\otimes \mu_2(v_2)\quad  \text{for all $v_1,v_2\neq 0$.}\label{r1 split}
\end{align}
\item There is a $C_2>0$ such that for all $\xi\neq 0$ it holds that\begin{align}
\mathrm{D}_\xi^2((f^\infty)^2)(\xi)\geq C_2 \mathrm{Id}\quad  \text{in the sense of positive definiteness}.\label{uni conv}
\end{align}
\item If for some $\xi\neq 0$, there is a $\xi'$ with $\mathrm{D}_\xi f^\infty(\xi')=\mathrm{D}_\xi f(\xi)$ then $\mathrm{D}_\xi f^\infty(\xi')=\mathrm{D}_\xi f^\infty(\xi)$.
\end{itemize}
\end{manualtheorem}

We remark that \eqref{r1 split} implies that $f^\infty$ is even.\smallskip

Regarding the boundary data we assume that: \begin{manualtheorem}{B3}\label{B3}
\begin{align}
u_0\in L^1(\de\Omega,\R^n)\cap BV(U,\R^n)\cap L^2(U,\R^n).\label{re u02}
\end{align}
\end{manualtheorem}

Some model integrands for which these assumptions hold are for instance $f(\xi)=|\xi|, \sqrt{1+|\xi|^2}$ or $\sqrt{\sum_{i=1}^n A\xi_{i,\bullet}\cdot \xi_{i,\bullet}}$ for any symmetric positive definite $A\in \R^{d\times d}$.
 Let us stress that this does \textit{not} contain the minimal surface problem in higher codimension as it is not convex (see e.g.\ \cite{acerbi1994new}).

\begin{remark}
Let us comment on the additional assumptions:
\begin{itemize}
\item \eqref{r1 split} is e.g.\ fulfilled when $f^\infty$ is isotropic, in the sense that $f^\infty=f^\infty(|\cdot|)$. For instance, in the context of the application to image restoration of color images, this is a completely natural assumption (see e.g.\ \cite{blomgren1998color}). We will also show that without this assumption, there are smooth counterexamples in Theorem \ref{T3}. 

\item \eqref{uni conv} takes the role of the curvature condition, together with the convexity assumption on the boundary. It can easily be shown that this condition is equivalent to $f^\infty$ being uniformly convex with a quadratic modulus of convexity. 

\item The last assumption on $f$ is presumably an artifact of the proof, it holds e.g.\ when $f$ is $1$-homogeneous or when $f$ is rotationally symmetric. Functions fulfilling the other assumptions but not this do however exist, an example is e.g.\ the integrand \begin{align*}
&f(\xi)=\begin{cases}
\tilde{\xi} &\text{ for $\tilde{\xi}\leq 1$}\\
2\tilde{\xi}^\frac{1}{2}-1&\text{ for $\tilde{\xi}\geq 1$.}
\end{cases}\\
\text{where }\:\:&\tilde{\xi}=\xi_{11}^2+\xi_{12}^2+\xi_{21}^2+(\xi_{22}-1)^2.
\end{align*}
One can check that this is $C^2$ for $|\xi|$ large and $C^1$ everywhere, has the recession function $|\xi|$, and that for $\tilde{\xi}\geq1$, the last condition is always violated.

\item The $L^2$-assumption in \eqref{re u02} comes from a technical limitation of the Anzelotti pairing, which is used in the proof, and it might be possible to remove it using more elaborate versions of the pairing (see e.g.\ \cite{crasta2019anzellotti}).

\item The existence of the recession function holds automatically for autonomous convex integrands and therefore does not need to be assumed.
\end{itemize}
\end{remark}

Our main theorem in this case is the following.

\begin{theorem}\label{T2}
Suppose that the assumptions \ref{B1}, \ref{B2} and \ref{B3} hold and let $u\in BV(\Omega,\R^n)\cap L^2(\Omega,\R^n)$ be a minimizer of the relaxed functional \eqref{vproblem}. Then \begin{align*}
u=u_0 \quad \text{on $U$}
\end{align*}
in the trace sense.
\end{theorem}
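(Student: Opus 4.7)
The strategy is to extend the scalar argument underlying Theorem \ref{T1} to the system by exploiting the rank-one split \eqref{r1 split} of $\mathrm{D}_\xi f^\infty$ together with the uniform convexity of both $U$ and of $(f^\infty)^2$. I would argue by contradiction: suppose the trace $\tau u$ disagrees with $u_0$ on a set $E\subset U$ of positive $\mathcal{H}^{d-1}$-measure, and construct an admissible competitor $\tilde u$ of strictly smaller energy, with $U$ chosen small enough that it is the graph of a single uniformly convex $C^2$ function.

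First I would extract the Euler--Lagrange information. Because $u\in L^2$, the Anzellotti pairing machinery produces a bounded divergence-free vector field $z\in L^\infty(\Omega,\R^{n\times d})$ with $z(x)\in\de f(\mathrm{D}^a u(x))$ a.e.\ and a recession behaviour on the singular part of $\mathrm{D}u$ matched with $\mathrm{D}_\xi f^\infty$. Testing the minimality of $u$ against perturbations supported near $U$ and integrating by parts pins down the normal trace of $z$: for $\mathcal{H}^{d-1}$-a.e.\ $x\in U$,
\begin{align*}
z(x)\cdot \nu(x) \;=\; \mathrm{D}_\xi f^\infty\!\bigl((u_0(x)-\tau u(x))\otimes \nu(x)\bigr)\cdot \nu(x),
\end{align*}
with the right-hand side interpreted as a subdifferential selection where $u_0=\tau u$. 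The last bullet of \ref{B2} is what guarantees that, even where $\mathrm{D}^a u$ explores a genuinely non-recession regime of $f$, the matched duality element still lies in the rank-one face of $\mathrm{D}_\xi f^\infty$ determined by $(u_0-\tau u)\otimes \nu$.

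On $E$, applying \eqref{r1 split} rewrites the above identity as
\begin{align*}
z(x)\cdot \nu(x) \;=\; \mu_1\bigl(u_0(x)-\tau u(x)\bigr)\,\bigl(\mu_2(\nu(x))\cdot \nu(x)\bigr).
\end{align*}
Evenness of $f^\infty$ combined with the uniform convexity \eqref{uni conv} forces $\mu_2(\nu)\cdot\nu\ge c_0>0$ and $\mu_1(v)\cdot v\ge c_0|v|$, so the vectorial boundary optimality collapses to a coercive scalar statement about the defect $v:=u_0-\tau u\in BV(U,\R^n)\cap L^2(U,\R^n)$. I expect this to be the main obstacle, since one must simultaneously justify the subdifferential inclusion via the Anzellotti theory, invoke the delicate last assumption of \ref{B2} to rule out spurious duality branches, and confirm that $\mu_1$ behaves as a coercive duality mapping on the full range of $v$.

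Once this scalar-like coercivity is in hand, the rest follows the geometric skeleton of Theorem \ref{T1}. Uniform convexity of $U$ makes the signed distance $\mathfrak{d}$ to $\de\Omega$ a $C^2$ function on a one-sided neighborhood whose level sets carry uniformly positive $f^\infty$-mean curvature $-\div(\mathrm{D}_\xi f^\infty(\mathrm{D}\mathfrak{d}))\ge c_1>0$, playing the role of $H_{\de\Omega,f}>0$ from the scalar setting. Extend $u_0|_U$ to $\tilde u_0\in BV$ on a one-sided tubular neighborhood $V$ of $U$ (using \ref{B3}) and, for small $\eps>0$, define
\begin{align*}
\tilde u_\eps(x) \;:=\; \begin{cases} \tilde u_0(x) & \text{if } 0<\mathfrak{d}(x)<\eps \text{ and } x\in V,\\ u(x) & \text{otherwise.}\end{cases}
\end{align*}
Computing the change in the relaxed functional, one isolates: (i) a gain of at least $c_0\int_E |v|\dH$ in the boundary penalty; (ii) a surface contribution on $\{\mathfrak{d}=\eps\}$ of size $O(\eps\|\tilde u_0\|_{BV})$; and (iii) an absolutely continuous correction in the layer $\{0<\mathfrak{d}<\eps\}$ of the same order. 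Using positivity of the $f^\infty$-mean curvature of $\mathfrak{d}$ exactly as in Theorem \ref{T1} to absorb (ii)--(iii) against a small fraction of (i), the net change becomes strictly negative for $\eps$ small enough, contradicting minimality. Hence $\mathcal{H}^{d-1}(E)=0$, i.e., $\tau u=u_0$ on $U$.
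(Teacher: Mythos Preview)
The first paragraph of your proposal, extracting the Anzellotti field $z$ and the boundary identity $[z,\nu]=\mathrm{D}_\xi f^\infty((u_0-\tau u)\otimes\nu)\nu$ on $\{u\neq u_0\}$, is correct and matches the paper (Lemmata \ref{vec char} and \ref{ve equiv}, together with \eqref{good bd cond}).

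The competitor argument that follows, however, has a genuine gap. Your item (ii), the jump contribution on the inner surface $\{\mathfrak{d}=\eps\}$, is \emph{not} $O(\eps\|\tilde u_0\|_{BV})$: it is
\[
\int_{\{\mathfrak d=\eps\}} f^\infty\bigl((\tilde u_0-u)\otimes(-\nu_\eps)\bigr)\,\mathrm d\mathcal H^{d-1},
\]
and as $\eps\to 0$ this converges (by the trace theorem, the construction of $\tilde u_0$, and evenness of $f^\infty$) to exactly the quantity you call the ``gain'' in (i), namely $\int_U f^\infty((u_0-\tau u)\otimes\nu)\,\mathrm d\mathcal H^{d-1}$. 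So at leading order the gain and the loss cancel. The mean-curvature correction you invoke is only an $O(\eps)$ effect, and it competes with other $O(\eps)$ terms (the bulk change in the layer, the variation of $\nu_\eps$) whose signs you have not controlled; there is no reason the balance is favorable. You also never use the field $z$ after extracting it, nor the uniform convexity \eqref{uni conv} of $(f^\infty)^2$, which is the crucial hypothesis without an analogue of which Theorem \ref{T3} shows smooth counterexamples exist.

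The paper's proof does not build a competitor. It keeps working with $z$: on small caps $\tilde D_x^\eps$ (cut from $\Omega$ by hyperplanes near $x\in\{u\neq u_0\}$) it integrates $[z-\bar z_x,\nu]\cdot(u-u_0)$ by parts via \eqref{gg form}, where $\bar z_x$ is a rank-one comparison field built from $\mu_1,\mu_2$ and the trace of $u-\bar u_x$ on the flat face. The boundary identity for $[z,\nu]$ forces $z$ to be close to a specific rank-one matrix on most of $\tilde C_x^\eps$, and the quantitative duality estimate of Lemma \ref{fen lemma} (which is where \eqref{uni conv} enters) then controls $z$ in the whole cap (Lemma \ref{cont z2}). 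The uniform convexity of $U$ enters not as a mean-curvature sign but through the geometric fact $|\nu_x-\nu_y|\approx\eps$ on the annular boundary region (Lemma \ref{geo lemma 2} e)), which, combined again with \eqref{uni conv}, produces a coercive lower bound $\gtrsim\eps^2|u-u_0|$ for the boundary integrand. A Morse covering then sums these local estimates to conclude. This mechanism is quite different from a layer-replacement competitor and is what actually makes the structural assumptions in \ref{B2} do work.
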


\begin{corollary}
Suppose \ref{B1}-\ref{B3} hold and suppose that $\de \Omega=U$. Then, if $u\in L^2(\Omega,\R^n)$ is a minimizer of \eqref{vproblem}, then $u$ is also a minimizer of \begin{align}
\min_{\bar{u}\in BV_{u_0}(\Omega,\R^n)}\int_\Omega f(\mathrm{D}^a\bar{u}(x))\dx+\int_\Omega f^\infty(\frac{\mathrm{d}\mathrm{D}^s \bar{u}}{\mathrm{d}|\mathrm{D}^s \bar{u}|}(x))\dd|\mathrm{D}^s \bar{u}|(x).\label{org vproblem}
\end{align}
If, furthermore, $u\in W^{1,1}(\Omega,\R^n)$, then $u$ is also a minimizer of the original problem \eqref{og vproblem}.
\end{corollary}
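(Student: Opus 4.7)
The plan is to mimic the argument of Corollary \ref{Cor1} in the vectorial setting. First I would apply Theorem \ref{T2} with $U=\de\Omega$ to conclude that any $L^2$-minimizer $u$ of the relaxed problem \eqref{vproblem} satisfies $u=u_0$ on $\de \Omega$ in the trace sense, so $u\in BV_{u_0}(\Omega,\R^n)$. Because the trace of $u$ equals $u_0$, the boundary penalty $\int_{\de\Omega} f^\infty((u_0-u)\otimes \nu_x)\dH$ vanishes, so the value of the functional in \eqref{vproblem} at $u$ coincides with the value of the functional in \eqref{org vproblem} at $u$.

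Next, I would invoke the relaxation result (Proposition \ref{same inf}), which guarantees that the infima of \eqref{og vproblem}, \eqref{org vproblem} and \eqref{vproblem} are all equal: the functional in \eqref{vproblem} is precisely the lower semicontinuous envelope (for weak$^\ast$-$BV$ convergence) of the functional in \eqref{og vproblem} over the admissible class where the boundary condition is respected. Consequently, the minimum value of \eqref{vproblem} equals the infimum of \eqref{org vproblem}, and since $u$ attains this common value while lying in $BV_{u_0}(\Omega,\R^n)$, it must be a minimizer of \eqref{org vproblem}.

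For the final assertion, if the minimizer additionally belongs to $W^{1,1}(\Omega,\R^n)$, then $\mathrm{D}^s u=0$, so the singular-part term in \eqref{org vproblem} vanishes and $\mathrm{D}^a u = \mathrm{D} u$. Hence the value of the functional in \eqref{org vproblem} at $u$ equals the value of the functional in \eqref{og vproblem} at $u$. Since the three infima agree by Proposition \ref{same inf}, $u$ must also be a minimizer of the original Sobolev problem \eqref{og vproblem}.

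No real obstacle is expected here: the content of the corollary is a direct consequence of Theorem \ref{T2} combined with the standard relaxation identity. The only nontrivial input is the trace attainment supplied by Theorem \ref{T2}, together with verifying that the Sobolev minimizer, if it exists, is not discarded by the relaxation — but this is exactly what the equality of infima encodes.
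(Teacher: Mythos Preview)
Your proposal is correct and matches the paper's own argument, which simply says ``This follows from the theorem with the same argument as for Corollary \ref{Cor1}.'' You have spelled out exactly that argument: Theorem \ref{T2} gives $u=u_0$ on $\de\Omega$, and Proposition \ref{same inf} then yields equality of the three infima, from which both minimality statements follow.
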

This follows from the theorem with the same argument as for Corollary \ref{Cor1}.\smallskip

On the other hand, we also have a smooth counterexample showing the necessity of \eqref{r1 split}.

\begin{theorem}\label{T3}
There is a positively $1$-homogeneous and even function $f:\R^{2\times 2}\rightarrow \R$ fulfilling every statement in the Assumption \ref{B2}, except \eqref{r1 split}, and which is additionally in $C^\infty(\R^{2\times 2}\backslash \{0\})$, such that there is a function $u_0\in C^\infty(\de B_1(0))$, which is not identically $0$, such that $u=0$ is a minimizer of the problem \eqref{vproblem} on $\Omega=B_1(0)\subset \R^2$ with this data.
In particular, for this data, it holds that $u\neq u_0$ on a subset of $\de B_1(0)$ with positive measure.
\end{theorem}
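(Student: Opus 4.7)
The strategy is to use convex duality to reduce the statement to a calibration question. By Fenchel--Rockafellar duality applied to the relaxed linear-growth functional in \eqref{vproblem}, the function $u=0$ is a minimizer with boundary datum $u_0$ on $B_1$ if and only if there exists a bounded row-wise divergence-free matrix field $\sigma\colon B_1\to\R^{2\times 2}$ with $f^\circ(\sigma)\leq 1$ a.e.\ (where $f^\circ$ is the polar norm of $f=f^\infty$) whose Anzellotti trace on $\partial B_1$ satisfies $\sigma\nu = D_\xi f(u_0\otimes\nu)\,\nu$. The ``only if'' direction follows from subdifferentiating the convex energy at $v=0$, while the ``if'' direction comes from combining the pointwise inequality $f(Dv)\geq \sigma:Dv$, integration by parts, and Euler's identity $f(\xi)=D_\xi f(\xi):\xi$ applied to the boundary term. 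The task is then to produce a suitable triple $(f,u_0,\sigma)$ with $u_0$ smooth and not identically zero.

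I take the integrand of the quadratic form $f(\xi)=\sqrt{M\xi:\xi}$ for a symmetric positive-definite bilinear form $M$ on $\R^{2\times 2}$ that does \emph{not} factor as a tensor product of two $2\times 2$ matrices; a concrete choice is $f(\xi)^2=\xi_{11}^2+2\xi_{12}^2+2\xi_{21}^2+\xi_{22}^2$. One checks that $M$ does not preserve the cone of rank-one matrices, so $D_\xi f(v_1\otimes v_2)=M(v_1\otimes v_2)/f(v_1\otimes v_2)$ is generically not rank-one and \eqref{r1 split} fails. All remaining items in \ref{B2} are immediate from the quadratic choice: $f\in C^\infty(\R^{2\times 2}\setminus\{0\})$, positively $1$-homogeneous and even so that $f^\infty=f$, and $D_\xi^2(f^2)=2M$ provides the uniform convexity in \eqref{uni conv}; the last item of \ref{B2} is vacuous because, for $1$-homogeneous $f$, the equality $D_\xi f(\xi')=D_\xi f(\xi)$ forces $\xi'$ to lie on the same ray through the origin as $\xi$.

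The construction of $\sigma$ and $u_0$ is then performed simultaneously via the stream-function parametrization $\sigma_{ij}=\epsilon_{jk}\partial_k\phi_i$ for smooth scalar fields $\phi_1,\phi_2$ on $\overline{B_1}$, which automatically enforces row-wise divergence-freeness. One selects $\phi_1,\phi_2$ so that the associated $\sigma$ satisfies $f^\circ(\sigma)<1$ strictly in the interior and $f^\circ(\sigma)=1$ on $\partial B_1$; the anisotropy of $M$ makes this feasible through explicit choices adapted to the radial symmetry of $B_1$ and the symmetries of $M$. The boundary datum $u_0$ is defined pointwise on $\partial B_1$ by inverting $D_\xi f(u_0\otimes\nu)\,\nu=\sigma\nu$, with magnitude fixed by any convenient normalization. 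The main obstacle is matching the prescribed boundary trace $\sigma\nu$ to the image of the $0$-homogeneous map $v\mapsto D_\xi f(v\otimes\nu)\,\nu$: this image is a one-dimensional curve in $\R^2$ at each boundary point, but when \eqref{r1 split} holds it is rigid enough that the resulting system forces $u_0$ to be trivial (in line with Theorem \ref{T2}); the failure of \eqref{r1 split} for the chosen $M$ breaks this rigidity and leaves enough freedom to produce a smooth nontrivial $u_0$. Smoothness of $u_0$ then follows from the implicit function theorem applied to this inversion, and non-vanishing of $u_0$ is secured by arranging the boundary circulations of $\phi_1,\phi_2$ to be nonzero.
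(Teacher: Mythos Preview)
Your calibration framework is correct and matches the paper's: $u=0$ minimizes \eqref{vproblem} iff there is a divergence-free $z$ with $z(x)\in\overline{\ran(\de_\xi f)}$ and $[z,\nu]\cdot u_0=f^\infty(u_0\otimes\nu)$ on $\de B_1$ (this is Lemma~\ref{vec char} with $v=w=0$). Your quadratic $f$ does satisfy all of \ref{B2} except \eqref{r1 split}, and for $1$-homogeneous $f$ the last item of \ref{B2} is indeed vacuous since $f=f^\infty$.

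The gap is that you never construct the calibration $\sigma$. Writing $\sigma_{ij}=\epsilon_{jk}\de_k\phi_i$ gives divergence-freeness for free and makes $[\sigma,\nu]=\de_\tau\phi$, but you still have to produce $\phi_1,\phi_2$ such that (i) $f^\circ(\sigma)\le 1$ throughout $B_1$ and (ii) on an arc of $\de B_1$ the vector $\de_\tau\phi(\theta)$ stays on the moving image curve $\{\mathrm{D}_\xi f(v\otimes\nu_\theta)\nu_\theta: v\in S^1\}$. Condition (ii) is a codimension-one constraint at each boundary point, and for your diagonal quadratic $M$ one checks that $M(v\otimes v)$ is rank-two unless $v\in\{\pm e_1,\pm e_2\}$; there is no evident mechanism forcing a smooth divergence-free $\sigma$ to have its normal trace track this curve on any open arc, and the sentence about ``boundary circulations'' does not supply one. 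Without this, ``invert to get $u_0$'' is empty.

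The paper resolves exactly this point by \emph{designing} $f$ rather than fixing it. It builds a non-quadratic norm $f_0$ (the polar of a small cubic perturbation of a quadratic) engineered so that $\mathrm{D}_\xi f_0\big((ae_1+be_2)\otimes(ae_1+be_2)\big)$ is a scalar multiple of $(ae_1+be_2)\otimes e_1$ for $|b|<\eps|a|$. Then with $u_0(x)=\eta(x_2)\,x$ supported near $e_1$, the boundary requirement forces $z=\mathrm{D}_\xi f_0(x\otimes x)$ on $\spt u_0$, and this matrix depends only on $x_2$ with second column zero---hence its constant-in-$x_1$ extension to $B_1$ is automatically divergence-free and sits in $\overline{\ran(\de_\xi f_0)}$ after a cutoff. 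The entire difficulty you defer to ``explicit choices adapted to the radial symmetry'' is thus absorbed into the construction of $f_0$ itself. If you want to keep a quadratic $f$, you would need a genuinely new argument for the calibration; as written, the proposal is a correct outline of what must be checked but not a proof.
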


Of course, this data also fulfills \ref{B1} and \ref{B3}. This example illustrates how the situation here is quite different from the scalar setting, as for every scalar, even, uniformly convex and autonomous integrand, the ball would fulfill the mean curvature condition \footnote{this can, for instance, be seen from the equivalence with the barrier condition, as stated in \cite[Sec. 3]{jerrard2018existence}, which holds for all strictly convex sets (see e.g.\ \cite[Prop.\ 5.5]{gorny2024functions})}.

Roughly speaking, the mechanism for smooth non-attainment here is that there is an extension of $u_0$ such that a part of $\de \Omega$ is a minimal surface with respect to the anisotropic scalar integrand $f(u_0(y)\otimes \cdot)$.

\subsection{Further discussion}
\subsubsection{Uniqueness}
Our results also imply new uniqueness results for minimizers in a lot of cases.

\begin{corollary}
Assume that either the assumptions of Theorem \ref{T1} or of Theorem \ref{T2} hold and that $\Omega$ is connected and $U$ is not empty. Further assume that $f(x,\cdot)$ is strictly convex for every $x\in \Omega$.
Let $u,v\in W^{1,1}(\Omega,\R^n)\cap L^2(\Omega, \R^n)$ be two minimizers of \eqref{gen rel prob} (resp.\ \eqref{vproblem}) with the same boundary datum $u_0$ (and the same $g,h,\lambda$ for \eqref{gen rel prob}). Then $u=v$.
\end{corollary}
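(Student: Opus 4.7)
The plan is the standard strict-convexity / convex-combination argument, with the main theorems supplying the boundary trace information needed to close it.

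First, I would invoke Theorem \ref{T1} in the scalar setting (resp.\ Theorem \ref{T2} in the vectorial setting) for both $u$ and $v$. This gives that their traces on $U$ both equal $u_0$, so that the boundary penalty density $f^\infty(x,(u_0-u)\otimes\nu_x)$, and the analogous expression for $v$, vanish $\Ha^{d-1}$-a.e.\ on $U$. Since $u,v\in W^{1,1}$, the singular parts $\mathrm{D}^s u,\mathrm{D}^s v$ vanish, so on such competitors the relaxed functional $F$ of \eqref{gen rel prob} (resp.\ \eqref{vproblem}) simplifies to $\int_\Omega f(x,\mathrm{D}u)\dx + \int_{\de\Omega\setminus U} f^\infty(x,(u_0-u)\otimes\nu_x)\dH$, augmented in the scalar case by the linear term $\int gu\dx$ and the convex quadratic $\int \tfrac{\lambda}{2}|u-h|^2\dx$.

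Next, I would consider the convex combination $w:=(u+v)/2\in W^{1,1}(\Omega,\R^n)\cap L^2(\Omega,\R^n)$. By convexity of $f(x,\cdot)$, of $f^\infty(x,\cdot)$, and of $|{\cdot}-h|^2$, together with linearity of $\int g\,\cdot$, one obtains $F(w)\leq \tfrac{1}{2}(F(u)+F(v))=\min F$. Hence $w$ is itself a minimizer, and every convexity inequality used in this estimate must be an equality. Applied pointwise to the volume integrand, this yields $f(x,\mathrm{D}w(x))=\tfrac{1}{2}(f(x,\mathrm{D}u(x))+f(x,\mathrm{D}v(x)))$ for a.e.\ $x\in\Omega$, which by the assumed strict convexity of $f(x,\cdot)$ forces $\mathrm{D}u=\mathrm{D}v$ almost everywhere in $\Omega$.

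Finally, since $\Omega$ is connected and $u-v\in W^{1,1}$ has vanishing a.e.\ gradient, $u-v$ agrees a.e.\ with a constant $c\in\R^n$; by the first step the trace of $u-v$ vanishes on the nonempty open set $U$, so $c=0$ and $u=v$. The only step requiring genuine care is the first one: one must verify that the $U$-piece of the boundary penalty really does drop out for both competitors so that the convex combination argument applies term by term, which is exactly what Theorems \ref{T1} and \ref{T2} deliver. Beyond that bookkeeping I do not foresee any serious obstacle.
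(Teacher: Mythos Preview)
Your proposal is correct and follows essentially the same approach as the paper: take the convex combination $(u+v)/2$, use strict convexity of $f(x,\cdot)$ together with $u,v\in W^{1,1}$ to force $\mathrm{D}u=\mathrm{D}v$ a.e., and then use Theorems~\ref{T1}/\ref{T2} plus connectedness to conclude $u=v$. Your treatment is slightly more detailed (you explicitly handle the $\de\Omega\setminus U$ boundary term via convexity of $f^\infty$, whereas the paper simply absorbs all such terms into the convexity argument), but the substance is the same.
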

\begin{proof}
Since the singular part in \eqref{gen rel prob} (resp.\ \eqref{vproblem}) vanishes by assumption, it is immediate from the strict convexity that $\frac{u+v}{2}$ must have strictly less energy than $u$ and $v$ unless $\mathrm{D}(u-v)=0$, which, since $u$ and $v$ agree on a part of the boundary by the theorem, shows that $u=v$.
\end{proof}
There are many existing theorems about the regularity of these kinds of problems (including the vectorial case), showing that under certain ellipticity assumptions, which hold e.g.\ for the Plateau problem, any minimizer must automatically be in $W^{1,1}$, see e.g.\ \cite{bildhauer2003convex, marcellini2006nonlinear, beck2015interior, fussangel2024singular}. On the other hand, for the least gradient problem, strict convexity does not hold (and neither does interior regularity), and uniqueness might fail in this setting, see e.g.\ \cite[Example 2.7]{mazon2014functions}. A classical counterexample due to Santi \cite{santi1972sul} (see also \cite[Section 3.3]{beck2013dirichlet}) also shows that in almost nowhere mean-convex domains, uniqueness might fail even though interior regularity holds.

We also refer to \cite[Appendix B]{beck2013dirichlet} for a more detailed discussion of continuity in the boundary datum and similar issues. 

\subsubsection{The Dirichlet problem for the Rudin-Osher-Fatemi functional}\label{rofsec}

Let us comment on the special case in which $n=1$ and $\lambda=1$ and $f(x,\xi)=|\xi|$, in which the functional reduces (up to a constant) to the famous Rudin-Osher-Fatemi functional \cite{rudin1992nonlinear} given by \begin{align}
\int_\Omega |\mathrm{D} u|+\int_\Omega\frac{1}{2}(u-\bar{h})^2\dx \quad \text{(with $\bar{h}=h+g$)}\label{ROF}
\end{align}
equipped with Dirichlet boundary values $u_0$. The question when this Dirichlet problem (usually the functional is considered with a Neumann boundary condition) is solvable was raised by Brezis in \cite{brezis2019remarks} and was previously open for $d>1$.

In this case, our results give the following answer to this question:

\begin{corollary}
Suppose the following holds
\begin{itemize}
\item Assumption \ref{A1} holds with $U=\de \Omega$ 
\item $\bar{h}\in L^2(\Omega)$ is continuous in a neighborhood of $\de\Omega$ 
\item $u_0\in C^0(\de\Omega)$
\item and $\min_{\de \Omega}H_{\de \Omega}- |\bar{h}-u_0|>0$ where $H_{\de\Omega}=H_{\de\Omega,|\cdot|}$ is the mean curvature, normalized to be $d-1$ for a sphere, 
\end{itemize}
then the problem \eqref{ROF} has a minimizer $u\in BV(\Omega)$ for which the Dirichlet boundary condition is attained in the trace sense.
\end{corollary}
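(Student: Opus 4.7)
The plan is to reduce the statement to Theorem~\ref{T1} applied with $f(x,\xi)=|\xi|$. Using a continuous extension $\tilde{u}_0 \in C^0(\overline{\Omega})$ of $u_0$ (which exists by Tietze, since $\partial\Omega$ is closed), I set
\[
\lambda \equiv 1, \qquad h := \tilde{u}_0, \qquad g := \tilde{u}_0 - \bar{h}.
\]
A direct expansion gives $gu + \tfrac{1}{2}(u-h)^2 = \tfrac{1}{2}(u-\bar{h})^2 + \tfrac{1}{2}(\tilde{u}_0^2 - \bar{h}^2)$, so the last term is independent of $u$, and minimizers of \eqref{gen rel prob} for this data coincide with those of the relaxation of \eqref{ROF} subject to trace $u_0$.

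Next I would verify the assumptions \ref{A1}--\ref{A6}. \ref{A1} is given. \ref{A2} is essentially immediate for $f = f^\infty = |\cdot|$: there is no $x$-dependence, $|\cdot|$ is smooth off the origin, and since the radial Hessian satisfies $\langle \mathrm{D}^2_\xi f(s\hat\xi)\hat\xi,\hat\xi\rangle = 0$, the decay condition \eqref{conv df} holds trivially. For \ref{A3}, $g$ is continuous in a neighborhood of $\partial\Omega$ with $g|_{\partial\Omega} = u_0 - \bar{h}$, so
\[
\essinf_{\Omega\ni y\to x}\bigl(H_{\partial\Omega,f}(x) - |g(y)|\bigr) = H_{\partial\Omega}(x) - |u_0(x) - \bar{h}(x)|,
\]
which is strictly positive on $\partial\Omega$ by hypothesis and, by compactness and continuity, uniformly bounded below by some $c>0$. \ref{A4} is trivial. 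The third clause of \ref{A5} holds because $h = \tilde{u}_0 \in C^0$ near $U=\partial\Omega$ with $h|_{\partial\Omega}=u_0$, and the third clause of \ref{A6} applies since $f = f^\infty$, $h \in C^0$ near $U$, and $u_0 \in C^0(\partial\Omega)\cap L^1(\partial\Omega)$.

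Existence of a minimizer $u\in BV(\Omega)$ of \eqref{gen rel prob} then follows from the direct method (Proposition~\ref{ex min}). Because $\lambda \equiv 1$, the quadratic fidelity automatically controls $\|u\|_{L^2}$ along minimizing sequences, so $u \in L^2(\Omega)$. Theorem~\ref{T1} then yields $u = u_0$ on $\partial\Omega$ in the trace sense; in particular the boundary penalty vanishes and, by Proposition~\ref{same inf}, $u$ minimizes \eqref{ROF} among $BV$-functions with trace $u_0$.

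The only real subtlety is bookkeeping: \eqref{ROF} couples $u_0$ into the fidelity through the pointwise gap $|\bar{h} - u_0|$, whereas \ref{A3} only sees the linear coefficient $g$. The device of absorbing $u_0$ into both $h$ and $g$ via a single continuous extension $\tilde{u}_0$ converts the hypothesized gap on $\partial\Omega$ into $|g|$ in the interior; once this is done, everything else is a routine check.
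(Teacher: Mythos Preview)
Your approach is essentially the same as the paper's: split $\bar h$ into $h$ and $g$ with $h-g=\bar h$, verify \ref{A1}--\ref{A6}, and invoke Theorem~\ref{T1}. There is, however, one gap in your verification of \ref{A3}: with the choice $g=\tilde u_0-\bar h$ you only get $g\in L^2(\Omega)$, because $\bar h$ is assumed merely to lie in $L^2(\Omega)$ globally. For $d>2$ the requirement $g\in L^{\max(2,d)}(\Omega)=L^d(\Omega)$ in \ref{A3} is therefore not guaranteed, and Theorem~\ref{T1} cannot be applied as a black box.

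The paper avoids this by extending the \emph{difference} $\bar h|_{\partial\Omega}-u_0$ (a continuous function on $\partial\Omega$) continuously to all of $\overline\Omega$; the resulting $g$ is then bounded, hence in every $L^p$, and one sets $h$ accordingly so that the functionals match up to a constant. You can repair your version in the same way, or equivalently multiply your $g$ by a cutoff $\chi$ that equals $1$ near $\partial\Omega$ and is supported in the neighborhood where $\bar h$ is continuous (hence bounded), replacing $h$ by $(1-\chi)\bar h+\chi\tilde u_0$; this keeps $h-g=\bar h$, preserves all your boundary-local checks, and makes $g$ bounded.
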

\begin{proof}
Take a continuous extensions $g$  of $\bar{h}|_{\de\Omega}-u_0$ to $\Omega$, then \ref{A3}-\ref{A5} hold for this $g$ and $h=\bar{h}-g$, while \ref{A1}, \ref{A2} and \ref{A6} hold by assumption already. The relaxed problem \eqref{gen rel prob} has a minimizer $u\in BV(\Omega)\cap L^2(\Omega)$ (see Proposition \ref{ex min} for details), which attains the boundary values $u_0$ by Theorem \ref{T1}.
\end{proof}




Similar statements can also be made for $u_0\in BV(\de\Omega)$ or $u_0\in W^{\alpha,p}(\de\Omega)$. Of course, the result also holds for other values of $\lambda$ if $\min H_{\de\Omega}-\lambda|u-\bar{h}|>0$.

Let us also mention that if $d<8$, then $u$ inherits the interior regularity of $\bar{h}$ in the sense that if $\bar{h}\in C^{\beta}(\Omega)$, then $u\in C^\beta(\Omega)$ for $\beta\in (0,1]$ as shown in \cite{caselles2011regularity}.

The bound of $H_{\de\Omega}$ on $|u_0-\bar{h}|$ is also sharp if $d\geq 3$, for $|u_0-\bar{h}|>H_{\de\Omega}$ a counterexample is given by \begin{align}
u_0=0;\quad \bar{h}=(1+t)\div\frac{x}{|x|};\quad u=t\div\frac{x}{|x|}\quad \text{ with $\Omega=B_1(0)$}
\end{align}
 for any $t\in \R_{>0}$, since $\div\frac{x}{|x|}=\frac{d-1}{|x|}$. To see that this is indeed a counterexample, one notes that on the one hand minimizers of both the original problem and the relaxed version \eqref{gen rel prob} must be unique due to the strictly convex $(u-\bar{h})^2$-term. On the other hand, one can check by direct calculation that this $u$ is indeed a minimizer of the relaxed problem by using the subdifferential characterisation in the Lemmata \ref{sca char} and \ref{sc equiv} with $z=-\frac{x}{|x|}$. 

We also stress that the necessity of non-negative mean curvature persists here, even if $\bar{h}=u_0$ on the boundary. A counterexample is for instance given by $\Omega=B_1(0)\backslash B_{\frac{1}{2}}(0)\subset \R^2$ with the data \begin{align}
u=0\quad \text{and}\quad \bar{h}=u_0=\frac{4}{3|x|}-\frac{4}{3},
\end{align}
 which does not attain the boundary data at $|x|=\frac{1}{2}$. To verify that this is indeed a counterexample, one can again use the uniqueness and Lemma \ref{sca char}, together with $z=\frac{2}{3}x-\frac{4}{3}\frac{x}{|x|}$.


On the other hand, optimality of this result is certainly an open-ended question, and if one takes more information on the structure of the data into account, then better results for $d=1$ are available, see e.g.\ \cite[Thm.\ 3]{brezis2019remarks} and \cite{rybka2025dirichlet}.

\subsubsection{The trace space of functions of least gradient}
Let us consider the special case of the least gradient problem, that is \eqref{gen rel prob} with $g=\lambda=h=0$ and $f(x,\xi)=|\xi|$. In \cite[Open Problem 6]{gorny2024functions} G\'orny and Maz\'on raised the question of whether for every $u_0\in BV(\de \Omega)$ there is a minimizer $u$ of \eqref{gen rel prob} for which $u=u_0$ in the trace sense (which was already shown in the special case $d=2$ by G\'orny in \cite{gorny2018planar}). 

Our result answers this question positively if $\de \Omega$ is $C^2$ and has positive mean curvature (if the mean curvature is negative, there are of course smooth counterexamples). In particular, in this context, it is also quite easy to remove the assumption that $u\in L^2$ in the theorem. \begin{corollary}\label{cor sans l2}
Suppose that the assumptions \ref{A1}, \ref{A3} and \ref{A6} hold with $g=\lambda=h=0$ and $f(x,\xi)=|\xi|$ and $U=\de \Omega$. Let $u$ be a minimizer of the relaxed problem \eqref{gen rel prob}, then \begin{align*}
u=u_0\quad \text{ on $\de \Omega$}
\end{align*}
in particular, the problem \eqref{gen org prob} is also solvable under these circumstances.
\end{corollary}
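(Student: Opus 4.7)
The plan is to apply Theorem \ref{T1} to the truncations $u_M := T_M\circ u$, where $T_M(s):=\max(-M,\min(M,s))$, with the truncated boundary datum $(u_0)_M := T_M\circ u_0$, and then send $M\to\infty$. Since $u_M\in L^\infty(\Omega)\subset L^2(\Omega)$, the $L^2$-hypothesis of Theorem \ref{T1} becomes automatic, while the remaining hypotheses \ref{A1}, \ref{A3}--\ref{A5} are preserved verbatim (in the present setting $g=\lambda=h=0$ they say nothing about the boundary datum).

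The technical heart of the argument is to check that $u_M$ is itself a minimizer of \eqref{gen rel prob} with boundary datum $(u_0)_M$. Here I would use that for $f=|\cdot|$ (so $f^\infty=f$) the functional reduces to $J(v)=|Dv|(\Omega)+\int_{\de\Omega}|u_0-v|\dH$, and the generalized coarea formula rewrites it as
\[
J(v)=\int_{-\infty}^{\infty} J_t(\{v>t\})\dt, \qquad J_t(E):=P(E,\Omega)+\mathcal{H}^{d-1}\bigl(\de\Omega\cap(E\,\Delta\,\{u_0>t\})\bigr).
\]
The classical Bombieri--De Giorgi--Giusti / Sternberg--Williams--Ziemer decomposition says that $u$ minimizes $J$ if and only if $\{u>t\}$ minimizes $J_t$ among finite-perimeter subsets of $\Omega$ for a.e.\ $t\in\R$. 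Denoting by $J_t^M$ the analogous functional built from $(u_0)_M$, one has $J_t^M=J_t$ for $|t|<M$ and $\{u_M>t\}=\{u>t\}$ there, while for $|t|>M$ the set $\{u_M>t\}$ is $\emptyset$ or $\Omega$ and attains the trivial minimum value $0$ of $J_t^M$. Re-integrating in $t$ then yields that $u_M$ minimizes the $(u_0)_M$-version of the functional.

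Next I would verify that $(u_0)_M$ still satisfies \ref{A6}: truncation is $1$-Lipschitz, so it does not increase the $BV$, $W^{\alpha,p}$ (Gagliardo) or uniform seminorms, and $(u_0)_M\in L^\infty(\de\Omega)\subset L^1(\de\Omega)$; in the continuous case $f=f^\infty$ and $\lambda=0$ are already in force. Theorem \ref{T1} therefore applies to $u_M$ and gives $\mathrm{tr}(u_M)=(u_0)_M$ on $\de\Omega$. Since truncation commutes with the BV trace, $\mathrm{tr}(u_M)=T_M(\mathrm{tr}(u))$, and because $\mathrm{tr}(u)\in L^1(\de\Omega)$ by the classical trace theorem on Lipschitz domains, dominated convergence yields $T_M(\mathrm{tr}(u))\to \mathrm{tr}(u)$ and $(u_0)_M\to u_0$ in $L^1(\de\Omega)$ as $M\to\infty$, from which $\mathrm{tr}(u)=u_0$ follows. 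The step I expect to require the most care is the coarea decomposition argument: one must be careful that the $t$-by-$t$ minimality of level sets genuinely passes through the boundary penalty and that the trivial super-level sets for $|t|>M$ are handled without slack, so that the inequality obtained after re-integrating is sharp.
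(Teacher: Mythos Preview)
Your approach is correct and reaches the same conclusion, but it takes a genuinely different route from the paper. The paper's proof (Section~\ref{S6}) establishes the key lemma---that $T_b(u)$ minimizes the relaxed problem with boundary datum $T_b(u_0)$---via the vector-field characterisation of minimizers due to Maz\'on, Rossi and Segura de Le\'on: a minimizer $u$ admits a divergence-free $z\in X_d(\Omega,\R^d)$ with $\|z\|_\infty\le 1$, $(z,\mathrm{D}u)=|\mathrm{D}u|$ and $[z,\nu]\in\sgn(u_0-u)$, and one simply checks that the \emph{same} $z$ witnesses minimality of $T_b(u)$ for the datum $T_b(u_0)$, using the measure identity $|\mathrm{D}u|=|\mathrm{D}T_b(u)|+|\mathrm{D}(u-T_b(u))|$ to force equality in $(z,\mathrm{D}T_b(u))\le|\mathrm{D}T_b(u)|$. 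This fits naturally with the subdifferential machinery used throughout the paper. Your coarea route, by contrast, is more geometric: it reduces the statement to the level-set decomposition of the relaxed least-gradient functional (the fact that $u$ minimizes $J$ iff each $\{u>t\}$ minimizes the penalized perimeter $J_t$), after which truncation becomes transparent because it only discards the levels $|t|>M$, where the minimizer is trivially $\emptyset$ or $\Omega$. The trade-off is that the level-set equivalence you invoke, while standard in the least-gradient literature (e.g.\ \cite{gorny2024functions}), still needs the boundary-penalized version, and the direction ``$u$ minimal $\Rightarrow$ level sets minimal'' is the nontrivial one---you correctly flag this as the step requiring care. Both arguments are short once the relevant black box is granted; the paper's is slightly more self-contained given what has already been set up in Section~\ref{S3}.
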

The (short) proof is given in Section \ref{S6}.



\subsubsection{Comments on the optimality of the assumptions}
\vphantom{a}

\textbf{The size bound on $g$:} We already gave a (somewhat implicit) example of the sharpness of the size bound on $g$ in \ref{A3} for $d\geq 3$ in the counterexample for the ROF functional in Subsection \ref{rofsec} above.

 If one allows other integrands, then there are also easier counterexamples in one dimension, for instance one can consider $\Omega=(0,1)$ and $f(x,\xi)=a(x)|\xi|$ for $a\in C^2$, then the generalized mean curvature at $1$ is $a'(1)$ and at $0$ it is $-a'(0)$. If both values are positive, then one can then check, using Lemma \ref{sca char} with $z=a(x)$ that $u=0$ is a minimizer for $g=-a'(x)$ and $\lambda=0$ and boundary datum $u_0(1)=1$ and $u_0(0)=-1$.

\textbf{The regularity of $u_0$:} Whether or not the spaces $BV$ and $W^{\alpha,p}$ (with $\alpha p\geq 2$) are optimal is not clear, the construction of a counterexample given by Spradlin and Tamasan in \cite{spradlin2014not} seems to be limited to spaces $W^{\alpha,1}$ with $\alpha <\frac{2}{3}$. 

It would be an interesting question whether or not the required regularity of the boundary datum improves for elliptic integrands (such as e.g.\ $\sqrt{1+|\xi|^2}$).\smallskip

\textbf{The structure of the lower order terms:}
There are plenty of works considering similar problems with different lower order terms, for instance, non-linear eigenvalue problems \cite{kawohl2007dirichlet}, saddle point problems \cite{figueiredo2018nehari} or parabolic versions \cite{andreu2004parabolic,andreu2005cauchy,gorny2022duality,Meyer}.

Some of these fall under our framework by using that their Euler-Lagrange equations have the same structure as the functional here (see Lemma \ref{sc equiv}) if one considers $g$ as the contribution of the additional nonlinearities and that fulfilling the Euler-Lagrange equation is equivalent to being a minimizer (cf.\ Lemma \ref{sc equiv}) (this e.g.\ works with the saddle points in \cite{figueiredo2018nehari} if the prefactor of the lower order nonlinearity is small enough). In full generality, results analogous to ours are simply not going to be true, e.g.\ for the Cheeger problem \cite{leonardi2015overview}, there are easy examples (such as balls) where the situation seems to be completely different.

Regarding the parabolic setting, we expect that the technique here should work under similar assumptions and should produce a bound of the type $\de_t|u-u_0|\leq -H_{\de\Omega,f}$ on $\de\Omega$ if $u\neq u_0$.\smallskip

Finally, we remark that of course not every assumption here has been optimized towards maximal generality, and we expect that some of them can be weakened (e.g.\ the regularity of $h$) without too much effort.

\subsection{Organisation of the paper}
In Section \ref{S2}, we introduce the necessary preliminaries about function spaces and convex analysis.
In Section \ref{S3}, we recall the general machinery for linear growth functionals in the calculus of variations and state the characterisations of their subdifferentials, which the proofs of the Theorems \ref{T1} and \ref{T2} in the subsequent Section \ref{S4} use. Most of the proofs of the technical lemmata used there are postponed to Section \ref{S5}. The short proof of Corollary \ref{cor sans l2} is given in Section \ref{S6}. The counterexample in Theorem \ref{T3} is constructed in Section \ref{S7}. Finally, the proof of a lemma from convex analysis, which we believe to be well known but could not find in a satisfying citable form, is given in the Appendix \ref{appendix}.

\section{Preliminaries and notation}\label{S2}
\subsection{Notational conventions}
We shall write $A\gtrsim B$ if there is some constant $C$, depending on $\Omega,n,d, f$ or $\kappa$, but not $x$ or $\eps$, such that $A\geq CB$, sometimes, we shall also explicitly write the constant, in this case it is allowed to change its value from line to line. If we specifically wish to highlight the dependence of the implicit constant on a certain parameter, we shall denote that by a subscript, as in e.g.\ $\gtrsim_{u_0}$. Sometimes we also write $C(t)$ to highlight the dependence on $t$ (or some other parameter). If such an inequality holds in both directions, we shall use the symbol $\approx$ for that.

The divergences of matrices are always taken row-wise. $S^{d-1}$ is the $d-1$-dimensional unit sphere. $\mathds{1}$ denotes indicator functions. $\mathcal{L}^d$ and $\mathcal{H}^d$ are the $d$-dimensional Lebesgue/Hausdorff measure. The spaces of smooth and compactly supported smooth test functions and the space of distributions are denoted by $\mathcal{D}$ and $\mathcal{D}'$. We denote the truncation by \begin{align}
T_b(a)=\min(b,\max(a,-b)) \quad \text{for $a\in \R$ and $b\in \R_{\geq 0}$}.\label{def trunc}
\end{align}

\subsection{The space \texorpdfstring{$BV$}{BV}}\label{S22}

Let $O\subset \R^d$ be a bounded set with Lipschitz boundary. We denote the outer normal at $x\in \de O$ with $\nu_x$ (sometimes also written as just $\nu$) and use the same notation for the normal on $\de \Omega$.

The space $BV(O,\R^n)$ of functions of bounded variation is defined as the the space of functions $w\in L^1(O,\R^n)$ for which the distributional derivative $\mathrm{D}w\in \mathcal{D}'(O,\R^n)$ is represented by integration against a bounded, signed, $\R^{n\times d}$-valued Radon measure, which is also denoted by $\mathrm{D} w$, i.e.\ the space of functions for which there is a measure $\mathrm{D}w$ with \begin{align*}
-\int_O w\div\phi \dx=\int_O \phi \mathrm{d}\mathrm{D}w \quad \forall\phi\in \mathcal{D}(O).
\end{align*}
The space $BV(O,\R^n)$ is a Banach space when endowed with the norm
\begin{align*}
\norm{w}_{BV(O,\R^n)} := \norm{w}_{L^1(O,\R^n)} + |\mathrm{D}w|(O),
\end{align*}
where $|\mathrm{D}w|$ denotes the total variation of $\mathrm{D}w$.

We say $w_m\rightarrow w$ strictly if $w_m\rightarrow w$ in $L^1(O,\R^n)$ and $|\mathrm{D}w_m|(O)\rightarrow |\mathrm{D}w|(O)$. One can show that $C^\infty(\overline{O})$ is dense in $BV(O,\R^n)$ with respect to strict convergence.

We shall write $\mathrm{D}^a w$ and $\mathrm{D}^sw$ for the absolutely continuous and singular parts of the measure, i.e.\ \begin{align*}
\mathrm{D}^a w(x)=\frac{\mathrm{d}\mathrm{D}^a w}{\mathrm{d}\mathcal{L}^d}(x) \quad \text{and}\quad \mathrm{D}^s w=\mathrm{D}w-\mathrm{D}^a w\mathcal{L}^d.
\end{align*}
It can be shown that there is a linear, bounded trace $w\rightarrow w|_{\de O}\in L^1(O,\R^n)$ on $BV(O,\R^n)$, which is continuous under strict convergence and agrees with the classical trace on $C^1$. We will omit the ``$|_{\de O}$'' for the rest of the paper.

Furthermore, one has the same Poincar\'e and Sobolev inequalities as for $W^{1,1}$, i.e.\ for every $q\in [1,\frac{d}{d-1}]$, there is a $C(O,q)>0$, such that it holds that \begin{align} 
\norm{w}_{L^q(O)}\leq C(O,q)\left(|\mathrm{D}w|(O)+\int_{\de O}|w|\dH\right)\quad \forall w\in BV(O),\label{poincare}
\end{align}
furthermore $BV(O)$ embeds into $L^q(O)$ and the embedding is compact for $q\in [1,\frac{d}{d-1})$.

Whenever $\mathfrak{g}:\R^n\rightarrow \R^m$ is Lipschitz and $w\in BV(O,\R^n)$, then $\mathfrak{g}\circ w\in BV(O,\R^m)$. A particularly important case for us is that if $n=m=1$, then it holds that $\max(a,w)\in BV(O)$ for any $w\in BV(O)$ and $a\in \R$, in particular it also holds that \begin{align}
|\mathrm{D}\max(a,w)|(\mathcal{U})\leq|\mathrm{D}w|(\mathcal{U})\label{measure trunc}
\end{align}
for every measurable $\mathcal{U}\subset O$ and of course this also holds for minima instead of maxima and any combination of the two (such as $T_b$).\smallskip

If $M\subset \R^{d}$ is a compact embedded $C^1$-manifold of dimension $d'<d$ (the only relevant case for us are subsets of $\de \Omega$), then we define $BV(M,\R^n)$ by first defining the norm \begin{align*}
\norm{w}_{BV(M,\R^n)}:=\norm{w}_{L^1(M,\R^n)}+\norm{\nabla_\tau w}_{L^1(M,\R^{n\times d'})},
\end{align*}
first for $w\in C^1(M,\R^n)$ where $\nabla_\tau$ is the tangential gradient, taken with respect to any orthonormal basis of the tangent space. We then define the norm for $w\notin C^1(M,\R^n)$ by \begin{align*}
\norm{w}_{BV(M,\R^n)}=\inf_{C^1\ni w_m\rightarrow w\in L^1}\norm{w_m}_{BV(M,\R^n)}.
\end{align*}
It can be shown that $\nabla_\tau w_m$ converges weakly\textsuperscript{$*$} to a measure which is independent of the minimizing sequence \cite[Sec.\ 2]{kreuml2019fractional}. This definition is also equivalent to using charts or distributional derivatives.

The reader may e.g.\ consult the textbook \cite{ambrosio2000functions} for further reading.

\subsubsection{The Anzellotti Pairing}\label{S23}
In order to develop a satisfying theory for the Euler-Lagrange equation, it is necessary to have some notion of product between $\mathrm{D}w$ and discontinuous functions, which is done via partial integration. We refer to Anzelotti's original paper \cite{anzellotti1983pairings} as a source\footnote{The reference only contains the case $n=1$, the extension to $n\geq1$ is however trivial by applying the theory for $n=1$ to each row of $z$}.
We define \begin{align*}
X_p(O,\R^{n\times d}):=\big\{z\in L^\infty(O,\R^{n\times d})\,\big|\, \div z\in L^p(O,\R^n)\big\}.
\end{align*}
Here, the divergence is the distributional one and taken row-wise. 
One can then define a distribution $(z,\mathrm{D}w)$ for $z\in X_p(O,\R^{n\times d})$ and $w\in (BV\cap L^q)(O,\R^n)$ with $\frac{1}{p}+\frac{1}{q}=1$ by \begin{align*}
(z,\mathrm{D}w)(\phi)= - \int_O \phi w\cdot\div(z)\dx - \int_O w \otimes\nabla \phi :z\dx \quad \text{for all } \phi \in \mathcal{D}(O).
\end{align*}
It can be shown that this defines a bounded Radon measure with $|(z,\mathrm{D}w)|<<|\mathrm{D}w|$ and that it holds that \begin{align}|(z,\mathrm{D}w)|(\mathcal{U})\leq \norm{z}_{L^\infty}|\mathrm{D}w|(\mathcal{U})\label{anz bas est}
\end{align}
 for every measureable $\mathcal{U}\subset O$.
It further generalizes the pointwise product in the sense that \begin{align}
\frac{\mathrm{d}(z,\mathrm{D}w)}{\mathrm{d}\mathcal{L}^d}=\scalar{z}{\frac{\mathrm{d}\mathrm{D}^aw}{\mathrm{d}\mathcal{L}^d}}\quad \text{$\mathcal{L}^d$-a.e.},
\end{align}
in particular, for $w\in W^{1,1}(O,\R^n)$ it is the pointwise product.
%
%
Regarding the singular part of the measure, one can show that if $z(x)\in \mathcal{C}$ for some closed convex set $\mathcal{C}$ in some open $\mathcal{O}\subset O$, then it holds that \begin{align}
\frac{\mathrm{d}(z,\mathrm{D}w)}{\mathrm{d}|\mathrm{D}^sw|}(x)\in \mathcal{C}\cdot \frac{\mathrm{d}\mathrm{D}^sw}{\mathrm{d}|\mathrm{D}^sw|}(x)\quad \text{for $|\mathrm{D}^sw|$-a.e.\ $x\in \mathcal{O}$}.\label{anz rep}
\end{align}
In fact, more precise expressions for the density as suitable weak limits exist \cite{anzellotti1983traces,crasta2019anzellotti}.

Furthermore $z\in X_p(O,\R^{n\times d})$ has a normal trace $[z,\nu]\in L^\infty(\de O,\R^n)$ on $\de O$, which is linear in $z$ and fulfills $\norm{[z,\nu]}_{L^\infty(\de O,\R^n)}\leq \norm{z}_{L^\infty(O,\R^{n\times d})}$. For $z\in C^1$ it agrees a.e.\ with the pointwise normal trace, $z\nu_x=\sum_{i=1}^d z_{mi}(x)\nu_x^i$, where $\nu_x^i$ are the components of $\nu_x$.
Similarly to \eqref{anz rep}, we have that whenever $z(x)\in \mathcal{C}$ for some closed convex set $\mathcal{C}$ in some open $\mathcal{O}\subset O$, then it holds that \begin{align}
[z,\nu](x)\in \mathcal{C} \nu_x\quad \text{for $\mathcal{H}^{d-1}$-a.e.\ $x\in \de(\mathcal{O}\cap  O)$}.\label{nt rep}
\end{align}
Furthermore, one has the following Gauss-Green type formula for $w\in (BV\cap L^q)(O,\R^n)$ (for $\frac{1}{p}+\frac{1}{q}=1$): \begin{align}
\int_O \scalar{w}{\div z}\dy+\int_O (z,\mathrm{D}w)=\int_{\de O}\scalar{w}{[z,\nu]}\dH.\label{gg form}
\end{align}
We will also use the following chain rule-type estimate for the pairing.\begin{lemma}\label{anz ch}
Let $w\in (BV\cap L^2)(O)$ and $z\in X_2(O,\R^d)$. Let $\mathfrak{g}:\R\rightarrow \R$ be a non-decreasing Lipschitz function, then it holds that \begin{align}
|(z,\mathrm{D}(\mathfrak{g}\circ w))_-|(O')\leq \norm{\mathfrak{g}}_{Lip}|(z,\mathrm{D}w)_-|(O')
\end{align}
for every measurable $O'\subset O$ where the ``$-$'' denotes the negative part of the measure.
\end{lemma}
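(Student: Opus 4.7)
The plan is to argue by smooth approximation of $w$ and pass to the limit. By the standard Meyers--Serrin-type approximation in $BV$, one obtains $w_m\in C^\infty(O)\cap BV(O)\cap L^2(O)$ with $w_m\to w$ strictly in $BV(O)$ and in $L^2(O)$ (if needed, truncating by $T_b(w_m)$ with $b\to\infty$ and using \eqref{measure trunc}). Since $\mathfrak{g}$ is Lipschitz, also $\mathfrak{g}\circ w_m\to \mathfrak{g}\circ w$ strictly in $BV(O)$ and in $L^2(O)$.

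For each smooth $w_m$, Stampacchia's chain rule gives $\nabla(\mathfrak{g}\circ w_m)=\mathfrak{g}'(w_m)\nabla w_m$ almost everywhere, where $\mathfrak{g}'$ denotes any a.e.\ representative of the derivative of the Lipschitz function $\mathfrak{g}$. Then $\mathfrak{g}\circ w_m\in W^{1,\infty}_{\mathrm{loc}}(O)$ and both Anzellotti pairings collapse to absolutely continuous measures:
\begin{equation*}
(z,\mathrm{D}(\mathfrak{g}\circ w_m))=\mathfrak{g}'(w_m)\,z\cdot\nabla w_m\dx=\mathfrak{g}'(w_m)\,(z,\mathrm{D}w_m)
\end{equation*}
as measures on $O$. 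The assumption that $\mathfrak{g}$ is non-decreasing and Lipschitz translates to $0\le \mathfrak{g}'(w_m)\le \norm{\mathfrak{g}}_{Lip}$ a.e., so the density-level inequality
\begin{equation*}
|(z,\mathrm{D}(\mathfrak{g}\circ w_m))_-|(E)\le \norm{\mathfrak{g}}_{Lip}\,|(z,\mathrm{D}w_m)_-|(E)
\end{equation*}
is immediate for every measurable $E\subset O$.

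To pass to the limit $m\to\infty$, I would invoke the continuity of the Anzellotti pairing under strict $BV$-convergence established in \cite{anzellotti1983pairings}: combined with the $L^2$-convergence of $w_m$ and $\mathfrak{g}\circ w_m$ and with $\div z\in L^2$, this upgrades the elementary weak-$\ast$ convergence of the two pairings to strict convergence as signed measures. Strict convergence of signed measures preserves the values of the positive and negative parts on continuity sets, so the inequality passes to the limit on every open continuity set of the measure $|(z,\mathrm{D}w)|+|(z,\mathrm{D}(\mathfrak{g}\circ w))|$, and outer regularity then extends the bound to every measurable $O'\subset O$.

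The main obstacle is this final limiting step: weak-$\ast$ convergence alone would only yield lower semicontinuity of the negative parts, which controls the left-hand side in the correct direction but leaves the right-hand side with an unwanted $\liminf$. It is exactly the strict convergence of the Anzellotti pairings that rescues the argument. A slightly cleaner alternative would be to bypass the $w$-approximation altogether and instead establish a Vol'pert-type chain rule $(z,\mathrm{D}(\mathfrak{g}\circ w))=\bar{\mathfrak{g}}'(w)\,(z,\mathrm{D}w)$ with Radon--Nikodym density $0\le \bar{\mathfrak{g}}'(w)\le \norm{\mathfrak{g}}_{Lip}$, using \eqref{anz rep} on the singular part; the lemma is then immediate.
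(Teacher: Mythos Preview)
Your alternative at the end --- a density-level chain rule $(z,\mathrm{D}(\mathfrak{g}\circ w))=\bar{\mathfrak{g}}'(w)\,(z,\mathrm{D}w)$ with $0\le \bar{\mathfrak{g}}'(w)\le \norm{\mathfrak{g}}_{Lip}$ --- is precisely the paper's approach. The paper simply cites \cite[Prop.~4.5~(iii)]{crasta2019anzellotti}, which states that composition with a non-decreasing Lipschitz function leaves the Radon--Nikodym density $\frac{\mathrm{d}(z,\mathrm{D}w)}{\mathrm{d}|\mathrm{D}w|}$ unchanged; combined with $|\mathrm{D}(\mathfrak{g}\circ w)|\le \norm{\mathfrak{g}}_{Lip}|\mathrm{D}w|$ from the $BV$ chain rule, the inequality for the negative parts is immediate. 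So this route is correct and short.

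Your primary approach via smooth approximation, however, has a genuine gap. The step you flag yourself is indeed the problem: you assert that Anzellotti's original paper upgrades weak-$\ast$ convergence of $(z,\mathrm{D}w_m)$ to strict convergence of signed measures, but no such result is in \cite{anzellotti1983pairings}. What Anzellotti proves is weak-$\ast$ convergence of the pairing under strict $BV$-convergence; convergence of the total variation $|(z,\mathrm{D}w_m)|(O)\to |(z,\mathrm{D}w)|(O)$ is a genuinely stronger statement (a Reshetnyak-continuity-type claim for the discontinuous integrand $\xi\mapsto |z(x)\cdot\xi|$) and is not available from that reference. Without strict convergence, weak-$\ast$ convergence of signed measures says nothing useful about negative parts --- the simple example $\mu_m=\delta_{1/m}-\delta_{-1/m}\rightharpoonup 0$ with $(\mu_m)_-=\delta_{-1/m}\rightharpoonup \delta_0$ already shows that both semicontinuity directions fail --- so the limit passage for the right-hand side cannot be justified this way. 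If you want to salvage the approximation route you would effectively have to prove the density-level chain rule anyway, which brings you back to the second approach.
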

\begin{proof}
In \cite[Prop. 4.5 (iii)]{crasta2019anzellotti} it is shown that composition with a non-decreasing Lipschitz function does not change the density $\frac{\mathrm{d}(z,\mathrm{D}w)}{\mathrm{d}|\mathrm{D}w|}$, which, together with the facts that $|\mathrm{D}(\mathfrak{g}\circ w)|\leq \norm{\mathfrak{g}}_{Lip}|\mathrm{D}w|$ by the chain rule for $BV$-functions \cite[Thm.\ 3.99]{ambrosio2000functions} and that $|(z,\mathrm{D}w)|<<|\mathrm{D}w|$, shows the lemma.
\end{proof}

\subsection{Elements of convex analysis}\label{Sec conv ana}
Let $Y$ be a (real) separable Hilbert space, and let $\mathcal{G}:Y\rightarrow \R\cup\{+\infty\}$ be convex and lower semicontinuous (with the usual conventions for $+\infty$), then we say that $v\in Y$ is in the subdifferential of $\mathcal{G}$ at $w\in Y$, written $v\in \de\mathcal{G}(w)$, if $\mathcal{G}(w)<\infty$ and \begin{align*}
\scalar{w'-w}{v}\leq \mathcal{G}(w')-\mathcal{G}(w) \quad \forall w'\in Y.
\end{align*}
If $\mathcal{G}$ is Gateaux-differentiable at $w$, then $\de\mathcal{G}(w)=\{\mathrm{D}\mathcal{G}(w)\}$.

One can show that if $Y$ is finite-dimensional and $\mathcal{G}$ is finite everywhere, then $\de\mathcal{G}(w)$ is nonempty for all $w$.
We also note that some $w\in Y$ is a minimizer of $\mathcal{G}$ if and only if $0\in \de\mathcal{G}(w)$.

We set \begin{align*}
\ran(\de\mathcal{G})=\{v\,\big|\, v\in \de\mathcal{G}(w)\text{ for some $w\in Y$}\}.
\end{align*}
For functions also depending on other parameters, we denote the dependence on the parameter by a subscript, e.g.\ for the subdifferential of $f(x,\xi)$ with respect to $\xi$, we write $\de_\xi f(x,\xi)$.

We now consider integrands $\bar{f}:\overline{\Omega}\times \R^{n\times d}$ which are convex in the second variable, continuous in the joint variable, and fulfill a  growth bound of the form \begin{align}
C_3^{-1}|\xi|-C_3\leq \bar{f}(x,\xi)\leq C_3(|\xi|-1),\label{gb barf}
\end{align}
with some $C_3>0$ not depending on $x$ or $\xi$.

We define the recession function $\bar{f}^\infty$ as \begin{align*} 
\bar{f}^\infty(x,\xi)=\lim_{x'\rightarrow x,\, t\rightarrow +\infty}\frac{1}{t}\bar{f}(x,t\xi),
\end{align*}
and it is easy to show that, if this limit exists, then \eqref{gb barf} implies \begin{align}
C_3^{-1}|\xi|\leq\bar{f}^\infty(x,\xi)\leq  C_3|\xi|\label{gb rec}
\end{align}
with the same constant $C_3$.

In particular, if \ref{A2} resp.\ \ref{B2} hold, then it holds that \begin{align}
C_1^{-1}|\xi|\leq f^\infty(x,\xi)\leq  C_1|\xi|,\label{gc finf}
\end{align}
with the same $C_1$ as in these assumptions.

The recession function is positively $1$-homogeneous in the second variable, and if $\bar{f}$ is positively $1$-homogeneous in the second variable, then $\bar{f}^\infty=\bar{f}$.

Regarding the subdifferentials of $\bar{f}$ and $\bar{f}^\infty$, we have the following relation \begin{align}
\overline{\ran{\de_\xi\bar{f}(x,\cdot)}}=\ran{\de_\xi\bar{f}^\infty(x,\cdot)}\subset \overline{B_{C_3}(0)}.\label{ball subdiff}
\end{align}
This also implies Lipschitz continuity of $\bar{f}$, more precisely \begin{align}
|\bar{f}(x,\xi_1)-\bar{f}(x,\xi_2)|\leq C_3|\xi_1-\xi_2|\label{lipsch}
\end{align}
with the same $C_3$.

The Legendre transform of $\bar{f}$ is defined as \begin{align*}
\bar{f}^*(x,\xi^*)=\sup_{\xi\in \R^{n\times d}}\scalar{\xi^*}{\xi}-\bar{f}(x,\xi).
\end{align*}
It can be shown that $\bar{f}^*(x,\cdot)=+\infty$ outside of $\overline{\ran{\de_\xi\bar{f}(x,\cdot)}}$ and that \begin{align*}
\bar{f}(x,\xi)=\sup_{\xi^*\in \R^{n\times d}}\scalar{\xi^*}{\xi}-\bar{f}^*(x,\xi^*),
\end{align*}
where the supremum is attained at $\xi^*$ if and only if $\xi^*\in \de_\xi f(x,\xi)$.

If $\bar{f}^\infty$ is differentiable in $\xi$ at $(x,\xi)$, we furthermore have  \begin{align}
\bar{f}^\infty(x,\xi)=\sup_{\xi^*\in\ran(\de_\xi\bar{f}^\infty(x,\cdot))} \scalar{\xi}{\xi^*}=\scalar{\xi}{\mathrm{D}_\xi \bar{f}^\infty(x,\xi)},\label{doal}
\end{align}
and it holds that $\de_\xi\bar{f}^\infty(x,0)=\ran(\de_\xi\bar{f}^\infty(x,\cdot))$.
%
%
%
%
We also note that our assumptions imply a quantitative version of this for $f^\infty$:

\begin{lemma}\label{fen lemma}
Assume \ref{A2} holds. There is a constant $C>0$, not depending on $x$, such that for all unit vectors $v$, all $v^*\in \overline{\ran(\de_\xi f(x,\cdot))}$ and all $x\in \overline{\Omega}$ it holds that \begin{align}
\scalar{\mathrm{D}_\xi f^\infty(x,v)-v^*}{v}\geq C|\mathrm{D}_\xi f^\infty(x,v)-v^*|^2.\label{quant fen}
\end{align}
The same holds if \ref{B2} holds.
\end{lemma}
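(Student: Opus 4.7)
The inequality is a quantitative version of the Fenchel--Young inequality for the 1-homogeneous convex function $f^\infty(x,\cdot)$. The plain version $\langle \mathrm{D}_\xi f^\infty(x,v)-v^*,v\rangle\ge 0$ is immediate: Euler's identity for 1-homogeneous $f^\infty$ yields $\langle \mathrm{D}_\xi f^\infty(x,v),v\rangle=f^\infty(x,v)$, while the inclusion $v^*\in\overline{\ran(\partial_\xi f(x,\cdot))}=\ran(\partial_\xi f^\infty(x,\cdot))$ (by \eqref{ball subdiff}) gives $\langle v,v^*\rangle\le f^\infty(x,v)$, so their difference is nonnegative. The task reduces to extracting the quadratic improvement.

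For case \ref{B2}, condition \eqref{uni conv} states precisely that $\mathcal{G}:=(f^\infty)^2$ is uniformly convex with modulus $C_2$. A short computation based on the 1-homogeneity of $f^\infty$ identifies its Legendre transform as $\mathcal{G}^*=\tfrac{1}{4}((f^\infty)^\circ)^2$, and the standard duality between uniform convexity and Lipschitz gradient yields the quantitative Fenchel--Young inequality $\mathcal{G}(\xi)+\mathcal{G}^*(\eta)-\langle\xi,\eta\rangle\ge\tfrac{C_2}{2}|\xi-\mathrm{D}\mathcal{G}^*(\eta)|^2$. I would apply this with $\xi=v$ and $\eta=2f^\infty(x,v)\,v^*$, use the envelope-theorem identity $\mathrm{D}\mathcal{G}^*\bigl(2f^\infty(x,v)\,\mathrm{D}_\xi f^\infty(x,v)\bigr)=v$, and simplify using $(f^\infty)^\circ(v^*)\le 1$ for $v^*\in K$ to obtain the desired bound with constant depending on $C_1$ and $C_2$.

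For case \ref{A2}, direct uniform convexity is not assumed; instead one uses the uniform $C^2$-bound $\|\mathrm{D}_\xi^2 f^\infty(x,\cdot)\|_{\mathrm{op}}\le M$ on $B_2\setminus B_{1/2}$. For $v^*\in\partial K$ write $v^*=\mathrm{D}_\xi f^\infty(x,\tilde v)$ for some unit $\tilde v$; Taylor expansion of $f^\infty$ at $\tilde v$ combined with Euler's identity at both $v$ and $\tilde v$ gives
\begin{align*}
\langle \mathrm{D}_\xi f^\infty(x,v)-v^*,v\rangle = \int_0^1 (1-s)\,\langle \mathrm{D}_\xi^2 f^\infty(x,\tilde v+s(v-\tilde v))(v-\tilde v),\,v-\tilde v\rangle\dd s,
\end{align*}
while $\mathrm{D}_\xi f^\infty(x,v)-v^* = \int_0^1 \mathrm{D}_\xi^2 f^\infty(x,\tilde v+s(v-\tilde v))(v-\tilde v)\dd s$. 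Combining the PSD inequality $\langle Ah,h\rangle\ge\|A\|_{\mathrm{op}}^{-1}|Ah|^2$ with the uniform bound $M$ handles $v^*$ close to $\xi^*:=\mathrm{D}_\xi f^\infty(x,v)$. For $v^*$ bounded away from $\xi^*$ or lying in the interior of $K$, a compactness argument on $S^{d-1}\times K\times\overline{\Omega}$ together with strict convexity of $K$---a consequence of the $C^1$-differentiability of $f^\infty$ on $\R^d\setminus\{0\}$, via the duality between smoothness of a norm and strict convexity of its dual---produces a uniform positive lower bound on the Fenchel defect, which then dominates the bounded quantity $|\xi^*-v^*|^2$ uniformly.

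The principal obstacle is stitching together the local Taylor and global compactness estimates in case \ref{A2} with a constant independent of $x\in\overline{\Omega}$; this rests on the joint continuity of $f^\infty$ in $(x,\xi)$ guaranteed by \ref{A2} and on the uniformity of the $C^2$-bound across $\overline{\Omega}$.
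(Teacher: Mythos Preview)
Your treatment of case \ref{B2} is valid and takes a different route from the paper: you exploit the explicit uniform-convexity hypothesis \eqref{uni conv} on $(f^\infty)^2$, whereas the paper gives a single argument covering both \ref{A2} and \ref{B2} that uses only the uniform $C^2$-bound on $f^\infty$ away from the origin (available in both settings).

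Your argument for \ref{A2}, however, has a genuine gap in the local step. From the two integral representations you write down, the PSD inequality $\langle A h,h\rangle\ge M^{-1}|Ah|^2$ yields
\[
\int_0^1(1-s)\,\langle A(s)h,h\rangle\,ds\ \ge\ M^{-1}\int_0^1(1-s)\,|A(s)h|^2\,ds,
\]
while the gradient difference carries no weight: $\xi^*-v^*=\int_0^1 A(s)h\,ds$. These two integrals are not comparable: by letting $A(s)h$ concentrate near $s=1$ one can drive the ratio $\int_0^1(1-s)|A(s)h|^2\,ds\,\big/\,\bigl|\int_0^1 A(s)h\,ds\bigr|^2$ to zero, even under the uniform bound $|A(s)h|\le M|h|$. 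The radial nullity $\mathrm{D}_\xi^2 f^\infty(\xi)\xi=0$ forces $A(1)v=0$, so concentration near $s=1$ is exactly the regime you cannot rule out. The compactness half of your dichotomy then cannot compensate, since it needs the local estimate to hold on a fixed neighbourhood of $\xi^*$ with a uniform constant; without that, the two regimes do not patch together.

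The paper avoids this entirely by passing to the polar $f_\star(x,\xi^*)=\sup_{f^\infty(x,\xi)\le1}\langle\xi^*,\xi\rangle$ and proving directly, via the sup-swap identity
\[
\sup_{f_\star(\xi_i^*)\le1}\bigl\{f_\star(x,\xi_1^*{+}\xi_2^*)+t\,f_\star(x,\xi_1^*{-}\xi_2^*)\bigr\}
=\sup_{f^\infty(\xi_i)\le1}\bigl\{f^\infty(x,\xi_1{+}t\xi_2)+f^\infty(x,\xi_1{-}t\xi_2)\bigr\},
\]
that the right-hand side is $\le 2+Ct^2$. This step uses only the uniform \emph{bound} on $\mathrm{D}_\xi^2 f^\infty$ (no modulus of continuity, no pointwise Taylor matching), and immediately gives a quadratic modulus of uniform convexity for $f_\star$, uniformly in $x$; the lemma then follows in two lines. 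This is the classical ``uniform smoothness of a gauge $\Rightarrow$ uniform convexity of its polar'' argument. Your \ref{A2} sketch is in effect trying to reprove this duality by a local second-order expansion, which is precisely why it runs into the weight mismatch.
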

We believe this Lemma to be quite well-known, but were not able to find this exact version in the literature; therefore, a proof can be found in the Appendix \ref{appendix}.

We also note for further reference that, if \ref{A2} resp.\ \ref{B2} hold, then \begin{align}
\lim_{t\rightarrow +\infty} \mathrm{D}_\xi f(x,t\xi)\cdot\frac{\xi}{|\xi|}=\mathrm{D}_\xi f^\infty(x,\xi)\cdot\frac{\xi}{|\xi|}=f^\infty(x,\frac{\xi}{|\xi|})\qquad \forall \xi\neq 0.\label{conv grad}
\end{align}
Indeed this follows from the fact that the expression on the left-hand side is bounded from below by $\frac{1}{2t|\xi|}(f(x,t\xi)-f(x,\frac{1}{2}t\xi))$ and from above by $\frac{1}{t|\xi|}(f(x,2t\xi)-f(x,t\xi))$ by convexity, which both converge to $f^\infty(x,\frac{\xi}{|\xi|})$ by definition.

The reader may refer e.g.\ to the books \cite{borwein2006convex,bauschke,boyd2004convex} for further background reading.

\subsection{Fractional Sobolev spaces}\label{S24}

Let $\mathcal{U}\subset \de\Omega$ be an open subset of the boundary which is $C^1$ and whose (relative) boundary is $C^1$, let $\alpha\in (0,1)$ and $p\in (1,\infty)$, then we define \begin{align*}
\norm{w}_{W^{\alpha,p}(\mathcal{U})}:=\norm{w}_{L^p(\mathcal{U})}+\left(\int_{\mathcal{U}^2} \frac{|w(x)-w(y)|^p}{|x-y|^{d-1+\alpha p}}\dx\dy\right)^\frac{1}{p}
\end{align*}
and define the space $W^{\alpha,p}(\mathcal{U})$ as the subset of $L^p(\mathcal{U})$ for which this norm is finite. This definition is equivalent to defining the space via a diffeomorphism to a subset of the $\R^{d-1}$ and using the same norm there. We refer e.g.\ to the monograph \cite{di2012hitchhikers} for background reading.

The main property which we will require is the following one: If $w\in W^{\alpha,p}(\mathcal{U})$, then there are functions $r_\delta\in L^p(\mathcal{U})$, such that for every a.e.\ $x,y\in \mathcal{U}$ with $|x-y|\leq \delta$ it holds that \begin{align}
|w(x)-w(y)|\leq |x-y|^\alpha(r_\delta(x)+r_\delta(y))\label{cald grad}
\end{align}
and that furthermore \begin{align}\lim_{\delta\searrow 0} \norm{r_\delta}_{L^p(\mathcal{U})}=0.\label{r lim}
\end{align}
A proof of this can be found e.g.\ in \cite{devore1984maximal}, where it is shown for open subsets of the $\R^d$, for subsets of $\de \Omega$ it follows by parametrizing the boundary.

\begin{remark}
This property is in fact a little weaker than being in $W^{\alpha,p}$, for instance \eqref{cald grad} is an equivalent characterisation of the Triebel-Lizorkin space $F_{p,\infty}^\alpha$ (which is bigger than $W^{\alpha,p}$), see e.g.\ \cite[Prop.\ 3.2]{bahouri1994equations} and since \eqref{r lim} holds for smooth functions, both properties hold for the closure of $C^\infty$ in the Triebel-Lizorkin space $F_{p,\infty}^\alpha$. We prefer not to use local Triebel-Lizorkin spaces here for the sake of simplicity, even though Theorem \ref{T1} still holds for $u_0\in L^p(U)\backslash W^{\alpha,p}(U)$ which fulfill \eqref{cald grad} and \eqref{r lim}.
\end{remark}

Let us also stress that we never use any kind of fractional derivative here, and $\mathrm{D}^a$ and $\mathrm{D}^s$ never mean anything other than the absolutely continuous and singular part of the gradient.

\section{Existence of minimizers and characterisation of the subdifferential}\label{S3}
In this section, we will recall some of the classical existence theory for variational problems with linear growth and characterisations of their subdifferential. Most of this is standard and well known to the expert, we nevertheless provide some of the proofs to show how to deal with the lower-order terms.

For the most part, we will treat the scalar and vector-valued setting simultaneously here, and statements where neither of the two is specified apply to both. In particular, we consider $f$ as a function on $\overline{\Omega}\times \R^{n \times d}$ here of which we assume that either $n=1$ or that it is independent of the first variable.\smallskip

The expression $\int f(x,\mathrm{D}w)\dx$ is not defined in a classical sense if $w\notin W^{1,1}$.
Instead one defines \begin{equation}\begin{aligned}
\mathcal{F}_{u_0}(w):=&\int_{\Omega} f(x,\mathrm{D}^aw(x))\dx+\int_{\Omega}f^\infty(x,\frac{\mathrm{d}\mathrm{D}^sw}{|\mathrm{d}\mathrm{D}^sw|}(x))\dd|\mathrm{D}^sw|(x)\label{def fu0}\\
&+\int_{\de\Omega} f^\infty(x,(u_0-w)(x)\otimes \nu_x)\dH(x)
\end{aligned}\end{equation}
for $u_0\in L^1(\de\Omega,\R^n)$ and $w\in BV(\Omega,\R^n)$. Clearly, this functional is $\int_\Omega f(x,\mathrm{D}w)\dx$ when restricted to $W_{u_0}^{1,1}(\Omega,\R^n)$.

This functional is coercive on $BV(\Omega,\R^n)$ in the sense that \begin{align}
\mathcal{F}_{u_0}(w)\geq C_1^{-1}\left(|\mathrm{D}w|(\Omega)+\int_{\de\Omega}|w|\dH\right)-C_1(\mathcal{L}^{d}(\Omega)+\norm{u_0}_{L^1(\de \Omega)})\label{f coer}
\end{align}
with the constant $C_1$ from the assumptions \eqref{bd gf1} resp.\ \eqref{bd gf2}, as one directly sees from using \eqref{gb rec} and the triangle inequality.

\begin{proposition}\label{rel func}
Suppose \ref{A1} and \ref{A2} (resp. \ref{B1} and \ref{B2}) hold and $u_0\in L^1(\de\Omega,\R^n)$ is fixed. Then we have the following:

\noindent\textbf{a)} The functional $\mathcal{F}_{u_0}$ is convex and lower-semicontinuous with respect to convergence in $\mathcal{D}'(\Omega,\R^n)$.

\noindent\textbf{b)} For every $w\in BV(\Omega,\R^n)$ there is a sequence $w_m\in (L^2\cap W_{u_0}^{1,1})(\Omega,\R^n)$, converging to $w$ in $L^1(\Omega,\R^n)$ and so that $\mathcal{F}_{u_0}(w_m)\rightarrow \mathcal{F}_{u_0}(w)$. 

If $w\in L^q(\Omega,\R^n)$ for $q\in[1,\infty)$, then the sequence can additionally be chosen so that $w_m\rightarrow w$ in $L^q(\Omega,\R^n)$.

If $w\in L^\infty(\Omega,\R^n)$ and $u_0\in L^\infty(\de\Omega,\R^n)$, then the sequence can also be chosen such that  $w_m\xrightarrow{*}w$ in $L^\infty(\Omega,\R^n)$.
\end{proposition}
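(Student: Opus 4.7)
Convexity is immediate from convexity of $f(x,\cdot)$ and $f^\infty(x,\cdot)$. For $\mathcal{D}'$-lower semicontinuity I would first use the coercivity \eqref{f coer} to reduce to $L^1$-lsc: any sequence $w_m\to w$ in $\mathcal{D}'(\Omega,\R^n)$ with $\liminf_m \mathcal{F}_{u_0}(w_m)<\infty$ is bounded in $BV$ and thus admits a subsequence converging in $L^1(\Omega,\R^n)$ to $w$. To prove $L^1$-lsc I would fix a $W^{1,1}$-extension $\bar u_0$ of $u_0$ to some $\tilde\Omega\supset\overline\Omega$ and, for any $w\in BV(\Omega,\R^n)$, extend it by $\bar u_0$ outside $\Omega$ to $\bar w\in BV(\tilde\Omega,\R^n)$. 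Since $\bar u_0$ is Sobolev across $\de\Omega$, the jump part of $\mathrm{D}\bar w$ on $\de\Omega$ equals $(u_0-w)\otimes\nu_x\,\mathcal{H}^{d-1}\mres\de\Omega$, so
\[
\mathcal{F}_{u_0}(w)=\int_{\tilde\Omega} f(x,\mathrm{D}^a\bar w)\dx+\int_{\tilde\Omega} f^\infty\bigl(x,\tfrac{\mathrm{d}\mathrm{D}^s\bar w}{\mathrm{d}|\mathrm{D}^s\bar w|}\bigr)\dd|\mathrm{D}^s\bar w|-\int_{\tilde\Omega\setminus\overline\Omega} f(x,\nabla\bar u_0)\dx.
\]
The last term is $w$-independent, and $L^1(\tilde\Omega)$-lsc of the remaining bulk-plus-singular functional is the classical Reshetnyak / Ambrosio--Dal Maso / Fonseca--Müller relaxation theorem for continuous convex integrands with linear growth, whose hypotheses are met under \ref{A2} (resp.\ \ref{B2}).

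\textbf{Part (b), strategy.} The plan is a two-parameter diagonal. First approximate $u_0\in L^1(\de\Omega,\R^n)$ in $L^1(\de\Omega)$ by smooth $u_0^{(k)}\in C^\infty(\de\Omega,\R^n)$, staying inside the closed $L^\infty$-ball of radius $\norm{u_0}_{L^\infty}$ if $u_0\in L^\infty$ (by mollifying a Lipschitz extension of $u_0$). By the Lipschitz bound $|f^\infty(x,\xi)-f^\infty(x,\xi')|\le C_1|\xi-\xi'|$ from \eqref{ball subdiff}, $|\mathcal{F}_{u_0^{(k)}}(w)-\mathcal{F}_{u_0}(w)|\lesssim \norm{u_0^{(k)}-u_0}_{L^1(\de\Omega)}\to 0$. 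For fixed smooth $u_0^{(k)}$, extend it to $U^{(k)}\in C^\infty(\overline\Omega,\R^n)$, take standard strict-in-$BV$ mollifications $v_m\in C^\infty(\overline\Omega,\R^n)$ of $w$ (whose traces $v_m|_{\de\Omega}\to w|_{\de\Omega}$ in $L^1(\de\Omega)$ by continuity of trace under strict convergence), choose $\delta_m\to 0$ so slowly that $|\mathrm{D}w|(\Omega_{\delta_m})+\int_{\Omega_{\delta_m}}|\nabla v_m|\dx\to 0$ with $\Omega_{\delta_m}:=\{\mathfrak d<\delta_m\}$, and pick a cutoff $\phi_m\in C^\infty(\overline\Omega)$ with $\phi_m=0$ on $\de\Omega$, $\phi_m=1$ on $\Omega\setminus\Omega_{\delta_m}$, $|\nabla\phi_m|\le 2/\delta_m$. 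Then $w_m^{(k)}:=\phi_m v_m+(1-\phi_m)U^{(k)}$ lies in $C^\infty(\overline\Omega)\cap W^{1,1}_{u_0^{(k)}}(\Omega,\R^n)\subset L^2(\Omega,\R^n)$, and $w_m^{(k)}\to w$ in $L^1(\Omega)$.

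\textbf{Core computation.} Split $\int_\Omega f(x,\nabla w_m^{(k)})\dx=I_m+J_m$ with $I_m$ the integral over $\Omega\setminus\Omega_{\delta_m}$ and $J_m$ over the layer. On the complement, $I_m$ converges to the bulk-plus-singular part of $\mathcal{F}_{u_0^{(k)}}(w)$ by the (area-)strict continuity theorem applied to $v_m\to w$. On the layer, $\nabla w_m^{(k)}=(v_m-U^{(k)})\otimes\nabla\phi_m+\phi_m\nabla v_m+(1-\phi_m)\nabla U^{(k)}$, and since $|\nabla\phi_m|\sim 1/\delta_m\to\infty$ the first term dominates; by the defining limit \eqref{rec func} and $1$-homogeneity of $f^\infty$,
\[
f(x,\nabla w_m^{(k)})=|\nabla\phi_m|\,f^\infty\bigl(x,(v_m-U^{(k)})\otimes\tfrac{\nabla\phi_m}{|\nabla\phi_m|}\bigr)+o(|\nabla\phi_m|)
\]
on the bulk of the layer. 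Using tubular coordinates around $\de\Omega$ (where $\nabla\phi_m=-\phi_m'(\mathfrak d)\,\nu$ and $\int_0^{\delta_m}(-\phi_m'(r))\dr=1$), Fubini together with $L^1$-trace convergence $v_m|_{\de\Omega}\to w|_{\de\Omega}$ yields $J_m\to \int_{\de\Omega} f^\infty(x,(u_0^{(k)}-w)\otimes\nu_x)\dH$. Combined with the lsc from (a), this forces $\mathcal{F}_{u_0^{(k)}}(w_m^{(k)})\to\mathcal{F}_{u_0^{(k)}}(w)$, and a diagonal $w_m:=w_m^{(k(m))}$ finishes the basic $L^1$-case.

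\textbf{Stronger convergences and main obstacle.} The $L^q$-statement follows by choosing $v_m\to w$ in $L^q$ and noting $\norm{w_m^{(k)}-v_m}_{L^q(\Omega)}\le \norm{v_m-U^{(k)}}_{L^q(\Omega_{\delta_m})}\to 0$ by dominated convergence. For $w,u_0\in L^\infty$ with $b:=\max(\norm{w}_{L^\infty},\norm{u_0}_{L^\infty})$, I would compose the mollifications with the $1$-Lipschitz radial retraction onto $\overline{B_b(0)}\subset\R^n$ (which preserves area-strict convergence by the $BV$-chain rule and leaves both the essential range of $w$ and $u_0^{(k)}$ fixed) and take $U^{(k)}$ bounded by $b$; then $\norm{w_m^{(k)}}_{L^\infty}\le b$ pointwise, giving weak-$*$ convergence. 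The chief technical difficulty is the layer analysis for $J_m$: it requires the convergence $t^{-1}f(x,t\xi)\to f^\infty(x,\xi)$ to be uniform on compact subsets of $\overline\Omega\times(\R^{n\times d}\setminus\{0\})$ (a Dini-type consequence of continuity plus the linear growth \eqref{bd gf1}), and careful control of the small subset of the layer where $v_m-U^{(k)}$ is too small for the dominant-gradient expansion to apply, which is dealt with by the choice of $\delta_m$ making $\int_{\Omega_{\delta_m}}|\nabla v_m|\dx\to 0$.
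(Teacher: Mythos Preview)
Your proposal is correct and essentially complete, but it follows a genuinely different route from the paper's own proof.

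\textbf{Where the paper goes.} The paper treats part (a) and the basic case of (b) as classical (citing Giaquinta--Modica--Sou\v{c}ek 1979) and only sketches the $L^q/L^\infty$ addendum. Its approach is the \emph{extended-domain} one: fix a bounded Lipschitz $\hat\Omega\supset\overline\Omega$, extend $u_0$ to a function $u_0^{\mathrm{ext}}\in W^{1,1}(\hat\Omega,\R^n)$ (in $L^q$ via a density result of M\"uller; in $L^\infty$ via Stampacchia and truncation), and then use that $\mathcal{F}_{u_0}(w_m)\to\mathcal{F}_{u_0}(w)$ whenever $(w_m-u_0^{\mathrm{ext}})\mathds{1}_\Omega+u_0^{\mathrm{ext}}$ converges area-strictly on $\hat\Omega$. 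Such a sequence is furnished by the construction in Bildhauer's appendix, and the paper simply observes that this construction also delivers the required $L^q$ (resp.\ weak-$*$ $L^\infty$) convergence.

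\textbf{Where you go.} You instead work entirely on $\Omega$: first approximate $u_0$ by smooth data $u_0^{(k)}$, then for each $k$ carry out a boundary-layer blending $w_m^{(k)}=\phi_m v_m+(1-\phi_m)U^{(k)}$ with a smooth extension $U^{(k)}$ of $u_0^{(k)}$, and finally diagonalize. Your layer analysis (recovering the $f^\infty$ boundary term from the dominant gradient direction $\nabla\phi_m$ via tubular coordinates) is the standard mechanism, and your identification of the two technical subtleties---locally uniform convergence $t^{-1}f(x,t\xi)\to f^\infty(x,\xi)$ and the sublayer where $v_m-U^{(k)}$ is small---is accurate. For the $L^\infty$ case, your sandwich argument that composition with the $1$-Lipschitz radial retraction onto $\overline{B_b(0)}$ preserves area-strict convergence when $\|w\|_\infty\le b$ (lower semicontinuity for the $\liminf$, the pointwise bound $|\nabla(T_b\circ v_m)|\le|\nabla v_m|$ for the $\limsup$) is correct; note only that the truncated $v_m$ are then merely Lipschitz rather than $C^\infty$, which is harmless since the conclusion only asks for $W^{1,1}\cap L^2$.

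\textbf{Trade-offs.} The paper's route is shorter because the entire layer analysis is absorbed into the area-strict continuity theorem on the enlarged domain, and no approximation of $u_0$ is needed---but it leans on two external constructions (the $L^q$-extension of traces and Bildhauer's approximation). Your route is more self-contained and makes the mechanism explicit, at the cost of a two-parameter diagonal and a more delicate layer computation.
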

This is a classical result from \cite{Giaquinta1979}, except for the point about $L^q$/$L^\infty$-convergence in b), which one can achieve with the standard construction, but which regrettably does not seem to be stated anywhere in the literature. We therefore give a short sketch here of why the construction in \cite[Appendix B.1]{bildhauer2003convex} still works with $L^q$-convergence.
\begin{proof}[Proof sketch for the $L^q/L^\infty$-convergence in b)]
We first extend $u_0$ as $u_0^{ext}$ to some bounded  Lipschitz domain $\hat{\Omega}$ with $\overline{\Omega}\subset \hat{\Omega}$. Given $q\in [1,\infty)$, it is possible by \cite[Thm.\ 1.4]{muller2016density} to choose this extension in $(W^{1,1}\cap L^q)(\hat{\Omega},\R^n)$. If $u_0\in L^\infty(\de\Omega,\R^n)$, then it is possible to choose the extension in $(W^{1,1}\cap L^\infty)(\hat{\Omega},\R^n)$ by e.g.\ Stampaccias Lemma and truncation.

It is shown in \cite{Giaquinta1979} that if some sequence $w_m\in BV(\Omega,\R^n)$ has the property that $(w_m-u_0^{ext})\mathds{1}_\Omega+u_0^{ext}$ converges area-strictly to $(w-u_0^{ext})\mathds{1}_\Omega+u_0^{ext}$ in $\hat{\Omega}$, meaning that \begin{align*}
\mel\int_{\hat{\Omega}}\sqrt{1+|\mathrm{D}^a((w_m-u_0^{ext})\mathds{1}_\Omega+u_0^{ext})|^2}\dx+\int_{\hat{\Omega}}\dd|\mathrm{D}^s((w_m-u_0^{ext})\mathds{1}_\Omega+u_0^{ext})|\\
&\xrightarrow{m\rightarrow \infty} \int_{\hat{\Omega}}\sqrt{1+|\mathrm{D}^a((w-u_0^{ext})\mathds{1}_\Omega+u_0^{ext})|^2}\dx+\int_{\hat{\Omega}}\dd|\mathrm{D}^s((w-u_0^{ext})\mathds{1}_\Omega+u_0^{ext})|,
\end{align*}
then it holds that $\mathcal{F}_{u_0}(w_m)\rightarrow \mathcal{F}_{u_0}(w)$. Such an approximating sequence $w_m\in W_{u_0}^{1,1}(\Omega,\R^n)$ is e.g.\ constructed in \cite[Appendix B]{bildhauer2003convex}. One can check directly from the construction there (which we omit here) that, as long as $(w-u_0^{ext})\mathds{1}_\Omega+u_0^{ext}\in L^q(\hat{\Omega},\R^n)$, the construction there also yields $L^q$-convergence, if $q<\infty$ and weak\textsuperscript{$\star$}-$L^\infty$ convergence if $q=\infty$.
\end{proof}

This implies the following two Propositions:

\begin{proposition}\label{same inf}
Suppose that \ref{A1}, \ref{A2} and \ref{A4} hold and that $g\in L^{d}(\Omega)$, as well as $u_0\in L^1(\de\Omega)$. Then the problems \eqref{gen org prob0}, \eqref{gen org prob}, and \eqref{gen rel prob} have the same infimum and, if one restricts these problems to functions in $L^2(\Omega)$, the infimum does not change.
Furthermore, any limit of an $L^1(\Omega)$-convergent minimizing sequence for \eqref{gen org prob0} or \eqref{gen org prob} is a minimizer for \eqref{gen rel prob}. 

Similarly, if \ref{B1} and \ref{B2} hold and $u_0\in L^1(\de\Omega,\R^d)$, then \eqref{og vproblem}, \eqref{org vproblem}, and \eqref{vproblem} have the same infimum, and if one restricts these problems to functions in $L^2(\Omega,\R^n)$, the infimum does not change.
Any limit of an $L^1(\Omega,\R^n)$-convergent minimizing sequence for \eqref{og vproblem} or \eqref{org vproblem} is a minimizer for \eqref{vproblem}.
\end{proposition}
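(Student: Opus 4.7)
The strategy is to reduce all three problems in each case to minimising a single functional on nested admissible sets, and to use Proposition \ref{rel func} to pass between them. In the scalar case, set
\[
\mathcal{G}(w) := \mathcal{F}_{u_0}(w) + \int_\Omega g w + \tfrac{\lambda}{2}(w-h)^2 \,\mathrm{d}x
\]
on the domain $\{w \in BV(\Omega) : \int_\Omega \lambda |w|^2 \,\mathrm{d}x < \infty\}$. Because $\mathcal{F}_{u_0}$ coincides with $\int_\Omega f(x,\mathrm{D}w)\,\mathrm{d}x$ on $W^{1,1}_{u_0}(\Omega)$ (the singular and boundary parts in \eqref{def fu0} both vanish there) and with the full $BV$-integral on $BV_{u_0}(\Omega)$, the three problems \eqref{gen org prob0}, \eqref{gen org prob}, \eqref{gen rel prob} are exactly the minimisation of $\mathcal{G}$ over $W^{1,1}_{u_0}$, $BV_{u_0}$, and $BV$, respectively. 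Inclusion of admissible sets immediately gives $\inf\eqref{gen rel prob} \leq \inf\eqref{gen org prob} \leq \inf\eqref{gen org prob0}$, and each value is bounded above by its $L^2$-restricted analogue.

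For the reverse inequality I plan to proceed by approximation. Given admissible $w \in BV(\Omega)$ with $\mathcal{G}(w) < \infty$, first truncate $w^{(k)} := T_k(w) \in L^\infty \cap BV$; the BV chain rule, \eqref{measure trunc}, and dominated convergence applied separately to the absolutely continuous part, to the singular part (whose direction is preserved by truncation so that only the magnitude is rescaled by a factor in $[0,1]$), and to the boundary trace (with integrable majorant $C(|u_0|+|w|)$ coming from the linear bound \eqref{gb rec} on $f^\infty$), should give $\mathcal{F}_{u_0}(w^{(k)}) \to \mathcal{F}_{u_0}(w)$; the lower-order terms converge to those of $w$ by the admissibility condition $\int \lambda |w|^2 \,\mathrm{d}x < \infty$ and dominated convergence. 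Next, I apply Proposition \ref{rel func}(b) with $q = \infty$ to each $w^{(k)}$ to produce approximants $w^{(k)}_m \in W^{1,1}_{u_0} \cap L^\infty$ with $w^{(k)}_m \xrightarrow{\ast} w^{(k)}$ in $L^\infty$, $w^{(k)}_m \to w^{(k)}$ in $L^1$, and $\mathcal{F}_{u_0}(w^{(k)}_m) \to \mathcal{F}_{u_0}(w^{(k)})$. Uniform $L^\infty$-bounds together with $L^1$-convergence upgrade to $L^p$-convergence for every finite $p$, so both the $g$- and $\lambda$-terms pass to the limit in $m$. A diagonal extraction in $(k,m)$ then yields a sequence in $W^{1,1}_{u_0} \cap L^2$ realising $\mathcal{G}(w)$ in the limit. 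Since this may be done for any admissible $w$, all six infima (three original, three $L^2$-restricted) coincide.

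For the last assertion, let $u_k \to u$ in $L^1(\Omega)$ be an $L^1$-convergent minimising sequence for \eqref{gen org prob0} or \eqref{gen org prob}. Coercivity \eqref{f coer} bounds $(u_k)$ in $BV(\Omega)$, hence $u \in BV(\Omega)$ and $u_k \rightharpoonup u$ weakly in $L^{d/(d-1)}$; pairing against $g \in L^{\max(2,d)}$ yields $\int g u_k \to \int g u$, while the uniform bound on $\int \lambda u_k^2 \,\mathrm{d}x$ coming from the minimising property gives weak $L^2(\lambda\,\mathrm{d}x)$-convergence and hence lower semicontinuity of the convex quadratic. Combined with the distributional lower semicontinuity of $\mathcal{F}_{u_0}$ from Proposition \ref{rel func}(a) this yields $\mathcal{G}(u) \leq \liminf \mathcal{G}(u_k) = \inf\eqref{gen rel prob}$, so $u$ minimises \eqref{gen rel prob}. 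The vectorial statement is proved by the same argument with no lower-order terms and is therefore simpler. The main obstacle I anticipate is the verification that $\mathcal{F}_{u_0}(T_k w) \to \mathcal{F}_{u_0}(w)$ under truncation, since it affects simultaneously the singular measure and the boundary trace; the cleanest route seems to be through the explicit description of the singular part under Lipschitz composition and dominated convergence against the finite measure $|\mathrm{D}w| + \mathcal{H}^{d-1} \mres \partial\Omega$.
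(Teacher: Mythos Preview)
Your overall strategy matches the paper's proof: establish the trivial chain of inequalities by inclusion, reverse it by truncation followed by Proposition \ref{rel func}(b) and a diagonal argument, and then deduce the statement about minimising sequences from lower semicontinuity. However, there is a concrete gap in your approximation step.

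You invoke Proposition \ref{rel func}(b) in the case $q=\infty$, but that case explicitly requires $u_0\in L^\infty(\partial\Omega)$, whereas the hypothesis of Proposition \ref{same inf} is only $u_0\in L^1(\partial\Omega)$. So you are not entitled to approximants $w^{(k)}_m$ converging weak\nobreakdash-$*$ in $L^\infty$. The paper avoids this by applying Proposition \ref{rel func}(b) with $q=2$: since $T_b(w)\in L^\infty\subset L^2$, one obtains $w_{b,m}\to T_b(w)$ strongly in $L^2$ without any extra hypothesis on $u_0$. For $d\ge 2$ one has $d/(d-1)\le 2$, so $L^2$-convergence on the bounded domain already forces convergence of $\int g\,w_{b,m}$ (with $g\in L^d$) and of the quadratic $\lambda$-term; the case $d=1$ is the only place where the $L^\infty$-variant is used, and there $u_0\in L^\infty(\partial\Omega)$ holds trivially because $\partial\Omega$ is finite. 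The fix to your argument is therefore simply to replace ``$q=\infty$'' by ``$q=2$'' and drop the interpolation step.

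On the point you flag as the main obstacle, namely $\mathcal{F}_{u_0}(T_b w)\to\mathcal{F}_{u_0}(w)$: the paper handles this more cleanly than the route you sketch. Rather than analysing the action of truncation on each of the three pieces of \eqref{def fu0} separately, it uses the Lipschitz bound \eqref{lipsch} to obtain
\[
|\mathcal{F}_{u_0}(T_b w)-\mathcal{F}_{u_0}(w)|\lesssim |\mathrm{D}(w-T_b w)|(\Omega)+\|w-T_b w\|_{L^1(\partial\Omega)}
\]
first for $w\in W^{1,1}$ and then for $w\in BV$ by strict approximation; the right-hand side tends to zero by the coarea formula and dominated convergence. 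This sidesteps the delicate bookkeeping of how truncation interacts with the polar decomposition of $\mathrm{D}^s w$.
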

\begin{proof}
We only show the statement for the scalar problem \eqref{gen org prob0}/\eqref{gen rel prob}, the vectorial case is easier due to the missing lower-order terms.

We trivially have \begin{align*}
\inf \text{of \eqref{gen org prob0} }\geq \inf\text{of \eqref{gen org prob} }\geq \inf \text{of \eqref{gen rel prob}}.
\end{align*}
Therefore, it suffices to show that $\inf \text{\eqref{gen org prob0} }\leq \inf \text{\eqref{gen rel prob}}$ to show that all three are the same.

Let $w\in BV(\Omega)$ be given. We first note that \begin{align}
\mathcal{F}_{u_0}(T_b(w))\rightarrow \mathcal{F}_{u_0}(w),\label{tr claim}
\end{align}
where the truncation $T_b$ was defined in \eqref{def trunc}. This can be seen from the fact that by \eqref{lipsch} we have \begin{align*}
|\mathcal{F}_{u_0}(T_b(w))-\mathcal{F}_{u_0}(w)|\lesssim \norm{\mathrm{D}(w-T_b(w))}_{L^1(\Omega,\R^d)}+\norm{(w-T_b(w))}_{L^1(\de\Omega)}\end{align*}
 for $w\in W^{1,1}(\Omega)$, and by smooth approximation, this also holds for $w\in BV$. Since $|\mathrm{D}(w-T_b(w))|(\Omega)+\norm{(w-T_b(w))}_{L^1(\de\Omega)}\rightarrow 0$ by e.g.\ the co-area formula and dominated convergence, this shows \eqref{tr claim}.

  By dominated convergence and the fact that $gu$ is integrable since $g\in L^d(\Omega)$ by assumption and that $w\in L^{\frac{d}{d-1}}(\Omega)$ by the Sobolev embedding, we hence have \begin{align*}
\mathcal{F}_{u_0}(T_b(w))+\int_\Omega gT_b(w)\dx+\int_\Omega \lambda |T_b(w)-h|^2\dx\xrightarrow{b\rightarrow \infty} \mathcal{F}_{u_0}(w)+\int_\Omega gw\dx+\int_\Omega \lambda |w-h|^2\dx.
\end{align*}
By the Proposition \ref{rel func} b), there are $w_{b,m}\in W_{u_0}^{1,1}(\Omega)$ converging to $T_b(w)$ in $L^2(\Omega,\R)$ and such that $\mathcal{F}_{u_0}(w_{b,m})\rightarrow \mathcal{F}_{u_0}(T_b(w))$ as $m\rightarrow \infty$. If $d=1$, then we trivially have $u_0\in L^\infty(\de \Omega)$ and can also ensure that this sequence converges weakly\textsuperscript{$*$} in $L^\infty(\Omega)$.

 Since this makes the lower order terms converge, we see that for a diagonal sequence $w_m$ among the $w_{b,m}$ we have \begin{align*}
\mathcal{F}_{u_0}(w_m)+\int_\Omega gw_m\dx+\int_\Omega \lambda |w_m-h|^2\dx\xrightarrow{m\rightarrow \infty} \mathcal{F}_{u_0}(w)+\int_\Omega gw\dx+\int_\Omega \lambda |w-h|^2\dx.
\end{align*}
This shows that the infimum of \eqref{gen org prob0}, \eqref{gen org prob}, and \eqref{gen rel prob} must be the same.

The fact that every $L^1(\Omega)$-convergent minimizing sequence $w_m\rightarrow w$ for \eqref{gen org prob0} (resp.\ \eqref{gen org prob}) converges to a minimizer of \eqref{gen rel prob} follows from the lower semicontinity of $\mathcal{F}_{u_0}$, the fact that $\int_\Omega (w_m-w)g\dx\rightarrow 0$ by the embedding $BV\hookrightarrow L^\frac{d}{d-1}(\Omega)$ and Banach-Alaoglu and because $\int_\Omega\lambda|\cdot-h|^2\dx$ is lower-semicontinuous by convexity.
\end{proof}

\begin{proposition}\label{ex min}
\textbf{a)} Suppose that \ref{A1}, \ref{A2}, \ref{A4} and $u_0\in L^1(\de\Omega)$ hold. Further suppose that $\norm{g}_{L^\frac{q}{q-1}(\Omega)}< C_1^{-1}C(\Omega,q)$ for the constant $C(\Omega,q)$ from the Poincar\'e inequality \eqref{poincare} for some $q\in [1,\frac{d}{d-1}]$ and the constant $C_1$ from \eqref{bd gf1}. Then there is a minimizer $u\in BV(\Omega)$ for the relaxed problem \eqref{gen rel prob}.

\noindent \textbf{b)} Suppose that \ref{A1}, \ref{A2}, \ref{A4} and $u_0\in L^1(\de\Omega)$ hold. Suppose that $\inf_{x\in \Omega} \lambda(x)>0$ and that $g\in L^2(\Omega)$, then there is a unique minimizer for the relaxed problem \eqref{gen rel prob}.

\noindent \textbf{c)} Suppose that \ref{B1}, \ref{B2} and $u_0\in L^1(\de\Omega,\R^n)$ hold. Then the relaxed problem \eqref{vproblem} has a minimizer $u\in BV(\Omega,\R^n)$. 
\end{proposition}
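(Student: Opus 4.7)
The plan is to apply the direct method to each of the three parts, leveraging the distributional lower semicontinuity of $\mathcal{F}_{u_0}$ from Proposition \ref{rel func} and the coercivity bound \eqref{f coer}, and handling the lower-order terms appropriately in each case.

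Part c) is the cleanest. Given a minimizing sequence $u_m \in BV(\Omega,\R^n)$ for \eqref{vproblem}, estimate \eqref{f coer} produces a uniform bound on $|\mathrm{D}u_m|(\Omega) + \|u_m\|_{L^1(\de\Omega,\R^n)}$. The Poincar\'e inequality \eqref{poincare} then upgrades this to a uniform $BV$-bound; compactness of the embedding $BV \hookrightarrow L^1$ furnishes a subsequence converging in $L^1(\Omega,\R^n)$ to some $u \in BV(\Omega,\R^n)$, and lower semicontinuity of $\mathcal{F}_{u_0}$ along distributional convergence identifies $u$ as a minimizer.

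For part a), the additional issue is that the term $\int_\Omega g u_m\,\mathrm{d}x$ is sign-indefinite. Combining H\"older with \eqref{poincare} gives
\begin{align*}
\Bigl|\int_\Omega g u_m\,\mathrm{d}x\Bigr| \leq \|g\|_{L^{\frac{q}{q-1}}(\Omega)}\,C(\Omega,q)\,\bigl(|\mathrm{D}u_m|(\Omega) + \|u_m\|_{L^1(\de\Omega)}\bigr),
\end{align*}
and the smallness hypothesis on $\|g\|_{L^{q/(q-1)}}$ is precisely calibrated so that this prefactor is strictly smaller than the coercivity constant $C_1^{-1}$ from \eqref{f coer}; since $\lambda \geq 0$ renders the quadratic term non-negative, this yields a uniform $BV$-bound on $u_m$ together with boundedness of $\int_\Omega \lambda(u_m - h)^2\,\mathrm{d}x$, hence of $\int_\Omega \lambda u_m^2\,\mathrm{d}x$ (using $h \in L^2(\Omega)$ and $\lambda \in L^\infty(\Omega)$). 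Extraction then gives $u_m \to u$ in $L^1(\Omega)$, and weakly in $L^{d/(d-1)}(\Omega)$ by Banach--Alaoglu, so that $\int g u_m \to \int g u$ pairs correctly against $g \in L^d(\Omega)$. The convex quadratic term passes to the limit either by weak lower semicontinuity in the weighted $L^2(\lambda\mathcal{L}^d)$-space or directly by Fatou after passing to an a.e.\ convergent subsequence. Finally the limit $u$ lies in $L^2(\lambda \mathcal{L}^d)$ by lower semicontinuity of $\int \lambda u^2\,\mathrm{d}x$, placing it in the admissible class.

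Part b) follows the same template as a), but with $\inf_\Omega \lambda > 0$ providing its own coercivity. Young's inequality absorbs $\int g u$ into $\tfrac{\inf \lambda}{4}\int u^2$ plus a constant depending on $\|g\|_{L^2}$, yielding a uniform $L^2$-bound on any minimizing sequence without smallness of $g$; the $BV$-bound follows from \eqref{f coer} as before, and the rest of the direct method proceeds identically. Uniqueness is then immediate from strict convexity of the quadratic term: if $u_1 \neq u_2$ were two minimizers, then $\tfrac{u_1+u_2}{2}$ would strictly decrease the $\int \lambda(\cdot - h)^2$-contribution while the remaining terms are convex, contradicting minimality. No step should present serious difficulty; the only mildly delicate point is the borderline exponent $q=d/(d-1)$ in a), where only weak (not compact) convergence of $u_m$ in $L^{d/(d-1)}$ is available, but this suffices for the linear pairing against $g \in L^d(\Omega)$.
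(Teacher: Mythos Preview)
Your proposal is correct and follows essentially the same approach as the paper's own proof, which is a terse sketch of the direct method: coercivity from \eqref{f coer} combined with the smallness of $g$ (part a)) or the quadratic term (part b)), compactness in $BV$, and lower semicontinuity of each piece of the functional. You supply considerably more detail than the paper does---in particular you spell out the borderline case $q=d/(d-1)$ and the handling of the weighted quadratic term---but the underlying argument is the same.
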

\begin{proof}
This is very standard, and we only outline \textbf{a)} and \textbf{b)}, \textbf{c)} is even easier.

\textbf{a)} We use the direct method in the calculus of variations. It follows from the bound \eqref{f coer} on $\mathcal{F}_{u_0}$, the Poincar\'e inequality \eqref{poincare}, and the assumed bound on $g$ that one has coercivity. This yields the existence of a minimizing sequence which is bounded in $BV(\Omega)$ and which also converges weakly\textsuperscript{$\ast$} in $L^{\frac{d}{d-1}}$ by the Sobolev embedding and Banach-Alaoglu. The functional in \eqref{gen rel prob} is lower semicontinuous with respect to this convergence by the point a) in Proposition \ref{rel func} and because the lower order terms are trivially lower semicontinuous with respect to weak\textsuperscript{$\ast$}-convergence in $L^{\frac{d}{d-1}}$. Hence, every minimizing sequence has a convergent subsequence going to a minimizer.
 
\textbf{b)} This also follows from the direct method, where one obtains from the $\lambda|u-h|^2$-term that each minimizing sequence is bounded in $L^2(\Omega)$ and hence that every minimizing sequence converges strongly in $L^2(\Omega)$ due to the uniform convexity of that term, which also yields the uniqueness.
\end{proof}

\allowdisplaybreaks[4]

\subsection{Characterisation of the subdifferential}
The functional $\mathcal{F}_{u_0}$ is in general not differentiable (e.g.\ when is $f$ not differentiable), instead the natural object to characterise minimizers is the subdifferential as defined in Subsection \ref{Sec conv ana}.

To define $\mathcal{F}_{u_0}$ on a Hilbert space, we will consider $\mathcal{F}_{u_0}$ as a functional on $L^2(\Omega,\R^n)$ by extending it as $+\infty$ to $L^2(\Omega,\R^n)\backslash BV(\Omega,\R^n)$, which is easily seen to be convex and lower semicontinuous, as a consequence of the coercivity bound \eqref{f coer}. By an abuse of notation, we still denote this by $\mathcal{F}_{u_0}$. Furthermore, the infimum of the functional remains the same thanks to part b) of Proposition \ref{rel func} above.

\begin{lemma}\label{sca char}
Suppose that \ref{A1} and \ref{A2} are fulfilled and $u_0\in L^1(\de\Omega)$. Then the following two statements are equivalent for $v,w\in L^2(\Omega)$:

\noindent\textbf{a)} It holds that $v\in \de \mathcal{F}_{u_0}(w)$.

\noindent\textbf{b)} It holds that $w\in BV(\Omega)$ and there is a $z\in X_2(\Omega,\R^d)$ such that \begin{align}
&v=-\div z&\label{cs1}\\
&z(x)\in \de_\xi f(x,\mathrm{D}^a w(x))&\mathcal{L}^d-\text{a.e.\ in $\Omega$}\label{cs2}\\
&\frac{\mathrm{d}(z,\mathrm{D}w)}{\mathrm{d}|\mathrm{D}^sw|}(x)=f^\infty(x,\frac{\mathrm{d}\mathrm{D}^sw}{\mathrm{d}|\mathrm{D}^sw|}(x))& |\mathrm{D}^sw|-\text{a.e.\ in $\Omega$}\label{cs3}\\
&z(x)\in \overline{\ran( \de_\xi f(x,\cdot))}&\mathcal{L}^d-\text{a.e.\ in $\Omega$}\label{cs4}\\
&[z,\nu_x](u_0-w)=f^\infty(x,(u_0-w)(x)\otimes \nu_x)&\mathcal{H}^{d-1}-\text{a.e.\ on $\de\Omega$}.\label{cs5}
\end{align}
\end{lemma}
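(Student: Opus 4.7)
My plan is to prove both directions via a single Fenchel--Young decomposition combined with the Gauss--Green formula \eqref{gg form}. For (b) $\Rightarrow$ (a), given $z$ satisfying \eqref{cs1}--\eqref{cs5} and any $w' \in BV(\Omega) \cap L^2(\Omega)$, one has three pointwise Fenchel--Young inequalities:
\begin{align*}
f(x, \mathrm{D}^aw'(x)) + f^*(x, z(x)) &\geq \scalar{z(x)}{\mathrm{D}^a w'(x)}, \\
f^\infty\!\left(x, \tfrac{\mathrm{d}\mathrm{D}^sw'}{\mathrm{d}|\mathrm{D}^sw'|}(x)\right) &\geq \tfrac{\mathrm{d}(z, \mathrm{D}w')}{\mathrm{d}|\mathrm{D}^sw'|}(x), \\
f^\infty(x, (u_0 - w')(x)\otimes \nu_x) &\geq [z, \nu_x]\,(u_0 - w')(x).
\end{align*}
The first is classical; the latter two follow from the support-function representation $f^\infty(x, \cdot) = \sup_{\xi^* \in \overline{\ran \partial_\xi f(x,\cdot)}} \scalar{\xi^*}{\cdot}$ (cf.\ \eqref{doal}, \eqref{ball subdiff}) combined with the localization properties \eqref{anz rep} and \eqref{nt rep} of the Anzellotti pairing. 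Integrating, summing, and applying \eqref{gg form} with $v = -\div z$ yields
\[
\mathcal{F}_{u_0}(w') - \int_\Omega v w'\dx \;\geq\; -\int_\Omega f^*(x, z)\dx + \int_{\partial\Omega} u_0\,[z, \nu_x]\dH.
\]
At $w' = w$, the conditions \eqref{cs2}, \eqref{cs3}, \eqref{cs5} amount precisely to equality in each of the three inequalities above, so this lower bound is saturated. Subtracting gives the subdifferential inequality, proving (a).

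For the converse, (a) means $w$ minimizes $w' \mapsto \mathcal{F}_{u_0}(w') - \int_\Omega v w' \dx$ on $L^2(\Omega)$, equivalently $\mathcal{F}_{u_0}(w) + \mathcal{F}_{u_0}^*(v) = \int_\Omega v w\dx$. The lower bound above shows
\[
\mathcal{F}_{u_0}^*(v) \geq \inf\!\left\{\int_\Omega f^*(x, z)\dx - \!\int_{\partial\Omega}\! u_0\,[z, \nu]\dH\ \Big|\ z \in X_2(\Omega,\R^d),\ -\div z = v,\ z(x) \in \overline{\ran\partial_\xi f(x,\cdot)}\right\},
\]
and the core of the proof is to show that this infimum is attained and equals $\mathcal{F}_{u_0}^*(v)$, a standard Fenchel--Rockafellar type statement for linear-growth problems. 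I would prove this by approximation: replace $f$ by the uniformly convex $f_\varepsilon(x, \xi) := f(x, \xi) + \tfrac{\varepsilon}{2}|\xi|^2$ and consider the strictly convex perturbed functional, which now has $W^{1,2}$ coercivity. For that problem classical convex duality applies without gap, yielding unique primal--dual pairs $(w_\varepsilon, z_\varepsilon)$ with $z_\varepsilon = \mathrm{D}_\xi f(x, \mathrm{D}w_\varepsilon) + \varepsilon \mathrm{D}w_\varepsilon$ satisfying the smooth analogues of \eqref{cs1}--\eqref{cs5}. By \eqref{ball subdiff} the shifted fields $z_\varepsilon - \varepsilon \mathrm{D}w_\varepsilon$ are uniformly bounded in $L^\infty(\Omega, \R^d)$; by strict convexity $w_\varepsilon \to w$ in $L^2(\Omega)$. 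Weak\textsuperscript{$\ast$}-$L^\infty$ compactness then delivers a limit $z \in X_2(\Omega, \R^d)$ with $-\div z = v$ and $z \in \overline{\ran \partial_\xi f}$ a.e., establishing \eqref{cs1} and \eqref{cs4}.

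The main obstacle is recovering the singular-part equality \eqref{cs3} and the boundary equality \eqref{cs5} in the limit, because the Anzellotti pairing is not weak\textsuperscript{$\ast$}-continuous in $z$. The resolution is an energy identity: the pre-limit bound from the first paragraph, applied with $(z_\varepsilon, w_\varepsilon)$, is already an equality, since $z_\varepsilon$ satisfies the smooth analogues of \eqref{cs2}, \eqref{cs3}, \eqref{cs5}. Passing to the liminf using lower semicontinuity of $\mathcal{F}_{u_0}$ and of $z \mapsto \int_\Omega f^*(x, z)\dx$ under weak\textsuperscript{$\ast$}-$L^\infty$ convergence, together with $w_\varepsilon \to w$ in $L^2$, forces equality in the bound for the limiting pair $(z, w)$. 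Since the three Fenchel--Young inequalities in the first paragraph each have a non-negative defect, their aggregate equality compels pointwise saturation a.e., which is precisely \eqref{cs2}, \eqref{cs3}, and \eqref{cs5}. The $L^2$-hypothesis on $w$ is used here both to make the Anzellotti pairing $(z, \mathrm{D}w)$ well-defined via the framework of Subsection \ref{S23} and to justify the passage to the limit in $\int_\Omega v w_\varepsilon\dx$.
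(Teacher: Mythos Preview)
Your (b) $\Rightarrow$ (a) is correct and is essentially the argument the paper uses once it has the intermediate characterization from \cite{Meyer} (conditions \eqref{alt char1}--\eqref{alt char3}: $v=-\div z$, $f^*(\cdot,z)\in L^1$, and the single energy identity $\int_\Omega (z,\mathrm{D}w) - \int_\Omega f^*(x,z)\dx + \int_{\partial\Omega} [z,\nu](u_0-w)\dH = \mathcal{F}_{u_0}(w)$). The paper then shows that splitting this identity into its absolutely continuous, singular and boundary parts is precisely \eqref{cs2}--\eqref{cs5}, i.e.\ the same three Fenchel--Young saturations you wrote.

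For (a) $\Rightarrow$ (b) the paper does not construct $z$; it cites \cite{Meyer} for existence of $z$ in the form \eqref{alt char1}--\eqref{alt char3} and only verifies the equivalence with \eqref{cs2}--\eqref{cs5}. Your viscous approximation is a different route and, as written, has genuine gaps. First, ``by strict convexity $w_\varepsilon\to w$ in $L^2$'' is unjustified: strict convexity of the perturbed functional gives uniqueness of $w_\varepsilon$, not convergence to the particular $w$ you were handed (the unperturbed minimizer need not be unique). Second, the limit field is problematic whichever variant you take: the full $z_\varepsilon=\mathrm{D}_\xi f_\varepsilon(\cdot,\mathrm{D}w_\varepsilon)$ has $-\div z_\varepsilon=v$ but lies only in $L^\infty+L^2$ (the piece $\varepsilon\mathrm{D}w_\varepsilon$ is merely $O(\varepsilon^{1/2})$ in $L^2$), while the shifted $\tilde z_\varepsilon=\mathrm{D}_\xi f(\cdot,\mathrm{D}w_\varepsilon)$ is bounded in $L^\infty$ but has $\div\tilde z_\varepsilon=-v-\varepsilon\Delta w_\varepsilon\notin L^2$ in general, so $\tilde z_\varepsilon\notin X_2$ and its normal trace is not covered by the framework of Subsection~\ref{S23}. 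Third, your ``liminf via lower semicontinuity'' does not close the identity: both $\mathcal{F}_{u_0}$ and $z\mapsto\int f^*(\cdot,z)$ are lower semicontinuous, so their liminf inequalities point in \emph{opposite} directions across the energy equality, and the boundary term $\int_{\partial\Omega}u_0[z_\varepsilon,\nu]$ with $u_0$ merely in $L^1(\partial\Omega)$ is not stable under weak$^*$-$L^\infty$ convergence of $z_\varepsilon$. These are precisely the technical obstructions that make the duality nontrivial in the linear-growth setting and that the paper outsources to the cited reference.
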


\begin{lemma}\label{vec char}
Suppose that \ref{B1} and \ref{B2} hold and that $u_0\in L^1(\de\Omega,\R^n)$. Then the following two statements are equivalent for $v,w\in L^2(\Omega,\R^n)$:

\noindent\textbf{a)} It holds that $v\in \de \mathcal{F}_{u_0}(w)$.

\noindent \textbf{b)}  It holds that $w\in BV(\Omega,\R^n)$ and there is a $z\in X_2(\Omega,\R^{n\times d})$ such that \begin{align}
&v=-\div z&\label{cv1}\\
&z(x)\in \de_\xi f(\mathrm{D}^a w(x))&\mathcal{L}^d-\text{a.e.\ in $\Omega$}\label{cv2}\\
&\frac{\mathrm{d}(z,\mathrm{D}w)}{\mathrm{d}|\mathrm{D}^sw|}(x)=f^\infty(\frac{\mathrm{d}\mathrm{D}^sw}{\mathrm{d}|\mathrm{D}^sw|}(x))& |\mathrm{D}^sw|-\text{a.e.\ in $\Omega$}\label{cv3}\\
&z(x)\in \overline{\ran( \de_\xi f)}&\mathcal{L}^d-\text{a.e.\ in $\Omega$}\label{cv4}\\
&[z,\nu_x]\cdot(u_0-w)=f^\infty((u_0-w)(x)\otimes \nu_x)&\mathcal{H}^{d-1}-\text{a.e.\ on $\de\Omega$}\label{cv5}
\end{align}
\end{lemma}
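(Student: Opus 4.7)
The statement is the standard Anzellotti-type characterisation of the subdifferential of a linear-growth functional, adapted to the vector-valued setting, and I would follow the template of the scalar Lemma \ref{sca char}. The direction $(b)\Rightarrow(a)$ is the easy one: given $z\in X_2$ satisfying \eqref{cv1}-\eqref{cv5}, write out the Fenchel inequality at each level of the decomposition of $\mathrm{D}w$ and integrate by parts via the Gauss--Green formula \eqref{gg form}. For the other direction, I would use convex duality applied to the minimisation problem that $w$ solves after adding the linear term $-\int v\cdot w$.

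\textbf{Proof of $(b)\Rightarrow(a)$.} Fix $w'\in L^2(\Omega,\R^n)$; we may assume $w'\in BV(\Omega,\R^n)$ since otherwise $\mathcal{F}_{u_0}(w')=+\infty$. Because $z(x)\in \partial_\xi f(\mathrm{D}^a w(x))$ a.e., pointwise Fenchel--Young yields
\begin{equation*}
f(\mathrm{D}^a w'(x))\geq f(\mathrm{D}^a w(x))+z(x):\bigl(\mathrm{D}^a w'(x)-\mathrm{D}^a w(x)\bigr).
\end{equation*}
For the singular parts, \eqref{cv4} together with \eqref{doal} gives $f^\infty(\xi)\geq z(x):\xi$ for every $\xi\in\R^{n\times d}$; applying this to $\tfrac{\mathrm{d}\mathrm{D}^sw'}{\mathrm{d}|\mathrm{D}^sw'|}(x)$ and invoking \eqref{anz rep} (which embeds $\frac{\mathrm{d}(z,\mathrm{D}w')}{\mathrm{d}|\mathrm{D}^s w'|}$ into $z\cdot\tfrac{\mathrm{d}\mathrm{D}^sw'}{\mathrm{d}|\mathrm{D}^sw'|}$) shows
\begin{equation*}
f^\infty\!\Bigl(\tfrac{\mathrm{d}\mathrm{D}^sw'}{\mathrm{d}|\mathrm{D}^sw'|}\Bigr)\,|\mathrm{D}^sw'|\ \geq\ (z,\mathrm{D}w')^s,
\end{equation*}
and analogously on $\partial\Omega$ using \eqref{nt rep}: $f^\infty((u_0-w')\otimes\nu)\geq [z,\nu]\cdot(u_0-w')$. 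Summing these three Fenchel inequalities and subtracting the corresponding equalities \eqref{cv2}, \eqref{cv3}, \eqref{cv5} at $w$ produces
\begin{equation*}
\mathcal{F}_{u_0}(w')-\mathcal{F}_{u_0}(w)\ \geq\ \int_\Omega (z,\mathrm{D}(w'-w))-\int_{\partial\Omega}[z,\nu]\cdot(w'-w)\dH,
\end{equation*}
and the Gauss--Green formula \eqref{gg form} rewrites the right-hand side as $\int_\Omega (-\div z)\cdot(w'-w)\dx=\int_\Omega v\cdot(w'-w)\dx$, which is exactly the subdifferential inequality.

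\textbf{Proof of $(a)\Rightarrow(b)$.} Assume $v\in\partial\mathcal{F}_{u_0}(w)$; equivalently, $w$ minimises $\mathcal{G}(w'):=\mathcal{F}_{u_0}(w')-\int_\Omega v\cdot w'\dx$ over $L^2(\Omega,\R^n)$. I would dualise this primal problem in Fenchel--Rockafellar form: writing $\mathcal{F}_{u_0}$ on $W^{1,1}_{u_0}$-functions as $\int_\Omega f(\mathrm{D}w')\dx$ and extending by relaxation, the predual variable is a field $z\in L^\infty(\Omega,\R^{n\times d})$ constrained by $z(x)\in\overline{\operatorname{dom} f^*}=\overline{\operatorname{ran}\partial_\xi f}$ and $-\div z=v$ (this is exactly \eqref{cv1} and \eqref{cv4}, and the $L^2$-hypothesis on $v$ ensures $z\in X_2$). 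The standard duality argument (Temam--Strang for the BV relaxation; see also the scalar version in \cite{anzellotti1983pairings}) gives existence of such a maximiser $z$ and the equality of primal and dual values. The Fenchel inequalities derived above must then be equalities a.e.\ with respect to $\mathcal{L}^d$, $|\mathrm{D}^s w|$ and $\mathcal{H}^{d-1}\mres\partial\Omega$, which are precisely \eqref{cv2}, \eqref{cv3}, \eqref{cv5}.

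\textbf{Main obstacle.} The one non-routine point is the compatibility between the relaxed functional $\mathcal{F}_{u_0}$ (including its boundary penalty) and the Anzellotti pairing when $w\in BV\cap L^2$ but $w\notin L^\infty$; this is why the hypothesis $v\in L^2$ (hence $\div z\in L^2$) is used to place $z\in X_2$ and make \eqref{gg form} applicable. In the $(a)\Rightarrow(b)$ direction, one also has to be careful that the density identities on the singular part and on the boundary are \emph{pointwise} a.e.\ equalities rather than just integrated identities; this is extracted from the equality of measures by comparing total masses after localising to arbitrary Borel sets, exactly as in the scalar case.
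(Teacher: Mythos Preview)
Your proposal is correct in spirit and the $(b)\Rightarrow(a)$ direction is essentially complete, but the route differs from the paper's. The paper does \emph{not} set up a Fenchel--Rockafellar duality from scratch; instead it quotes an existing characterisation of $\partial\overline{\mathcal{F}}_{u_0}$ from \cite{Meyer} (with $\mathcal{A}=\mathrm{D}$), which already furnishes a $z\in X_2$ satisfying $-\div z=v$, $f^*(\cdot,z)\in L^1$, and the single integrated identity
\[
\int_\Omega (z,\mathrm{D}w)-\int_\Omega f^*(z)\dx+\int_{\partial\Omega}[z,\nu]\cdot(u_0-w)\dH=\mathcal{F}_{u_0}(w).
\]
From there the paper only has to (i) identify $\overline{\mathcal{F}}_{u_0}=\mathcal{F}_{u_0}$ via Proposition~\ref{rel func}~b), and (ii) disintegrate the integrated identity into the three pointwise equalities \eqref{cv2}, \eqref{cv3}, \eqref{cv5} by the usual ``each term satisfies the Fenchel inequality, their sum is an equality, hence each is an equality a.e.'' argument --- which is exactly the mechanism you describe. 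So the pointwise-equality extraction is identical in both approaches; what differs is the source of $z$.

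Your direct Fenchel--Rockafellar route for $(a)\Rightarrow(b)$ is the classical alternative and would work, but as you note yourself it is only sketched: one has to set up the duality so that the relaxed boundary penalty in $\mathcal{F}_{u_0}$ is captured (typically by extending to a larger domain or working with the pair $(L^{d/(d-1)},BV)$ against $(L^d,X_2)$), verify the qualification condition for no duality gap, and argue existence of the dual maximiser. None of this is hard in the autonomous vectorial setting of \ref{B2}, but it is more work than citing \cite{Meyer}. The paper's shortcut buys brevity; your route buys self-containment.
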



We remark that, thanks to the uniform continuity of $\mathrm{D}_\xi f^\infty$ and the formula \eqref{nt rep}, for such a $z$ it also holds that \begin{align}
[z,\nu_x](x)\in \big\{\scalar{z_0}{\nu_x}\,\big|\, z_0\in \overline{\ran(\de_\xi f(x,\cdot))}\big\}\label{z tr pw}\end{align} pointwise a.e.\ on $\de\Omega$ in both the scalar and the vector setting (where one interprets the product as a matrix-vector product in the vectorial setting).

Thanks to \eqref{quant fen}, we also see that \eqref{cs4} (resp.\ \eqref{cv4}) imply that \begin{align}
[z,\nu_x]=\mathrm{D}_\xi f^\infty(x,(u_0-w)(x)\otimes \nu_x)\nu_x \quad \text{ if $w(x)\neq u_0(x)$, up to a $\mathcal{H}^{d-1}$-zero set}\label{good bd cond}
\end{align}
where the product on the right is understood as a matrix-vector product and the gradient is understood as a $n\times d$ matrix.

For future reference, let us also note that \eqref{cs4} resp.\ \eqref{cv4} together with \eqref{ball subdiff} also imply that \begin{align}
\norm{z}_{L^\infty(\Omega,\R^{n\times d})}\leq C_1\lesssim 1 \label{uni bd z}
\end{align}
with the $C_1$ from \ref{A2} resp.\ \ref{B2}.

There are many versions and special cases of this characterisation in the literature, the closest probably being \cite[Sec.\ 4]{gorny2022duality}, which additionally assumes that $f^\infty$ is even and $n=1$ and \cite{Meyer}, which considers $f$ which are rough in $x$ and only depend on a part of the gradient, which leads to the result being written in rather implicit way.

\begin{proof}[Proof of the Lemmata \ref{sca char} and \ref{vec char}]
We want to reduce this to the results from \cite{Meyer}. Let $\overline{\mathcal{F}}_{u_0}$ denote the functional defined on $L^2(\Omega, \R^n)$ by \begin{align*}
\overline{\mathcal{F}}_{u_0}(w):=\inf_{(W_{u_0}^{1,1}\cap L^2)(\Omega,\R^n)\ni w_m\xrightarrow{L^2} w}\int_\Omega f(x,\mathrm{D}w_m)\dx.
\end{align*}
This is convex and lower-semicontinuous as it is a lower-semicontinuous hull of convex functionals by definition.

In \cite[Thm.\ 6.4 and 6.5]{Meyer} (with $\mathcal{A}=\mathrm{D}$) it is shown that for $v\in L^2(\Omega, \R^n)$ it holds that $v\in \de\overline{\mathcal{F}}_{u_0}(w)$ if and only if $w\in BV(\Omega,\R^n)$ and if there is a $z\in X_2(\Omega,\R^{n\times d})$ with the following properties \begin{align}
&-\div z=v \quad\text{ in $\Omega$}\label{alt char1}\\
&f^*(x,z)\in L^1(\Omega)\label{alt char2}\\
&\int_\Omega (z,\mathrm{D}u)-\int_\Omega f^*(x,z)\dx+\int_{\Omega}[z,\nu_x]\cdot(u_0-u)\dH(x)=\overline{\mathcal{F}}_{u_0}(u).\label{alt char3}
\end{align}
We first note that we have $\overline{\mathcal{F}}_{u_0}=\mathcal{F}_{u_0}$ as a consequence of Proposition \ref{rel func} b).
Hence it merely needs to be checked that \eqref{alt char2}, \eqref{alt char3} are equivalent to \eqref{cs2}-\eqref{cs5} (resp.\ \eqref{cv2}-\eqref{cv5}).

First, assume \eqref{alt char2} and \eqref{alt char3} hold. As explained in the preliminary section $f^*(x,\cdot)=+\infty$ outside of $\overline{\ran(\de_\xi f(x,\cdot))}$ and therefore \eqref{alt char2} implies \eqref{cs4} (resp.\ \eqref{cv4}).

By Fenchel duality, we have that \begin{align*}
\scalar{z(x)}{\mathrm{D}^a w(x)}-f^*(x,z(x))\leq f(x,\mathrm{D}^a w(x))
\end{align*}
 a.e.\ and if equality holds at some point, then $z(x)\in \de_\xi f(x,\mathrm{D}^aw(x)))$. Similarly, thanks to \eqref{cs4} (resp.\ \eqref{cv4}), as well as \eqref{anz rep} and \eqref{doal}, we have \begin{align*}
\frac{\mathrm{d}(z,\mathrm{D}w)}{\mathrm{d}|\mathrm{D}^s w|}(x)\leq f^\infty(x,\frac{\mathrm{d}\mathrm{D}^s w}{\mathrm{d}|\mathrm{D}^s w|})(x)\quad \text{ for $|\mathrm{D}^s w|$-a.e.\ $x$.}
\end{align*}
Similarly, since we have \eqref{cs4} (resp.\ \eqref{cv4}) we also have \eqref{z tr pw} and therefore by \eqref{doal} we also must have \begin{align*}
\scalar{[z,\nu_x]}{u_0-w}\leq f^\infty(x,(u_0-w)\otimes \nu_x)\quad \mathcal{H}^{d-1}-\text{a.e.}
\end{align*}
It follows from \eqref{alt char3} and the Definition \eqref{def fu0} of $\mathcal{F}_{u_0}$, that we must actually have equality a.e.\ in all three of these which shows \eqref{cs2}, \eqref{cs3} and \eqref{cs5} (resp.\ \eqref{cv2}, \eqref{cv3} and \eqref{cv5}).

If on the other hand \eqref{cs2}-\eqref{cs5} (resp.\ \eqref{cv2}-\eqref{cv5}) hold, then we have that \begin{align*}
\scalar{z}{\mathrm{D}^aw(x)}-f^*(x,z)=f(x,\mathrm{D}^a w(x))\quad \text{$\mathcal{L}^d$-a.e.,}
\end{align*}
in particular, this implies that $f^*(x,z)\in L^1$ (and therefore \eqref{alt char2}), as the integrals of the other two terms exist. 

Summing this up with \eqref{cs3} and \eqref{cs5} (resp.\ \eqref{cv3} and \eqref{cv5}) and using the decomposition of the Anzelotti pairing into absolutely continuous and singular parts yields \eqref{alt char3} by the definition \eqref{def fu0} of $\mathcal{F}_{u_0}$.
\end{proof}




We can now formulate a suitable form of the Euler-Lagrange equations for \eqref{gen rel prob} and \eqref{vproblem}.

\begin{lemma}\label{sc equiv}
Suppose that the Assumptions \ref{A1}-\ref{A6} of Theorem \ref{T1} hold and let $u\in L^2(\Omega)$ be given, then it is a minimizer of \eqref{gen rel prob} if and only if \begin{align}
-(\lambda(u-h)+g)\in \de \mathcal{F}_{u_0}(u).\label{subdiff equiv}
\end{align}
\end{lemma}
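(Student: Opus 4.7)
The plan is to decompose the energy in \eqref{gen rel prob} as a sum of two convex functionals on the Hilbert space $L^2(\Omega)$ and then apply the Moreau--Rockafellar sum rule for subdifferentials. Concretely, define
\begin{align*}
\mathcal{G}(w):=\int_\Omega g\,w+\frac{\lambda(x)}{2}|w-h|^2\dx\qquad \text{for }w\in L^2(\Omega),
\end{align*}
so that, extending $\mathcal{F}_{u_0}$ by $+\infty$ outside of $BV(\Omega)$ as discussed before Lemma \ref{sca char}, the functional in \eqref{gen rel prob} coincides with $\mathcal{F}_{u_0}+\mathcal{G}$ on $L^2(\Omega)$ (and equals $+\infty$ where $\mathcal{F}_{u_0}=+\infty$, in particular the $L^2$-constraint in \eqref{gen rel prob} is automatic for $w\in L^2$). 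Hence $u$ is a minimizer of \eqref{gen rel prob} if and only if $0\in \de(\mathcal{F}_{u_0}+\mathcal{G})(u)$.

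Next I would verify that $\mathcal{G}$ is everywhere finite, convex, continuous and Fr\'echet differentiable on $L^2(\Omega)$ with derivative $\mathrm{D}\mathcal{G}(w)=g+\lambda(w-h)\in L^2(\Omega)$. This uses \ref{A3} (which gives $g\in L^{\max(2,d)}(\Omega)\subset L^2(\Omega)$ since $\Omega$ is bounded and $d\geq 1$; for $d=1$ one already has $g\in L^2$ by assumption), \ref{A4} (giving $\lambda\in L^\infty$ and $h\in L^2$, so $\lambda(w-h)\in L^2$), and the standard fact that a continuous affine-plus-quadratic functional on a Hilbert space is Fr\'echet differentiable. Because $\mathcal{G}$ is differentiable, $\de\mathcal{G}(w)=\{\mathrm{D}\mathcal{G}(w)\}$ as recalled in Subsection \ref{Sec conv ana}.

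Since $\mathcal{G}$ is finite and continuous on all of $L^2(\Omega)$, the Moreau--Rockafellar sum rule applies without any qualification condition and yields
\begin{align*}
\de(\mathcal{F}_{u_0}+\mathcal{G})(u)=\de \mathcal{F}_{u_0}(u)+\{\mathrm{D}\mathcal{G}(u)\}=\de \mathcal{F}_{u_0}(u)+\{g+\lambda(u-h)\}.
\end{align*}
Combining this with the minimality criterion $0\in \de(\mathcal{F}_{u_0}+\mathcal{G})(u)$ gives exactly \eqref{subdiff equiv}, i.e.\ $-(g+\lambda(u-h))\in\de \mathcal{F}_{u_0}(u)$, and the chain of equivalences is two-sided at every step.

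The only point that requires any real thought, rather than bookkeeping, is the validity of the sum rule in this setting; this is, however, the textbook case where one summand is globally defined and continuous on the Hilbert space, so no further regularity of $\mathcal{F}_{u_0}$ or interior-of-domain condition is needed. Existence of minimizers of \eqref{gen rel prob} in $L^2(\Omega)$ under the coercivity hypotheses is not required for the equivalence itself and is handled separately in Proposition \ref{ex min}.
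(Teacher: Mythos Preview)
Your approach is correct and takes a genuinely different route from the paper. The paper argues both directions by hand: for the ``if'' direction it writes out the subdifferential inequality and completes the square to obtain the energy comparison, and for the ``only if'' direction it tests with $u+t(v-u)$, uses minimality, and lets $t\searrow 0$. Your argument instead packages the lower-order terms into a globally finite, continuous, Fr\'echet-differentiable convex functional $\mathcal{G}$ on $L^2$ and invokes the Moreau--Rockafellar sum rule, which is cleaner and more conceptual once one is willing to quote that theorem; the paper's version is more elementary and self-contained. One small point worth making explicit in your sketch: the equivalence ``$u$ minimizes \eqref{gen rel prob}'' $\Leftrightarrow$ ``$u$ minimizes $\mathcal{F}_{u_0}+\mathcal{G}$ on $L^2$'' is not purely formal, since competitors in \eqref{gen rel prob} need only satisfy $\int\lambda|w|^2<\infty$ and may fail to lie in $L^2$; the backward implication requires the density/same-infimum statement of Proposition~\ref{same inf} (or Proposition~\ref{rel func}\,b)), which the paper invokes explicitly at the start of its ``if'' direction. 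Your reference to the discussion before Lemma~\ref{sca char} almost covers this, but that paragraph concerns $\mathcal{F}_{u_0}$ alone, not $\mathcal{F}_{u_0}+\mathcal{G}$.
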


\begin{lemma}\label{ve equiv}
Suppose that the Assumptions \ref{B1}-\ref{B3} of Theorem \ref{T2} hold and let $u\in L^2(\Omega,\R^n)$ be given, then it is a minimizer of \eqref{vproblem} if and only if \begin{align*}
0\in \de \mathcal{F}_{u_0}(u).
\end{align*}
\end{lemma}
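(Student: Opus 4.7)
My plan is to reduce the statement to the standard convex-analytic characterization of minimizers of convex lower semicontinuous functionals on Hilbert spaces via the subdifferential, combined with the fact that the passage from minimization over $BV(\Omega,\R^n)$ to minimization over $L^2(\Omega,\R^n)$ (with the $+\infty$ extension) does not alter the infimum.

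First, I would recall that $\mathcal{F}_{u_0}$ has been extended to the Hilbert space $L^2(\Omega,\R^n)$ by setting it to $+\infty$ on $L^2(\Omega,\R^n)\setminus BV(\Omega,\R^n)$, and that this extension is convex and lower semicontinuous (as noted just before Lemma \ref{sca char}, using the coercivity bound \eqref{f coer}). By the very definition of the subdifferential in a Hilbert space, $0 \in \de \mathcal{F}_{u_0}(u)$ is equivalent to the inequality $\mathcal{F}_{u_0}(u) \leq \mathcal{F}_{u_0}(w)$ for every $w \in L^2(\Omega,\R^n)$, i.e., to $u$ being a minimizer of the extended functional on the whole space $L^2(\Omega,\R^n)$.

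Second, I would equate this with being a minimizer of \eqref{vproblem}. One direction is immediate: if $u\in L^2(\Omega,\R^n)$ minimizes the extension on $L^2$, then restricting to competitors in $BV(\Omega,\R^n)$ (where $\mathcal{F}_{u_0}$ agrees with the original functional) shows that $u$ minimizes \eqref{vproblem}, using also that $u$ itself necessarily lies in $BV(\Omega,\R^n)$ since otherwise $\mathcal{F}_{u_0}(u)=+\infty$ would contradict the existence of a $BV$ minimizer guaranteed by Proposition \ref{ex min} c). Conversely, if $u\in L^2(\Omega,\R^n)$ is a minimizer of \eqref{vproblem}, then $\mathcal{F}_{u_0}(u)\leq \mathcal{F}_{u_0}(w)$ holds for all $w\in BV(\Omega,\R^n)$, trivially for $w\in L^2(\Omega,\R^n)\setminus BV(\Omega,\R^n)$ because the right-hand side is $+\infty$ there, and hence for all $w\in L^2(\Omega,\R^n)$.

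There is no real obstacle here; the lemma is an essentially routine combination of the definition of the subdifferential with the fact, already recorded in Proposition \ref{same inf}, that the infima of \eqref{vproblem} and its $L^2$-restriction coincide. The analogous statement for the scalar case in Lemma \ref{sc equiv} would be obtained by the same argument, additionally noting that the smooth, everywhere finite lower order terms $\int_\Omega gu + \tfrac{\lambda}{2}|u-h|^2 \dx$ contribute the Gateaux derivative $\lambda(u-h)+g$ to the subdifferential, so that the sum rule for subdifferentials (applicable since this additional functional is finite and continuous on $L^2$) yields the shift by $-(\lambda(u-h)+g)$.
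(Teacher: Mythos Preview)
Your proposal is correct and follows essentially the same approach as the paper: the paper only gives the proof of the scalar Lemma \ref{sc equiv} (handling the lower-order terms explicitly) and remarks that Lemma \ref{ve equiv} is easier because those terms are absent, which is exactly the reduction you carry out. The one point to state a bit more carefully is the direction ``$0\in\de\mathcal{F}_{u_0}(u)\Rightarrow u$ minimizes \eqref{vproblem}'': minimizing the $L^2$-extension only gives you competitors in $BV\cap L^2$, so to reach all of $BV$ you really do need the fact from Proposition~\ref{same inf} (equivalently Proposition~\ref{rel func}~b)) that the infimum over $BV$ and over $BV\cap L^2$ coincide---you cite this in your final paragraph, so the argument is complete.
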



\begin{proof}[Proof of Lemma \ref{sc equiv} and Lemma \ref{ve equiv}]
We only show Lemma \ref{sc equiv}, the other one is easier due to the missing lower-order terms.

Suppose that \eqref{subdiff equiv} holds. It follows from Proposition \ref{rel func} b) that it is enough to consider competitors in $L^2(\Omega)$ to show that $u$ minimizes \eqref{gen rel prob}. Let $v\in L^2(\Omega)$ be one. Then it holds that \begin{align*}
&\mathcal{F}_{u_0}(v)-\mathcal{F}_{u_0}(u)\geq -\scalar{\lambda(u-h)+g}{v-u}\\
&\iff\Bigg\{\: \begin{aligned}&\mathcal{F}_{u_0}(v)+\scalar{g}{v}+\frac{1}{2}\scalar{\lambda (v-h)}{(v-h)}-\frac{1}{2}\scalar{\lambda(v-u)}{(v-u)}\\
&\geq  \mathcal{F}_{u_0}(u)+\scalar{g}{u}+\frac{1}{2}\scalar{\lambda (u-h)}{(u-h)}\end{aligned}\\
&\implies\mathcal{F}_{u_0}(v)+\scalar{g}{v}+\frac{1}{2}\scalar{\lambda (v-h)}{(v-h)}\geq  \mathcal{F}_{u_0}(u)+\scalar{g}{u}+\frac{1}{2}\scalar{\lambda (u-h)}{(u-h)},
\end{align*}
showing that $u$ is a minimizer.

Suppose that $u\in L^2(\Omega)$ is a minimizer and consider a competitor $v\in L^2(\Omega)$, then, by convexity, it holds for $t\in (0,1]$ that \begin{align*}
\mel\mathcal{F}_{u_0}(v)-\mathcal{F}_{u_0}(u)\geq \frac{1}{t}(\mathcal{F}_{u_0}(u+t(v-u))-\mathcal{F}_{u_0}(u))\\
&\geq \frac{1}{t}\left(\frac{-1}{2}\scalar{\lambda(u+t(v-u)-h)}{u+t(v-u)-h)}+\frac{1}{2}\scalar{\lambda(u-h)}{u-h)}-t\scalar{g}{v-u}\right),
\end{align*}
where the second step follows from $u$ being a minimizer. Letting $t\searrow 0$ shows that \begin{align*}
\mathcal{F}_{u_0}(v)-\mathcal{F}_{u_0}(u)\geq -\scalar{\lambda(u-h)+g}{v-u}
\end{align*}
which shows \eqref{subdiff equiv}.
\end{proof}

\section{Proof of the main theorems \ref{T1} and \ref{T2}}\label{S4}
\subsection{A few words about the strategy}
Unlike other works, our approach is not based on barriers. Instead, we will transform $\Omega$ to a ball in the scalar case and use the subdifferential characterisation to show integral estimates in small spherical caps (called $D_x^\eps$ below) for $u-\hat{u}_x$, where $\hat{u}_x$ is a suitable extension of $u_0$.
 The most important point in the $BV$-case is that if the boundary condition \eqref{good bd cond} for $z$ holds on most of some spherical cap $C_x^\eps=\de\Omega\cap \de D_x^\eps$, then, as a consequence of the uniform convexity of the set $\ran(\de_\xi f^\infty)$ of allowed values for $z$, one actually controls $z$ in $D_x^\eps$ too as shown in Lemma \ref{cont z1}, which allows for sufficiently strong estimates.
 
The fact that $u_0\in BV$ enters through estimates on the gradient for the extension $\hat{u}_x$ (see Lemma \ref{char1}), the crucial point about it is that we can keep control of the modulus of integrability of the absolutely continuous part of the gradient of the extension (see Lemma \ref{dlvp}). 

The proof then proceeds by using the Morse covering theorem to cover the bad set $\{u\neq u_0\}$ with caps $C_x^\eps$ for which one has this control over $z$ and on which $|\mathrm{D}^su_0|$ is small, and recovering the trace from the weighted integrals over the $D_x^\eps$, as these become infinitesimally close to the boundary in the limit.\smallskip

In the $W^{\alpha,p}$-case the basic scheme is the same, though instead of an exact extension of $u_0$, we use local averages, which makes a few gradients disappear, but comes at the cost of having to control how well $u_0$ is approximated by its local averages, which is the part that needs the fractional Sobolev regularity.

The $C^0$ case is easier and mostly the $p\rightarrow \infty$-limit of the estimates in the $W^{\alpha,p}$-case.
\smallskip

The vectorial case $n\geq 1$ uses the same scheme as the $BV$-case, where there are extra limitations on the structure of $f$ coming from the fact that splitting into the positive and negative parts, as it is done in the scalar case, does not work well anymore, and one hence has to work in a way that handles both parts at once. In particular, we work with the original $\Omega$, which is the part that requires uniform convexity of the boundary.

\subsection{Preparations for the proof of Theorem \ref{T1}}
We will assume in the entire proof that the Assumptions \ref{A1}-\ref{A6} hold and all of the lemmata use some of them, though we will not explicitly state this.

\subsubsection{Transformation of the domain} 

It will be much easier to carry out the proof if $U=\de\Omega$ is a subset of a sphere. We will therefore use a diffeomorphism and show that the assumptions are stable under this, as the following Lemma shows. Furthermore, if this sphere has very small curvature, we can also obtain a more useful version of the curvature condition.

\begin{lemma}\label{trans lemma}
Suppose \ref{A1}-\ref{A6} hold.

\noindent\textbf{a)} Let $\Phi:\Omega\rightarrow \Omega'$ be a $C^2$-diffeomorphism. If we set \begin{align*}
&f_\Phi(y,\xi):=|(\det \mathrm{D}\Phi^{-1})(y)|f(\Phi^{-1}(y),(\mathrm{D}\Phi^{-1}(y))^{-T}\xi)\\
&\lambda_\Phi(y):=|(\det \mathrm{D}\Phi^{-1})(y)|\lambda(\Phi^{-1}(y))\\
&g_\Phi(y):=|(\det \mathrm{D}\Phi^{-1})(y)|g(\Phi^{-1}(y))\\
&h_\Phi(y):=h(\Phi^{-1}(y))
\end{align*}
for $y\in \Omega'$ and $\xi\in \R^d$, the assumptions \ref{A1}-\ref{A6} hold with $f_\Phi,\lambda_\Phi, g_\Phi,h_\Phi, u_0\circ \Phi^{-1}, \Phi(U)$ in place of $f,g,\lambda,h,u_0, U$ (with different constants).
Furthermore, \eqref{subdiff equiv} holds if and only if \begin{align}
-\left(\lambda_{\Phi}(u\circ \Phi^{-1}-h_\Phi)+ g_{\Phi}\right)\in \de(\Phi^*\mathcal{F}_{u_0\circ \Phi^{-1}})(u\circ \Phi^{-1}),\label{sd pushforward}
\end{align}
where $\Phi^*\mathcal{F}_{u_0\circ \Phi^{-1}}$ refers to the functional $\mathcal{F}_{u_0\circ \Phi^{-1}}$ defined with respect to $f_\Phi$.
%

\noindent\textbf{b)} For every $x_0\in U$, there is an open $U'\subset U$ with $x_0\in U'$ such that there is an $\Omega'\subset \R^d$ and a $\Phi:\Omega\rightarrow \Omega'$ (depending on $U'$) such that: \begin{itemize}
\item $\Phi$ is a $C^2$-diffeomorphism,
\item \begin{equation}\Phi(U') \subset \de B_{\frac{1}{\kappa}}(0)\quad \text{for some $\kappa>0$},\label{ballbd}\end{equation}  
\item  for every open $V\subset \Phi(U')$ with a positive distance to the boundary of $\Phi(U')$, there are $\eps_0,\,c_0>0$ (depending on $V$ and $U'$), such that for all $x\in V$ and all $y\in \Omega'\cap B_{\eps_0}(x)$ it holds that \begin{align}
\pm \div_y \mathrm{D}_{\xi}f_{\Phi}^\infty(y,\pm  \nu_x^{\Omega'})\geq |g_{\Phi}(y)|+c_0,\label{impr cc}
\end{align}
where $\nu_x^{\Omega'}$ is the outer normal to $\Omega'$ at $x$.
\end{itemize}
\end{lemma}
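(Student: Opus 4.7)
My plan is a direct verification via change of variables. Writing $\tilde u := u\circ\Phi^{-1}$, the chain rule and $\dx = |\det\mathrm{D}\Phi^{-1}(y)|\dy$ give $\int_\Omega f(x,\mathrm{D}u)\dx = \int_{\Omega'} f_\Phi(y,\mathrm{D}\tilde u)\dy$, and analogous pushforward identities hold for the lower-order terms and boundary penalty (using that $\mathcal{H}^{d-1}$ on $\de\Omega$ transforms by a $C^1$ Jacobian under $\Phi|_{\de\Omega}$). Convexity of $f_\Phi(y,\cdot)$ follows since $\xi \mapsto (\mathrm{D}\Phi^{-1}(y))^{-T}\xi$ is linear; the two-sided growth bound \eqref{bd gf1}, joint continuity in $(x,\xi)$, and the $C_\xi^2$/$C_x^1 C_\xi^1$ regularity of $f_\Phi^\infty$ transfer from $f$ via composition, using that $(\mathrm{D}\Phi^{-1})^{\pm T}$ and $|\det\mathrm{D}\Phi^{-1}|$ are in $C^1(\overline{\Omega'})$ and uniformly bounded above and below; the Hessian decay \eqref{conv df} transfers under the substitution $\eta = (\mathrm{D}\Phi^{-1}(y))^{-T}\xi$, which is uniformly invertible. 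For the curvature condition \ref{A3}, I would show directly that $H_{\de\Omega',f_\Phi}$ is continuous on $\Phi(U)$ and bounded below by a positive multiple of $H_{\de\Omega,f}\circ\Phi^{-1}$ up to controllable first-order corrections from $\Phi$; combined with the corresponding rescaling of $|g_\Phi|$, the strict inequality $H - |g| > c$ is preserved with a possibly modified constant. The $BV$, $W^{\alpha,p}$, and $C^0$ regularities in \ref{A5}, \ref{A6} are preserved under $C^2$-diffeomorphisms of the boundary. Finally, \eqref{sd pushforward} follows because, by Lemma \ref{sc equiv}, both the original and pushforward subdifferential conditions characterize the Euler--Lagrange equations of energies that coincide after the change of variables.

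\textbf{Part (b).} Given $x_0 \in U$, I would construct $\Phi$ in two steps. Using \ref{A1}, take local Cartesian coordinates centered at $x_0 = 0$ in which $\de\Omega$ is locally the graph $\{y_d = \varphi(y')\}$ with $\varphi(0) = 0$, $\nabla\varphi(0) = 0$, and $\Omega$ is locally on one side of this graph. Set $\psi_\kappa(y') := \kappa^{-1}\bigl(1 - \sqrt{1-\kappa^2|y'|^2}\bigr)$, whose graph (for $|y'|$ small) parametrizes a portion of the sphere $\de B_{1/\kappa}(\kappa^{-1}e_d)$. The bending map $\Psi_\kappa(y',y_d) := \bigl(y',\,y_d - \varphi(y') + \psi_\kappa(y')\bigr)$ is a $C^2$-diffeomorphism on a neighborhood of $0$ taking $\{y_d = \varphi(y')\}$ onto $\{y_d = \psi_\kappa(y')\}$. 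Translating by $-\kappa^{-1}e_d$ and composing with the local coordinate chart then produces the desired $\Phi$ with $\Phi(U') \subset \de B_{1/\kappa}(0)$ for some open $U' \ni x_0$ in $U$.

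\textbf{Verification of \eqref{impr cc} and main obstacle.} By part (a), \ref{A3} holds for the pulled-back problem, i.e.\ $\essinf_{\Omega'\ni y\to x}\bigl(H_{\de\Omega',f_\Phi}(x) - |g_\Phi(y)|\bigr) > c_1$ for some $c_1 > 0$ and every $x \in \Phi(U')$. Since $\Phi(U')$ lies on $\de B_{1/\kappa}(0)$, the outer normal $\nu_x^{\Omega'} = \kappa x$ extends canonically to a smooth radial vector field on a neighborhood, and the map $(y,x) \mapsto \mp\div_y\mathrm{D}_\xi f_\Phi^\infty(y,\mp\nu_x^{\Omega'})$ is jointly continuous by the $C_x^1 C_\xi^1$ regularity of $f_\Phi^\infty$ from part (a). Evaluating at $y = x$, the $y$-dependence introduced in $f_\Phi$ by the non-linear bending $\Psi_\kappa$ makes this quantity equal to $H_{\de\Omega',f_\Phi}(x)$ up to an $O(\kappa)$ tangential correction term coming from the difference between the ``frozen-$x$'' divergence in \eqref{impr cc} and the full divergence defining $H$. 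For $\kappa$ (and afterwards $\eps_0$) chosen small enough, this correction is dominated by $c_1/2$; together with the essinf bound on $|g_\Phi|$ near $x$ and continuity in $y$, one obtains \eqref{impr cc} on $\Omega'\cap B_{\eps_0}(x)$ for every $x \in V$ with $c_0 := c_1/2$. The main obstacle is the careful bookkeeping in part (a), in particular transferring \eqref{conv df} and verifying that the mean-curvature-type condition \ref{A3} survives the diffeomorphism with an explicit modified constant; both rely on the uniform $C^2$-bounds for $\Phi$ and $\Phi^{-1}$ on the compact sets $\overline{\Omega}$ and $\overline{\Omega'}$.
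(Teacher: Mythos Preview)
Your overall strategy matches the paper's: verify the assumptions transfer under change of variables in (a), construct a bending map onto a sphere in (b), and control the gap between the ``frozen-$x$'' divergence in \eqref{impr cc} and the generalized curvature $H_{\de\Omega',f_\Phi}$ by an $O(\kappa)$ term. Two points of comparison are worth flagging.

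First, for the curvature condition \ref{A3} in part (a), the paper does not settle for an inequality with correction terms but proves the exact identity $H_{\de\Omega',f_\Phi}(\Phi(y))=|\det\mathrm{D}\Phi(y)|^{-1}H_{\de\Omega,f}(y)$. The key step is a lemma showing that for any $C^1$ field $z_0$ with $z_0=c(y)\nu_y$ (with $c<0$) on $U$, one has $\div\mathrm{D}_\xi f^\infty(y,z_0(y))=\div\mathrm{D}_\xi f^\infty(y,\mathrm{D}\mathfrak{d}(y))$ on $U$: the normal part of the divergence is insensitive to the extension because $\nu_y\cdot\mathrm{D}_\xi^2 f^\infty(y,-\nu_y)=0$ (since $v\mapsto -\nu_y\cdot\mathrm{D}_\xi f^\infty(y,v)$ is maximized at $v=-\nu_y$). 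This exact scaling matches the Jacobian scaling of $|g_\Phi|$, so \ref{A3} transfers without any error bookkeeping. Your vaguer ``positive multiple up to first-order corrections'' would require additional work to show the corrections are harmless for an arbitrary $C^2$-diffeomorphism.

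Second, in part (b) your bending map $\Psi_\kappa$ is only defined on a coordinate neighborhood; the lemma requires $\Phi:\Omega\to\Omega'$ to be a global $C^2$-diffeomorphism. The paper handles this with smooth cutoffs extending $\Phi_\kappa$ as the identity outside the neighborhood, and then verifies that $\|\Phi_\kappa\|_{C^2}$ and $\|\Phi_\kappa^{-1}\|_{C^2}$ stay bounded \emph{uniformly} in small $\kappa$. This uniformity is what makes your $O(\kappa)$ correction argument legitimate: the paper's explicit estimate $|\mathrm{D}^2\mathfrak{d}_{\Omega'}(x)|\leq C(d)\kappa$ on the sphere, combined with the uniform bound on $\|f_{\Phi_\kappa}^\infty(y,\cdot)\|_{C_\xi^2}$, gives the difference between the frozen-$x$ and full divergences as $C\kappa$ with $C$ independent of $\kappa$. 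Without the uniform-in-$\kappa$ bounds, the implicit constant in your $O(\kappa)$ could blow up.
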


The lemma is proven in Section \ref{S5}. 

Now it suffices to show that $u=u_0$ in every such $U'$ as in b). By the Lemma, it is not restrictive to assume that every point in b) was already true for $U'$ and that $\Phi$ is the identity (otherwise we use the diffeomorphism which the lemma yields, rename everything, and use that by a) all the assumptions remain true). We also consider $\kappa$ to be fixed of order $1$ and allow the implicit constants to depend on it.


Since the statement of the theorem is local, it is not restrictive to only show that $u-u_0$ vanishes on $U''$ for every (relatively) open set $U''\subset U'$ with positive distance to the boundary of $U'$. 
It is also not restrictive to assume that both $U''$ and $U'$ have a sufficiently regular (relative) boundary.

Let $x\in U''$ and $\eps>0$, we set \begin{align}
&C_x^\eps=B_\eps(x)\cap\de \Omega\label {def C1}\\
&D_x^\eps=\left\{y\in \Omega\cap B_\eps(x)\,\big|\,\scalar{y-x}{\nu_x}\geq-\frac{1}{2}\kappa\eps^2\right\}\label{def D1}\\
&E_x^\eps=\de D_x^\eps\backslash C_x^\eps=\left\{y\in \overline{\Omega}\cap \overline{B_\eps(x)}\,\big|\,\scalar{y-x}{\nu_x}=-\frac{1}{2}\kappa\eps^2\right\}.\label{def E1}
\end{align}
We only consider $\eps$ such that $\eps<\dist(U'',\de \Omega\backslash U')$, so that $C_x^\eps\subset U'$ and there is no intersection between $D_x^\eps$ and $\de \Omega\backslash U'$.
We furthermore also only consider $\eps$ smaller than the $\eps_0$ in part b) of Lemma \ref{trans lemma}, so that the statement of the Lemma is applicable in $D_x^\eps$ with a $c_0$ uniform in $x\in U''$ and small $\eps$.

An easy calculation shows that $E_x^\eps$ is a piece of a plane with normal $\nu_x$ and that the boundary of the spherical cap $D_x^\eps$ is precisely $E_x^\eps\cup C_x^\eps$.  See also Figure \ref{fig1} for a sketch.

\begin{figure}
\includegraphics[width=6cm]{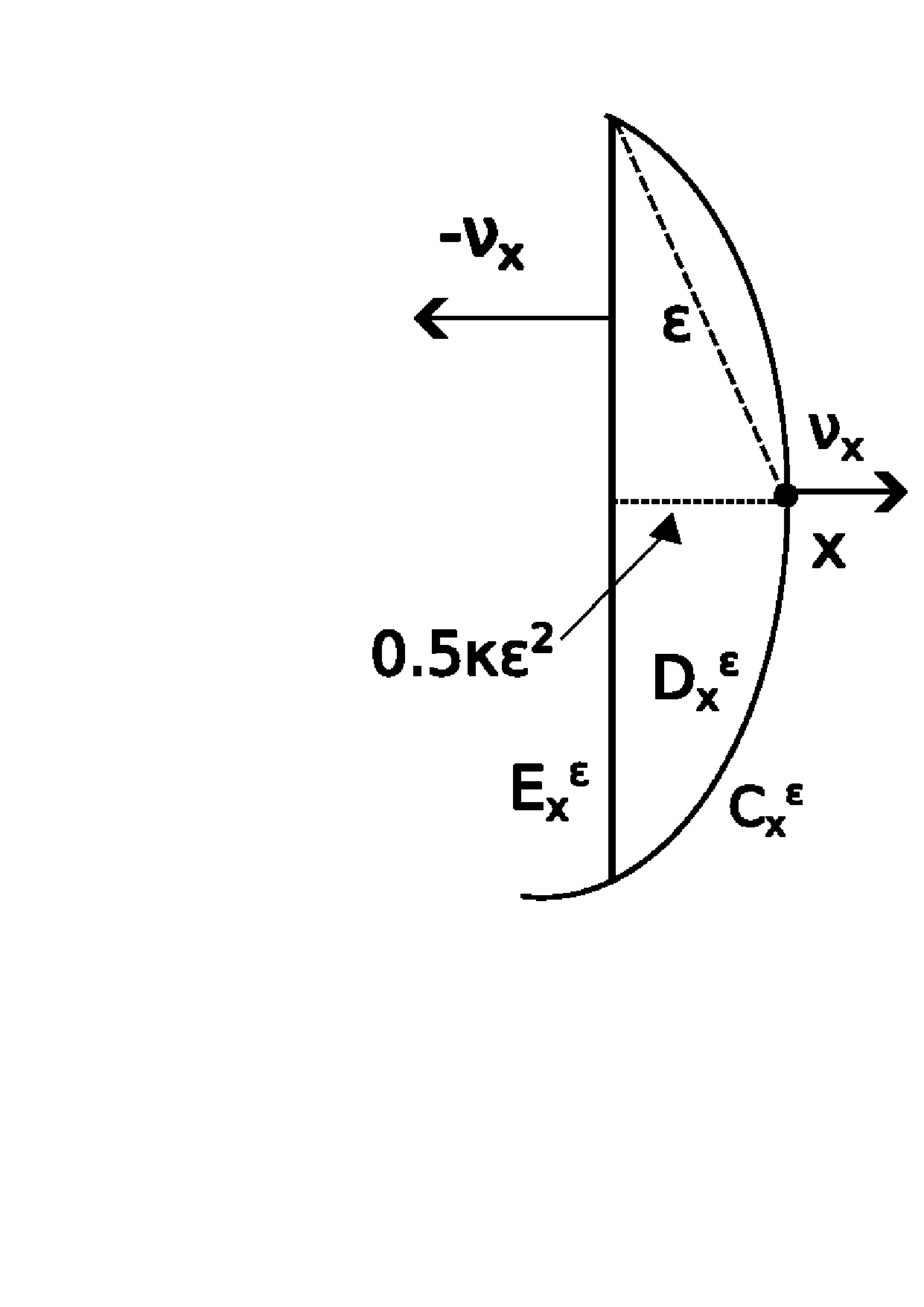}\vspace{-2cm}
\caption{Sketch of the geometry in the proof of Theorem \ref{T1}}\label{fig1}
\end{figure}

 We furthermore have the following elementary geometric properties: \begin{lemma}\label{geo lemma1}
\textbf{a)} It holds that \begin{align}
&\mathcal{H}^{d-1}(C_x^\eps)\approx \eps^{d-1}\label{est C1}\\
&\mathcal{L}^{d}(D_x^\eps)\approx \eps^{d+1}\label{est D1}\\
&\mathcal{H}^{d-1}(E_x^\eps)\approx \eps^{d-1}.\label{est E1}
\end{align}
uniformly in $x\in U''$ and small $\eps$.

\textbf{b)} It holds that \begin{align}\label{deri C}
\frac{\mathrm{d}}{\mathrm{d}\eps} \mathcal{H}^{d-1}(C_x^\eps)\approx \eps^{d-2}
\end{align}
uniformly in $x\in U''$ and small $\eps$ for $d>1$, for $d=1$ the derivative vanishes.

\textbf{c)} If $D_x^\eps$ and $D_{x'}^{\eps'}$ intersect for some $x,x'\in U''$, then $C_x^\eps$ and $C_{x'}^{\eps'}$ intersect too.
\end{lemma}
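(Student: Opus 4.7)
The plan is to exploit that, after the diffeomorphism reduction provided by Lemma~\ref{trans lemma}, we may work in the situation where $U'\subset\partial B_{1/\kappa}(0)$ and $\Omega\subset\overline{B_{1/\kappa}(0)}$ locally near $U'$, so setting $R:=1/\kappa$ the unit outer normal at $x\in U'$ is $\nu_x=x/R$. The spherical identity $\langle y-x,\nu_x\rangle=-\tfrac{1}{2R}|y-x|^2$ for $y\in\partial B_R(0)$ then makes every object fully explicit: the hyperplane $\{\langle y-x,\nu_x\rangle=-\tfrac{1}{2}\kappa\eps^2\}$ is precisely tangent to the parallel circle $\partial B_R(0)\cap\partial B_\eps(x)$, so $C_x^\eps$ is a spherical cap of chord-radius $\eps$, $D_x^\eps$ is the thin ``lens'' trapped between that cap and this hyperplane (with normal thickness $\tfrac{1}{2}\kappa\eps^2+O(\eps^4)$), and $E_x^\eps$ is the flat piece in the hyperplane, truncated against $\overline{\Omega}$. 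Notice that the $-\tfrac{1}{2}\kappa\eps^2$ in the definition of $D_x^\eps$ is exactly calibrated to the sphere's deviation from its tangent plane, a fact which will be crucial for (c).

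From this explicit picture, (a) reduces to elementary spherical geometry. The cap $C_x^\eps$ has geodesic radius $2R\arcsin(\kappa\eps/2)=\eps+O(\eps^3)$, yielding $\mathcal{H}^{d-1}(C_x^\eps)\approx\eps^{d-1}$; the set $E_x^\eps$ is, up to a cut-off by the relative boundary of $U'$ that is irrelevant on the scale $\eps<\dist(U'',\partial U')$, a $(d-1)$-dimensional disk of radius $\sqrt{\eps^2-\tfrac14\kappa^2\eps^4}\approx\eps$, yielding $\mathcal{H}^{d-1}(E_x^\eps)\approx\eps^{d-1}$; and $D_x^\eps$ has lateral extent $\approx\eps$ and normal thickness $\approx\eps^2$, yielding $\mathcal{L}^d(D_x^\eps)\approx\eps^{d+1}$. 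All implicit constants depend only on $\kappa$ and on $\dist(U'',\partial U')$ and are therefore uniform in $x\in U''$. For (b), I would pass to geodesic polar coordinates on the sphere centered at $x$ to write, for $d\geq 2$,
\begin{align*}
\mathcal{H}^{d-1}(C_x^\eps)=\omega_{d-2}R^{d-1}\int_0^{\theta(\eps)}\sin^{d-2}(s)\,\dd s,\qquad \theta(\eps):=2\arcsin(\kappa\eps/2),
\end{align*}
and differentiate; using $\sin\theta(\eps)=\kappa\eps$ and $\theta'(\eps)\approx\kappa$ this produces $\tfrac{d}{d\eps}\mathcal{H}^{d-1}(C_x^\eps)\approx\eps^{d-2}$, again uniformly in $x$. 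For $d=1$ the set $C_x^\eps\subset\partial\Omega$ is locally discrete, its $\mathcal{H}^0$-measure is therefore locally constant in $\eps$, and the derivative vanishes.

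For (c), the natural candidate for a common point is the radial projection $z:=Ry/|y|\in\partial B_R(0)\subset\partial\Omega$ of any $y\in D_x^\eps\cap D_{x'}^{\eps'}$. Writing $\gamma:=|y-x|<\eps$, the identity $2\langle y,x\rangle=|y|^2+R^2-\gamma^2$ combined with the half-space constraint $\langle y,x\rangle\geq R^2-\eps^2/2$ (which is how $\langle y-x,\nu_x\rangle\geq-\tfrac{1}{2}\kappa\eps^2$ rearranges in this setup) produces the sharp lower bound $|y|^2\geq R^2-\eps^2+\gamma^2$. Then $|z-y|=R-|y|\leq R-\sqrt{R^2-\eps^2+\gamma^2}$, and the desired inequality $|z-x|\leq\gamma+(R-|y|)\leq\eps$ is equivalent to $\sqrt{R^2-\eps^2+\gamma^2}\geq R-\eps+\gamma$, which after squaring reduces to $(R-\eps)(\eps-\gamma)\geq 0$; this holds strictly because $\gamma<\eps$ and $\eps<R$ (the latter being automatic for $\eps$ below the threshold $\eps_0$). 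The analogous argument with $x',\eps',\gamma':=|y-x'|$ yields $|z-x'|<\eps'$, so $z\in C_x^\eps\cap C_{x'}^{\eps'}$. The main obstacle here is recognizing that the $+\gamma^2$ term in the lower bound on $|y|^2$ cannot be dropped: the crude estimate $|y|\geq\sqrt{R^2-\eps^2}$ would only give $|z-x|\leq\eps+O(\eps^2)$ and would fail exactly when $\gamma$ approaches $\eps$, whereas the sharp bound produces a cancellation that makes the two loss terms tie off against one another, giving (c) literally as stated.
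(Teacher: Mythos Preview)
Your argument is correct; the minor slip $\sin\theta(\eps)=\kappa\eps$ (it is $\kappa\eps\sqrt{1-\kappa^2\eps^2/4}$) is harmless for the $\approx$-statement in~(b). The justification of $\mathcal{L}^d(D_x^\eps)\approx\eps^{d+1}$ is a bit breezy, but the upper bound is immediate from the enclosing slab and the lower bound follows by noting that on the sub-slab $\{R-\tfrac12\kappa\eps^2\le y_d\le R-\tfrac14\kappa\eps^2\}$ the horizontal cross-section has radius $\gtrsim\eps$.

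The paper proceeds differently. It does not prove Lemma~\ref{geo lemma1} directly at all: it proves the more general Lemma~\ref{geo lemma 2} (for $\tilde C,\tilde D,\tilde E$ over a uniformly convex $C^2$ graph $\varpi$) via Taylor estimates on $\varpi$, and observes that the spherical case is the specialisation $\varpi(y')=\tfrac12\kappa|y'|^2+O(|y'|^4)$. In particular, for part~(c) the paper's mechanism is qualitative rather than computational: from any $y$ in the intersection one moves along the fixed direction $+\nu_x$ until first exiting $\tilde D_x^\eps\cap\tilde D_{x'}^{\eps'}$; since both half-space functionals $\langle\,\cdot\,-x,\nu_x\rangle$ and $\langle\,\cdot\,-x',\nu_{x'}\rangle$ strictly increase along this ray (the latter because $\langle\nu_x,\nu_{x'}\rangle>0$ for $\eps$ small), the exit point cannot lie on either flat face $\tilde E$, hence lies on $\partial\Omega$ and therefore in $\tilde C_x^\eps\cap\tilde C_{x'}^{\eps'}$.

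Your route---radial projection $z=Ry/|y|$ plus the sharp inequality $(R-\eps)(\eps-\gamma)>0$---is more explicit and exploits the exact calibration of the offset $\tfrac12\kappa\eps^2$ to the sphere (a feature you correctly highlight). It is arguably cleaner for the sphere but does not transfer to the general uniformly convex setting of Lemma~\ref{geo lemma 2}; the paper's normal-ray argument, by contrast, handles both lemmas simultaneously without any computation.
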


The Lemma is proven in Section \ref{S52}.

\subsection{Proof of Theorem \ref{T1} in the case \texorpdfstring{$u_0\in  BV$}{u0BV}}

We will only show that $u\leq u_0$ holds $\mathcal{H}^{d-1}$-a.e.\ on $U''$, the other inequality follows in the same way by appropriately changing the signs.

We will use a comparision vector field $\hatz$, which we define as \begin{align}
\hatz(y)=\mathrm{D}_\xi f^\infty(y,- \nu_x).\label{def hatz}
\end{align} 
%
%
%
It follows from the Assumption \ref{A2} that \begin{equation}
\hatz\in C^1(\Omega)\subset X_2(\Omega)\label{reg hatz}
\end{equation}
and by \eqref{ball subdiff}, we also have \begin{align}
\norm{\hatz}_{L^\infty(C_x^\eps)}\leq C_1\lesssim1.\label{bound hatz}
\end{align}
We will then define the function $\hatu$ as the extension of $u_0$ along the characteristic curves of $\hat{z}_{x}$, more precisely:
\begin{lemma}\label{char1}
There is a function $\hat{u}_x\in BV(D_x^\eps)$ such that \begin{align}
\hatu&=u_0\quad \text{ on $C_x^\eps$}\label{charu1}\\
(\hatz,\mathrm{D}\hatu)&=0 \qquad\text{in $D_x^\eps$.}\label{charu2}
\end{align}
Furthermore, it holds that \begin{align}
&\norm{\hatu}_{BV(D_x^\eps)}\lesssim \eps^2\norm{u_0}_{BV(C_x^\eps)}\label{est hatu1}
\end{align}
uniformly in $\eps$ and $x\in U''$.  
\end{lemma}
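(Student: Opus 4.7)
The plan is to construct $\hatu$ by extending $u_0$ along the integral curves of the comparison vector field $\hatz$, correcting it by a constant on the thin region of $D_x^\eps$ not reached by the flow so as to keep the $BV$-norm controlled. By Euler's identity for $1$-homogeneous functions together with the lower bound \eqref{gc finf}, one has $\hatz(x)\cdot\nu_x=-f^\infty(x,-\nu_x)\leq -C_1^{-1}$. Since $\hatz\in C^1$ by \eqref{reg hatz}, the same inequality (up to a factor of $\tfrac12$) holds for $\hatz(y)\cdot\nu_x$ at nearby $y$, uniformly in $x\in U''$, provided $\eps$ is small enough. Consequently, the flow $\Psi_t$ of $\hatz$ is a $C^1$-diffeomorphism in a neighbourhood of $\overline{D_x^\eps}$; and the uniform bound on $\hatz\cdot\nu_x$ combined with the depth $\tfrac12\kappa\eps^2$ of $D_x^\eps$ in direction $\nu_x$ forces every integral curve starting from $C_x^\eps$ to exit $D_x^\eps$ within a uniform time $T(y_0)\lesssim \eps^2$. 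The map $(y_0,t)\mapsto \Psi_t(y_0)$ is thus a bilipschitz diffeomorphism from $\{(y_0,t):y_0\in C_x^\eps,\ 0\leq t\leq T(y_0)\}$ onto its image $\Omega_x^\eps\subset D_x^\eps$, with Jacobians bounded above and below uniformly in $x$ and $\eps$. Writing $\pi\colon\Omega_x^\eps\to C_x^\eps$ for the retraction $\pi(\Psi_t(y_0))=y_0$ and $\bar u_0:=\avint_{C_x^\eps} u_0 \dH$, I would define
\[
\hatu(y) := \begin{cases} u_0(\pi(y)), & y\in \Omega_x^\eps,\\ \bar u_0, & y\in D_x^\eps\setminus\Omega_x^\eps,\end{cases}
\]
so that \eqref{charu1} is immediate from $\Psi_0=\Id$.

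To show \eqref{charu2} rigorously I would use strict $BV$-density: approximate $u_0$ by $C^1$-functions $u_{0,k}\to u_0$ strictly in $BV(C_x^\eps)$, apply the same flow construction to obtain smooth $\hat{u}_{x,k}$ on $\Omega_x^\eps$, and observe that $\hatz\cdot\nabla\hat{u}_{x,k}\equiv 0$ classically (hence $(\hatz,\mathrm{D}\hat{u}_{x,k})\equiv 0$ on $\Omega_x^\eps$) because $\hat{u}_{x,k}$ is constant along integral curves of $\hatz$. Passing to the limit using continuity of the Anzelotti pairing under strict $BV$-convergence, which follows from the $L^\infty$-bound \eqref{bound hatz} on $\hatz$ and the basic estimate \eqref{anz bas est}, gives $(\hatz,\mathrm{D}\hatu)=0$ on $\Omega_x^\eps$. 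On $D_x^\eps\setminus\Omega_x^\eps$, $\hatu$ is constant so $\mathrm{D}\hatu\equiv 0$ there. Finally, the interface $\partial\Omega_x^\eps\cap D_x^\eps$ is itself a union of integral curves of $\hatz$ (those emanating from $\partial C_x^\eps$), so its normal is perpendicular to $\hatz$, and the jump contribution to the singular measure $(\hatz,\mathrm{D}\hatu)$ vanishes as well. The main difficulty here is precisely this careful strict-approximation argument, because the alternative of applying \eqref{anz rep} directly with the singleton convex set $\mathcal{C}=\{\hatz(y)\}$ would require knowing in advance that the Radon-Nikodym direction of $\mathrm{D}^s\hatu$ is perpendicular to $\hatz$, which is morally true only after identifying $\hat u$ with the pullback of $u_0$ under a smoothly flow-trivialisation.

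For \eqref{est hatu1}, change of variables under $\Psi$ combined with $T(y_0)\lesssim\eps^2$ and the chain rule for $u_0\circ\pi$ yield $\|\hatu\|_{L^1(\Omega_x^\eps)}+|\mathrm{D}\hatu|(\Omega_x^\eps)\lesssim\eps^2\|u_0\|_{BV(C_x^\eps)}$, while the constant region contributes $|\bar u_0|\cdot|D_x^\eps\setminus\Omega_x^\eps|\lesssim\eps^{d+1}\cdot\eps^{-(d-1)}\|u_0\|_{L^1(C_x^\eps)}=\eps^2\|u_0\|_{L^1(C_x^\eps)}$. The jump of $\hatu$ on the interface contributes
\[
\int_{\partial\Omega_x^\eps\cap D_x^\eps}|u_0\circ\pi-\bar u_0|\dH \lesssim \eps^2 \int_{\partial C_x^\eps}|u_0-\bar u_0|\,\mathrm{d}\mathcal{H}^{d-2},
\]
obtained by parametrising the interface as the flow of $\partial C_x^\eps$ over $[0,T(\cdot)]$ with bounded Jacobian. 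Combining the rescaled $BV$-trace inequality on the cap, $\|v\|_{L^1(\partial C_x^\eps)}\lesssim \eps^{-1}\|v\|_{L^1(C_x^\eps)}+|\mathrm{D}v|(C_x^\eps)$, with the Poincar\'e estimate $\|u_0-\bar u_0\|_{L^1(C_x^\eps)}\lesssim \eps\,|\mathrm{D}u_0|(C_x^\eps)$ applied to $v=u_0-\bar u_0$ bounds this last integral by $|\mathrm{D}u_0|(C_x^\eps)$, completing \eqref{est hatu1}.
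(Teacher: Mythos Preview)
Your approach --- extend $u_0$ along integral curves of $\hatz$ and control the result via the bilipschitz flow map --- is precisely the paper's, and your argument is correct. One simplification, however: the ``thin region not reached by the flow'' you introduce is actually empty. After the transformation of Lemma~\ref{trans lemma}, the boundary $\de D_x^\eps$ decomposes \emph{exactly} as $C_x^\eps\cup E_x^\eps$; there is no lateral piece coming from $\de B_\eps(x)$, because for $y\in\de B_\eps(x)\cap\Omega$ (with $\de\Omega$ locally the sphere $\de B_{1/\kappa}(0)$) one computes $\langle y-x,\nu_x\rangle\leq -\tfrac12\kappa\eps^2$, so such $y$ already lie in $\overline{E_x^\eps}$. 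Since $-\hatz\cdot\nu_x\gtrsim 1$, the backward characteristic through any $y\in D_x^\eps$ strictly increases $\langle\,\cdot-x,\nu_x\rangle$ and therefore cannot exit through $E_x^\eps$; it must hit $C_x^\eps$. Thus the retraction $\pi$ is globally defined on $\overline{D_x^\eps}$, and the paper simply sets $\hatu=u_0^e\circ\Psi$ everywhere --- dispensing with the constant correction $\bar u_0$, the interface analysis, and the $BV$-trace and Poincar\'e inequalities on $\de C_x^\eps$.
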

The Lemma is proven in Section \ref{S53}.

Crucially, we can also lift de-La-Vall\'ee-Poussin-type estimates for the absolutely continuous part of $\mathrm{D}u_0$ to the absolutely continuous part of $\mathrm{D}\hatu$.
 \begin{lemma}\label{dlvp}
There is a non-decreasing function $i:\R_{\geq 0}\rightarrow \R_{\geq 0}$ depending on $u_0$, but not on $\eps$ or $x$, such that
\begin{align*}
\lim_{t\rightarrow +\infty} \frac{i(t)}{t}=+\infty,
\end{align*}
it holds that \begin{align}
 \int_{U'} i(|\mathrm{D}^au_0|(y))\dH(y)<\infty\label{ineq i1}
\end{align}
and \begin{align}
\int_{D_x^\eps} i(|\mathrm{D}^a\hatu|(y))\dy\lesssim_{u_0} \eps^2\int_{C_x^\eps} i(|\mathrm{D}^au_0|(y))\dH(y),\label{ineq i2}
\end{align}
uniformly in $x\in U''$ and $\eps>0$.
Furthermore it holds that \begin{align}
|\mathrm{D}^s\hatu|(D_x^\eps)\lesssim \eps^2|\mathrm{D}^su_0|(C_x^\eps),\label{ineq i3}
\end{align}
uniformly in $x\in U''$ and $\eps$.
\end{lemma}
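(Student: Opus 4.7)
The plan is to apply the de la Vall\'ee Poussin compactness criterion to the $L^1(U',\dH)$-function $|\mathrm{D}^a u_0|$, which yields a non-decreasing superlinear $i$ with $\int_{U'} i(|\mathrm{D}^a u_0|)\dH<\infty$. We additionally pick $i$ satisfying a doubling-type estimate $i(Cs)\leq K\,i(s)+K'$ for any fixed $C>0$, which is a standard refinement of the theorem (e.g.\ by building $i$ piecewise linearly from a slowly-growing derivative). Once such $i$ is fixed, the estimates \eqref{ineq i2} and \eqref{ineq i3} will follow from a change of variables along the characteristics of $\hatz$.

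\textbf{Step 1: Flow structure.} By \eqref{doal} and \eqref{gc finf} one has $\scalar{\hatz(y)}{-\nu_x}=f^\infty(y,-\nu_x)\geq C_1^{-1}$, so $\hatz$ has a uniformly positive normal component pointing into $\Omega$. Together with the regularity \eqref{reg hatz} and boundedness \eqref{bound hatz}, this makes the map $(x',t)\mapsto \phi_t(x')$ (where $\phi_t$ is the flow of $\hatz$) a $C^1$-diffeomorphism from $\{(x',t)\,:\,x'\in C_x^\eps,\ 0\leq t\leq T(x')\}$ onto $D_x^\eps$, with $T(x')\lesssim \eps^2$ (since $D_x^\eps$ has depth $\kappa\eps^2/2$ in the $-\nu_x$-direction) and Jacobian bounded above and below uniformly in $x\in U''$ and small $\eps$.

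\textbf{Step 2: Gradient factorisation.} Differentiating $\hatu(\phi_t(x'))=u_0(x')$ tangentially in $x'$ along a tangent vector $\tau$ of $C_x^\eps$ gives, for smooth $u_0$,
\begin{align*}
\mathrm{D}\hatu(\phi_t(x'))\cdot (\mathrm{D}_{x'}\phi_t(x'))\tau = \nabla_\tau u_0(x'),
\end{align*}
which, combined with \eqref{charu2}, expresses $\mathrm{D}\hatu(\phi_t(x'))$ as a uniformly bounded linear image of $\nabla_\tau u_0(x')$ (the inverse of the tangential differential of $\phi_t$ is close to the identity for $t\lesssim\eps^2$). Hence $|\mathrm{D}^a\hatu(\phi_t(x'))|\leq C|\mathrm{D}^a u_0(x')|$ pointwise a.e.\ for smooth $u_0$, and by strict-$BV$ approximation of $u_0$ on $C_x^\eps$ (the map $u_0\mapsto\hatu$ being linear, bounded $BV\to BV$ by Lemma \ref{char1}, and compatible with the $\mathrm{D}^a/\mathrm{D}^s$ splitting through the smooth diffeomorphism $\phi$) the same bound holds for $u_0\in BV$. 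Applied at the level of measures, the same pushforward yields $|\mathrm{D}^s\hatu|(D_x^\eps)\lesssim \sup_{x'}T(x')\cdot |\mathrm{D}^s u_0|(C_x^\eps)\lesssim \eps^2|\mathrm{D}^s u_0|(C_x^\eps)$, which proves \eqref{ineq i3}.

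\textbf{Step 3: Integral bound and main obstacle.} Combining Steps 1 and 2 with the monotonicity and doubling bound of $i$,
\begin{align*}
\int_{D_x^\eps} i(|\mathrm{D}^a\hatu|)\dy \lesssim \int_{C_x^\eps}\int_0^{T(x')} i(C|\mathrm{D}^a u_0(x')|)\dt \dH(x') \lesssim \eps^2\int_{C_x^\eps}\big(i(|\mathrm{D}^a u_0|)+1\big)\dH,
\end{align*}
and absorbing the additive constant into $i$ (e.g.\ by replacing $i(t)$ by $i(t)+t$, which preserves the required monotonicity, superlinearity, and integrability of $i(|\mathrm{D}^a u_0|)$) yields \eqref{ineq i2}. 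The main obstacle is the $BV$-version of the factorisation in Step 2: the pointwise bound on $\mathrm{D}^a\hatu$ in terms of $\mathrm{D}^a u_0$, together with its analogue for $\mathrm{D}^s$, must be justified without any smoothness of $u_0$. This is handled by the same slicing/approximation scheme along flow lines of $\hatz$ that underlies the construction of $\hatu$ in Lemma \ref{char1}: since $\phi$ is a smooth diffeomorphism, its action on measures preserves the Radon--Nikodym decomposition, and one may pass to the $BV$-limit in each part separately.
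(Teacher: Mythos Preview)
Your proposal is correct and follows essentially the same route as the paper: de la Vall\'ee Poussin to produce $i$, then transport along the characteristics of $\hatz$ via a bi-Lipschitz diffeomorphism, and the BV chain rule under that diffeomorphism to separate $\mathrm{D}^a$ and $\mathrm{D}^s$. Two minor points where the paper is slightly tidier: (i) it takes a \emph{multiplicative} doubling condition $i(2t)\lesssim i(t)$ (obtained by replacing $i(t)$ with $\min_{\rho\in(0,1]}\rho^{-2}(i+1)(\rho t)$), which avoids your additive constant $K'$ and the subsequent absorption step; (ii) rather than approximating by smooth $u_0$, it invokes the BV chain rule for composition with a $C^1$-diffeomorphism directly (cf.\ \cite[Remark~3.18]{ambrosio2000functions}), which immediately gives $\mathrm{D}^a\hatu(y)=\mathrm{D}^au_0^e(\Psi(y))\mathrm{D}\Psi(y)$ and the corresponding pushforward formula for $\mathrm{D}^s\hatu$, so your ``main obstacle'' is resolved by a single citation rather than an approximation argument.
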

This lemma is also proven in Section \ref{S53}.

%
%

We now take $z\in X_2(\Omega,\R^d)$ such that the conditions \eqref{cs2}-\eqref{cs4} in the characterisation of the subdifferential hold with $v=-\lambda(u-h)-g$ and $w=u$, this is possible because of the characterisation of minimizers in the Lemmata \ref{sca char} and \ref{sc equiv}.

%
We use the truncations $T_b$, defined in \eqref{def trunc}, where $b\in \R_{\geq 0}$ is some arbitrary number.
The space $BV$ is closed under the application of $T_b$, by e.g.\ the chain rule as described in Section \ref{S22}.
Therefore, we can use the Gauss-Green formula \eqref{gg form} for the domain $D_x^\eps$ and $(T_b(u)-T_b(\hatu))_{+}$, where we denote the positive part by $a_+=\max(a,0)$, as well as $(z-\hatz)$, to the effect that \begin{equation}\begin{aligned}\label{part int}
\mel\int_{C_x^\eps} [z-\hatz,\nu_y](T_b(u)-T_b(u_0))_{+}\dH(y)+\int_{E_x^\eps} [z-\hatz,-\nu_x](T_b(u)-T_b(\hatu))_{+}\dH(y)\\
&=\int_{D_x^\eps}(\lambda (u-h) +g-\div\hatz) (T_b(u)-T_b(\hatu))_+\dy+\int_{D_x^\eps}(z-\hat{z},\mathrm{D}(T_b(u)-T_b(\hatu))_{+}),
\end{aligned}\end{equation}
where we have used that the outer normal on $E_x^\eps$ is $-\nu_x$.
We claim that the left-hand side is non-positive: Thanks to \eqref{cv5}, we have \begin{align}
[z,\nu_y]=-f^\infty(y,-\nu_y)\quad \text{on $C_x^\eps$} \label{znu 1}
\end{align}
whenever $u-u_0>0$ at $y$, due to the positive $1$-homogeneity of $f^\infty$. By the regularity and definition of $\hatz$ and \eqref{doal}, we have \begin{align}
[\hatz,\nu_y] =\scalar{\mathrm{D}_\xi f^\infty(y,-\nu_x)}{\nu_y}\geq -f^\infty(y,-\nu_y).\label{znu 2}
\end{align}
Since the integral over $C_x^\eps$ is automatically zero at the points where $u\leq u_0$, this shows that \begin{align*}
\int_{C_x^\eps} [z-\hatz,\nu_y](T_b(u)-T_b(u_0))_{+}\dH(y)\leq 0.
\end{align*}
Regarding the other integral, we have on $E_x^\eps$ \begin{align*}
    [\hat{z}_x,-\nu_x](y)=\scalar{\mathrm{D}_\xi f^\infty(y,-\nu_x)}{-\nu_x}= f^\infty(y,-\nu_x),
\end{align*}  
by \eqref{doal} as well as the regularity and definition of $\hatz$. Since we also have \begin{align*}
[z,-\nu_x]\in \left\{z_0\cdot(- \nu_x)\,\big|\, z_0\in \ran(\de_\xi f^\infty(y,\cdot)\right\}
\end{align*}
thanks to \eqref{anz rep}, \eqref{cs4} and the uniform continuity of $\mathrm{D}f^\infty$, we see from \eqref{doal} that \begin{align*}
[z-\hatz,-\nu_x]\leq 0,
\end{align*}
yielding \begin{align*}
\int_{E_x^\eps} [z-\hatz,-\nu_x](T_b(u)-T_b(\hatu))_{+}\dH(y)\leq 0.
\end{align*}
We hence obtain that \begin{align}\label{part int2}
\int_{D_x^\eps}(\lambda (u-h) +g-\div\hatz) (T_b(u)-T_b(\hatu))_+\dy\leq -\int_{D_x^\eps}(z-\hat{z},\mathrm{D}(T_b(u)-T_b(\hatu))_{+}).
\end{align}
We can further estimate the right-hand side, using Lemma \ref{anz ch} and the triangle inequality as \begin{align}\label{anz tr}
\mel-\int_{D_x^\eps}(z-\hat{z},\mathrm{D}(T_b(u)-T_b(\hatu))_{+})
\leq \int_{D_x^\eps}|(z-\hat{z},\mathrm{D}T_b(\hatu))|+|(z-\hat{z},\mathrm{D}u)_-|,
\end{align}
where the lower ``$-$'' denotes the negative part of the measure.


 We would like to show that the right-hand side of \eqref{part int2} becomes suitably small if $z$ equals $\mathrm{D}_\xi f^\infty(y,- \nu_y)$ on most of $C_x^\eps$, which will follow from the next Lemma.

\begin{lemma}\label{cont z1}
\textbf{a)} For every $\delta>0$ and every $x\in U''$ and $\eps>0$ it holds that, \textbf{if} we have \begin{align}\label{del ass}
\mathcal{H}^{d-1}\left(\{y\in C_x^\eps\,\big|\,[z,\nu_y](y)\neq \scalar{\mathrm{D}_\xi f^\infty(y,- \nu_y)}{\nu_y}\}\right)\leq \delta \mathcal{H}^{d-1}(C_x^\eps),\end{align}
 \textbf{then} \begin{align}\label{del stat}
\int_{D_x^\eps}|z(y)-\hatz(y)|^2\dy\lesssim \eps^{d+1}(\eps+\delta)+\eps^2\int_{D_x^\eps}|\div z|\dy.
\end{align}
%
\textbf{b)} There is a non-decreasing, bounded function $k:\R_{\geq 0}\rightarrow \R_{\geq 0}$, with $\lim_{t\searrow 0} k(t)=0$, only depending on $f$, such that for all $\eps,x$ it holds that
\begin{align}
\int_{D_x^\eps} \left|(z-\hatz,\mathrm{D}u)_-\right|\leq \int_{D_x^\eps} k(|z-\hatz|)\dy,\label{k est}
\end{align}
where the lower ``$-$'' denotes the negative part of the measure. If $f$ is positively $1$-homogeneous, one can take $k=0$.
\end{lemma}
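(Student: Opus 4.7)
For part (a), the idea is to carry out a divergence-theorem computation against the linear test function $\phi(y) = -\scalar{y-x}{\nu_x} - \tfrac{1}{2}\kappa\eps^2$, which satisfies $\nabla\phi \equiv -\nu_x$ and, using that $\de\Omega$ is locally the sphere $\de B_{1/\kappa}(0)$, obeys $\scalar{y-x}{\nu_x} = -\tfrac{\kappa}{2}|y-x|^2$ for $y\in\de\Omega$ near $x$. Combined with \eqref{def D1} this gives $\phi\equiv 0$ on $E_x^\eps$ and $-\tfrac{\kappa}{2}\eps^2 \leq \phi \leq 0$ on $\overline{D_x^\eps}$. The quantitative Fenchel inequality \eqref{quant fen}, applied with unit vector $v=-\nu_x$ and $v^\ast = z(y)\in\overline{\ran(\de_\xi f(y,\cdot))}$ (which is permitted by \eqref{cs4}), yields the pointwise bound $\scalar{z - \hat{z}_{x}}{\nu_x} \geq C|z - \hat{z}_{x}|^2$ throughout $D_x^\eps$. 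Integrating this against $\nabla\phi = -\nu_x$ and invoking the Anzellotti Gauss--Green formula \eqref{gg form} for $z$ together with the classical divergence theorem for $\hat{z}_{x}$ (which is $C^1$ by \eqref{reg hatz}) then delivers
\begin{align*}
C\int_{D_x^\eps} |z - \hat{z}_{x}|^2\,\dy \leq -\int_{D_x^\eps}\phi\,\div(\hat{z}_{x} - z)\,\dy + \int_{C_x^\eps}\phi\,[\hat{z}_{x} - z, \nu_y]\,\dH,
\end{align*}
where the boundary piece on $E_x^\eps$ drops out thanks to $\phi|_{E_x^\eps} = 0$.

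The right-hand side is estimated term by term. Using $|\phi|\lesssim \eps^2$ on $D_x^\eps$, the bound $\norm{\div\hat{z}_{x}}_{L^\infty}\lesssim 1$ from the $C^1$-regularity of $\hat{z}_{x}$, and $\mathcal{L}^d(D_x^\eps)\approx\eps^{d+1}$, the $\phi\,\div\hat{z}_{x}$ contribution is $\lesssim \eps^{d+3}$ and the $\phi\,\div z$ contribution is $\lesssim \eps^2\int_{D_x^\eps}|\div z|\,\dy$. For the surface integral over $C_x^\eps$ one splits according to the hypothesis \eqref{del ass}: on the good set, \eqref{doal} and the continuity of $\mathrm{D}_\xi f^\infty$ give $[\hat{z}_{x} - z,\nu_y] = \scalar{\mathrm{D}_\xi f^\infty(y,-\nu_x) - \mathrm{D}_\xi f^\infty(y,-\nu_y)}{\nu_y}$, which is $\lesssim|\nu_x-\nu_y|\lesssim\eps$ by the $C^1$-regularity of $\mathrm{D}_\xi f^\infty$ in Assumption \ref{A2} together with the $C^2$-regularity of $\de\Omega$; on the bad set of $\mathcal{H}^{d-1}$-measure $\lesssim\delta\eps^{d-1}$ one only uses the uniform bound \eqref{uni bd z}. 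Multiplying by $|\phi|\lesssim\eps^2$ and using \eqref{est C1} produces a boundary contribution of order $\eps^{d+2} + \delta\eps^{d+1} \lesssim \eps^{d+1}(\eps+\delta)$, and assembling all the pieces yields exactly \eqref{del stat}.

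For part (b), one decomposes $(z-\hat{z}_{x}, \mathrm{D}u)$ into its absolutely continuous and singular parts with respect to $\mathcal{L}^d$. On the singular part, \eqref{cs3} together with \eqref{anz rep} and \eqref{doal}, applied to $\hat{z}_{x}(y)=\mathrm{D}_\xi f^\infty(y,-\nu_x)\in\ran(\de_\xi f^\infty(y,\cdot))$, gives
\begin{align*}
\frac{\mathrm{d}(z-\hat{z}_{x}, \mathrm{D}u)}{\mathrm{d}|\mathrm{D}^s u|}(y) = f^\infty\!\left(y, \tfrac{\mathrm{d}\mathrm{D}^s u}{\mathrm{d}|\mathrm{D}^s u|}(y)\right) - \scalar{\hat{z}_{x}(y)}{\tfrac{\mathrm{d}\mathrm{D}^s u}{\mathrm{d}|\mathrm{D}^s u|}(y)} \geq 0,
\end{align*}
so the singular part contributes nothing to the negative part. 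For the absolutely continuous part, the characterization \eqref{cs2} is, by Fenchel duality, equivalent to the equality $\scalar{z}{\mathrm{D}^a u} = f(y,\mathrm{D}^a u) + f^\ast(y,z)$, while the Fenchel inequality gives $\scalar{\hat{z}_{x}}{\mathrm{D}^a u} \leq f(y,\mathrm{D}^a u) + f^\ast(y,\hat{z}_{x})$; subtracting yields $\scalar{z-\hat{z}_{x}}{\mathrm{D}^a u} \geq f^\ast(y,z) - f^\ast(y,\hat{z}_{x})$ and hence $(\scalar{z-\hat{z}_{x}}{\mathrm{D}^a u})_- \leq (f^\ast(y,\hat{z}_{x}) - f^\ast(y,z))_+$. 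One then takes $k$ to be the modulus of uniform continuity of $f^\ast$ on the compact set $\{(y,\xi^\ast):y\in\overline\Omega,\ \xi^\ast\in\overline{\ran(\de_\xi f(y,\cdot))}\}$. In the positively $1$-homogeneous case this is trivial since $f^\ast\equiv 0$ on its domain, so $k\equiv 0$ works. The main subtle point for the general case is to verify that $f^\ast$ really is uniformly continuous up to the boundary of $\overline{\ran(\de_\xi f(y,\cdot))}$, uniformly in $y$; this follows from joint continuity of $f$, the two-sided growth bound \eqref{bd gf1} (which forces $|f^\ast|\leq C_1$ on its domain) and the near-homogeneity condition \eqref{conv df}, which together ensure that $f^\ast$ extends continuously to the closure of $\ran(\de_\xi f(y,\cdot))$ with a $y$-independent modulus.
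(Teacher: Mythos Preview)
Your argument for part~(a) is correct and is essentially the paper's own proof: the same test function $\scalar{y-x}{\nu_x}+\tfrac{1}{2}\kappa\eps^2$, the same Gauss--Green step, and the same use of the quantitative Fenchel inequality \eqref{quant fen} to pass from $\scalar{z-\hat z_x}{\nu_x}$ to $|z-\hat z_x|^2$.

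Your treatment of the singular part in~(b) is also the paper's argument. The gap is in the absolutely continuous part. You bound $(\scalar{z-\hat z_x}{\mathrm{D}^au})_-$ by $(f^*(y,\hat z_x)-f^*(y,z))_+$ and then invoke a uniform modulus of continuity for $f^*$ on $\overline{\ran(\de_\xi f(y,\cdot))}$; but under Assumption~\ref{A2} the conjugate $f^*(y,\cdot)$ need not even be \emph{finite} on the boundary of that set, and $\hat z_x(y)=\mathrm{D}_\xi f^\infty(y,-\nu_x)$ lies exactly on that boundary. A concrete autonomous counterexample in $d=1$ is
\[
f(\xi)=|\xi|-\int_0^{|\xi|}\frac{ds}{(s+e)\log(s+e)}.
\]
This is convex, satisfies the two-sided linear growth bound, has $f^\infty(\xi)=|\xi|$, and obeys \eqref{conv df} since $t\int_t^\infty f''(s)\,ds=\tfrac{t}{(t+e)\log(t+e)}\to 0$. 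Yet $f^*(\pm 1)=\int_0^\infty\frac{ds}{(s+e)\log(s+e)}=+\infty$, so for $\hat z_x=\pm 1$ your inequality degenerates to $(\cdot)_-\le+\infty$. The growth bound \eqref{bd gf1} does not force $|f^*|\le C_1$ on $\overline{\ran(\de_\xi f)}$ either: in this same example $f^*(\xi^*)\to+\infty$ as $\xi^*\to\pm 1$ from the interior.

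The paper avoids $f^*$ altogether. Using \eqref{cs2} (so $z=\mathrm{D}_\xi f(y,\mathrm{D}^au)$ where $|\mathrm{D}^au|\ge R$) and \eqref{doal} one gets
\[
\scalar{z-\hat z_x}{\mathrm{D}^au}\;\ge\;\scalar{\mathrm{D}_\xi f(y,\mathrm{D}^au)-\mathrm{D}_\xi f^\infty(y,\mathrm{D}^au)}{\mathrm{D}^au}\;\ge\;-\mathfrak f(|\mathrm{D}^au|),
\]
with $\mathfrak f$ the quantity in \eqref{conv df}. One then splits the integral according to whether $|\mathrm{D}^au|$ exceeds $\max(R,|z-\hat z_x|^{-1/2})$: on the small-gradient set the trivial bound $|z-\hat z_x|\,|\mathrm{D}^au|$ suffices, and on the large-gradient set one uses $-\mathfrak f$. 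This produces $k(t)=\min\bigl(C,\;Rt+t^{1/2}+\mathfrak f(\max(R,t^{-1/2}))\bigr)$, which has the required properties directly from \eqref{conv df}.
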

The lemma is proven in Sections \ref{S54} and \ref{S55}.

\noindent We fix an arbitrary (small) $\delta>0$ now.
By Radon-Nikodym \cite[Thm.\ 1.153]{fonseca2007modern}\footnote{\label{fn2}The reference only states the theorems for subsets of the Euclidean space, for subsets of $\de \Omega$ it easily follows by parametrizing the boundary.}, applied to the indicator function $\mathds{1}_{u>u_0}$ and the measure $|\mathrm{D}^su_0|$ with respect to the measure $\mathcal{H}^{d-1}$ on $U'$, we know that for $\mathcal{H}^{d-1}$-a.e.\ $x\in U''\cap  \{u> u_0\}$ there exist an $\eps_{0,x}>0$, which can be chosen arbitrarily small, such that for all $\eps\leq \eps_{0,x}$ we have \begin{align}
&\mathcal{H}^{d-1}(\{y\in C_x^{\eps}\,\big|\, u(y)\leq u_0(y)\})\leq \delta\mathcal{H}^{d-1}(C_x^{\eps})\label{ball est1}\\
&|\mathrm{D}^su_0|(C_x^{\eps})\leq \delta\mathcal{H}^{d-1}(C_x^{\eps}).\label{ball est2}
\end{align}
%
%
In particular, by the Morse covering theorem \cite[Thm.\ 1.147]{fonseca2007modern}\textsuperscript{\ref{fn2}}, there exist a (countable) family of $C_{x_j}^{\eps_j}$ (which depends on $\delta$), such that \begin{align*}\mathcal{H}^{d-1}\left((U''\cap \{u>u_0\})\backslash \bigcup_j C_{x_j}^{\eps_j}\right)=0\end{align*}
and such that \eqref{ball est1} and \eqref{ball est2} hold for each of these spherical caps and such that the elements of this family are \textit{pairwise disjoint}. Furthermore, we may pick $\max_j \eps_j$ arbitrarily small.

By the boundary condition \eqref{cs5}, and its equivalent form \eqref{good bd cond}, we see that \eqref{ball est1} implies \eqref{del ass}, and in particular, on each such element of the covering, the assertion of the conditional statement of Lemma \ref{cont z1} a) holds.

For further reference, we note that, as a consequence of the disjointness and of Lemma \ref{geo lemma1}, it holds that \begin{align}
\sum_j \eps_j^{d-1}\approx\sum_j \mathcal{H}^{d-1}(C_{x_j}^{\eps_j}) \leq \mathcal{H}^{d-1}(U')\lesssim 1.\label{sum eps}
\end{align}
Now if we divide by $\eps_j^2$, and sum \eqref{part int2} together with \eqref{anz tr} over the covering, it holds that

\begin{equation} \begin{aligned}\label{eff est1}
\mel\sum_j\eps_j^{-2}\int_{D_{x_j}^{\eps_j}}(\lambda(u-h) +g-\div\hat{z}_{x_j}) (T_b(u)-T_b(\hat{u}_{x_j}))_+\dy\\
&\leq \sum_j\eps_j^{-2}\left( \int_{D_{x_j}^{\eps_j}} |(z-\hat{z}_{x_j},\mathrm{D}T_b(\hat{u}_{x_j}))|+\int_{D_{x_j}^{\eps_j}} |(z-\hat{z}_{x_j},\mathrm{D}u)_-|\right).
\end{aligned}\end{equation}
The rest of the proof in this case now consists of proving the following two claims.
\begin{claim}\label{claim1}
The right-hand side in \eqref{eff est1} goes to $0$ as $\delta,\max_j\eps_j\searrow 0$.
\end{claim}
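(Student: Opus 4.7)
The plan is to estimate the two summands on the right-hand side of \eqref{eff est1} separately, using the $L^2$--bound for $z-\hat{z}_{x_j}$ provided by Lemma \ref{cont z1} a) as the common starting point, and using Lemma \ref{dlvp} together with the Morse selection properties \eqref{ball est1}--\eqref{ball est2} to push the bounds through the summation. A preliminary observation is that by Lemma \ref{geo lemma1} c) and the disjointness of the Morse cover $\{C_{x_j}^{\eps_j}\}$, the caps $\{D_{x_j}^{\eps_j}\}$ are themselves pairwise disjoint; together with \eqref{sum eps} this yields
\begin{equation*}
\sum_j \|\div z\|_{L^1(D_{x_j}^{\eps_j})} \leq \|\div z\|_{L^1(\bigcup_j D_{x_j}^{\eps_j})}, \qquad \mathcal{L}^d\Bigl(\bigcup_j D_{x_j}^{\eps_j}\Bigr) \lesssim (\max_j \eps_j)^2,
\end{equation*}
so both quantities vanish as $\max_j \eps_j\searrow 0$ since $\div z=-(\lambda(u-h)+g)\in L^1(\Omega)$ near $U$. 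Lemma \ref{cont z1} a) applies to each cap because \eqref{ball est1} together with \eqref{good bd cond} and the $0$-homogeneity of $\mathrm{D}_\xi f^\infty$ imply the hypothesis \eqref{del ass}.

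For the first summand I would split $\mathrm{D}T_b(\hat{u}_{x_j})$ into its absolutely continuous and singular parts. The singular part is handled by the $L^\infty$--bound \eqref{uni bd z}, the chain-rule estimate \eqref{measure trunc}, and \eqref{ineq i3}: summing with weights $\eps_j^{-2}$ and using \eqref{ball est2} gives $\sum_j |\mathrm{D}^s u_0|(C_{x_j}^{\eps_j})\lesssim \delta\sum_j \mathcal{H}^{d-1}(C_{x_j}^{\eps_j})\lesssim \delta$. For the absolutely continuous part I would split the integrand at a threshold $M$, to be chosen: on $\{|\mathrm{D}^a\hat{u}_{x_j}|\leq M\}$, Cauchy--Schwarz together with Lemma \ref{cont z1} a) and $\mathcal{L}^d(D_{x_j}^{\eps_j})\approx\eps_j^{d+1}$ give the bound
\begin{equation*}
\eps_j^{-2}\!\int_{D_{x_j}^{\eps_j}}\!|z-\hat{z}_{x_j}||\mathrm{D}^a\hat{u}_{x_j}|\dy \;\lesssim\; M\eps_j^{d-1}(\eps_j+\delta)^{1/2} + M\eps_j^{(d-1)/2}\|\div z\|_{L^1(D_{x_j}^{\eps_j})}^{1/2}.
\end{equation*}
Summing, using Cauchy--Schwarz for the second piece and \eqref{sum eps}, this is bounded by $CM[(\max_j\eps_j+\delta)^{1/2}+\|\div z\|_{L^1(\bigcup_j D_{x_j}^{\eps_j})}^{1/2}]$, which is $o_M(1)$. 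On $\{|\mathrm{D}^a\hat{u}_{x_j}|>M\}$ I would use that since $i(t)/t\to\infty$ there is $\eta(M)\to 0$ with $t\leq \eta(M) i(t)$ for $t>M$; combined with \eqref{uni bd z} and \eqref{ineq i2}, summation yields a bound of the form $C\eta(M)\int_{U'}i(|\mathrm{D}^au_0|)\leq C'\eta(M)$ by \eqref{ineq i1}. Choosing $M$ large and then $\max_j\eps_j,\delta$ small handles this summand.

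For the second summand I would apply Lemma \ref{cont z1} b) so that the integrand is dominated by $k(|z-\hat z_{x_j}|)$. Given $\eta>0$, pick $t_0$ with $k(t)\leq \eta$ on $[0,t_0]$; then
\begin{equation*}
\int_{D_{x_j}^{\eps_j}}\!k(|z-\hat{z}_{x_j}|)\dy \;\leq\; \eta\,\mathcal{L}^d(D_{x_j}^{\eps_j}) + \|k\|_\infty t_0^{-2}\|z-\hat{z}_{x_j}\|_{L^2(D_{x_j}^{\eps_j})}^2,
\end{equation*}
and Lemma \ref{cont z1} a) bounds the second factor by $C(\eps_j^{d+1}(\eps_j+\delta)+\eps_j^2\|\div z\|_{L^1(D_{x_j}^{\eps_j})})$. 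Dividing by $\eps_j^2$ and summing, the total is at most $C\eta+C\|k\|_\infty t_0^{-2}(\max_j\eps_j+\delta+\|\div z\|_{L^1(\bigcup_j D_{x_j}^{\eps_j})})$, again $o(1)$ after first fixing $\eta$ small and then sending $\delta,\max_j\eps_j\to 0$. (In the $1$-homogeneous case one may take $k=0$ and this summand vanishes identically.)

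The main obstacle is that Lemma \ref{cont z1} a) delivers only $L^2$--control of $z-\hat{z}_{x_j}$, while $\mathrm{D}^a\hat{u}_{x_j}$ is merely $L^1$; a direct Cauchy--Schwarz argument therefore fails. The de la Vallée-Poussin function $i$ from Lemma \ref{dlvp} is precisely what bridges this gap, because \eqref{ineq i2} lifts the uniform integrability of $|\mathrm{D}^au_0|$ to the whole family $\{\hat{u}_{x_j}\}$ with the correct scaling factor $\eps_j^2$, matching the $\eps_j^{-2}$ weight used in \eqref{eff est1}. Without this lifting the sum would only be $O(1)$ rather than $o(1)$.
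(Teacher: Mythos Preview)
Your proof is correct and follows essentially the same approach as the paper: split the Anzellotti pairing with $\mathrm{D}T_b(\hat u_{x_j})$ into singular and absolutely continuous parts, control the former via \eqref{ineq i3} and \eqref{ball est2}, handle the latter by a threshold split combining Cauchy--Schwarz with Lemma \ref{cont z1} a) on the small side and the de~la~Vall\'ee--Poussin function from Lemma \ref{dlvp} on the large side, and treat the $k$-term by another threshold split plus Chebyshev. The only cosmetic difference is that the paper lets the threshold parameters $\sigma,\sigma'$ vary with $(\delta,\max_j\eps_j)$ at adapted rates, while you fix the auxiliary parameters first and then send $\delta,\max_j\eps_j\to 0$; both formulations are equivalent.
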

\begin{claim}\label{claim2}
If the limsup of the left-hand side in \eqref{eff est1} is $\leq 0$ as $\delta,\max_j\eps_j\searrow 0$ then $u\leq u_0$ in $U''$.
\end{claim}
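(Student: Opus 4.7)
The plan is to argue by contradiction: suppose $\mathcal{H}^{d-1}(\{u>u_0\}\cap U'')>0$ in the trace sense; I will produce a strictly positive lower bound for the left-hand side of \eqref{eff est1} persisting in the limit $\delta,\max_j\eps_j\searrow 0$, contradicting the hypothesis. The idea is to convert the weighted slab integrals on $D_{x_j}^{\eps_j}$ into boundary integrals on $C_{x_j}^{\eps_j}$ via an appropriate Lebesgue-point/trace argument, and then use the disjointness of the Morse cover to reassemble a Riemann approximation of $\int_{\{u>u_0\}\cap U''}(u-u_0)_+\dH$.

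\textbf{Step 1 (Pointwise lower bound on the integrand).} The improved curvature condition \eqref{impr cc} applied with the sign $-$ gives $-\div\hat{z}_{x_j}(y)\geq|g(y)|+c_0$ on $D_{x_j}^{\eps_j}$, hence $g-\div\hat{z}_{x_j}\geq c_0>0$ there. The integrand on the left of \eqref{eff est1} is therefore bounded below by $(c_0+\lambda(u-h))(T_b(u)-T_b(\hat{u}_{x_j}))_+$, and after writing $\lambda(u-h)=\lambda(u-\hat{u}_{x_j})+\lambda(\hat{u}_{x_j}-h)$, the first piece is non-negative on the support $\{u>\hat{u}_{x_j}\}$ of the positive part. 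The second piece is absorbed into a vanishing error in each of the three subcases of Assumption \ref{A5}: it is identically zero if $\lambda\equiv 0$ near $U$; if $h\in C^0$ near $U$ with $h=u_0$ on $U$, then $h$ is uniformly close to $u_0(x_j)$ on $D_{x_j}^{\eps_j}$ while $\hat{u}_{x_j}$ is $L^1$-close to $u_0(x_j)$ by Lemmas \ref{char1}--\ref{dlvp}; the $BV$-subcase is analogous via Poincar\'e on the thin slab, using that $\hat{u}_{x_j}$ and $h$ have the same trace $u_0$ on $C_{x_j}^{\eps_j}$.

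\textbf{Step 2 (From interior slabs to boundary values).} The crucial ingredient is a slab-adapted Lebesgue-point property. For $\mathcal{H}^{d-1}$-a.e.\ $x\in U''$, standard $BV$-trace theory, combined with the tubular structure of $D_x^\eps$, yields
\begin{align*}
\lim_{\eps\searrow 0}\frac{1}{|D_x^\eps|}\int_{D_x^\eps}|u(y)-u(x)|\dy=0,
\end{align*}
where $u(x)$ denotes the trace. For the extension $\hat{u}_x$, the characteristic construction of Lemma \ref{char1} and the modulus-of-integrability bound of Lemma \ref{dlvp}, combined with a Poincar\'e estimate on $D_x^\eps$, give the analogous
\begin{align*}
\lim_{\eps\searrow 0}\frac{1}{|D_x^\eps|}\int_{D_x^\eps}|\hat{u}_x(y)-u_0(x)|\dy=0.
\end{align*}
Using $|D_x^\eps|\approx \eps^2\mathcal{H}^{d-1}(C_x^\eps)$, these averaging properties imply that, at any such $x\in U''$ with $u(x)>u_0(x)$,
\begin{align*}
\eps^{-2}\int_{D_x^\eps}(T_b(u)-T_b(\hat{u}_x))_+\dy\geq c\,\mathcal{H}^{d-1}(C_x^\eps)\,\bigl(T_b(u(x))-T_b(u_0(x))\bigr)_+-o(\eps^{d-1})
\end{align*}
for a fixed $c>0$.

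\textbf{Step 3 (Summation and contradiction).} Since $u,u_0\in L^1(\de\Omega)$, if $\mathcal{H}^{d-1}(\{u>u_0\}\cap U'')>0$ we may fix $b$ large enough so that $E_b:=\{T_b(u)>T_b(u_0)\}\cap U''$ has positive $\mathcal{H}^{d-1}$-measure. Refining the Morse cover preceding \eqref{eff est1} so that each $x_j$ is a Lebesgue point as in Step 2 and the $C_{x_j}^{\eps_j}$ still cover $\mathcal{H}^{d-1}$-a.e.\ of $E_b$ disjointly, I sum the estimate of Step 2 over $j$, using Step 1 for the prefactor and \eqref{sum eps} to absorb the $o(\eps_j^{d-1})$ errors. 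The sum $\sum_j\mathcal{H}^{d-1}(C_{x_j}^{\eps_j})\bigl(T_b(u(x_j))-T_b(u_0(x_j))\bigr)_+$ is then a Riemann-type approximation of $\int_{E_b}(T_b(u)-T_b(u_0))_+\dH>0$ as $\max_j\eps_j\searrow 0$, producing a strictly positive lower bound for the left-hand side of \eqref{eff est1} and contradicting the hypothesis. The hardest part will be Step 2: the slabs $D_x^\eps$ (width $\eps$, depth $\eps^2$) have vanishing Lebesgue density at $x$, so the Lebesgue-point property cannot be quoted directly from the standard $BV$-trace theorem but must be extracted from the tubular slice structure; for $\hat{u}_x$ one additionally needs the transport-along-characteristics construction and the equi-integrability of $|\mathrm{D}^a\hat{u}_x|$, both of which rely critically on the $BV$-regularity of $u_0$.
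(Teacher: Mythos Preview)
Your Step 1 is correct and matches the paper's argument. The gap is in Steps 2--3, and it is exactly the non-uniformity that you flag as ``the hardest part'' but then do not actually resolve.

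First, the pointwise slab Lebesgue property you invoke for $u$ does not follow from standard $BV$ trace theory: the slabs $D_x^\eps$ have aspect ratio $\eps\times\eps^2$, so $|D_x^\eps|\approx\eps^{d+1}$ while the $BV$ trace theorem only controls averages over $B_\eps(x)\cap\Omega$ at the scale $\eps^d$. One can show, by an integration-by-parts argument, that the relevant error is not an abstract $o(\eps^{d-1})$ but rather $\lesssim|\mathrm{D}u|(D_x^\eps)+|\mathrm{D}\hat u_x|(D_x^\eps)$, and whether $|\mathrm{D}u|(D_x^\eps)/\eps^{d-1}\to 0$ for $\mathcal{H}^{d-1}$-a.e.\ $x$ is itself a nontrivial density statement. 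Second, and more seriously, even granting the pointwise convergence a.e., your summation step breaks: the errors $o(\eps_j^{d-1})$ are $x_j$-dependent little-o's, so $\sum_j o(\eps_j^{d-1})$ is $\sum_j\eps_j^{d-1}\eta_{x_j}(\eps_j)$ with $\eta_x(\eps)\to 0$ only pointwise in $x$. The bound $\sum_j\eps_j^{d-1}\lesssim 1$ from \eqref{sum eps} does not make this vanish without a uniform or dominated-convergence-type control you have not supplied. The ``Riemann approximation'' in Step 3 suffers from the same defect: replacing $\int_{C_{x_j}^{\eps_j}}(T_b u-T_b u_0)_+\dH$ by $\mathcal{H}^{d-1}(C_{x_j}^{\eps_j})(T_b u(x_j)-T_b u_0(x_j))_+$ introduces another layer of non-uniform Lebesgue-point errors.

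The paper avoids all of this by never passing through pointwise values. It proves a trace lemma (Lemma \ref{trace lemma}) by a one-line integration by parts against $\eps^{-2}(\scalar{y-x}{\nu_x}+\tfrac12\kappa\eps^2)$, yielding
\[
\eps^{-2}\int_{D_x^\eps}|w|\dy\;\gtrsim_\rho\;\int_{C_x^{\rho\eps}}|w|\dH-C\,|\mathrm{D}w|(D_x^\eps)
\]
for any $\rho\in(0,1)$. Applied with $w=(T_b(u)-T_b(\hat u_{x_j}))_+$, the boundary integrals already are $\int_{C_{x_j}^{\rho\eps_j}}(T_b u-T_b u_0)_+\dH$ (since $\hat u_{x_j}=u_0$ on $C_{x_j}^{\eps_j}$), so summing gives $\int_{\bigcup_j C_{x_j}^{\rho\eps_j}}(T_b u-T_b u_0)_+\dH$ directly, with no Riemann-sum step. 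The error terms $|\mathrm{D}w|(D_{x_j}^{\eps_j})\le|\mathrm{D}u|(D_{x_j}^{\eps_j})+|\mathrm{D}\hat u_{x_j}|(D_{x_j}^{\eps_j})$ are additive over the \emph{disjoint} slabs and hence sum to $|\mathrm{D}u|(\bigcup_jD_{x_j}^{\eps_j})+O(\max_j\eps_j^2)|\mathrm{D}u_0|(U')\to 0$. The remaining defect is the measure of $\bigcup_j(C_{x_j}^{\eps_j}\setminus C_{x_j}^{\rho\eps_j})\lesssim 1-\rho$, handled by letting $\rho\nearrow 1$ at the end. Replacing your abstract $o(\eps^{d-1})$ by this explicit, summable error is precisely the missing idea.
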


We start with proving the first claim. It holds that \begin{align}
\mel\sum_j\eps_j^{-2}\left( \int_{D_{x_j}^{\eps_j}} |(z-\hat{z}_{x_j},\mathrm{D}T_b(\hat{u}_{x_j}))|+\int_{D_{x_j}^{\eps_j}} |(z-\hat{z}_{x_j},\mathrm{D}u)_-|\right)\nonumber\\
&\lesssim\sum_j\eps_{j}^{-2}\bigg(\int_{D_{x_j}^{\eps_j}}|z-\hat{z}_{x_j}|(y)|\mathrm{D}^aT_b(\hat{u}_{x_j})|(y)\dy+\left(\norm{z}_{L^\infty}+\norm{\hat{z}_{x_j}}_{L^\infty}\right)|\mathrm{D}^sT_b(\hat{u}_{x_j})|(D_{x_j}^{\eps_j})\nonumber\\
&\quad+\int_{D_{x_j}^{\eps_j}}k(|z-\hat{z}_{x_j}|)\dy\bigg)\nonumber\\
&\lesssim\sum_j\eps_{j}^{-2}\left(\int_{D_{x_j}^{\eps_j}}|z-\hat{z}_{x_j}|(y)|\mathrm{D}^a\hat{u}_{x_j}|(y)\dy+|\mathrm{D}^s\hat{u}_{x_j}|(D_{x_j}^{\eps_j})+\int_{D_{x_j}^{\eps_j}}k(|z-\hat{z}_{x_j}|)\dy\right),\label{c1}
\end{align}
where in the first step we have split the first Anzelotti pairing into the absolutely continuous and singular part, which we estimated with its total variation and we have used \eqref{k est} for the second pairing, while the last step follows directly from \eqref{measure trunc}, as well as the bounds \eqref{bound hatz} and \eqref{uni bd z} on $z$ and $\hatz$.

It follows directly from \eqref{ineq i3}, \eqref{ball est2} and \eqref{sum eps} that \begin{align}
\sum_j\eps_{j}^{-2}|\mathrm{D}^s\hat{u}_{x_j}|(D_{x_j}^{\eps_j})\lesssim \delta \sum_j \mathcal{H}^{d-1}(C_{x_j}^{\eps_j})\lesssim \delta\rightarrow 0. \label{p1}
\end{align}
Regarding the last summand in \eqref{c1}, we split into the parts where $|z-\hat{z}_{x_j}|\geq \sigma$ and where $|z-\hat{z}_{x_j}|\leq \sigma$ for some real $\sigma$ to be chosen later, to see that \begin{align}
\mel\sum_j\eps_{j}^{-2}\int_{D_{x_j}^{\eps_j}}k(|z-\hat{z}_{x_j}|)\dy \leq k(\sigma)\sum_j\eps_{j}^{-2}\mathcal{L}^d(D_{x_j}^{\eps_j})+\norm{k}_{\sup}\sum_j\eps_{j}^{-2}\mathcal{L}^d(D_{x_j}^{\eps_j}\cap\{|z-\hat{z}_{x_j}|\geq \sigma\})\nonumber\\
&\lesssim k(\sigma)\sum_j \eps_{j}^{d-1}+\sum_j \frac{1}{\sigma^2}\left(\int_{D_{x_j}^{\eps_j}}|\div z|\dy+\eps_j^{d-1}(\eps_j+\delta)\right),\label{est kpart}
\end{align}
here the last step used \eqref{est D1} for the first summand and Lemma \ref{cont z1} a), as well as \eqref{est D1} and Chebyshevs' inequality for the second summand.

By Lemma \ref{geo lemma1} c) and the assumption on the $C_{x_j}^{\eps_j}$, the $D_{x_j}^{\eps_j}$ do not intersect and are also contained in a neighborhood of size $\approx\max_j\eps_j^2$ of the boundary of $\Omega$ by definition. Therefore, since $\div z=-\lambda(u-h)-g$ is integrable by the assumptions on the functions, we have by e.g.\ dominated convergence \begin{align}
\sum_j\int_{D_{x_j}^{\eps_j}}|\div z|\dy\xrightarrow{\max_j\eps_j}0.\label{dom conv}
\end{align}
We can furthermore let $\sigma=\sigma(\delta,\max_j\eps_j)\geq \max(\delta^\frac{1}{3},(\max_j \eps_j)^\frac{1}{3})$ go to $0$ so slowly that \begin{align}
\frac{1}{\sigma^2}\left(\sum_j\int_{D_{x_j}^{\eps_j}}|\div z|\dy+\eps_j^{d-1}(\eps_j+\delta)\right)\xrightarrow{\max_j\eps_j,\delta\rightarrow 0}0,\label{dom conv2}
\end{align}
since the second part of the sum goes to $0$ by \eqref{sum eps}.
Hence, using \eqref{sum eps} for the first summand in \eqref{est kpart}, we see that \begin{align}\label{p2}
\sum_j\eps_{j}^{-2}\int_{D_{x_j}^{\eps_j}}k(|z-\hat{z}_{x_j}|)\dy\lesssim k(\sigma)+\frac{1}{\sigma^2}\left(\sum_j\int_{D_{x_j}^{\eps_j}}|\div z|\dy+\eps_j^{d-1}(\eps_j+\delta)\right)\xrightarrow{\max_j\eps_j,\delta\rightarrow 0} 0.
\end{align}
The first part of the sum in \eqref{c1} uses a similar argument, we split $|\mathrm{D}^a \hat{u}_{x_j}|$ into the small and big parts to see that \begin{equation}\begin{aligned}
\mel\sum_j\eps_{j}^{-2}\int_{D_{x_j}^{\eps_j}}|z-\hat{z}_{x_j}|(y)|\mathrm{D}^a\hat{u}_{x_j}|(y)\dy\\
&\lesssim \sigma'\sum_j\eps_{j}^{-2} \int_{D_{x_j}^{\eps_j}}|z-\hat{z}_{x_j}|\dy
+\sum_j \eps_j^{-2}(\norm{z}_{L^\infty}+\norm{\hat{z}_{x_j}}_{L^\infty})\int_{D_{x_j}^{\eps_j}}\mathds{1}_{|\mathrm{D}^a\hat{u}_{x_j}|\geq \sigma'}|\mathrm{D}^a\hat{u}_{x_j}|(y)\dy\label{1st part}
\end{aligned}\end{equation}
with a real number $\sigma'>0$ to be fixed later.
For the first sum, we can use Cauchy-Schwarz on $\cup_j D_{x_j}^{\eps_j}$ together with \eqref{del stat} and \eqref{est D1} to see that \begin{align}
\sum_j\eps_{j}^{-2} \int_{D_{x_j}^{\eps_j}}|z-\hat{z}_{x_j}|\dy\leq \left(\sum_j \eps_j^{-2}\int_{D_{x_j}^{\eps_j}}1\dy\right)^\frac{1}{2}\left(\sum_j\eps_{j}^{-2} \int_{D_{x_j}^{\eps_j}}|z-\hat{z}_{x_j}|^2\dy\right)^\frac{1}{2}\nonumber\\
\lesssim \left(\sum_j \eps_j^{d-1}\right)^\frac{1}{2}\left(\sum_j\eps_{j}^{d-1}(\eps_j+\delta)+ \int_{D_{x_j}^{\eps_j}}|\div z|\dy\right)^\frac{1}{2}.\label{551}
\end{align}
By \eqref{sum eps} and \eqref{dom conv}, this goes to $0$.

The other summand in \eqref{1st part} on the other hand is estimated, using the bounds \eqref{bound hatz} and \eqref{uni bd z} on $z$ and $\hatz$, as well as Lemma \ref{dlvp}, as

\begin{align}
\mel\sum_j \eps_j^{-2}(\norm{z}_{L^\infty}+\norm{\hat{z}_{x_j}}_{L^\infty})\int_{D_{x_j}^{\eps_j}}\mathds{1}_{|\mathrm{D}^a\hat{u}_{x_j}|\geq \sigma'}|\mathrm{D}^a\hat{u}_{x_j}|(y)\dy\lesssim \frac{\sigma'}{i(\sigma')}\sum_j \eps_j^{-2}\int_{D_{x_j}^{\eps_j}}i(|\mathrm{D}^a\hat{u}_{x_j}|(y))\dy\nonumber\\
&\lesssim_{u_0} \frac{\sigma'}{i(\sigma')}\int_{U'} i(|\mathrm{D}^a u_0|(y))\dH(y)\xrightarrow{\sigma'\rightarrow +\infty}0,\label{last part}
\end{align}
where we used \eqref{ineq i2} and the disjointness of the sets in the last step.

Therefore, by \eqref{551} and \eqref{last part}, we see that, if we let $\sigma'\rightarrow \infty$ sufficiently slow, that \begin{align}
\sum_j\eps_{j}^{-2}\int_{D_{x_j}^{\eps_j}}|z-\hat{z}_{x_j}|(y)|\mathrm{D}^a\hat{u}_{x_j}|(y)\dy\xrightarrow{\max_j\eps_j,\delta\rightarrow 0}0.\label{p3}
\end{align}
Combining \eqref{c1}, \eqref{p1}, \eqref{p2} and \eqref{p3}, we see the Claim \ref{claim1}, and hence that \begin{align*}
\limsup_{\max_j\eps_j,\delta\rightarrow 0}\sum_j\eps_j^{-2}\int_{D_{x_j}^{\eps_j}}(\lambda(u-h) +g-\div\hat{z}_{x_j}) (T_b(u)-T_b(\hat{u}_{x_j}))_+\dy\leq 0.
\end{align*}

\subsubsection{Proof of the Claim \ref{claim2}}
We recall that the term $g-\div \hat{z}_{x_j}$ is $\gtrsim 1$ by the definition of $\hat{z}_{x_j}$ and \eqref{impr cc}.

Hence, using the monotonicity of the truncation, we see that in every $D_x^\eps$ it holds that \begin{align*}
\mel(\lambda(u-h)+g-\div \hatz)(T_b(u)-T_b(\hatu))_+\\
&\geq C(T_b(u)-T_b(\hatu))_++\lambda\big((u-\hatu)+(\hatu-h)\big)(T_b(u)-T_b(\hatu))_+\\
&\gtrsim (T_b(u)-T_b(\hatu))_+-Cb\lambda |\hatu-h|
\end{align*}
yielding, if we use \eqref{eff est1} and Claim \ref{claim1}, that \begin{align}
\mel\limsup_{\delta,\max_j\eps_j\searrow 0}\sum_j\eps_j^{-2}\int_{D_{x_j}^{\eps_j}}(T_b(u)-T_b(\hat{u}_{x_j}))_+\dy\lesssim_b\limsup_{\delta,\max_j\eps_j\searrow 0}\sum_j\eps_j^{-2}\int_{D_{x_j}^{\eps_j}}\lambda|\hat{u}_{x_j}-h|\dy.\label{lim est}
\end{align}


%

\noindent By the following Lemma, these weighted integrals converge to the traces of the corresponding functions in a uniform way up to a small cutoff of the radius. 
\begin{lemma}\label{trace lemma}
Let $\rho\in (0,1)$, then there is some constant $C$, depending on $\rho$, but not on $x$ or $\eps$, such that for every $x\in U''$ and $\eps>0$ and $w\in BV(D_x^\eps)$ it holds that \begin{align}\label{tr1}
\eps^{-2}\int_{D_x^\eps}|w|\dy\gtrsim_{\rho} \int_{C_x^{\rho\eps}}|w|\dH-C|\mathrm{D}w|(D_x^\eps).\end{align}
Furthermore
\begin{align}\label{tr2}
\eps^{-2}\int_{D_x^\eps}|w|\dy\lesssim |\mathrm{D}w|(D_x^\eps)+\int_{C_x^\eps}|w|\dH.
\end{align}
If additionally $h\in C^0(D_x^\eps)$, then 
\begin{align}\label{tr3}
\eps^{-2}\int_{D_x^\eps}|w-h|\dy\lesssim|\mathrm{D}w|(D_x^\eps)+\omega(2\eps)\eps^{d-1}+\int_{C_x^\eps}|w-h|\dH,
\end{align}
where $\omega(s)=\sup_{|y-y'|\leq s}|h(y)-h(y')|$ is the modulus of continuity of $h$.
\end{lemma}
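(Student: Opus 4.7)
The plan is to use the natural foliation of $D_x^\eps$ by line segments parallel to $\nu_x$ together with one-dimensional slicing of $BV$ functions. Since after the transformation $\de\Omega$ is locally part of the sphere $\de B_{1/\kappa}(0)$, I would write it near $x$ as the graph $y'\mapsto y'+\phi(y')\nu_x$ over the tangent hyperplane $\Pi_x$, with $\phi(x)=0$, $\nabla\phi(x)=0$, and $\phi(y')=-\tfrac{\kappa}{2}|y'-x|^2+O(|y'-x|^3)$. This gives the parametrization
\[
D_x^\eps=\bigl\{y'+s\nu_x \,:\, y'\in P,\; -\tfrac{\kappa\eps^2}{2}\le s\le \phi(y')\bigr\},
\]
with $\dy = \dd y'\,\ds$ (no Jacobian, since $\nu_x\perp \Pi_x$). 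The slice length $L(y'):=\phi(y')+\tfrac{\kappa\eps^2}{2}$ satisfies $L(y')\lesssim \eps^2$ uniformly and $L(y')\gtrsim_\rho \eps^2$ whenever $y'$ lies above $C_x^{\rho\eps}$. The boundary-graph Jacobian $\sqrt{1+|\nabla\phi|^2}$ is $\approx 1$ uniformly in $x,\eps$, so $\dH$ on $C_x^\eps$ and $\dd y'$ on its projection are comparable. By a standard strict approximation, it suffices to prove the three inequalities for $w\in C^1(\overline{D_x^\eps})$.

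For \eqref{tr1}, the fundamental theorem on a $\nu_x$-slice gives, for every admissible $s$, $|w(y'+\phi(y')\nu_x)|\le |w(y'+s\nu_x)|+\int_{s}^{\phi(y')}|\de_{\nu_x}w|\dt$. Averaging over $s\in[-\tfrac{\kappa\eps^2}{2},\phi(y')]$ and using $L(y')\gtrsim_\rho \eps^2$ yields
\[
|w(y'+\phi(y')\nu_x)|\lesssim_\rho \eps^{-2}\int_{-\kappa\eps^2/2}^{\phi(y')}|w(y'+s\nu_x)|\ds+\int_{-\kappa\eps^2/2}^{\phi(y')}|\de_{\nu_x}w|\ds.
\]
Integrating in $y'$ over the projection of $C_x^{\rho\eps}$ and invoking Fubini then proves \eqref{tr1}; the gradient term on the right collapses to at most $|\mathrm D w|(D_x^\eps)$ in the $BV$-limit. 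For the upper bound \eqref{tr2}, one runs the same slicing in reverse, starting from $|w(y'+s\nu_x)|\le |w(y'+\phi(y')\nu_x)|+\int_{s}^{\phi(y')}|\de_{\nu_x}w|\dt$, integrating over $s$ via $L(y')\lesssim \eps^2$ and then over $y'$.

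For \eqref{tr3}, I would insert
\[
|w(y)-h(y)|\le |w(y)-w(\pi(y))|+|w(\pi(y))-h(\pi(y))|+|h(\pi(y))-h(y)|,
\]
where $\pi(y'+s\nu_x):=y'+\phi(y')\nu_x\in\de\Omega$ is the normal projection onto the boundary. The first two terms are handled exactly as in \eqref{tr2}, producing the $|\mathrm D w|(D_x^\eps)$ and $\int_{C_x^\eps}|w-h|\dH$ contributions. The third term is bounded by $\omega(|y-\pi(y)|)\le \omega(2\eps)$ since $|y-\pi(y)|\le L(y')\le\tfrac{\kappa\eps^2}{2}\le 2\eps$; integrating this contribution over $D_x^\eps$ gives $\omega(2\eps)\,\mathcal L^d(D_x^\eps)\approx \omega(2\eps)\eps^{d+1}$, which, after dividing by $\eps^2$, produces the required $\omega(2\eps)\eps^{d-1}$ term.

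No substantial obstacle is anticipated; the proof is essentially a direct instance of the one-dimensional slicing theory for $BV$. The two small technical points are the uniform bound on the graph Jacobian (which follows from $|\nabla\phi|=O(\eps)$ and the $C^2$-regularity of the sphere, uniformly in $x\in U''$) and the passage from $C^1$ to $BV$ by strict density, using that the $BV$-trace is continuous from the strict topology into $L^1(\de\Omega)$ so that the traces on $C_x^\eps$ and $C_x^{\rho\eps}$ converge along the smooth approximating sequence.
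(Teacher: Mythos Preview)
Your proposal is correct and follows essentially the same approach as the paper: both exploit the thin-slab structure of $D_x^\eps$ in the $\nu_x$-direction to relate the volume integral of $|w|$ to its trace on $C_x^\eps$, at the cost of a total-variation term. The paper's implementation is slightly more direct: instead of slicing with the fundamental theorem of calculus and passing through a $C^1$ approximation, it integrates $|w|$ by parts against the linear weight $\eps^{-2}\big(\langle y-x,\nu_x\rangle+\tfrac{1}{2}\kappa\eps^2\big)$ (which vanishes on $E_x^\eps$ and is $\gtrsim_\rho 1$ on $C_x^{\rho\eps}$), working directly with the measure $\mathrm{D}|w|$; for \eqref{tr3} it freezes $h$ at the center point $h(x)$ rather than projecting to the boundary, but this is an immaterial variation.
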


The proof of the lemma is found in Section \ref{S56}.

Now by e.g.\ the coarea formula, we have $|\mathrm{D}\left((T_b(u)-T_b(\hatu))_+\right)|\leq |\mathrm{D}u|+|\mathrm{D}\hatu|$ for all $x$ and $b$.
Using \eqref{lim est}, we hence see that for each $\rho$ we have \begin{align*}
\mel\limsup_{\delta,\max_j\eps_j\searrow 0}\int_{\bigcup_j C_{x_j}^{\rho\eps_j}} (T_b(u)-T_b(u_0))_+\dH\\
&\lesssim_{\rho,b}  \limsup_{\delta,\max_j\eps_j\searrow 0}\sum_j\eps_j^{-2}\int_{D_{x_j}^{\eps_j}}\lambda|\hat{u}_{x_j}-h|\dy+\sum_j |\mathrm{D}u|(D_{x_j}^{\eps_j})+|\mathrm{D}\hat{u}_{x_j}|(D_{x_j}^{\eps_j}).
\end{align*}
Since the $D_{x_j}^{\eps_j}$ are disjoint by Lemma \ref{geo lemma1} c) and their union vanishes in the limit, we have \begin{align*}
\sum_j |\mathrm{D}u|(D_{x_j}^{\eps_j})\rightarrow 0,
\end{align*}
and the same holds for $h$ if $h\in BV$.
Similarly, by \eqref{est hatu1} and the disjointness it holds that \begin{align*}
\sum_j|\mathrm{D}\hat{u}_{x_j}|(D_{x_j}^{\eps_j})\lesssim \big(\max_j\eps_j^2\big)\sum_j |\mathrm{D}u_0|(C_{x_j}^{\eps_j})\lesssim\max_j\eps_j^2|\mathrm{D}u_0|(U')\rightarrow 0.
\end{align*}
Now applying \eqref{tr2} with $w=h-\hat{u}_{x_j}$ if $h\in BV$ or \eqref{tr3} with $w=\hat{u}_{x_j}$ if $h$ is continuous, we see from the assumption that $h=u_0$ on the boundary that \begin{align*}
\limsup_{\delta,\max_j\eps_j\searrow 0}\sum_j\eps_j^{-2}\int_{D_{x_j}^{\eps_j}}\lambda|\hat{u}_{x_j}-h|\dy\lesssim\begin{cases} \sum_j|\mathrm{D}\hat{u}_{x_j}|(D_{x_j}^{\eps_j})+|\mathrm{D}h|(D_{x_j}^{\eps_j})&\text{ if $h\in BV$}\\
\sum_j|\mathrm{D}\hat{u}_{x_j}|(D_{x_j}^{\eps_j})+\omega(2\max_j \eps_j)&\text{ if $h\in C^0$}.
\end{cases}
\end{align*}
In either case, this goes to $0$ as $\max_j \eps_j\rightarrow 0$. This shows that \begin{align}\label{fin lim}
\limsup_{\delta,\max_j\eps_j\searrow 0}\int_{\bigcup_j C_{x_j}^{\rho\eps_j}} (T_b(u)-T_b(u_0))_+\dH=0.
\end{align}
Finally, to deal with the $\rho$, we note that by \eqref{deri C} and \eqref{sum eps} it holds that\begin{align}\label{est def}
\sum_j\mathcal{H}^{d-1}(C_{x_j}^{\eps_j}\backslash C_{x_j}^{\rho\eps_j})\lesssim \sum_j (1-\rho)\eps_j^{d-1}\lesssim 1-\rho
\end{align}
and combining \eqref{fin lim} with  the fact that the $C_{x_j}^{\eps_j}$ cover $U''\cap \{u>u_0\}$ up to a $\mathcal{H}^{d-1}$-zero set by construction yields that for every $\rho\in (0,1)$ it holds that \begin{align*}
\limsup_{V\subset U',\, \mathcal{H}^{d-1}(V)\leq C(1-\rho)} \int_{U''\backslash V} (T_b(u)-T_b(u_0))_+\dH=0.
\end{align*}
Letting first $\rho\nearrow 1$ and then $b\rightarrow +\infty$ yields that $u=u_0$ in $U''$, showing this case of the theorem.\hfill\qedsymbol

\subsection{Proof of Theorem \ref{T1} if \texorpdfstring{$u_0\in W^{\alpha,p}$}{u0Wap}}

We use the same geometric setup and the same field $\hatz$ as defined in \eqref{def hatz}. We will again only show that $u\leq u_0$ a.e.\ on the boundary, the other case again follows from symmetry.
Instead of $u-\hatu$ we use $T_1(u-a)_{+}^{p}$, where $T_b(y)=\min(b,\max(y,-b))$ is again the trunctation and $a$ is some real number to be determined later.
This function lies in $BV$ by the chain rule for $BV$-functions as explained in Section \ref{S22}. We again use $z$ fulfilling the conditions in the characterisation of the subdifferential in Lemma \ref{sca char} and the same $\hat{z}_x$, defined in \eqref{def hatz}.
We see by the Gauss-Green formula \eqref{gg form} that \begin{align*}
\mel\int_{C_x^\eps} [z-\hatz,\nu_y]T_1(u-a)_{+}^{p}\dH(y)+\int_{E_x^\eps} [z-\hatz,-\nu_x]T_1(u-a)_{+}^{p}\dH\\
&=\int_{D_x^\eps}(\lambda(u-h)+g-\div\hatz)T_1(u-a)_{+}^{p}\dy+\int_{D_x^\eps}(z-\hat{z}_x,\mathrm{D}T_1(u-a)_{+}^{p})
\end{align*}
where we have again used that $-\nu_x$ is the outer normal on $E_x^\eps$.
The integral over $E_x^\eps$ is non-positive as in the $BV$-case, because $[\hat{z}_x,\nu_x]\geq [z,\nu_x]$ thanks to \eqref{quant fen}, \eqref{z tr pw} and the definition \eqref{def hatz} of $\hat{z}_x$, and hence we have the estimate \begin{equation}\begin{aligned}\label{part int 2}
\mel\int_{D_x^\eps}(\lambda(u-h)+g-\div\hatz)T_1(u-a)_{+}^{p}\dy\\
&\leq \int_{C_x^\eps} [z-\hatz,\nu]T_1(u-a)_{+}^{p}\dH(y)-\int_{D_x^\eps}(z-\hat{z},\mathrm{D}T_1(u-a)_{+}^{p}).
\end{aligned}\end{equation}
Observe that the integrand in the integral over $C_x^\eps$ can only be positive when $a\leq u$. Further observe that $[z-\hatz,\nu_y]\leq 0$ if $u\geq u_0$, as already established in \eqref{znu 1} and \eqref{znu 2} earlier. Hence, the integral over $C_x^\eps$ is only positive at points with $a\leq u\leq u_0$, yielding that 
 \begin{align}
[z-\hatz,\nu_y]T_1(u-a)_{+}^{p}\lesssim |u_0-a|^p \quad\text{$\mathcal{H}^{d-1}$-a.e.\ on $C_x^\eps$}\label{zu est}
\end{align}
since we also have that $z$ and $\hatz$ are uniformly bounded by \eqref{bound hatz} and \eqref{uni bd z}.
It follows from the Lemma \ref{anz ch} that \begin{align}
 \mel\int_{D_x^\eps} |(z-\hat{z},\mathrm{D}T_1(u-a)^p)_-|\leq p \int_{D_x^\eps} |(z-\hat{z},\mathrm{D}u)_-|\lesssim \int_{D_x^\eps} k(|z-\hatz|)\dy\label{ka part},\end{align}
where we also used Lemma \ref{cont z1} b).


We again take $\delta>0$ fixed.
We can again use the Morse covering theorem and Radon-Nikodym \cite[Thm.\ 1.147 and 1.153]{fonseca2007modern} to find $C_{x_j}^{\eps_j}$ (depending on $\delta$) which are disjoint, cover $U''\cap \{u>u_0\}$ up to a $\mathcal{H}^{d-1}$-zero set and are such that \eqref{del ass} (and therefore also \eqref{del stat}) holds on each one of them.\footnote{This is done with the sole intent of dealing with the $k$-term. In the special case in which $f$ is positively $1$-homogeneous and $k=0$, one can simplify the proof by dropping $\delta$ and taking an arbitrary covering.}

Then we again have, as in \eqref{sum eps}, that\begin{align}
\sum_j\eps_j^{d-1}\lesssim 1.\label{sum eps2}
\end{align}
Summing up and combining \eqref{zu est} and \eqref{ka part}, we see that for each fixed $p$ we have \begin{equation}\begin{aligned}
\mel\sum_j\eps_j^{-2}\int_{D_{x_j}^{\eps_j}}(\lambda (u-h)+g-\div\hat{z}_{x_j})T_{1}(u-a_j)_+^p\dy\\
&\lesssim \sum_j \eps_j^{-2} \int_{C_{x_j}^{\eps_j}}|u_0-a_j|^p\dH +\sum_j  \eps_j^{-2}\int_{D_{x_j}^{\eps_j}} k(|z-\hat{z}_{x_j}|)\dy\label{eff est 2}
\end{aligned}\end{equation}
for numbers $a_j$ which we define as the average values \begin{align}\label{a avg}
a_j:=\strokedint_{C_{x_j}^{\eps_j}} u_0\dH.
\end{align}
We use the same basic approach as in the $BV$-case, that is, showing that the right-hand side of \eqref{eff est 2} vanishes in the limit, while the left-hand side controls the trace.

\subsubsection{Proof that the right-hand side in \eqref{eff est 2} vanishes in the limit} Jensen's inequality lets us estimate the first integral on the right-hand side in \eqref{eff est 2} as \begin{align*}
\mel\eps_j^{-2} \int_{C_{x_j}^{\eps_j}}|u_0-a_j|^p\dH=\eps_j^{-2} \int_{C_{x_j}^{\eps_j}}\left|u_0(y)-\strokedint_{C_{x_j}^{\eps_j}} u_0(y')\dH(y')\right|^p\dH(y)\\
&\leq \eps_j^{-2}\int_{C_{x_j}^{\eps_j}}\strokedint_{C_{x_j}^{\eps_j}} \left|u_0(y)-u_0(y')\right|^p\dH(y)\dH(y').
\end{align*}
Using the property \eqref{cald grad}, with a function $r$ (depending on $\max_j\eps_j$) fulfilling \eqref{cald grad} with $\eps_j$ in place of $\delta$, we see that, since $\alpha p\geq 2$ it holds that  \begin{align}
\mel\eps_j^{-2} \int_{C_{x_j}^{\eps_j}}|u_0-a_j|^p\dH(y)\lesssim \int_{C_{x_j}^{\eps_j}}\strokedint_{C_{x_j}^{\eps_j}} r(y)^p+r(y')^p\dH(y)\dH(y')\nonumber\\
&=2\int_{C_{x_j}^{\eps_j}}r(y)^p\dH(y),\label{est ua}
\end{align}
since $|y-y'|\leq 2\eps_j$.
As we only need \eqref{cald grad} for points with distance $\leq 2\max_j\eps_j$, we see from \eqref{r lim} and summing up that in the limit \begin{align}
\sum_j\eps_j^{-2} \int_{C_{x_j}^{\eps_j}}|u_0-a_j|^p\dH\lesssim \int_{U'} r(y)^p\dH(y)\xrightarrow{\max_j\eps_j\rightarrow 0}0.\label{lim 1st part}
\end{align}
In the other sum in \eqref{eff est 2}, we split the sum into the small and big parts of $|z-\hat{z}_{x_j}|$  to see that \begin{align*}
\sum_j  \eps_j^{-2}\int_{D_{x_j}^{\eps_j}} k(|z-\hat{z}_{x_j}|)\dy\leq \sum_j k(\sigma)\eps_j^{-2}\mathcal{L}^{d}(D_{x_j}^{\eps_j})+\sum_j \eps_j^{-2}\int_{D_{x_j}^{\eps_j}}\mathds{1}_{|z-\hat{z}_{x_j}|\geq \sigma}\dy
\end{align*}
where $\sigma$ is a real number.
By \eqref{est D1}, \eqref{sum eps2} and the fact that $\lim_{t\searrow 0} k(t)=0$ the first summand goes to $0$ if $\sigma\rightarrow 0$.
The second summand is estimated by Chebyshev's inequality and \eqref{del stat} as \begin{align*}
\sum_j \eps_j^{-2}\int_{D_{x_j}^{\eps_j}}\mathds{1}_{|z-\hat{z}_{x_j}|\geq \sigma}\dy\lesssim \sum_j\frac{1}{\sigma^2}\left(\eps_j^{d-1}(\eps_j+\delta)+\int_{D_{x_j}^{\eps_j}}|\div z|\dy\right),
\end{align*}
if we let $\frac{1}{\sigma^2}\rightarrow \infty$ slow enough (depending on $\max_j\eps_j$ and $\delta$), then this goes to $0$ as already established in \eqref{dom conv}, \eqref{dom conv2}.

Together we see that \begin{align*}
\sum_j  \eps_j^{-2}\int_{D_{x_j}^{\eps_j}} k(|z-\hat{z}_{x_j}|)\dy\xrightarrow{\delta,\max_j\eps_j\rightarrow 0}0,
\end{align*}
showing together with \eqref{lim 1st part} that the right hand side of \eqref{eff est 2} vanishes in the limit.

\subsubsection{Proof that the left-hand side of \eqref{eff est 2} controlls the trace}
Again, it holds that $g-\div\hat{z}_{x_j}$ is $\gtrsim 1$ by \eqref{def hatz} and  \eqref{impr cc},  and using the pointwise inequality $cT_1(c)_+^{p}\geq 0$, we see that \begin{align*}
&\sum_j\eps_j^{-2}\int_{D_{x_j}^{\eps_j}}(\lambda (u-h)+g-\div\hat{z}_{x_j})T_1(u-a_j)_+^p\dy\\
&=\sum_j\eps_j^{-2}\int_{D_{x_j}^{\eps_j}}\left(\lambda ((u-a_j)+(a_j-h))+g-\div\hat{z}_{x_j}\right)T_{1}(u-a_j)_{+}^{p}\dy \nonumber\\
&\geq C\sum_j\eps_j^{-2} \int_{D_{x_j}^{\eps_j}}T_{1}(u-a_j)_{+}^{p}\dy-C'\sum_j\eps_j^{-2} \int_{D_{x_j}^{\eps_j}}\lambda(y)|h-a_j|\dy,
\end{align*}
where we have also used that $T_1(u-a)_+\leq  1$.

The first sum of integrals in the last line can be regarded as a single integral on the set $\cup D_{x_j}^{\eps_j}$ with respect to the measure whose density with respect to the Lebesgue measure in $D_{x_j}^{\eps_j}$ is $\eps_j^{-2}$, in particular we can apply Hölder with respect to this measure to see that \begin{align*}
\mel\left(\sum_j\eps_j^{-2}\int_{D_{x_j}^{\eps_j}}T_{1}(u-a_j)_+\dy\right)^p\\ &\lesssim\left(\sum_j \eps_j^{-2}\mathcal{L}^d(D_{x_j}^{\eps_j})\right)^\frac{p^2}{p-1}\times\Bigg(\sum_j\eps_j^{-2}\int_{D_{x_j}^{\eps_j}}\big(\lambda(y) (u-h)+g-\div\hat{z}_{x_j}\big)T_{1}(u-a_j)_{+}^{p}\dy\\
&\quad+\sum_j\eps_j^{-2} \int_{D_{x_j}^{\eps_j}}\lambda(y)|h-a_j|\dy\Bigg).
\end{align*}
Here, the first sum can be estimated as \begin{align*}
\sum_j \eps_j^{-2}\mathcal{L}^d(D_{x_j}^{\eps_j})\lesssim \sum_j \eps_j^{d-1}\lesssim 1,
\end{align*}
by \eqref{est D1} and \eqref{sum eps2}, and hence this factor may be disregarded.

Together, we have obtained \begin{align*}
\limsup_{\delta,\max_j\eps_j\rightarrow 0}\sum_j\eps_j^{-2}\int_{D_{x_j}^{\eps_j}}T_{1}(u-a_j)_+\dy\lesssim&\limsup_{\delta,\max_j\eps_j\rightarrow 0} \left(\sum_j \eps_j^{-2} \int_{D_{x_j}^{\eps_j}}\lambda(y)|h-a_j|\dy\right)^\frac{1}{p}.
\end{align*} 
We may then apply the trace Lemma \ref{trace lemma} for each $\rho\in (0,1)$, together with the fact that the total variation of $T_{1}(u-a_j)_+$ is less of equal than the one of $u$ by e.g.\ the coarea formula to conclude in the same way as in the $BV$-case that for every fixed $\rho\in (0,1)$ it holds that \begin{align*}
\limsup_{\delta,\,\max_j\eps_j\rightarrow 0}\sum_j \int_{C_{x_j}^{\rho\eps_j}}T_{1}(u-a_j)_+\dH\lesssim_{\rho} \limsup_{\delta,\,\max_j\eps_j\rightarrow 0}\left(\sum_j\eps_j^{-2} \int_{D_{x_j}^{\eps_j}}\lambda(y)|h-a_j|\dy\right)^{\frac{1}{p}}.
\end{align*}
%
%
%
Regarding the right-hand side, using Lemma \ref{trace lemma} and arguing as in the $BV$-case, we have \begin{align*}
\limsup_{\max_j\eps_j,\delta\rightarrow 0} \sum_j\eps_j^{-2} \int_{D_{x_j}^{\eps_j}}\lambda|h-a_j|\dy\lesssim \limsup_{\max_j\eps_j,\delta\rightarrow 0}\norm{\lambda}_{L^\infty}\sum_j\int_{C_{x_j}^{\eps_j}}|u_0-a_j|\dH,
\end{align*}
but we also have, by arguing as for \eqref{est ua}, that\begin{align*}
\sum_j\norm{\lambda}_{L^\infty}\int_{C_{x_j}^{\eps_j}}|u_0-a_j|\dH\lesssim \norm{\lambda}_{L^\infty}\max_j\eps_j^\alpha\int_{U'} r(y)\dH(y)\rightarrow 0,
\end{align*}
yielding \begin{align*}
\lim_{\max_j\eps_j\rightarrow 0}\sum_j\eps_j^{-2} \int_{D_{x_j}^{\eps_j}}\lambda(y)|h-a_j|\dy=0,
\end{align*}
and \begin{align}
\lim_{\delta,\,\max_j\eps_j\rightarrow 0}\sum_j \int_{C_{x_j}^{\rho\eps_j}}T_{1}(u-a_j)_+\dH= 0.\label{lim est2}
\end{align}
 It remains to estimate $\mathcal{H}^{d-1}(\{u> u_0\})$ with this sum. Observe that by Chebyshev and the triangle inequality we have, for any $a$, \begin{equation}\begin{aligned}
\mel\mathcal{H}^{d-1}(\{y\in C_x^\eps\,\big|\,u(y)\geq a+1\})= \mathcal{H}^{d-1}(\{y\in C_x^\eps\,\big|\,T_{1}(u(y)-a)_+\geq 1\})\\
&\leq \int_{C_x^\eps}T_{1}(u(y)-a)_+\dH.\label{tru est}
\end{aligned}\end{equation}
On the other hand, we also have \begin{align*}
(u(y)-u_0(y))_+\leq T_{1}(u(y)-a)_++|u_0(y)-a| \quad \text{if $u(y)\leq a+1$}
\end{align*}
hence, by Chebyshev \begin{equation}\begin{aligned}
&\mathcal{H}^{d-1}(\{y\in C_x^\eps\,\big|\,u(y)\leq a+1,\, u(y)-u_0(y)\geq c \})\leq\frac{1}{c}\int_{C_x^\eps} (T_{1}(u)-a)_++|u_0-a|\dH\label{tru est2}
\end{aligned}\end{equation}
for each $c> 0$.

Similarly as for  \eqref{est ua} above, we have that \begin{align}
\sum_j \int_{C_{x_j}^{\eps_j}} \left|u_0(y)-a_j\right|\dH(y)\lesssim \max_j\eps_j^\alpha\int_{U'} r\dH\rightarrow 0.\label{u0a}
\end{align}
Hence, using \eqref{tru est}, \eqref{tru est2} and \eqref{u0a}, we see that \begin{align*}
\mel\mathcal{H}^{d-1}\left(U''\cap\{u-u_0\geq c\}\cap \bigcup_j C_{x_j}^{\rho\eps_j}\right)\\
&\leq \sum_j \mathcal{H}^{d-1}\left(\{y\in C_{x_j}^{\rho\eps_j}\,\big|\,u(y)\geq a_j+1\}\right)+\mathcal{H}^{d-1}\left(\{y\in C_{x_j}^{\rho\eps_j}\,\big|\,u(y)\leq a_j+1,\, u(y)-u_0(y)\geq c \}\right)\\
& \leq\frac{1}{c} \sum_j \int_{C_{x_j}^{\rho\eps_j}}(T_{1}(u(y)-a_j)_+\dH+\frac{1}{c}\max_j\eps_j^\alpha\int_{U'} r\dH.
\end{align*}
This converges to $0$ for every fixed $c>0$ by \eqref{lim est2}. Since we have \begin{align*}
\mathcal{H}^{d-1}\left(\bigcup_j(C_{x_j}^{\eps_j}\backslash C_{x_j}^{\rho\eps_j})\right)\xrightarrow{\rho\nearrow 1} 0,
\end{align*}
as in \eqref{est def} and because the $C_{x_j}^{\eps_j}$ cover $U''\cap \{u>u_0\}$ up to a $\mathcal{H}^{d-1}$-zero set, we conclude the theorem in this case.\hfill\qedsymbol

\subsection{Proof of Thm.\ \ref{T1} in the case \texorpdfstring{$u_0\in C^0$}{u0C0}}
Yet again, it suffices to consider the positive part of $u-u_0$.

We start with \eqref{part int 2} (which did not use the regularity of $u_0$ at all and therefore still holds here), where we again take \begin{align*}
a=\strokedint_{C_x^\eps} u_0\dH.
\end{align*}
Then, by the extra assumption in \ref{A6} that in this case either $\lambda=0$ or $h$ is a continuous extension of $u_0$ (near $U$) we have $\norm{\lambda(a-h)}_{L^\infty(D_x^\eps)}\rightarrow 0$ as $\eps\searrow 0$ yielding, together with \eqref{impr cc}, that in $D_x^\eps$ it holds that \begin{align*}
\mel(\lambda (u-h)+g-\div \hatz)T_1(u-a)_+^{p}\\
&\geq \left(\lambda ((u-a)+(a-h))+g-\div \hatz\right)T_1(u-a)_+^{p}\gtrsim T_1(u-a)_+^{p},
\end{align*}
uniformly in $p$, where we have again used that $(u-a)T_1(u-a)_+\geq 0$.
We now take the limit $p\rightarrow \infty$ of the $p$-th root of \eqref{part int 2}, since the limit of the $L^p$-norm is the $L^\infty$-norm, we see that \begin{align*}
\mel\esssup_{y\in C_x^\eps} T_1(u(y)-a)_+\leq\esssup_{y\in D_x^\eps} T_1(u(y)-a)_+\\
&\lesssim \limsup_{p\rightarrow \infty}\left( \int_{D_x^\eps}(\lambda (u-h)+g-\div \hatz)T_1(u-a)_+^p\dy\right)^\frac{1}{p}\\
&\lesssim \limsup_{p\rightarrow \infty}\left(\int_{C_x^\eps} [z-\hatz,\nu]T_1(u-a)_+^p\dH\right)^\frac{1}{p}+\left(\int_{D_x^\eps}|(z-\hatz,\mathrm{D}(T_1(u-a)_+^p))_-|\right)^\frac{1}{p}\nonumber\\
&\lesssim\sup_{y\in C_x^\eps} T_1(u_0-a)_++\limsup_{p\rightarrow \infty}\left(\int_{D_x^\eps}|(z-\hatz,\mathrm{D}T_1(u-a)_+^p)_-|\right)^\frac{1}{p}.
\end{align*}
Using Lemma \ref{anz ch} and that we can take $k=0$ in Lemma \ref{cont z1} b), due to the extra assumption of positive $1$-homogeneity of $f$ in \ref{A6}, we see that the second summand vanishes.

%
Now if we let $\eps\rightarrow 0$, then the right-hand side goes to $0$ by continuity, while $a$ converges to $u_0(x)$ by the continuity of $u_0$, showing that $u(x)\leq u_0(x)$ whenever $x$ is a Lebesgue point of $u|_{\de\Omega}$.\hfill\qedsymbol



\subsection{Proof of Theorem \ref{T2}}
Throughout the entire proof, we assume that \ref{B1}-\ref{B3} hold, in particular, the lemmata might require them even though we don't explicitly mention it.

We use a similar scheme as in the $BV$ case for Theorem \ref{T1}, but without transforming the domain. 
As the statement of the theorem is local, we again only need to show that for every point $x^0\in U$, we can find a neighborhood where $u=u_0$.

We shall consider $U'\subset U$ of the form $U'=\de\Omega\cap B$ for some (open) ball $B$.
 We can also assume that $\Omega\cap B$ is convex by choosing $B$ sufficiently small, thanks to the Assumption \ref{B1} on $U$.

Since $U$ is uniformly convex and $C^2$, we can also assume that $U'$ is uniformly convex in the sense that it can be written as the graph of a uniformly convex $C^2$ function. It is also not restrictive to assume that such a graph is taken over the plane perpendicular to $\nu_{x_0}$ for some $x_0\in U'$ after potentially making $U'$ smaller.

We use the following adaptions of the sets for $x\in U'$ and small $\eps$: \begin{align}
&\tilde{E}_x^\eps:=\{y\in \overline{\Omega}\cap B\,\big|\, \scalar{y-x}{\nu_x}=-\eps^2\}\\
&\tilde{D}_x^\eps:=\{y\in \Omega\cap B\,\big|\, \scalar{y-x}{\nu_x}>-\eps^2\}\\
&\tilde{C}_x^\eps=\de\tilde{D}_x^\eps\backslash\tilde{E}_x^\eps=\{y\in\de \Omega\cap B\,\big|\, \scalar{y-x}{\nu_x}>-\eps^2\}.
\end{align}
Figure \ref{figure2} shows a sketch. 
 We have the following geometric properties.
 
\begin{figure}
\includegraphics[width=5cm]{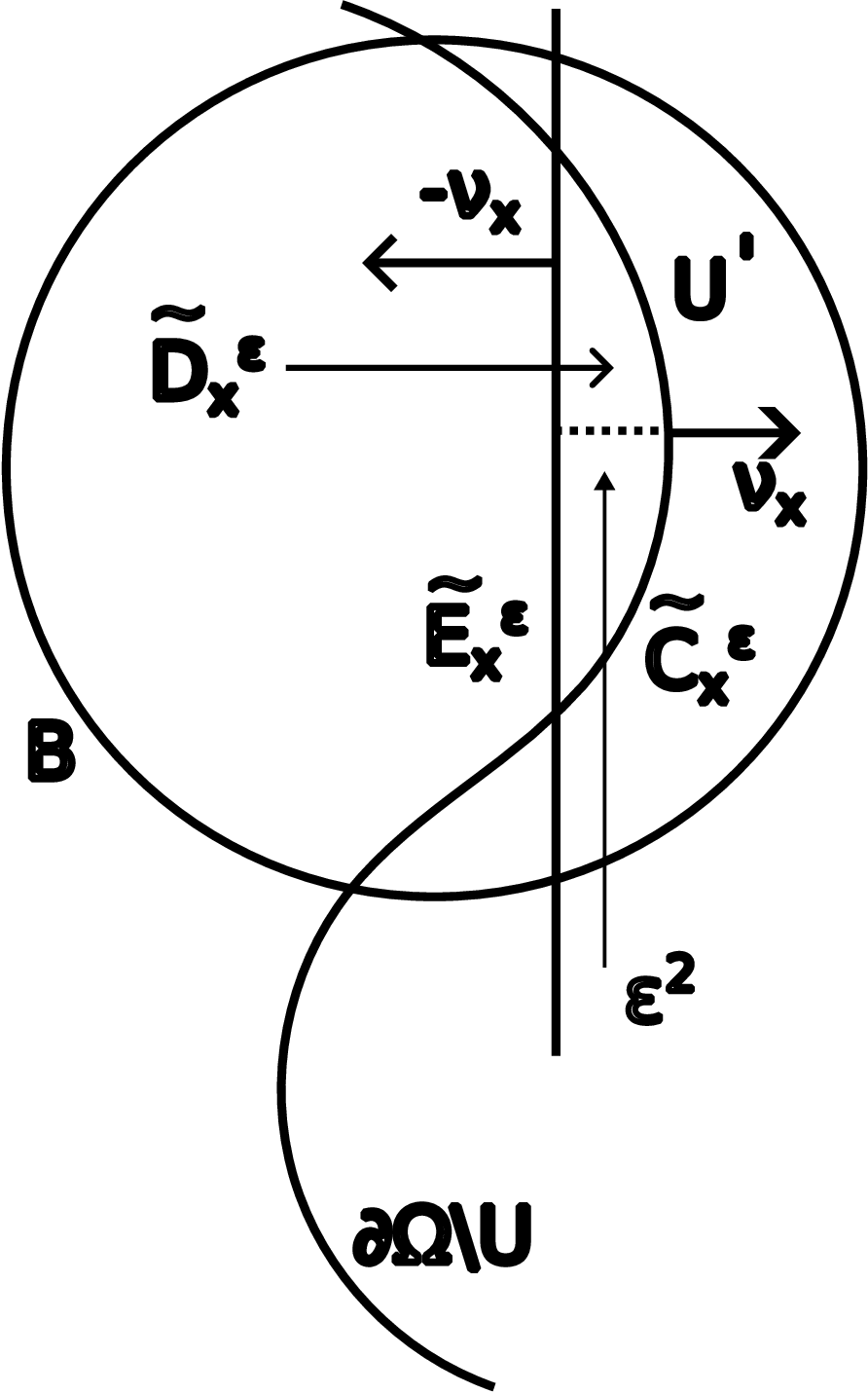} 
\caption{Sketch of the geometry in the proof of Theorem \ref{T2}}\label{figure2}
\end{figure}

\begin{lemma}\label{geo lemma 2}
For every $x^0\in U'$, we can find an open $U''\subset U$ with $x^0\in U''$, such that $U''$ has a positive distance to the relative boundary $\de U'$ and there is a constant $C>0$ such that we have the following geometric properties for all $x\in U''$ and $\eps<C\min(\dist(U'',\de B), \dist(U'',\de\Omega\backslash U'))$:

\noindent \textbf{a)} It holds that $\tilde{C}_x^\eps\cup \tilde{D}_x^\eps\cup \tilde{E}_x^\eps\subset B_{C'\eps}(x)$ for some constant $C'$, independent of $x$ and $\eps$, and we have the bounds \begin{align}
&\mathcal{H}^{d-1}(\tilde{C}_x^\eps)\approx \eps^{d-1}\label{est C2}\\
&\mathcal{L}^{d}(\tilde{D}_x^\eps)\approx \eps^{d+1}\label{est D2}\\
&\mathcal{H}^{d-1}(\tilde{E}_x^\eps)\approx \eps^{d-1}\label{est E2}
\end{align}
and $\mathcal{H}^{d-1}(\tilde{C}_x^\eps)$ is Lipschitz in $\eps$ with \begin{align}
\frac{\mathrm{d}}{\mathrm{d}\eps}\mathcal{H}^{d-1}(\tilde{C}_x^\eps)\approx \eps^{d-2}\label{C lip}
\end{align}
at all $\eps$ at which the derivative exists, all uniformly in $x\in U''$ and small $\eps$.

\noindent\textbf{b)} The boundary $\de \tilde{D}_x^\eps$ is Lipschitz.

\noindent\textbf{c)} If $\tilde{D}_x^\eps$ and $\tilde{D}_{x'}^{\eps'}$ intersect, then so do $\tilde{C}_x^\eps$ and $\tilde{C}_{x'}^{\eps'}$.

\noindent\textbf{d)} Let $\mathfrak{pr}$ denote the orthogonal projection onto the plane perpendicular to $\nu_{x^0}$, then each of the sets $\mathfrak{pr}(\tilde{C}_x^\eps)$ is convex and there is constant $C$, not depending on $x$ or $\eps$ such that \begin{align}
B_{\frac{1}{C}\eps}(\mathfrak{pr}(x))\subset \mathfrak{pr}(\tilde{C}_x^\eps)\subset B_{C\eps}(\mathfrak{pr}(x)).\label{morse prereq}
\end{align}

\noindent\textbf{e)} Let $\rho\in (0,1)$, then it holds that \begin{align}
\eps\lesssim_\rho|\nu_x-\nu_y|\lesssim \eps \quad\text{in $\tilde{C}_x^\eps\backslash \tilde{C}_x^{\rho\eps}$,}\label{normal change}
\end{align}
uniformly in $x$ and $\eps$.
\end{lemma}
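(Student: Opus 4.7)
The whole statement reduces to a computation on the graph of a uniformly convex $C^2$-function. After rotating coordinates we may assume $\nu_{x^0}=-e_d$, so that $\mathfrak{pr}(y',y_d)=y'$ and $U'$ is the graph of a $C^2$-function $\phi$ with $c\,I\leq D^2\phi\leq C\,I$, with $\Omega\cap B$ lying on the side $y_d>\phi(y')$. For $x=(x',\phi(x'))\in U''$ the outer normal is $\nu_x=(\nabla\phi(x'),-1)/\sqrt{1+|\nabla\phi(x')|^2}$, and if we set
\[
\psi_x(y'):=\phi(y')-\phi(x')-\nabla\phi(x')\cdot(y'-x'),
\]
then $\tfrac c2|y'-x'|^2\leq\psi_x(y')\leq\tfrac C2|y'-x'|^2$. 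In these terms, up to the harmless factor $\sqrt{1+|\nabla\phi(x')|^2}\in[1,2]$, the set $\tilde{C}_x^\eps$ is the graph of $\phi$ over $\{\psi_x<\eps^2\}$, the set $\tilde{D}_x^\eps$ is the region between that graph and the slanted plane $y_d=\phi(x')+\nabla\phi(x')\cdot(y'-x')+\eps^2$, and $\tilde{E}_x^\eps$ is a flat piece of that plane; everything is controlled by the level sets of $\psi_x$.

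Parts (a), (d), (e) are then routine from this normal form. The two-sided bound on $\psi_x$ immediately gives the inclusions of (d) \eqref{morse prereq}, as well as $\tilde{C}_x^\eps\cup\tilde{D}_x^\eps\subset B_{C'\eps}(x)$. The measure estimates \eqref{est C2}--\eqref{est E2} follow from the change of variables $y'=x'+\eps z$, which turns $\{\psi_x<\eps^2\}$ into a region trapped between two fixed ellipsoids in $z$, combined with Fubini in the $e_d$-direction for $\mathcal{L}^d(\tilde{D}_x^\eps)$. The Lipschitz continuity \eqref{C lip} comes from the coarea formula: on $\{\psi_x=\eps^2\}$ one has $|\nabla\psi_x|=|\nabla\phi(y')-\nabla\phi(x')|\approx\eps$ and $\mathcal{H}^{d-2}(\{\psi_x=\eps^2\})\approx\eps^{d-2}$, so $\tfrac{\mathrm{d}}{\mathrm{d}\eps}\mathcal{H}^{d-1}(\tilde{C}_x^\eps)\approx\eps^{d-2}$. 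Part (e) is $|\nu_y-\nu_x|\approx|\nabla\phi(y')-\nabla\phi(x')|\approx|y'-x'|$, together with the fact that the two-sided bound on $\psi_x$ forces $|y'-x'|\approx_\rho\eps$ whenever $y\in\tilde{C}_x^\eps\setminus\tilde{C}_x^{\rho\eps}$. For (b), when $\eps$ is small enough that $\tilde{D}_x^\eps$ stays well inside $B$, we have $\de\tilde{D}_x^\eps=\overline{\tilde{C}_x^\eps}\cup\tilde{E}_x^\eps$, and both pieces are $C^2$-hypersurfaces meeting transversally (though with opening angle of order $\eps$) along the $(d-2)$-manifold $\tilde{C}_x^\eps\cap\tilde{E}_x^\eps$; this suffices to make $\de\tilde{D}_x^\eps$ Lipschitz for each fixed $\eps$, though not uniformly in $\eps$.

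The only assertion requiring a genuinely separate idea is (c), for which the plan is a vertical-projection argument. Given $y\in\tilde{D}_x^\eps\cap\tilde{D}_{x'}^{\eps'}$, let $\bar y:=(y',\phi(y'))\in\de\Omega$ be obtained from $y$ by moving in direction $-e_d$. Shrinking $U''$ if necessary ensures that both $\nu_x$ and $\nu_{x'}$ are close to $\nu_{x^0}=-e_d$, so that $\langle e_d,\nu_x\rangle,\langle e_d,\nu_{x'}\rangle<0$; since $y_d-\phi(y')>0$ one then has
\[
\langle\bar y-x,\nu_x\rangle=\langle y-x,\nu_x\rangle+(\phi(y')-y_d)\langle e_d,\nu_x\rangle>\langle y-x,\nu_x\rangle>-\eps^2,
\]
and the identical computation for $x'$ yields $\bar y\in\tilde{C}_x^\eps\cap\tilde{C}_{x'}^{\eps'}$. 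The main delicate point is really (b): the corner at the edge $\tilde{C}_x^\eps\cap\tilde{E}_x^\eps$ degenerates to a cusp as $\eps\to 0$, so one must be careful to claim only qualitative, not uniform, Lipschitzness. Once (b) is read in this qualitative sense, the remainder is a matter of exploiting the two-sided bound on $D^2\phi$.
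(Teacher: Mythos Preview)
Your proposal is correct and follows the same overall approach as the paper: graph coordinates over the tangent plane at $x^0$, together with the two-sided quadratic bound on $\psi_x$. A few points of comparison:

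For (b) you are working harder than necessary. Recall that in the setup preceding the lemma it is arranged that $\Omega\cap B$ is convex; hence $\tilde{D}_x^\eps$ is the intersection of a convex set with the half-space $\{y:\langle y-x,\nu_x\rangle>-\eps^2\}$ and is therefore itself convex. Any bounded open convex set has Lipschitz boundary, which disposes of (b) in one line and in particular dissolves your concern about the degenerating corner. Your transversal-intersection argument is not wrong, but the convexity observation is what the paper uses and it is cleaner.

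For (c) your vertical projection along $-e_d$ is a slight variant of the paper's argument, which instead pushes the point $y$ along $+\nu_x$ until it hits $\partial(\tilde{D}_x^\eps\cap\tilde{D}_{x'}^{\eps'})$ and then checks that it cannot land on either $\tilde{E}$. Both work; your version is arguably more direct since $-e_d$ is a fixed direction.

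For the derivative estimate \eqref{C lip} the paper bounds $\mathcal{H}^{d-1}(P_\eps\setminus P_{\eps'})$ directly by Fubini in polar coordinates rather than invoking the coarea formula; the two computations are equivalent. In your coarea sketch note that you also need the chain-rule factor $2\eps$ from $\frac{\mathrm d}{\mathrm d\eps}(\eps^2)$, which cancels against the $1/|\nabla\psi_x|\approx 1/\eps$ to give the stated $\eps^{d-2}$.
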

The lemma is proven in Section \ref{S52}.

During the rest of the proof, we fix such an $U''$ and show that $u=u_0$ holds in $U''$, which then yields the theorem because $U$ can be covered with such sets.
We will only consider $\eps<<\min(\dist(U'',\de B), \dist(U'',\de\Omega\backslash U'))$ so that $\tilde{C}_x^\eps\subset U'$, there is no intersection between $\tilde{D}_x^\eps$ and $\de\Omega\backslash U'$, and the statements of the Lemma hold.

\smallskip



We use the functions $\mu_1,\mu_2$ from the Assumption \eqref{r1 split} on $f$ and define for $x\in U'$ \begin{align*}
\zeta_x:=\mu_2(\nu_x)
\end{align*}
Observe that, by the definition \eqref{r1 split} of $\mu_1$ and $\mu_2$, the property \eqref{doal} and the growth condition \eqref{gc finf}, it holds that \begin{align}
\scalar{\zeta_x}{\nu_x}=\frac{1}{\scalar{\mu_1(a)}{a}}f^\infty(a\otimes \nu_x)\gtrsim 1,\label{pos scalar}
\end{align}
where $a$ is arbitrary.
Therefore, by the implicit function theorem, we can write every point in $\Omega$ in a neighborhood of $x$ uniquely in the form \begin{align}
y=y_0^x+t\zeta_x,\label{imp norm}
\end{align}
where $t\leq 0$ and $y_0^x\in \de\Omega$ is some point near $x$. Since all possible values of $\zeta_x$ lie in a compact set, it is possible to pick $\eps$ so small that for every $x\in U''$, the set $\tilde{D}_x^\eps$ lies in a neighborhood of the boundary where such a unique representation is possible.
It follows from \eqref{pos scalar} and the definition of the sets that for $y\in \tilde{D}_x^\eps$, the point $y_0^x$ must lie in $\tilde{C}_x^\eps$.

We then set \begin{align*}
\bar{u}_x(y):=u_0(y_0^x).
\end{align*}

\begin{lemma}\label{char2}
The function $\bar{u}_x$ lies in $(BV\cap L^2)(\tilde{D}_x^\eps,\R^n)$, and it holds that \begin{align}
&a\otimes\zeta_x: \mathrm{D}\bar{u}_x=0\label{van}\\
&|\mathrm{D}\bar{u}_x|(\tilde{D}_x^\eps)\lesssim \eps^2|\mathrm{D}u_0|(\tilde{C}_x^\eps)\label{est baru1}\\
&|\mathrm{D}^s\bar{u}_x|(\tilde{D}_x^\eps)\lesssim \eps^2|\mathrm{D}^su_0|(\tilde{C}_x^\eps)\label{est baru2}
\end{align}
for every $a\in \R^n$.
Furthermore, there is a non-decreasing function $i:\R_{\geq 0}\rightarrow \R_{\geq 0}$ depending on $u_0$, but not on $\eps$ or $x$, such that
\begin{align*}
\lim_{t\rightarrow +\infty} \frac{i(t)}{t}=+\infty,
\end{align*}
it holds that \begin{align*}
\int_{U'} i(|\mathrm{D}^au_0|(y))\dH(y)<\infty
\end{align*}
and \begin{align*}
\int_{\tilde{D}_x^\eps} i(|\mathrm{D}^a\bar{u}_x|(y))\dy\lesssim_{u_0} \eps^2\int_{\tilde{C}_x^\eps} i(|\mathrm{D}^au_0|(y))\dH(y),
\end{align*}
uniformly in $x\in U''$ and $\eps>0$ small.
\end{lemma}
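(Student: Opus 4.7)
The plan is to parametrize $\tilde{D}_x^\eps$ via the bi-Lipschitz change of coordinates
$\Psi_x(y_0, t) := y_0 + t\zeta_x$
from $\{(y_0, t) : y_0 \in \tilde{C}_x^\eps,\, t_-(y_0) \leq t \leq 0\}$ onto $\tilde{D}_x^\eps$, where $t_-(y_0)$ is the entry time of the $\zeta_x$-line through $y_0$. By the non-degeneracy $\scalar{\zeta_x}{\nu_{y_0}} \gtrsim 1$ in \eqref{pos scalar} together with the uniform continuity of $\mu_2$ and $\nu$, $\Psi_x$ is a bi-Lipschitz diffeomorphism with Jacobian bounded above and below by positive constants uniform in $x \in U''$ and $\eps$, while the geometry of $\tilde{D}_x^\eps$ (depth $\eps^2$ in the $\nu_x$-direction, cf.\ Lemma \ref{geo lemma 2} a)) forces $|t_-(y_0)| \lesssim \eps^2$. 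By construction, $\bar{u}_x \circ \Psi_x$ depends only on $y_0$, so $\bar{u}_x$ is constant along each $\zeta_x$-line in $\tilde{D}_x^\eps$.

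When $u_0$ is smooth, this constancy immediately yields $\mathrm{D}\bar{u}_x \zeta_x = 0$ pointwise, from which \eqref{van} follows since $a \otimes \zeta_x : \mathrm{D}\bar{u}_x = a \cdot (\mathrm{D}\bar{u}_x \zeta_x) = 0$ for every $a \in \R^n$. By the chain rule, $\mathrm{D}\bar{u}_x(y)$ factors through the tangential derivative of $u_0$ at $y_0^x(y)$, and applying Fubini in the $(y_0, t)$-coordinates with the uniformly bounded Jacobian gives both $\int_{\tilde{D}_x^\eps} |\mathrm{D}\bar{u}_x|\dy \lesssim \eps^2 \int_{\tilde{C}_x^\eps} |\mathrm{D}u_0| \dH$ and the $L^2$ estimate. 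For general $u_0 \in BV(\tilde{C}_x^\eps, \R^n) \cap L^2(\tilde{C}_x^\eps, \R^n)$, approximate $u_0$ strictly in $BV$ and in $L^2$ by smooth $u_0^m$, define $\bar{u}_x^m$ by the same rule, and pass to the limit: $L^1$-convergence together with lower semicontinuity of the total variation upgrade the smooth estimates to \eqref{est baru1}, while the identity \eqref{van} survives as a distributional statement.

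For the split \eqref{est baru2}, observe that $\bar{u}_x = u_0 \circ \pi_x$ where $\pi_x(y) := y_0^x(y)$ is the Lipschitz projection onto $\tilde{C}_x^\eps$ along $\zeta_x$-lines, so $\mathrm{D}\bar{u}_x$ is the pullback of $\mathrm{D}u_0$ under $\pi_x$ combined with the derivative of $\pi_x$; since $\pi_x$ is Lipschitz with uniformly bounded derivative, the absolutely continuous and singular parts are preserved separately, and Fubini in $t$ again contributes the $\eps^2$ factor. Finally, for the de La Vallée-Poussin function, the classical theorem applied to the finite measure $|\mathrm{D}^a u_0|\mathcal{H}^{d-1}\mres U'$ produces a convex nondecreasing $\tilde{i}$ with $\tilde{i}(t)/t \to \infty$ and $\int_{U'} \tilde{i}(|\mathrm{D}^a u_0|) \dH < \infty$; setting $i(t) := \tilde{i}(t/C)$, where $C$ is the uniform pointwise bound on $|\mathrm{D}^a \bar{u}_x(y)| / |\mathrm{D}^a u_0(y_0^x(y))|$, and using monotonicity together with Fubini in $(y_0, t)$, yields the claimed bound.

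The main obstacle I anticipate is bookkeeping rather than analysis: making sure the pullback structure under $\pi_x$ is compatible with the Lebesgue-singular decomposition of $\mathrm{D}u_0$ in the vector-valued $BV$ setting, and verifying that the implicit constants are genuinely independent of $x \in U''$ and $\eps$. The uniform convexity of $U$ and the uniform continuity of $\mu_2$ are precisely what underwrite this uniformity, so the strategy should go through cleanly.
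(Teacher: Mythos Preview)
Your approach is essentially identical to the paper's: the paper proves Lemmata~\ref{char1}, \ref{dlvp} and~\ref{char2} together via exactly this bi-Lipschitz parametrisation (for~\ref{char2} the ``characteristics'' are just the straight $\zeta_x$-lines, and the paper explicitly says the argument is the same as for~\ref{char1} with \eqref{pos scalar} replacing \eqref{prod est}), followed by the BV chain rule and the de~La~Vall\'ee~Poussin lemma.

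One small wrinkle in your de~La~Vall\'ee~Poussin step: the rescaling $i(t)=\tilde{i}(t/C)$ alone does not close the inequality as stated, because the pointwise bound $|\mathrm{D}^a\bar{u}_x(y)|\leq C|\mathrm{D}^a u_0(\pi_x(y))|$ and monotonicity give you $\int_{\tilde{D}_x^\eps} i(|\mathrm{D}^a\bar{u}_x|)\lesssim \eps^2\int_{\tilde{C}_x^\eps}\tilde{i}(|\mathrm{D}^a u_0|)$, whereas the lemma requires $i$ (not $\tilde{i}$) on the right-hand side. This is easily fixed by also requiring $\tilde{i}$ to satisfy a doubling condition $\tilde{i}(2t)\lesssim\tilde{i}(t)$, which the de~La~Vall\'ee~Poussin lemma can accommodate; the paper does exactly this (see its footnote on how to enforce doubling).
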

The lemma is proven in Section \ref{S53}.

To define the comparison tensor field $\bar{z}_x$, we first note that every point $y$ in $\tilde{D}_x^\eps$ can also uniquely be written as $y_1^x+s'\zeta_x$  for $y_1^x\in \tilde{E}_x^\eps$. Indeed this representation is unique as $\zeta_x$ is not tangential to $\tilde{E}_x^\eps$ by \eqref{pos scalar} and every point can be written in this way because the ray $y+s\zeta_x$ with $s\leq 0$ must intersect the boundary of $\tilde{D}_x^\eps$, yet can not hit $\tilde{C}_x^\eps$ because of the uniqueness of the representation \eqref{imp norm}.

We then set  \begin{align}
\bar{z}_x(y):=-\mu_1\left(u(y_1^x)-\bar{u}_x(y_1^x)\right)\otimes \zeta_x,\label{def barz}
\end{align}
where we set $\mu_1(0)=0$. If $u(y_1^x)-\bar{u}_x(y_1^x)\neq 0$, then by the definition \eqref{r1 split} of $\mu_1$ and $\mu_2$ it holds that \begin{align*}
\bar{z}_x(y)=-\mathrm{D}_\xi f^\infty\big((u(y_1^x)-\bar{u}_x(y_1^x))\otimes \nu_x\big).
\end{align*}
This field is (row-wise) divergence-free, since every row is a multiple of $\zeta_x$ and the field is constant along lines with direction $\zeta_x$, and it is furthermore in the class $X_2(\tilde{D}_x^\eps,\R^{n\times d})$.

By the following lemma, the normal trace of $\bar{z}_x$ is the one that one would expect from the definition. \begin{lemma}
Let $a\in L^\infty(\tilde{E}_x^\eps)$, then for the function $a(y_1^x)\otimes \zeta_x\in X_2(\tilde{D}_x^\eps,\R^{n\times d})$ it holds that \begin{align}
&[a(y_1^x)\otimes \zeta_x,\nu_{\tilde{E}_x^\eps}](y)=-a(y)\scalar{\zeta_x}{\nu_x}\quad \text{ on $\tilde{E}_x^\eps$}\label{nt1}\\
&[a(y_1^x)\otimes \zeta_x,\nu](y)=a(y_1^x)\scalar{\nu_y}{\zeta_x}\quad \text{ on $\tilde{C}_x^\eps$}\label{nt2}
\end{align}
where $\nu_{\tilde{E}_x^\eps}=-\nu_x$ is the outer normal on $\tilde{E}_x^\eps$.
\end{lemma}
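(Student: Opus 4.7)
The plan is to reduce the identification of the Anzellotti normal trace to the classical divergence theorem via approximation. Writing $z := a(y_1^x)\otimes \zeta_x$, I would first check that $z \in X_\infty(\tilde D_x^\eps, \R^{n\times d})$ with $\div z = 0$: each row of $z$ is the constant vector $\zeta_x$ multiplied by a scalar function that is constant along $\zeta_x$-lines, so for any $\phi \in C_c^\infty(\tilde D_x^\eps,\R^n)$, foliating by $y = y_1 + s\zeta_x$ with $y_1 \in \tilde E_x^\eps$ (Jacobian $\scalar{\zeta_x}{\nu_x} > 0$ by \eqref{pos scalar}) reduces $\int z:\nabla\phi\, dy$ to a one-dimensional integral along each line, which vanishes by the fundamental theorem of calculus and the compact support of $\phi$.

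Next, for smooth $a$ extended to $\tilde D_x^\eps$ along $\zeta_x$-lines by composition with the smooth affine projection $y \mapsto y_1^x$, the field $z$ is smooth up to the Lipschitz boundary $\partial\tilde D_x^\eps$, so its Anzellotti normal trace coincides with the pointwise one. Applying the classical divergence theorem componentwise to each vector field $\phi_i\, a_i(y_1^x)\,\zeta_x$, whose divergence reduces to $a_i(y_1^x)\,\zeta_x\cdot\nabla\phi_i$ (since $\zeta_x\cdot\nabla a_i(y_1^x)=0$), and summing over $i$ gives
\begin{equation}\label{eq-pfplan-key}
\int_{\tilde D_x^\eps} z:\nabla\phi\, dy = \int_{\partial\tilde D_x^\eps} \phi \cdot a(y_1^x)\scalar{\zeta_x}{\nu}\, d\mathcal{H}^{d-1}
\end{equation}
for every $\phi \in C^\infty(\overline{\tilde D_x^\eps},\R^n)$. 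Combined with the Anzellotti Gauss--Green formula \eqref{gg form} (recalling $\div z = 0$) and specializing the right-hand side of \eqref{eq-pfplan-key} to the two pieces of $\partial\tilde D_x^\eps$ (using $\nu = -\nu_x$ and $y_1^x = y$ on $\tilde E_x^\eps$, and $\nu = \nu_y$ on $\tilde C_x^\eps$), this yields \eqref{nt1} and \eqref{nt2} in the smooth case.

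For general $a \in L^\infty(\tilde E_x^\eps)$, I would approximate by smooth $a_k$ with $a_k\to a$ in $L^1(\tilde E_x^\eps)$ and $\norm{a_k}_{L^\infty}\leq \norm{a}_{L^\infty}$. Setting $z_k:= a_k(y_1^x)\otimes\zeta_x$, Fubini gives $z_k \to z$ in $L^1(\tilde D_x^\eps,\R^{n\times d})$ while $\div z_k = 0$. For fixed $\phi\in C^\infty(\overline{\tilde D_x^\eps},\R^n)$, the Gauss--Green formula together with the smooth version \eqref{eq-pfplan-key} applied to $a_k$ yields
\begin{equation*}
\int_{\partial \tilde D_x^\eps}\phi\cdot[z_k,\nu]\, d\mathcal{H}^{d-1} = \int_{\tilde D_x^\eps} z_k:\nabla\phi\, dy \xrightarrow{k\to\infty} \int_{\tilde D_x^\eps} z:\nabla\phi\, dy = \int_{\partial\tilde D_x^\eps}\phi\cdot[z,\nu]\, d\mathcal{H}^{d-1},
\end{equation*}
while the same left-hand side also converges to $\int_{\partial\tilde D_x^\eps}\phi\cdot a(y_1^x)\scalar{\zeta_x}{\nu}\, d\mathcal{H}^{d-1}$, using $L^1$-convergence of $a_k\circ y_1^x$ (trivial on $\tilde E_x^\eps$, and on $\tilde C_x^\eps$ using that $y_1^x\colon \tilde C_x^\eps \to \tilde E_x^\eps$ is a smooth diffeomorphism with bounded Jacobian, which follows from the implicit function theorem and the transversality $\scalar{\zeta_x}{\nu_y}>0$). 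By density of $C^\infty(\overline{\tilde D_x^\eps})|_{\partial\tilde D_x^\eps}$ in $L^1(\partial\tilde D_x^\eps)$, the identity $[z,\nu] = a(y_1^x)\scalar{\zeta_x}{\nu}$ holds $\mathcal{H}^{d-1}$-a.e.\ on $\partial\tilde D_x^\eps$, which yields \eqref{nt1} and \eqref{nt2} after restricting to each piece. No serious obstacle is expected; the only mild subtlety is the smoothness of $y_1^x$ on $\tilde C_x^\eps$, handled as indicated.
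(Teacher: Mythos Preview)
Your proposal is correct and follows essentially the same approach as the paper: verify the formulas for smooth $a$ via the classical divergence theorem (where the normal trace is pointwise), then pass to the limit using a smooth approximation of $a$ and the Gauss--Green identity. The only minor difference is the mode of approximation---you use $L^1$-convergence with a uniform $L^\infty$ bound, while the paper uses weak\textsuperscript{$*$}-$L^\infty$ convergence directly---but both suffice, and your version is slightly more explicit about the diffeomorphism $y_1^x\colon\tilde C_x^\eps\to\tilde E_x^\eps$ needed to handle the boundary piece $\tilde C_x^\eps$.
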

\begin{proof}
For $a\in C^1$, this is trivial because the normal trace is then just the pointwise normal trace and $y=y_1^x$ on $\tilde{E}_x^\eps$. For $a\notin C^1$, we can use an approximating sequence $a_m\rightarrow a$ converging weakly\textsuperscript{$*$} in $L^\infty$ to $a$. Then the function $a_m(y_1^x)\otimes \zeta_x$ is trivially divergence-free and hence for every $\phi\in C^1(\tilde{D}_x^\eps,\R^n)$ we have \begin{align*}
\mel\int_{\tilde{C}_x^\eps\cup \tilde{E}_x^\eps}\scalar{[(a_m-a)(y_1^x)\otimes \zeta_x,\nu]}{\phi}\dH(y)=\int_{\tilde{D}_x^\eps}(a_m-a)(y_1^x)\otimes \zeta_x:\mathrm{D}\phi\dy\\
&=\int_{\tilde{D}_x^\eps}\scalar{(a_m-a)(y_1^x)}{\de_{\zeta_x}\phi}\dy\rightarrow 0,
\end{align*}
where the convergence easily follows from Fubini.
\end{proof}

We now fix a $z$ fulfilling the conditions in Lemma \ref{vec char} (with $v=0$, as it is a minimizer).

We then use the same approach as in the scalar-valued case and see from the Gauss-Green formula \eqref{gg form} that \begin{align}
\int_{\tilde{C}_x^\eps} [z-\bar{z}_x,\nu]\cdot(u-u_0)\dH+\int_{\tilde{E}_x^\eps} [z-\bar{z}_x,\nu_{\tilde{E}_x^\eps}]\cdot(u-\bar{u}_x)\dH= \int_{\tilde{D}_x^\eps} (z-\bar{z}_x,\mathrm{D}u-\mathrm{D}\bar{u}_x).\label{par int}
\end{align}
By \eqref{nt1}, the definition of $\bar{z}_x$, and \eqref{doal} we know that on $\tilde{E}_x^\eps$ it holds that \begin{align*}
[\bar{z}_x,\nu_{\tilde{E}_x^\eps}]\cdot(u-\bar{u}_x)=\big(\mu_1\big(u(y_1^x)-\bar{u}_x(y_1^x)\big)\otimes \mu_2(\nu_x)\big):\big((u-\bar{u}_x)\otimes \nu_x\big)=f^\infty((u-\bar{u}_x)\otimes \nu_x)
\end{align*}
since $y=y_1^x$ on $\tilde{E}_x^\eps.$
Together with \eqref{z tr pw}, \eqref{cv4} and \eqref{doal} this shows that \begin{align*}
[z-\bar{z}_x,\nu_{\tilde{E}_x^\eps}]\cdot(u-\bar{u}_x)\leq 0 \quad \text{ on $\tilde{E}_x^\eps$}.
\end{align*}
Hence, using \eqref{par int}, we see that \begin{align}
-\int_{\tilde{C}_x^\eps} [z-\bar{z}_x,\nu]\cdot(u-u_0)\dH(y)\leq \int_{\tilde{D}_x^\eps} (z-\bar{z}_x,\mathrm{D}\bar{u}_x)-(z-\bar{z}_x,\mathrm{D}u).\label{dadada}
\end{align}
It follows from the definition of $\bar{z}_x$, the equation \eqref{van} and the formulas for the density of the Anzelotti pairing in Section \ref{S23} that \begin{align*}
(\bar{z}_x,\mathrm{D}\bar{u}_x)=0.
\end{align*}

The analogue of Lemma \ref{cont z1} holds:

\begin{lemma}\label{cont z2}
\textbf{a)} For every $\delta>0$, every $v\in S^{d-1}$, every $x\in U''$ and $\eps>0$, we have the following conditional estimate: \textbf{If} we have \begin{align}
\mathcal{H}^{d-1}\left(\{y\in \tilde{C}_x^\eps\,\big|\,[z,\nu](y)\notin \scalar{\zeta_y}{\nu_y}B_\delta(\mu_1(v))\}\right)\leq \delta \mathcal{H}^{d-1}(\tilde{C}_x^\eps)\label{del ass2}
\end{align}
 \textbf{then} \begin{align}
\int_{\tilde{D}_x^\eps} |z(y)-\mu_1(v)\otimes \zeta_x|^2\dy\lesssim \eps^{d+1}(\delta+\eps).\label{del stat2}
\end{align}
\textbf{b)} There is a non-decreasing and bounded function $k:\R_{\geq 0}\rightarrow \R_{\geq 0}$, depending on $f$, but not on the other quantities, such that $\lim_{t\rightarrow 0} k(t)=0$ and \begin{align*}
\int_{\tilde{D}_x^\eps} \left|(z-\bar{z}_x,\mathrm{D}u)_-\right|\lesssim \int_{\tilde{D}_x^\eps}k(|z(y)-\mu_1(v)\otimes \zeta_x|)\dy,
\end{align*}
holds for every $v\neq 0$, where the "$-$" denotes the negative part of the measure.
\end{lemma}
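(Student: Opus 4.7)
Both parts mirror the scalar analogue (Lemma~\ref{cont z1}), with the rank-one splitting \eqref{r1 split} as the key new ingredient: it makes an autonomous, divergence-free comparison field available.

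For part (a), I would set $\tilde z:=\mu_1(v)\otimes\zeta_x$, a field constant in $y$ and hence divergence-free; at a minimizer one also has $\div z=0$ by Lemma~\ref{ve equiv}. By \eqref{r1 split} one has $\mathrm{D}_\xi f^\infty(v\otimes\nu_x)=\mu_1(v)\otimes\mu_2(\nu_x)=\tilde z$, so Lemma~\ref{fen lemma} applied to the unit tensor $v\otimes\nu_x$ and to $z(y)\in\overline{\ran(\de_\xi f)}$ (cf.~\eqref{cv4}) gives the pointwise Fenchel bound
\begin{align*}
|z(y)-\tilde z|^2\lesssim \scalar{\tilde z-z(y)}{v\otimes\nu_x}=\scalar{(\tilde z-z(y))\nu_x}{v}.
\end{align*}
Integrating over $\tilde D_x^\eps$ and applying Gauss--Green with the smooth test function $\psi(y):=\scalar{y-x}{\nu_x}+\eps^2$ (so that $\nabla\psi=\nu_x$, $\psi=0$ on $\tilde E_x^\eps$, and $0\leq\psi\lesssim \eps^2$ on $\tilde C_x^\eps$) collapses the bulk integral to a single surface integral:
\begin{align*}
\int_{\tilde D_x^\eps}\scalar{(\tilde z-z)\nu_x}{v}\dy = \int_{\tilde C_x^\eps}\psi\,\scalar{[\tilde z-z,\nu]}{v}\dH.
\end{align*}
I would then split $\tilde C_x^\eps$ into the good subset where \eqref{del ass2} holds pointwise and a bad subset of $\mathcal{H}^{d-1}$-measure $\lesssim \delta\eps^{d-1}$. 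On the good subset, using $[\tilde z,\nu](y)=\mu_1(v)\scalar{\zeta_x}{\nu_y}$ together with $|\zeta_y-\zeta_x|\lesssim |\nu_y-\nu_x|\lesssim \eps$ (from the $C^2$-regularity of $\de\Omega$), one obtains $|[\tilde z-z,\nu]|\lesssim \eps+\delta$; on the bad subset the uniform bound \eqref{uni bd z} gives $|[\tilde z-z,\nu]|\lesssim 1$. Combined with $\psi\lesssim \eps^2$ and \eqref{est C2}, this bounds the right side by $\eps^{d+1}(\eps+\delta)$, as required.

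For part (b), I would construct $k$ by decomposing $(z-\bar z_x,\mathrm{D}u)$ into absolutely continuous and singular parts via \eqref{cv2}, \eqref{cv3} and \eqref{anz rep}. For the singular part the density of $(z,\mathrm{D}u)$ with respect to $|\mathrm{D}^s u|$ equals $f^\infty\!\left(\frac{\mathrm{d}\mathrm{D}^s u}{\mathrm{d}|\mathrm{D}^s u|}\right)$, while Fenchel duality and \eqref{anz rep} control the density of $(\bar z_x,\mathrm{D}u)$ since $\bar z_x(y)\in \overline{\ran(\mathrm{D}_\xi f^\infty)}$; the resulting defect is quantifiable via uniform continuity of $\mathrm{D}_\xi f^\infty$ on the unit sphere. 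For the absolutely continuous part, \eqref{cv2} gives $z(y)\in \de_\xi f(\mathrm{D}^a u(y))$, so the Fenchel defect $f(\mathrm{D}^a u)+f^*(\bar z_x)-\bar z_x\cdot \mathrm{D}^a u$ encodes the distance from $\bar z_x$ to $\de_\xi f(\mathrm{D}^a u)$; the decay estimate \eqref{almost hom} converts this into a continuous modulus $k_1$ of $|\bar z_x - z|$. The last bullet of Assumption~\ref{B2} then forces $|\bar z_x - z|$ to be in turn dominated by $|z-\mu_1(v)\otimes \zeta_x|$, assembling a single bounded $k$ with $k(t)\to 0$ as $t\searrow 0$.

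The main obstacle lies in part~(a): securing the decisive $\eps+\delta$ factor (rather than merely $O(1)$) requires both that $\psi$ vanish on $\tilde E_x^\eps$ (to kill the otherwise uncontrolled boundary term there) and that $\tilde z$ be autonomous and divergence-free (so Gauss--Green applies cleanly). The latter is precisely what the rank-one splitting \eqref{r1 split} provides---without it, the natural candidate $\mathrm{D}_\xi f^\infty(v\otimes\nu_y)$ would depend on $y$ and have nonzero divergence, which is the exact mechanism behind the counterexample of Theorem~\ref{T3}. Part~(b) is more structural but technically delicate, the modulus being assembled from the asymptotic control \eqref{almost hom} and the injectivity-type condition in Assumption~\ref{B2}.
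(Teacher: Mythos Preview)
Your argument for part (a) is correct and essentially identical to the paper's: integrate $z-\mu_1(v)\otimes\zeta_x$ against the weight $\psi(y)=\scalar{y-x}{\nu_x}+\eps^2$ (which kills the $\tilde E_x^\eps$ contribution), use that both fields are divergence-free, split $\tilde C_x^\eps$ into good and bad parts, and conclude via Lemma~\ref{fen lemma}.

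Part (b), however, has a genuine gap in the final conversion step. You obtain a modulus $k_1(|z-\bar z_x|)$ for the absolutely continuous part and then assert that the last bullet of \ref{B2} forces $|z-\bar z_x|$ to be dominated by $|z-\mu_1(v)\otimes\zeta_x|$. This is not true: by definition $\bar z_x(y)=-\mu_1\big(u(y_1^x)-\bar u_x(y_1^x)\big)\otimes\zeta_x$ depends only on trace data of $u-\bar u_x$ on $\tilde E_x^\eps$, which bears no a priori relation to the fixed direction $v$. Thus $\bar z_x$ and $\mu_1(v)\otimes\zeta_x$ can be far apart even when $z$ is close to the latter, and the triangle inequality gives nothing. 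The last bullet of \ref{B2} concerns the relation between $\mathrm{D}_\xi f(\xi)$ and $\mathrm{D}_\xi f^\infty(\xi)$ at the \emph{same} $\xi$; it says nothing about $\bar z_x$.

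The paper repairs this by eliminating $\bar z_x$ \emph{before} introducing the modulus. Since $\bar z_x(y)\in\overline{\ran(\de_\xi f^\infty)}$, one has pointwise
\[
\scalar{z-\bar z_x}{\mathrm{D}^a u}\geq \scalar{\mathrm{D}_\xi f(\mathrm{D}^a u)-\mathrm{D}_\xi f^\infty(\mathrm{D}^a u)}{\mathrm{D}^a u},
\]
a $\bar z_x$-free quantity. One then proves by compactness that this defect is $\geq -A(t)$ whenever $|\mathrm{D}_\xi f(\mathrm{D}^a u)-\mu_1(v_1)\otimes\mu_2(v_2)|\leq t$ for \emph{some} unit $v_1,v_2$: for large $|\mathrm{D}^a u|$ this follows from \eqref{almost hom}, while for bounded $|\mathrm{D}^a u|$ a limit $\xi_\infty$ would satisfy $\mathrm{D}_\xi f(\xi_\infty)=\mathrm{D}_\xi f^\infty(v_{1,\infty}\otimes v_{2,\infty})$, and then the last bullet of \ref{B2} forces $\mathrm{D}_\xi f(\xi_\infty)=\mathrm{D}_\xi f^\infty(\xi_\infty)$, contradicting a nonzero defect. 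Applying this with $v_1=v$, $v_2=\nu_x$ and $t=|z-\mu_1(v)\otimes\zeta_x|$ gives $k=A$. (Your treatment of the singular part is fine; in fact it is simply non-negative, so no modulus is needed there.)
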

The proof can be found in Sections \ref{S54} and \ref{S55}. 

We note that if $u(y)\neq u_0(y)$ at some $y\in \de \Omega$, then, if we plug \eqref{r1 split} into \eqref{good bd cond}, it holds that $[z,\nu](y)= \scalar{\zeta_y}{\nu_y}\mu_1(v)$ with  $v=\frac{u_0(y)- u(y)}{|u_0(y)- u(y)|}$. Since we also have $\scalar{\zeta_y}{\nu_y}\gtrsim 1$ by \eqref{pos scalar}, we see from Radon-Nikodym and the Lebesgue point theorem that for $\mathcal{H}^{d-1}$-a.e.\ $x$ in the set $\{u\neq u_0\}$, the condition \eqref{del ass2} holds for every sufficiently small $\eps$ (depending on $\delta$ and $x$).

Now we fix $\delta>0$ and use a covering argument as in the scalar case.
We can again find $\eps_{0,x}>0$ such that for $\mathcal{H}^{d-1}$-a.e.\ $x\in U''\cap \{u\neq u_0\}$ the property \eqref{del ass2} holds for $\eps<\eps_{0,x}$ and a $v$ depending on $\eps$, as explained in the previous paragraph, and furthermore such that $|\mathrm{D}^su_0|(\tilde{C}_x^\eps)\leq \delta \mathcal{H}^{d-1}(\tilde{C}_x^\eps)$. Indeed, this follows from Radon-Nikodym.

We can apply the Morse covering theorem \cite[Thm.\ 1.147]{fonseca2007modern} to the projections of the $\tilde{C}_x^\eps$ onto the plane perpendicular to $\nu_{x^0}$  since the assumptions of the theorem are true by Lemma \ref{geo lemma 2} d). Since $U'$ can be written as a graph over the plane, we have that two different $\tilde{C}_x^\eps$ are disjoint if their projections are disjoint. This yields that we can find again find a (countable) cover of $U''\cap \{u\neq u_0\}$, up to a $\mathcal{H}^{d-1}$-zero set with disjoint $\tilde{C}_{x_j}^{\eps_j}$ (depending on $\delta$) such that \eqref{del ass2} holds for each of these and such that \begin{align*}
|\mathrm{D}^su_0|(\tilde{C}_{x_j}^{\eps_j})\leq \delta \mathcal{H}^{d-1}(\tilde{C}_{x_j}^{\eps_j}).
\end{align*}
The analogue of \eqref{sum eps} does still hold here for the same reason, i.e.\ \begin{align}
\sum_j \eps_j^{d-1}\approx \sum_j \mathcal{H}^{d-1}(\tilde{C}_{x_j}^{\eps_j})\leq \mathcal{H}^{d-1}(U')\lesssim 1.\label{sum eps3}
\end{align}

\noindent We hence see that as in \eqref{eff est1}, \eqref{c1} it follows from \eqref{dadada} that \begin{align*}
\mel\sum_j-\eps_j^{-2}\int_{\tilde{C}_x^\eps} [z-\bar{z}_{x_j},\nu]\cdot(u-u_0)\dH(y)\leq\sum_j\eps_j^{-2}\left(\int_{\tilde{D}_{x_j}^{\eps_j}} (z,\mathrm{D}\bar{u}_{x_j})+|(z-\bar{z}_{x_j},\mathrm{D}u)_-|\right)\\
&\lesssim \sum_j \eps_j^{-2}\int_{\tilde{D}_{x_j}^{\eps_j}}|z-\mu_1(v_j)\otimes\zeta_{x_j}||\mathrm{D}^a \bar{u}_{x_j}|\dy\\
&\quad+\eps_j^{-2}|\mathrm{D}^s\bar{u}_{x_j}|(\tilde{D}_{x_j}^{\eps_j})+\eps_j^{-2}
\int_{\tilde{D}_{x_j}^{\eps_j}}k(|z-\mu_1(v_j)\otimes\zeta_{x_j}|)\dy,
\end{align*}
where we have used \eqref{van} to bring in the $\mu_1(v_j)\otimes\zeta_{x_j}$.

The same arguments as in the proof of Claim \ref{claim1} in the scalar case earlier show that this goes to $0$ as $\delta,\max_j\eps_j\searrow 0$, in fact, the argument even simplifies quite a bit due to the vanishing divergence of $z$.

It remains to control the trace with the left-hand side.

\subsubsection*{Proof that the left-hand side controls the trace}
Let us analyze the integrand on the left-hand side.  We distinguish the cases $u(y_1^x)-\bar{u}_x(y_1^x)=0$ and $u(y_1^x)-\bar{u}_x(y_1^x)\neq 0$.
At points $y\in \tilde{C}_x^\eps$ with  $u(y_1^x)-\bar{u}_x(y_1^x)=0$ it holds that $[\bar{z}_x,\nu](y)=0$ by \eqref{nt2}. Then it follows from \eqref{cv5} and \eqref{gc finf} that \begin{align}
-\left[z-\bar{z}_x,\nu\right]\cdot(u-u_0)=f^\infty((u-u_0)\otimes \nu_y)\gtrsim |u-u_0|.\label{str bd}
\end{align}
In the other case, it either holds that $u=u_0$ or, as one sees from \eqref{good bd cond} and the definition \eqref{r1 split} of $\mu_1,\mu_2$, it holds that \begin{equation*}[z,\nu]\cdot (u-u_0)(y)=-\left(\mu_1(u-u_0)\otimes \mu_2(\nu_y)\right):((u-u_0)\otimes \nu_y)\end{equation*} as $\mu_1$ is odd by definition.

Combining this with  \eqref{nt2} and the Definition \eqref{def barz} of $\bar{z}_x$, we see, using the oddness of $\mu_1$ again, that\begin{align*}
\mel -\left[z-\bar{z}_x,\nu\right]\cdot(u-u_0)(y)\\
&=\scalar{\mu_1(u-u_0)\otimes \mu_2(\nu_y)-\mu_1\left(u(y_1^x)-\bar{u}_x(y_1^x)\right)\otimes \mu_2(\nu_x)}{(u-u_0)\otimes \nu_y}\\
&\gtrsim|(u-u_0)\otimes \nu_y|\left|\mu_1(u-u_0)\otimes \mu_2(\nu_y)-\mu_1\left(u(y_1^x)-\bar{u}_x(y_1^x)\right)\otimes \mu_2(\nu_x)\right|^2,
\end{align*}
where the last step used Lemma \ref{fen lemma} and the positive $0$-homogeneity of $\mu_1$ and $\mu_2$.
Again, using the Definition \eqref{r1 split} of $\mu_1$ and $\mu_2$, we see that \begin{align}
\mel\left|\mu_1(u-u_0)\otimes \mu_2(\nu_y)-\mu_1\left(u(y_1^x)-\bar{u}_x(y_1^x)\right)\otimes \mu_2(\nu_x)\right|\nonumber\\
&=\left|\mu_1\left(\frac{u-u_0}{f^\infty((u-u_0)\otimes \nu_y)}\right)\otimes \mu_2(\nu_y)-\mu_1\left(\frac{u(y_1^x)-\bar{u}_x(y_1^x)}{f^\infty((u(y_1^x)-\bar{u}_x(y_1^x))\otimes \nu_x)}\right)\otimes \mu_2(\nu_x)\right|\nonumber\\
&=\frac{1}{2}\left|\mathrm{D}_\xi( (f^\infty)^2)\left(\frac{(u-u_0)\otimes \nu_y}{f^\infty((u-u_0)\otimes \nu_y)}\right)-\mathrm{D}_\xi ( (f^\infty)^2)\left(\frac{(u(y_1^x)-\bar{u}_x(y_1^x))\otimes \nu_x}{f^\infty((u(y_1^x)-\bar{u}_x(y_1^x))\otimes \nu_x)}\right)\right|.\label{rewrite grad}
\end{align}
Since $(f^\infty)^2$ is uniformly convex by assumption, we know that for all $\xi_1,\xi_2\in \R^{n\times d}\backslash \{0\}$ it holds that \begin{align*}
\mel\left|\mathrm{D}_\xi( (f^\infty)^2)(\xi_1)-\mathrm{D}_\xi( (f^\infty)^2)(\xi_2)\right|\\
&\geq \frac{1}{|\xi_1-\xi_2|}\scalar{\mathrm{D}_\xi( (f^\infty)^2)(\xi_1)-\mathrm{D}_\xi( (f^\infty)^2)(\xi_2)}{\xi_1-\xi_2}\\
&\gtrsim |\xi_1-\xi_2|.
\end{align*}
Combining this estimate with \eqref{rewrite grad}, we see that 
 \begin{align*}
\mel\big|\mu_1(u-u_0)\otimes \mu_2(\nu_y)-\mu_1\left(u(y_1^x)-\bar{u}_x(y_1^x)\right)\otimes \mu_2(\nu_x)\big|\\
&\gtrsim \left|\frac{(u-u_0)\otimes \nu_y}{f^\infty((u-u_0)\otimes \nu_y)}-\frac{(u(y_1^x)-\bar{u}_x(y_1^x))\otimes \nu_x}{f^\infty((u(y_1^x)-\bar{u}_x(y_1^x))\otimes \nu_x)}\right|.
\end{align*}
Using the growth bound \eqref{gc finf} for $f^\infty$ and that it holds $|a\otimes v_1-b\otimes v_2|\gtrsim \min(|a|,|b|)\min(|v_1-v_2|,|v_1+v_2|)$ for all unit vectors $v_1$ and $v_2$, we see that \begin{align*}
\left|\frac{(u-u_0)\otimes \nu_y}{f^\infty((u-u_0)\otimes \nu_y)}-\frac{(u(y_1^x)-\bar{u}_x(y_1^x))\otimes \nu_x}{f^\infty((u(y_1^x)-\bar{u}_x(y_1^x))\otimes \nu_x)}\right|\gtrsim \min(|\nu_x-\nu_y|,\,|\nu_x+\nu_y|).
\end{align*}  
In sum, we have obtained that \begin{align*}
-\left[z-\bar{z}_x,\nu\right]\cdot(u-u_0)\gtrsim |u-u_0| \min(|\nu_x-\nu_y|,\,|\nu_x+\nu_y|)^2.
\end{align*} 
Using that $|\nu_x-\nu_y|<<1$ as $|x-y|<<1$ and that the normal is continuous by the assumed regularity of the boundary, we see that $|\nu_x+\nu_y|\geq |\nu_x-\nu_y|$ for small $\eps$ and hence, thanks to Lemma \ref{geo lemma 2} e), we see that this is \begin{align}
\gtrsim_\rho\eps^2 |u-u_0|\quad \text{ in $\tilde{C}_x^\eps\backslash \tilde{C}_x^{\rho\eps}$}\label{norm est }
\end{align}
uniformly in $x$ and $\eps$ for $\rho\in (0,1)$ for $u(y_1^x)-\bar{u}_x(y_1^x)\neq 0$. This is also true in the case $u(y_1^x)-\bar{u}_x(y_1^x)= 0$  from earlier, as \eqref{str bd} is clearly much stronger and hence \eqref{norm est } holds on all of $\tilde{C}_x^\eps\backslash \tilde{C}_x^{\rho\eps}$.
This yields that \begin{align}
\limsup_{\delta,\max_j\eps_j\searrow 0} \int_{\bigcup_j\tilde{C}_x^\eps\backslash \tilde{C}_x^{\rho\eps}}|u-u_0|\dH=0.\label{lim est 3}
\end{align}
%
%
%
%
We note that, thanks to the disjointness of the sets, it holds that \begin{align*}
\mathcal{H}^{d-1}\left(\bigcup_j \tilde{C}_{x_j}^{\rho\eps_j}\right)\leq \sum_j\mathcal{H}^{d-1}(\tilde{C}_{x_j}^{\rho\eps_j})\approx \rho^{d-1}\sum_j\eps_j^{d-1}\lesssim \rho^{d-1},
\end{align*}
where we have used  \eqref{est C2} and \eqref{sum eps3}.

We therefore obtain that \begin{align*}
\sup_{V\subset U', \mathcal{H}^{d-1}(V)\leq C\rho^{d-1}}\int_{U''\backslash V} |u-u_0|\dH=0.
\end{align*}
Letting $\rho\searrow 0$ yields the theorem.\hfill\qedsymbol




\section{Proofs of the technical Lemmata}\label{S5}

\subsection{Proof of the diffeomorphism Lemma \ref{trans lemma}}
\vphantom{a}

\textbf{a)} 
We first check that the assumptions hold for the transformed data.
\ref{A1} is trivial. \ref{A6} is trivial if $u_0\in C^0$, if $u_0\in BV$ it follows from the chain rule for $BV$-functions, see e.g.\ \cite[Remark 3.18]{ambrosio2000functions}. For $u_0\in W^{\alpha,p}$ it follows e.g.\ from applying the transformation rule to the integral in the definition and using that $\Phi$ is Lipschitz.

Regarding \ref{A2}, we can check the different points as follows: \begin{itemize}
\item Convexity of $f_\Phi$ is trivial from the definition.
\item The growth bound \eqref{bd gf1} holds because $\Phi$ is $C^1$ and, since it is a diffeomorphism, the $C^1$ norm of the inverse is also bounded, yielding the bound directly from the definition.
\item $f_\Phi$ being $C^2$ in the $\xi$-variable outside a large ball follows from $\Phi$ being $C^2$. We also see directly from the definition and the chain rule that \begin{align*}
\scalar{\mathrm{D}_\xi^2 f_\Phi(y,\xi)\frac{\xi}{|\xi|}}{\frac{\xi}{|\xi|}}=|\det \mathrm{D}\Phi^{-1}|\scalar{\mathrm{D}_\xi^2 f\big(\Phi^{-1}(y),(\mathrm{D}\Phi^{-1})^{-T}\xi\big)\frac{(\mathrm{D}\Phi^{-1})^{-T}\xi}{|\xi|}}{\frac{(\mathrm{D}\Phi^{-1})^{-T}\xi}{|\xi|}}
\end{align*}
which yields \eqref{conv df} for $f_\Phi$ after applying \eqref{conv df} with $(\mathrm{D}\Phi^{-1})^T\xi$ in place of $\xi$ and using the boundedness of the derivatives of $\Phi^{-1}$.
\item Continuity of $f_\Phi$ follows from the regularity of $\Phi$.
\item The recession function of $f_\Phi$ is \begin{align*}
f_\Phi^\infty(y,\xi)=|(\det \mathrm{D}\Phi^{-1})y|f^\infty\big(\Phi^{-1}(y),(\mathrm{D}\Phi^{-1}(y))^{-T}\xi\big)\end{align*}
and the convergence of $\frac{1}{t}f_\Phi(y,t\cdot)$ to it follows from the continuity of $\Phi$ and the boundedness of the involved derivatives.
\item The regularity of the recession function follows from the chain rule, the homogeneity of the recession function, and $\Phi$ being $C^2$.
\end{itemize}



It follows directly from the fact that $\Phi$ is $C^2$ that $\lambda_\Phi$ and $h_\Phi$ have the regularity/integrability required by \ref{A4}-\ref{A5}.

It follows from the transformation rule that $g_\Phi\in L^{\max(2,d)}(\Omega')$. In order to establish the curvature condition in \ref{A3}, we write $H_{\de \Omega', f_\Phi}$ to denote the generalized curvature with respect to $\Omega'$ and $f_\Phi$, defined analogously to \eqref{def hf}. We claim that for $y\in U$ it holds that \begin{align}
H_{\de \Omega', f_\Phi}(\Phi(y))=|\det \mathrm{D}\Phi(y)|^{-1}H_{\de\Omega,f}(y),\label{cur ch}
\end{align}
which then implies \eqref{A3}.

To show this, we first claim that for any $C^1$ vector field $z_0$, defined in a neighborhood of $U$, for which it holds that $z_0(y)=c(y)\nu_y$ for some $c(y)\in \R_{<0}$ on $U$, it holds that \begin{align}
\div\mathrm{D}_\xi f^\infty(y,z_0(y))=\div\mathrm{D}_{\xi} f^\infty(y,\mathrm{D}\mathfrak{d}(y))\quad\text{ for $y\in U$.}\label{n field claim}
\end{align}
Indeed, to see this, we first note that we can assume that $z_0(y)=-\nu_y=\mathrm{D}\mathfrak{d}(y)$ on the boundary because $\mathrm{D}_\xi f^\infty$ is positively $0$-homogeneous.
Therefore, we only need to show that the part of the divergence that is not determined by the values on the boundary is the same for both fields.

We can rewrite the divergence at each point $x\in U$, thanks to its frame invariance, as $\div=\nu\cdot\de_\nu+ \mathfrak{pr}_{T_x}(\nabla-\de_\nu)$, where $\mathfrak{pr}_{T_x}$ is the orthogonal projection onto the tangent space of $\de\Omega$ at $x$ and $\de_\nu$ the normal derivative. The second summand is determined by the boundary values only.


 By the chain rule, it holds that \begin{align*}
\nu_y\cdot \de_\nu \mathrm{D}_\xi f^\infty(y,z_0(y))=\nu_y\otimes \nu_y:\de_y\mathrm{D}_\xi f^\infty(y,z_0(y))+\nu_y\cdot \mathrm{D}_\xi^2 f^\infty(y,-\nu_y)\cdot \de_\nu z_0(y).
\end{align*}
The first term is determined by the values of $z_0(y)$ at the boundary. To see that the second term vanishes, we note that the function \begin{align*}
-\nu_y\cdot \mathrm{D}_\xi f^\infty(y,v)
\end{align*}
has a local maximum in the variable $v$ at $v=-\nu_y$ by e.g.\ Lemma \ref{fen lemma} and therefore its derivative must vanish, yielding $\nu_y\cdot \mathrm{D}_\xi^2 f^\infty(y,-\nu_y)\cdot \de_\nu z_0(y)=0$. This implies \eqref{n field claim}.

To proceed with the proof of the equivalence, we let $\mathfrak{d}_{\Omega'}$ denote the distance to the boundary in $\Omega'$.
We note that, thanks to the identity $|\det \mathrm{D}\Phi|^{-1}\div(\psi\circ\Phi)=\div(|\det \mathrm{D}\Phi^{-1}|(\mathrm{D}\Phi^{-1}(\cdot))^{-1}\psi)\circ \Phi$ which holds for every vector field $\psi$, we have \begin{align*}
\mel\big(\div \mathrm{D}_\xi f_\Phi^\infty(\cdot,\mathrm{D}\mathfrak{d}_{\Omega'}(\cdot))\big)\circ \Phi\\
&=\div\Big(|\det \mathrm{D}\Phi^{-1}| (\mathrm{D}\Phi^{-1}(\cdot))^{-1}\mathrm{D}_\xi f^\infty\left(\Phi^{-1}(\cdot),(\mathrm{D}\Phi^{-1}(\cdot))^{-T}\mathrm{D}\mathfrak{d}_{\Omega'}(\cdot)\right)\Big)\circ \Phi\\
&=|\det \mathrm{D}\Phi|^{-1}|\div\left(f^\infty(\cdot,(\mathrm{D}\Phi^{-1})^{-T}\circ \Phi\mathrm{D}\mathfrak{d}_{\Omega'}\circ \Phi )\right).
\end{align*}
The claim \eqref{n field claim} is applicable to the field $(\mathrm{D}\Phi^{-1})^{-T}\circ \Phi\mathrm{D}\mathfrak{d}_{\Omega'}\circ \Phi=\mathrm{D}(\mathfrak{d}_{\Omega'}\circ \Phi)$. Indeed, this field is trivially normal to the boundary as $\Phi$ maps the boundary to the boundary, the interior to the interior, and is bilipschitz.
This yields \begin{align*}
\mel(\div \mathrm{D}_\xi f_\Phi^\infty(\cdot,\mathrm{D}\mathfrak{d}_{\Omega'}(x)))\circ \Phi=|\det \mathrm{D}\Phi|^{-1}\div \mathrm{D}_\xi f^\infty(\cdot, \mathrm{D}\mathfrak{d}(\cdot)).
\end{align*}
\eqref{cur ch} follows by applying the same argument to $-\mathfrak{d}_{\Omega'}$ and noting that this is the term in the definition \eqref{def hf} of the generalized curvature.

Regarding \eqref{sd pushforward}, we note that \begin{align*}
(\Phi^*\mathcal{F}_{u_0\circ \Phi^{-1}})(w\circ \Phi^{-1})=\mathcal{F}_{u_0}(w)
\end{align*}
by the chain rule whenever $w\in W_{u_0}^{1,1}$. By using an approximating sequence and Proposition \ref{rel func} both for $\Phi^*\mathcal{F}_{u_0\circ \Phi^{-1}}$ and $\mathcal{F}_{u_0}$, we see that this also holds for $w\in BV(\Omega)$. This implies by the transformation rule and the definition of the subdifferential that $v\in \de\mathcal{F}_{u_0}(w)$ if and only if $|\det \mathrm{D}\Phi^{-1}|v\circ\Phi^{-1}\in \de(\Phi^*\mathcal{F}_{u_0\circ \Phi^{-1}})(w\circ \Phi^{-1})$. This in turn implies \eqref{sd pushforward} by unraveling the definitions.
\smallskip

\textbf{b)} We first show the first two points by constructing $\Phi=\Phi_\kappa$ depending on the (small) parameter $\kappa>0$ and then show that the third point holds automatically if we pick $\kappa$ small enough.

By translation, it is not restrictive to assume that $x_0=e_d$. We let $U'$ be a neighborhood of $x_0$ in $U$ such that in this neighborhood $\de\Omega$ can be written as a graph of a strictly positive $C^2$-function $\phi$ over the unit ball in $\R^{d-1}$ (after a rotation and a rescaling) and that \begin{align*}
\left\{y\,\big|\,(y_1,\dots, y_{d-1})\in B_{1}^{\R^{d-1}}(0)\text{ and } y_d\in (0,\,\phi(y_1,\dots, y_{d-1})))\right\}\subset \Omega,
\end{align*}
as well as \begin{align*}
\left\{y\,\big|\,(y_1,\dots, y_{d-1})\in B_{1}^{\R^{d-1}}(0)\text{ and } y_d\in (\phi(y_1,\dots, y_{d-1}),3)\right\}\cap\Omega=\emptyset.
\end{align*}
 By rescaling and potentially making $U'$ smaller, it also not restrictive to assume that $\phi$ takes values in $(\frac{9}{10},\frac{11}{10})$.
 Pick $\delta>0$ so that $U'$ is contained in the graph over $B_{1-\delta}^{\R^{d-1}}(0)$ and let $\psi_1:\R^{d-1}\rightarrow \R_{\geq 0}$ denote a smooth, nonnegative cutoff function which equals $1$ on $B_{1-\delta}^{\R^{d-1}}(0)$ and is supported on $B_{1-\frac{1}{2}\delta}^{\R^{d-1}}(0)$. Let $\psi_2$ denote a smooth, non-negative cutoff function which equals $1$ on $(\frac{1}{2},\frac{3}{2})$ and is supported in $(\frac{1}{4},2)$.

We can then use the following diffeomorphism \begin{align*}
&(\Phi_\kappa(y)_1,\cdots, \Phi_\kappa(y)_{d-1}):=(y_1,\cdots, y_{d-1})=:y'\\
&\Phi_\kappa(y)_d:=\psi_1(y')\psi_2(y_d)\frac{y_d}{\phi(y')}\left(\sqrt{\frac{1}{\kappa^2}-|y'|^2}-\left(\frac{1}{\kappa}-1\right)\right)+\big(1-\psi_1(y')\psi_2(y_d)\big)y_d.
\end{align*}
We can extend this to the whole $\Omega$ as the identity, which does not affect the regularity due to the cut-off. We define $\Omega'$ (depending on $\kappa$) as the image of $\Omega$ under $\Phi_\kappa$.

It holds that \begin{align*}
\norm{\sqrt{\frac{1}{\kappa^2}-|y'|^2}-\frac{1}{\kappa}}_{C^k(B_1(0))}=\norm{\frac{\kappa|y'|^2}{\sqrt{1-\kappa^2|y'|^2}+1}}_{C^k(B_1(0))}\leq C(k,d) \kappa
\end{align*}
for sufficiently small $\kappa$ and $k\in \N_{\geq 0}$, where the constant $C(k,d)$ does not depend on $\kappa$.

Therefore, we see that this is indeed a $C^2$-diffeomorphism for sufficiently small $\kappa$ and that $U'$ is mapped to the graph of $\sqrt{\frac{1}{\kappa^2}-|y'|^2}-\frac{1}{\kappa}-1$, which a subset of the boundary of a ball of radius $\frac{1}{\kappa}$ and this yields \eqref{ballbd} after translating.

Furthermore, it is straightforward to verify that $\norm{\Phi_\kappa}_{C^2}$ and $\norm{\Phi_\kappa^{-1}}_{C^2}$ are bounded uniformly in small $\kappa$. This shows the first two points.\smallskip

We move on to the proof of the third property.
We set \begin{align*}
c:=\inf_{x\in U}\essinf_{y\rightarrow x} H_{\de\Omega,f}(x)-|g(y)|,
\end{align*}
 which is $>0$ by assumption. It follows from part a) and the uniform estimates on $\Phi_\kappa$ that we also have \begin{align}
\inf_{x\in \Phi_{\kappa}(U')}\essinf_{y\rightarrow x} H_{\de\Omega',f_{\Phi_\kappa}}(x)-|g_{\Phi_\kappa}(y)|\geq c|\det\mathrm{D} \Phi_\kappa^{-1}|\geq c' \label{essinf re}
\end{align}
for some $c'>0$, uniformly in small $\kappa$. This also implies that for every open subset $V\subset \Phi_{\kappa}(U')$ with a positive distance to the boundary of $\Phi_{\kappa}(U')$, there is an $\eps_1=\eps_1(\kappa)>0$ such that \begin{align}
\inf_{x\in V}\essinf_{y:\, |y-x|\leq \eps_1}H_{\de\Omega',f_{\Phi_\kappa}}(x)-|g_{\Phi_\kappa}(y)|\geq \frac{c'}{2},\label{act inf}
\end{align}
if not, we could find a sequence of $x_n$ for which this difference is $\leq \frac{c'}{2}$ on a set of positive measure with distance $\leq \frac{1}{n}$ to $x_n$, which by compactness and continuity of $H_{\de\Omega',f_{\Phi_\kappa}}$ would show that \eqref{essinf re} is wrong at any accumulation point of the $x_n$.


We now claim that
 \begin{align}
\left|\div_y \mathrm{D}_\xi f_{\Phi_\kappa}^\infty(y,\pm\mathrm{D}\mathfrak{d}_{\Omega'}(y))\big|_{y=x}- \div_y \mathrm{D}_\xi f_{\Phi_\kappa}^\infty(y,\mp\nu_x^{\Omega'})\big|_{y=x}\right|\xrightarrow{\kappa\searrow 0} 0,\label{curv claim}
\end{align}
uniformly in $x\in V$, where $\mathfrak{d}_{\Omega'}$ denotes the signed distance to the boundary in $\Phi_\kappa(\Omega)$ and $\nu_x^{\Omega'}$ is the outer normal of $\Omega'$ at $x$. To show the claim, we first note that it holds that \begin{align}
|\mathrm{D}^2\mathfrak{d}_{\Omega'}(x)|\leq C(d) \kappa,\label{kappa bd}
\end{align} 
where the constant $C(d)$ does not depend on $\kappa$, for $x\in \Phi(U')$, since $\Phi(U')$ is a subset of a sphere of curvature $\kappa$ by construction.
%

We can then estimate, using the chain rule and that $\nu_x^{\Omega'}=-\mathrm{D}\mathfrak{d}_{\Omega'}(x)$ does not depend on $y$ \begin{align*}
\mel\left|\div_y \mathrm{D}_\xi f_{\Phi_\kappa}^\infty(y,\pm\mathrm{D}\mathfrak{d}_{\Omega'}(y))\big|_{y=x}- \div_y \mathrm{D}_\xi f_{\Phi_\kappa}^\infty(y,\mp\nu_x^{\Omega'})\big|_{y=x}\right|\\
&\leq C(d)|\mathrm{D}_{\xi}^2f_{\Phi_\kappa}^\infty(x,\mathrm{D}\mathfrak{d}_{\Omega'}(x))||\mathrm{D}_y^2\mathfrak{d}_{\Omega'}(x))|\\
&\leq C(d)\norm{f_{\Phi_\kappa}^\infty(x,\cdot)}_{C_\xi^2(B_2(0)\backslash B_{\frac{1}{2}}(0))}|\mathrm{D}^2\mathfrak{d}_{\Omega'}(x)|\leq C(d)\kappa\xrightarrow{\kappa\rightarrow 0} 0,
\end{align*}
where the last step follows from \eqref{kappa bd} and the fact that, thanks to step a) and the uniform estimates on $\Phi_\kappa$, the norm $\norm{f_{\Phi_\kappa}^\infty(y,\cdot)}_{C_\xi^2(B_2(0)\backslash B_{\frac{1}{2}}(0))}$ is bounded uniformly in $\kappa$.

Now, we see from \eqref{curv claim} and the the definition \eqref{def hf} of the generalized curvature that, if we choose $\kappa$ sufficiently small, it holds that \begin{align*}
\pm\div_y \mathrm{D}_\xi f_{\Phi_\kappa}^\infty(y,\pm\nu_x^{\Omega'})\big|_{y=x}-H_{\de\Omega',f_{\Phi_\kappa}}(x)\geq-\frac{c'}{8}
\end{align*}
for all $x\in V$. It further follows from the fact that $f_{\Phi_\kappa}^\infty$ is $C_y^1C_\xi^1$, that, after potentially lowering $\eps_1$ (which can be done uniformly in $x\in V$), we also have \begin{align*}
\essinf_{y:\, |y-x|\leq \eps_1}\pm\div_y \mathrm{D}_\xi f_{\Phi_\kappa}^\infty(y,\pm\nu_x^{\Omega'})-H_{\de\Omega',f_{\Phi_\kappa}}(x)\geq-\frac{c'}{4}
\end{align*}
which, together with \eqref{act inf}, yields \eqref{impr cc}.\hfill\qedsymbol


\subsection{Proof of the geometric Lemmata \ref{geo lemma1} and \ref{geo lemma 2}}\label{S52}

We only prove Lemma \ref{geo lemma 2} in dimension $d\geq 2$. The Lemma \ref{geo lemma1} is merely a special case of this if $d\geq 2$ and trivial if $d=1$, while the Lemma \ref{geo lemma 2} is a void statement if $d=1$ as the boundary of a $1$-dimensional set can not be uniformly convex.

\begin{proof}[Proof of Lemma \ref{geo lemma 2} if $d\geq 2$]
It is not restrictive to assume that $x^0=0$ and that $\nu_{x^0}=-e_d$, otherwise we can rotate and translate the set. We can further assume that the boundary of $\Omega$ around $x^0$ is the graph of some $\varpi\in C^2$ such that \begin{align}
1\lesssim \mathrm{D}^2\varpi\lesssim 1\quad \text{ in the sense of positive definiteness,}\label{pi nice}
\end{align}
in a neighborhood of $x^0$.
 One can easily check that the implicit constant in \eqref{pi nice} can be chosen (locally) uniformly in $x^0$.

By Taylor's theorem and \eqref{pi nice} there is a constant $C_4$ (independent of $x$ and $\eps$) such that in a neighborhood of $x^0$, it holds that \begin{align}
&\frac{1}{C_4}|y'|^2\leq \varpi(y')\leq C_4|y'|^2.\label{pi bd}\\
&|\mathrm{D}\varpi(y')|\leq 1.\label{dpi bd}
\end{align}
We take $U''$ as the intersection of such an open neighborhood with $\de \Omega$. After potentially making $U''$ smaller, we can also assume that $U''$ is compactly contained in $U'$ and that these bounds on $\varpi$ still hold in a small neighborhood of $U''$.

\textbf{a)} Since all the bounds on $\varpi$ are uniform in $x^0$, it is not restrictive to only show the statements for $x=x^0=0$. As $\de\Omega\cap B$ is the graph of $\varpi$, we know that some point $y=(y',y_d)$ lies in $\Omega\cap B$ if and only if $y\in B$ and $y_d\geq \varpi(y')$.

Since we also have $-\scalar{y}{\nu_x}=y_d$, this shows that for every $y\in \tilde{D}_x^\eps$ we must have $\varpi(y)\leq y_d\leq \eps^2$ and therefore
 $|y'|< C_4^{\frac{1}{2}}\eps$, yielding the upper bound in \eqref{est D2} on the volume of $\tilde{D}_x^\eps$. This also shows that $\tilde{C}_x^\eps\cup\tilde{D}_x^\eps \cup \tilde{E}_x^\eps=\overline{\tilde{D}_x^\eps}$ is contained in a ball of radius of order $\eps$ around $x$, which in turn implies that if $\eps$ is small enough, then the $\tilde{C}_{x}^\eps$ lie in $U'$.
 
 Similarly, we see the upper bound on $\tilde{E}_x^\eps$ in \eqref{est E2} because every $y\in \tilde{E}_x^\eps$ is of the form $(y',\eps^2)$ with $|y'|\leq C_4^{\frac{1}{2}}\eps$.

To see the lower bounds in \eqref{est D2} and \eqref{est E2} on these sets, we note that, by the upper bound in \eqref{pi bd}, we must have \begin{align*}
B_{\frac{1}{2}C_4^{-\frac{1}{2}}\eps}(0)\times (\frac{1}{4}\eps^2,\eps^2)\subset \tilde{D}_x^\eps\quad \text{and} \quad B_{\frac{1}{2}C_4^{-\frac{1}{2}}\eps}(0)\times \{\eps^2\}\subset \tilde{E}_x^\eps.
\end{align*}
For notational convenience, we set \begin{align*}
P_\eps:=\{y'\in B\cap \R^{d-1}\,\big|\,\varpi(y')< \eps^2\},
\end{align*}
where we identify $\R^{d-1}$ with $\R^{d-1}\times \{0\}\subset \R^d$.
It follows from \eqref{pi bd} that \begin{align}
B_{C_4^{-\frac{1}{2}}\eps}(0)\subset P_\eps\subset B_{C_4^{\frac{1}{2}}\eps}(0),\label{bd P}
\end{align}
uniformly in $\eps$.

 To see the bound \eqref{est C1} on $\tilde{C}_x^\eps$, we note that $\tilde{C}_x^\eps$ is precisely the graph of $\varpi$ over $P_\eps$ and that $|\mathrm{D}\varpi|$ is bounded in a neighborhood of $x$ and hence we have \begin{align*}
\mathcal{H}^{d-1}(\tilde{C}_x^\eps)\approx \mathcal{H}^{d-1}(P_\eps),
\end{align*}
which yields the bound by \eqref{bd P}.

We move on to showing \eqref{C lip}. For $\eps'\in (\frac{1}{2}\eps,\eps)$, it holds that $\tilde{C}_x^{\eps'}\subset \tilde{C}_x^\eps$ since $\tilde{D}_x^{\eps'}\subset \tilde{D}_x^\eps$ by definition. We have that $\tilde{C}_x^\eps\backslash \tilde{C}_x^{\eps'}$ is the graph of $\varpi$ over $P_\eps\backslash P_{\eps'}$.
As the slope of $\varpi$ has a modulus $\leq 1$ by \eqref{dpi bd}, we hence have that \begin{align}
\mathcal{H}^{d-1}(\tilde{C}_x^\eps\backslash \tilde{C}_x^{\eps'})\approx \mathcal{H}^{d-1}(P_\eps\backslash P_{\eps'}).\label{eq meas}
\end{align}
We have $P_\eps\backslash P_{\eps'}=\varpi^{-1}([(\eps')^2,\eps^2))$ by definition.
We therefore have $P_\eps\backslash P_{\eps'}\subset B_{C_4^\frac{1}{2}\eps}(0)\backslash B_{C_4^{-\frac{1}{2}}\eps'}(0)$  by \eqref{pi bd}, in particular,  we also have $|y'|\approx \eps$ for every $y'\in P_\eps\backslash P_{\eps'}$. Hence by the convexity, it also holds that $\mathrm{D}\varpi(y')\cdot\frac{y'}{|y'|}\approx \eps$ since $\mathrm{D}\varpi(0)=0$.
This shows that \begin{align*}
\mathcal{H}^{1}(P_\eps\backslash P_{\eps'}\cap (\frac{y'}{|y'|}\R_{\geq 0}))\approx \eps-\eps'
\end{align*}
for each fixed direction $\frac{y'}{|y|}$.


 By Fubini, we can therefore write \begin{align*}
\mel\mathcal{H}^{d-1}(P_\eps\backslash P_{\eps'})=\int_{S^{d-2}}\int_{\R_{\geq 0}}r^{d-2}\mathds{1}_{rv\in P_\eps\backslash P_{\eps'}}\dd\mathcal{H}^{1}(r)\dd\mathcal{H}^{d-2}(v)\\
& \lesssim \eps^{d-2}(\eps-\eps')\int_{S^{d-2}}1\dd\mathcal{H}^{d-2}(v)\approx \eps^{d-2}(\eps-\eps')
\end{align*}
which, together with \eqref{eq meas}, shows \eqref{C lip}.

\textbf{b)} $\tilde{D}_x^\eps$ is convex by definition, and any convex set has Lipschitz boundary.

\textbf{c)} Assume that $y\in \tilde{D}_{x}^{\eps}\cap\tilde{D}_{x'}^{\eps'}$, then since $\diam(\tilde{D}_x^\eps)\lesssim \eps$, we know that $|x-x'|\lesssim \eps'+\eps<<1$. By continuity and since the boundary is $C^2$, we have that $\scalar{\nu_x}{\nu_{x'}}>0$. By the convexity and boundedness of the sets, there is a unique $s>0$ such that $y+s\nu_x\in \de( \tilde{D}_x^\eps\cap \tilde{D}_{x'}^{\eps'})$. Since $\scalar{y+s\nu_x-x}{\nu_x}>\scalar{y-x}{\nu_x}>-\eps^2$ and similarly  $\scalar{y+s\nu_{x}-x'}{\nu_{x'}}>-(\eps')^2$, we see that $y+s\nu_x$ lies in neither of the $\tilde{E}$'s and must therefore lie in $\tilde{C}_{x}^{\eps}\cap\tilde{C}_{x'}^{\eps'}$, which implies that the intersection is not empty.



%




\textbf{d)} Convexity is trivial, since each $\tilde{C}_x^\eps$ is convex by definition and projections of convex sets are convex.

To see the other statement, we first note that here $\mathfrak{pr}$ is just the projection onto the first $d-1$ components.
For $(y^0,\varpi(y^0))=x\in U''$ close to $x_0=0$ it holds that \begin{align}
\tilde{C}_x^\eps=\left\{y\in B\,\big|\, y_d= \varpi(y')\text{ and } \mathrm{D}\varpi(y^0)(y'-y^0)-(y_d-\varpi(y_0))\geq -\eps^2\sqrt{1+|\mathrm{D}\varpi(y^0)|^2}\right\}\label{set vpi}
\end{align}
where we again wrote $y=(y',y_d)$ and because $(\mathrm{D}\varpi(y^0),-1)$ is the normal at $x$.

This condition holds in a neighborhood of order $\eps$ of $y^0$ by Taylor's theorem (uniformly in $x$). This shows the first inclusion in \eqref{morse prereq}.
On the other hand, by the uniform convexity, it also holds that $\varpi(y')-\varpi(y^0)-\mathrm{D}\varpi(y^0)(y'-y^0)\gtrsim |y'-y^0|^2$, uniformly in $x$. This shows (together with \eqref{dpi bd}) that the condition in \eqref{set vpi} can also only hold for $y'$ in a neighborhood of order $\eps$ of $y^0$. Together, we have obtained the second inclusion in \eqref{morse prereq}.

\textbf{e)} The upper bound is a trivial consequence of the fact that the normal is Lipschitz as the surface is $C^2$ by assumption and that $\tilde{C}_x^\eps$ is contained in a ball of radius $ C\eps$ as established in the proof of a).

Regarding the lower bound, we only show this for $x=x^0=0$, the general case follows by rotating and translating, and because all the involved estimates on $\varpi$ are uniform in $x^0$.
We can reuse that the projection of $\tilde{C}_x^{\rho\eps}$ onto $\R^{d-1}$ is $P_{\rho\eps}$ and contains a ball of radius $C_4^{-\frac{1}{2}}\rho\eps$ by \eqref{bd P},  and hence for all $y=(y',\varpi(y'))$ with $\varpi(y')\in (\rho^2\eps^2,\eps^2)$ we have $|y'|\gtrsim \rho\eps$ . This shows by uniform convexity that \begin{align*}
|\mathrm{D}\varpi(y')|\geq \frac{1}{|y'|}\scalar{\mathrm{D}\varpi(y')-\mathrm{D}\varpi(0)}{y'-0}\gtrsim |y'|\approx \rho\eps.
\end{align*}
Now every point in $\tilde{C}_x^\eps\backslash \tilde{C}_x^{\rho\eps}$ is of this form and therefore \begin{align*}
\nu_{(y',\varpi(y'))}=\frac{1}{\sqrt{1+|\mathrm{D}\varpi(y')|^2}}\binom{\mathrm{D}\varpi(y')}{-1}.\end{align*} 
Using the elementary estimate $\left|\frac{1}{\sqrt{1+|a|^2}}\binom{a}{-1}+e_d\right|\gtrsim \min(\frac{1}{2},|a|)$ we conclude.
\end{proof}

\subsection{Proof of the Lemmata \ref{char1}, \ref{dlvp} and \ref{char2}}\label{S53}
We prove all three Lemmata together: We first construct $\hatu$ and then show the bounds for $\hatu$ and $\bar{u}_x$ in parallel.

\textbf{Construction of $\hatu$:} First observe that $\hat{z}_x$ is $C^1$ by the assumption \ref{A2} and its definition \eqref{def hatz}, hence the characteristics of $\hatz$ exist in a neighborhood of the boundary  (cf.\ \cite[Chapter 3]{Evans}).  
We furthermore have
\begin{align}
- \scalar{\hat{z}_x(y)}{\nu_x}=\scalar{\mathrm{D}_\xi f^\infty(y,-\nu_x)}{-\nu_x}=f^\infty(y,-\nu_x)\gtrsim 1,\label{prod est}
\end{align}
thanks to \eqref{doal} and \eqref{gc finf}.

This implies that no characteristic can intersect $E_x^\eps$ in more than one point and, since by continuity, for small $\eps>0$ it also holds $\scalar{\hat{z}_x}{\nu_y}<0$ in a neighborhood of size $O(1)$ of $x$, that no characteristic can intersect $C_x^\eps$ twice.

Therefore, there is a well-defined map $\Psi:\overline{D_x^\eps}\rightarrow \overline{C_x^\eps}\times [0,1]$, depending on $x$ and $\eps$, which is a $C^1$-diffeomorphism onto its image, defined through its inverse fulfilling the ODE \begin{align}
\frac{\mathrm{d}}{\mathrm{d}t}\Psi^{-1}(y,t)=\hatz(\Psi^{-1}(y,t))\quad\text{and} \quad \Psi^{-1}(y,0)=y.\label{charode}
\end{align}
We extend $u_0$ to a function on $\overline{C_x^\eps}\times [0,1]$ which does not depend on the last variable. We denote the extension by $u_0^e$.

%

%


We then set \begin{align}
\hat{u}_x=u_0^e\circ \Psi.\label{def pb}
\end{align} 
This fulfills \eqref{charu1} and \eqref{charu2} trivially if $u_0$ is $C^1$, the general case follows from the fact that the Anzelotti pairing with $\hatz$ is just the pointwise product since $\hatz$ is $C^1$, once we have established that $\hatu\in BV$.\smallskip

\textbf{Estimates on $\Psi$:} We next have to show the estimate \eqref{est hatu1}, and in the case of Lemma \ref{char2}, the estimates \eqref{est baru1} and \eqref{est baru2}. In order to do so, we will first show that $\Psi$ is bi-Lipschitz uniformly in $x$ and $\eps$, which is quite standard, but we provide the proof here anyway for the sake of completeness.


We start with $\norm{\Psi}_{C^1}$, in order to bound it, it suffices to estimate the Lipschitz constants of the two components of $\Psi$ uniformly.

The (backwards) characteristic starting at $y\in D_x^\eps$ can be written as $\Psi^{-1}(\Psi_1(y),\Psi_2(y)-t)$, where the indices denote the components.
As $\hat{z}_x$ is $C^1$, uniformly in $x$ and $\eps$, we therefore have by Gronwall, applied to \eqref{charode}\begin{align}
\left|\Psi^{-1}(\Psi_1(y),\Psi_2(y)-t)-\Psi^{-1}(\Psi_1(y'),\Psi_2(y')-t)\right|\leq e^{\norm{\hat{z}_x}_{W^{1,\infty}}t}|y-y'|\label{gw1}
\end{align}
for $t\leq \min(\Psi_2(y),\Psi_2(y'))$, where $y,y'\in D_x^\eps$ are arbitrary. 

Now, if we take $t_y=\Psi_2(y)\leq \Psi_2(y')$, which is not restrictive by symmetry, then it holds that $\Psi^{-1}(\Psi_1(y),\Psi_2(y)-t_y)=\Psi_1(y)\in C_x^\eps$ and hence \eqref{gw1} gives \begin{align*}
\dist(\Psi^{-1}(\Psi_1(y'),\Psi_2(y')-t_y),C_x^\eps)\lesssim |y-y'|,
\end{align*}
where we could absorb the exponential factor into the implicit constant because $t_y\lesssim 1$.
By the estimate \eqref{prod est} and the fact that the curve $\Psi^{-1}(\Psi_1(y'),\Psi_2(y')-t)$ is the characteristic through $y'$, we have \begin{align*}
\frac{\mathrm{d}}{\mathrm{d}t}\dist\big(\Psi^{-1}(\Psi_1(y'),\Psi_2(y')-t),C_x^\eps\big)\leq -\min_{x'\in C_x^\eps}\scalar{\hat{z}_x}{\nu_{x'}}\lesssim -1,
\end{align*}
 this implies that the time $\Psi_2(y')$ at which the curve $\Psi^{-1}(\Psi_1(y'),\Psi_2(y')-t)$ hits $C_x^\eps$ is $\lesssim |y-y'|$ apart from $t_y$, yielding \begin{align*}
|t_y-\Psi_2(y')|=|\Psi_2(y)-\Psi_2(y')|\lesssim |y-y'|.
\end{align*}
 By the boundedness of $\hat{z}_x$, combined with \eqref{gw1}  \begin{align*}
\mel|\Psi_1(y)-\Psi_1(y')|=|\Psi^{-1}(\Psi_1(y),0)-\Psi^{-1}(\Psi_1(y'),0)|\\
&\leq \left|\Psi^{-1}(\Psi_1(y),\Psi_2(y)-t_y)-\Psi^{-1}(\Psi_1(y'),\Psi_2(y')-t_y)\right|+\norm{\hat{z}_x}_{L^\infty}\left|\Psi_2(y')-t_y\right|\\
&\lesssim |y-y'|,
\end{align*}
proving that $\norm{\Psi}_{C^1}\lesssim 1$, uniformly in $x$.

Similarly, it holds that \begin{align*}
&|\Psi^{-1}(y,t)-\Psi^{-1}(y,t')|\leq \norm{\hat{z}_x}_{L^\infty}|t-t'|\lesssim |t-t'|\\
&|\Psi^{-1}(y,t)-\Psi^{-1}(y',t)|\leq e^{\norm{\hat{z}_x}_{W^{1,\infty}}t}|y-y'|,
\end{align*}
showing that $\norm{\Psi^{-1}}_{C^1}\lesssim 1$ uniformly in $x$ and $\eps$.

The same argument can also be made for the projection in direction $\zeta_x$, which is used in Lemma \ref{char2}, where one uses \eqref{pos scalar} in place of \eqref{prod est}. We omit the details.\smallskip

%


\textbf{Proof of the bounds:}  We only show the $BV$-bounds for $\hat{u}_x$, the $BV$-bounds for $\bar{u}_x$ proceed in the same way, since one has the same estimates for the characteristics in both cases. The fact that the $\bar{u}_x\in L^2$ is a trivial consequence of the construction and the assumption that $u_0\in L^2$ in the vectorial setting. 

We first note that the image of $D_x^\eps$ under $\Psi$ actually lies in $C_x^\eps\times (0,C\eps^2)$ for some $C$ bounded uniformly in $x$ and $\eps$, since $D_x^\eps$ is contained in a strip of width $\lesssim \eps^2$ in direction $\nu_x$ by definition and since the characteristics have a speed $\gtrsim 1$ in the $\nu_x$-direction by the estimate \eqref{prod est}.

By e.g.\ the chain rule and smooth approximation (see e.g.\ \cite[Remark 3.18]{ambrosio2000functions} for more details), it holds $\hat{u}_x\in BV$ and that
 \begin{align*}
\mathrm{D}^a\hat{u}_x(y)=\mathrm{D}^au_0^e(\Psi(y)) \mathrm{D}\Psi(y)
\end{align*}
and
 \begin{align*}
(\mathrm{D}^s\hat{u}_x)(A)=\int_{\Psi(A)}|\det \mathrm{D}\Psi|^{-1} \mathrm{D}\Psi^T\dd\mathrm{D}^su_0^e,
\end{align*}
for any Borel set $A$.
Therefore, using that both the $C^1$-norms of $\Psi$ and $\Psi^{-1}$ are uniformly bounded by the previous step, we see that for any nondecreasing function $i_0:\R_{\geq 0}\rightarrow \R_{\geq 0}$ with $i_0(2t)\lesssim i_0(t)$ it holds that \begin{align}
\mel\int_{D_x^\eps} i_0(|\mathrm{D}^a\hat{u}_x(y)|)\dy\lesssim \int_{C_x^\eps\times (0,C\eps^2) }i_0(|\mathrm{D}^au_0^e(y)|)\dd(\mathcal{H}^{d-1}\times \mathcal{L}^1)(y)\nonumber\\
& \approx \eps^2\int_{C_x^\eps}i_0(|\mathrm{D}^au_0(y)|)\dd\mathcal{H}^{d-1}(y)\label{i est}
\end{align}
and \begin{align*}
|\mathrm{D}^s(\hat{u}_x)|(D_x^\eps)\lesssim \eps^2|\mathrm{D}^su_0|(C_x^\eps)
\end{align*}
since $u_0^e$ is a constant extension of $u_0$  onto a strip of $\lesssim \eps^2$.
 
In particular, this shows the estimate \eqref{est hatu1} by setting $i_0=|\cdot|$. By the de La Vall\'ee Poussin Lemma \cite[p.\ 3]{rao1991theory}\footnote{The property that $i(2t)\lesssim i(t)$ is not stated in the reference, one can however easily achieve it by replacing $i(t)$ with $\min_{\rho\in (0,1]}\rho^{-2}(i+1)(\rho t)$}, we can pick $i:\R_{\geq 0}\rightarrow \R_{\geq 0}$ non-decreasing, such that $i(2t)\lesssim i(t)$ and \begin{align*}
\lim_{t\rightarrow+\infty} \frac{i(t)}{t}=+\infty
\end{align*} 
and \begin{align*}
\int_{U'} i(|\mathrm{D}^au_0|(y))\dH(y)<\infty.
\end{align*}
Setting $i=i_0$ in \eqref{i est} then shows the Lemma.\hfill\qedsymbol

\subsection{Proof of the Lemmata \ref{cont z1} and \ref{cont z2}}\label{S54}
\textbf{Part a)} We first focus on the statement for $z$ in Lemma \ref{cont z1} a), and show Lemma \ref{cont z2} a) afterwards. 

We first partially integrate $z-\hatz$, using \eqref{gg form}, against the function $\scalar{y-x}{\nu_x}+\frac{1}{2}\kappa\eps^2$, which vanishes on $E_x^\eps$ by its definition.

This yields that \begin{equation}\begin{aligned}
\mel\int_{D_x^\eps} \scalar{z-\hat{z}_x}{\nu_x}\dy=-\int_{D_x^\eps} \div(z-\hat{z}_x)\left(\scalar{y-x}{\nu_x}+\frac{1}{2}\kappa\eps^2\right)\dy\\
&+\int_{C_x^\eps}[z-\hat{z}_x,\nu_y]\left(\scalar{y-x}{\nu_x}+\frac{1}{2}\kappa\eps^2\right)\dH(y).\label{pint lem}
\end{aligned}\end{equation}
We next realize that, thanks to $f^\infty$ being $C^1$ by \ref{A2}, and because $|\nu_x-\nu_y|\lesssim\eps$ in $C_x^\eps$, as it is a spherical cap of base radius $\leq\eps$, it holds that \begin{align*}
\left|\mathrm{D}_\xi f^\infty(y,-\nu_y)-\hat{z}_x\right|=
\left|\mathrm{D}_\xi f^\infty(y,-\nu_y)-\mathrm{D}_\xi f^\infty(y,-\nu_x)\right|\lesssim \eps.
\end{align*}
It holds that \begin{align}
|\scalar{y-x}{\nu_x}+\frac{1}{2}\kappa\eps^2|\leq \frac{1}{2}\kappa\eps^2\label{kappaeps}
\end{align} 
on $D_x^\eps$ and $C_x^\eps$, by definition (see \eqref{def D1}) and since the scalar product is maximized at $y=x$ by convexity. Now, using the assumed inequality \eqref{del ass}, we see that \begin{align*}
\mel\left|\int_{C_x^\eps}[z-\hat{z}_x,\nu_y]\left(\scalar{y-x}{\nu_x}+\frac{1}{2}\kappa\eps^2\right)\dH(y)\right|\\
&\leq \int_{C_x^\eps}\left|[\mathrm{D}_\xi f^\infty(y,-\nu_y)-\hat{z}_x,\nu_y]\left(\scalar{y-x}{\nu_x}+\frac{1}{2}\kappa\eps^2\right)\right|\dH(y)\\
&\quad+\int_{C_x^\eps}\mathds{1}_{[z,\nu_y](y)\neq \scalar{\mathrm{D}_\xi f^\infty(y,- \nu_y)}{\nu_y}}\left|[z-\hat{z}_x,\nu_y]\left(\scalar{y-x}{\nu_x}+\frac{1}{2}\kappa\eps^2\right)\right|\dH(y)\\
&\lesssim \eps^3\mathcal{H}^{d-1}(C_x^\eps)+(\norm{z}_{L^\infty(D_x^\eps)}+\norm{\hatz}_{L^\infty(D_x^\eps)})\eps^2\delta\mathcal{H}^{d-1}(C_x^\eps)\\
&\lesssim (\eps+\delta)\eps^{d+1},
\end{align*}
where we also used the bound \eqref{est C1} on the measure of $C_x^\eps$ and that $\norm{z}_{L^\infty(D_x^\eps)}+\norm{\hatz}_{L^\infty(D_x^\eps)}\lesssim 1$ by \eqref{ball subdiff}, \eqref{uni bd z} and \eqref{bound hatz}.

%

Combining \eqref{pint lem} and \eqref{kappaeps} to estimate the other integral, this shows that
 \begin{align*}
\mel\int_{D_x^\eps} \scalar{z-\hat{z}_x}{\nu_x}\dy\lesssim\eps^2\left(\int_{D_x^\eps}|\div  z|\dy+(\eps+\delta)\eps^{d-1}\right)+\norm{\div\hat{z}}_{L^\infty}\mathcal{L}^d(D_x^\eps)\eps^2\\
& \lesssim\eps^2\left(\int_{D_x^\eps}|\div  z|\dy+(\eps+\delta)\eps^{d-1}\right).
\end{align*}
where we also used that $\norm{\div\hat{z}_x}_{L^\infty}\lesssim \norm{f^\infty}_{C^1_yC^1_\xi}\lesssim 1$ by its Definition \eqref{def hatz} and the Assumption \ref{A2} and that $\mathcal{L}^d(D_x^\eps)\lesssim \eps^{d+1}$  by \eqref{est D1}.
%


We note that by the definition \eqref{def hatz} of $\hat{z}_x$, the pointwise constraint \eqref{cs4} on $z$ and the inequality \eqref{quant fen}, it holds that \begin{align*}
\scalar{z(y)-\hat{z}_x(y)}{\nu_x}=\scalar{\mathrm{D}f^\infty(y,-\nu_x)-z(y)}{-\nu_x}\gtrsim |z-\hat{z}_x|^2,
\end{align*}
yielding \eqref{del stat}.

The vector-valued case in Lemma \ref{cont z2} is very similar. We partially integrate against the weight $v\otimes(\scalar{y-x}{\nu_x}+\eps^2)$, which vanishes on $\tilde{E}_x^\eps$ by definition, and use the fact that the divergences vanish to see that \begin{align*}
\mel\int_{\tilde{D}_x^\eps} \scalar{z-\mu_1(v)\otimes \zeta_x}{v\otimes\nu_x}\dy=
\int_{\tilde{C}_x^\eps}\scalar{[z-\mu_1(v)\otimes \zeta_x,\nu_y]}{v}\left(\scalar{y-x}{\nu_x}+\eps^2\right)\dH(y).
\end{align*}
Since $\mu_2$ inherits the $C^1$-regularity of $f^\infty$, we also have \begin{align*}
|\mu_1(v)\otimes \zeta_y-\mu_1(v)\otimes \zeta_x|=\left|\mu_1(v)\otimes \mu_2(\nu_y)-\mu_1(v)\otimes\mu_2(\nu_x)\right|\lesssim \eps    
\end{align*}
on $\tilde{C}_x^\eps$.
Similarly as above, the assumed inequality \eqref{del ass2} then shows that \begin{align*}
\mel\left|\int_{\tilde{C}_x^\eps}\scalar{[z-\mu_1(v)\otimes\mu_2(\nu_x),\nu_y]}{v}\left(\scalar{y-x}{\nu_x}+\eps^2\right)\dH(y)\right|\\
&\leq \int_{\tilde{C}_x^\eps}\big|\scalar{[\mu_1(v)\otimes \zeta_y-\mu_1(v)\otimes \zeta_x,\nu_y]}{v}\big|\left|\scalar{y-x}{\nu_x}+\eps^2\right|\dH(y)\\
&\quad+\int_{\tilde{C}_x^\eps}\mathds{1}_{[z,\nu](y)\in \scalar{\zeta_y}{\nu_y}B_\delta(\mu_1(v))}|[z,\nu_y]-\mu_1(v)\scalar{\zeta_y}{\nu_y}||v||\scalar{y-x}{\nu_x}+\eps^2|\dH(y)
\\
&\quad+\int_{\tilde{C}_x^\eps}\mathds{1}_{[z,\nu](y)\notin \scalar{\zeta_y}{\nu_y}B_\delta(\mu_1(v))}\left|\scalar{[z-\mu_1(v)\otimes \zeta_x,\nu_y]}{v}\right|\left|\scalar{y-x}{\nu_x}+\eps^2\right|\dH(y)\\
&\lesssim \eps^3\mathcal{H}^{d-1}(\tilde{C}_x^\eps)+\delta\eps^2\mathcal{H}^{d-1}(\tilde{C}_x^\eps)+(\norm{z}_{L^\infty(\tilde{D}_x^\eps)}+1)\eps^2\delta\mathcal{H}^{d-1}(\tilde{C}_x^\eps)\\
&\lesssim (\eps+\delta)\eps^{d+1},
\end{align*}
where we have used the bound \eqref{est C2} on the measure of $\tilde{C}_x^\eps$, as well as the fact that $v$ is a unit vector and that $z$ and the $\mu$'s are $\lesssim 1$ by \eqref{uni bd z} and the assumption on $f$.
Therefore \begin{align}
\int_{\tilde{D}_x^\eps}\scalar{z-\mu_1(v)\otimes \zeta_x}{v\otimes \nu_x }\dy\lesssim (\eps+\delta)\eps^{d+1}
\end{align}
from which one concludes \eqref{del stat2} from Lemma \ref{fen lemma} as above.

\subsection{Proof of Lemma \ref{cont z1} b) and Lemma \ref{cont z2} b)}\label{S55}
Both proofs are quite similar, we start with the one of Lemma \ref{cont z1} b).

We make use of the pointwise characterisation of $z$ in Lemma \ref{sca char}. It holds that \begin{align}
 (z-\hatz,\mathrm{D}u)&=\scalar{z-\hatz}{\mathrm{D}^au}\mathcal{L}^d+ \left(f^\infty(\cdot,\frac{\mathrm{d}\mathrm{D}^su}{\mathrm{d}|\mathrm{D}^su|}(\cdot))-\scalar{\hatz}{\frac{\mathrm{d}\mathrm{D}^su}{\mathrm{d}|\mathrm{D}^su|}(\cdot)}\right)|\mathrm{D}^su|,\label{exp diff}
\end{align}
here we have used that the pairing is linear and that for $\hatz$ it is just the classical product because $\hatz$ is $C^1$ by \eqref{reg hatz}.

The prefactor of the singular part is non-negative and therefore the singular part may be discarded. Indeed this follows from the fact that $\hatz\in \ran(\de_\xi f^\infty(y,\cdot))$ by definition (see \eqref{def hatz}) and the property \eqref{doal}.

Furthermore, if $f(x,\cdot)$ is differentiable, which is e.g.\ the case for $|\mathrm{D}^au|>R$ for the $R$ from Assumption \ref{A2} or if $f=f^\infty$, then it holds that $
z(y)=\mathrm{D}_\xi f(y,\mathrm{D}^a u(y))$ by \eqref{cs2}.
We note that in the special case in which $f$ is positively $1$-homogeneous, the absolutely continuous part of \eqref{exp diff} is also non-negative as a consequence of \eqref{doal} and \eqref{cs2} and because then $f=f^\infty$.

Regarding the absolutely continuous part, we note that among all values in $\ran(\de_\xi f^\infty(y,\cdot))$ the one maximizing the scalar product with $\mathrm{D}^au(y)$ is $\mathrm{D}f^\infty(y,\mathrm{D}^au(y))$ by e.g.\ \eqref{doal} and hence, if $|\mathrm{D}^au|\geq R$ we can estimate,  \begin{align}
\mel\scalar{\mathrm{D}_\xi f(y,\mathrm{D}^au(y))-\hatz}{\mathrm{D}^au(y)}\geq \scalar{\mathrm{D}_\xi f(y,\mathrm{D}^au(y))-\mathrm{D}_\xi f^\infty(y,\mathrm{D}^au(y))}{\mathrm{D}^au(y)}\label{est w rec}\\
&=-\left\langle\int_{|\mathrm{D}^au(y)|}^{\infty}\scalar{\mathrm{D}_\xi^2f(y,s\frac{\mathrm{D}^au(y)}{|\mathrm{D}^au(y)|})\frac{\mathrm{D}^au(y)}{|\mathrm{D}^au(y)|}}{\frac{\mathrm{D}^au(y)}{|\mathrm{D}^au(y)|}}\dd s,\,\mathrm{D}^au(y)\right\rangle,\label{est df prod}
\end{align}
where we used \eqref{conv grad} and the fundamental theorem of calculus in the second step.
If we set \begin{align*}
\mathfrak{f}(t):=\sup_{y\in  \Omega,\, |\xi|\geq t}|\xi|\int_{|\xi|}^\infty|\scalar{\mathrm{D}^2f(y, s\frac{\xi}{|\xi|})\frac{\xi}{|\xi|}}{\frac{\xi}{|\xi|}}|\dd s,
\end{align*}
which is exactly the expression from \eqref{conv df} in the Assumption \ref{A2}, then \eqref{est df prod} implies that \begin{align*}
\scalar{\mathrm{D}_\xi f(y,\mathrm{D}^au(y))-\hatz}{\mathrm{D}^au(y)}\geq -\mathfrak{f}(|\mathrm{D}^au(y)|)
\end{align*}
for $|\mathrm{D}^au(y)|\geq R$.
To conclude, we split the integral of the absolutely continous part in \eqref{exp diff} into the parts where $|\mathrm{D}^au|\geq \max(R,|z-\hatz|^{-\frac{1}{2}})$ and where the opposite inequality holds, this then gives \begin{align*}
\mel\int_{D_x^\eps}|(z-\hatz,\mathrm{D}u)_-|\leq \int_{D_x^\eps}\mathds{1}_{|\mathrm{D}^au|\leq \max(R,|z-\hatz|^{-\frac{1}{2}})} |z-\hatz||\mathrm{D}^au|\dy+\int_{D_x^\eps} \mathfrak{f}(\max(R,|z-\hatz|^{-\frac{1}{2}}))\dy\\
&\leq\int_{D_x^\eps} R|z-\hatz|+|z-\hatz|^\frac{1}{2}+\mathfrak{f}(\max(R,|z-\hatz|^{-\frac{1}{2}}))\dy.
\end{align*}
We then take \begin{align*}k(t)=\min\big(C,Rt+t^\frac{1}{2}+\mathfrak{f}(\max(R,t^{-\frac{1}{2}}))\big),\end{align*}
 where $C$ is some large fixed constant such that $C>\norm{z}_{L^\infty}+\norm{\hatz}_{L^\infty}$ (which exists by \eqref{uni bd z} and \eqref{bound hatz}). Since $|z-\hatz|\leq C$, this cutoff does not affect the previous inequalities.
The function $k$ goes to $0$ at $0$ because $\lim_{t\rightarrow \infty} \mathfrak{f}(t)=0$ by the Assumption \ref{A2}.

\subsubsection{Proof of Lemma \ref{cont z2} b)}
As in \eqref{est w rec} above, we see from the pointwise characterisation in Lemma \ref{vec char} of $z$ that \begin{align}
\scalar{z(y)-\bar{z}_x}{\mathrm{D}^au(y)}\geq \scalar{\mathrm{D}_\xi f(\mathrm{D}^au(y))-\mathrm{D}_\xi f^\infty(\mathrm{D}^au(y))}{\mathrm{D}^au(y)}.\label{easy est}
\end{align}
By the same argument as earlier\begin{align*}
\bar{\mathfrak{f}}(t):=\max_{ |\xi|\geq t}|\xi|\int_{|\xi|}^\infty|\scalar{\mathrm{D}^2f( s\frac{\xi}{|\xi|})\frac{\xi}{|\xi|}}{\frac{\xi}{|\xi|}}|\dd s
\end{align*}
fulfills $\lim_{t\rightarrow +\infty}\bar{\mathfrak{f}}(t)=0$ by the Assumption \ref{B2} and \begin{align}\label{lim df}
\scalar{\mathrm{D}_\xi f(\xi)-\mathrm{D}_\xi f^\infty(\xi)}{\xi}\geq -\bar{\mathfrak{f}}(\xi).
\end{align}
%
%
%
%
We now claim that for every $t>0$ there is an $A(t)$ with $\lim_{t\searrow 0} A(t)=0$ such that whenever there are $\xi\in \R^{n\times d}$ and $v_1\in S^{n-1}$ and $v_2\in S^{d-1}$ fulfilling \begin{equation}\begin{aligned}
\left|\mu_1(v_1)\otimes \mu_2(v_2)-\mathrm{D}_\xi f(\xi)\right|\leq t,  \quad \text{then}\quad \scalar{\mathrm{D}_\xi f(\xi)-\mathrm{D}_\xi f^\infty(\xi)}{\xi}\geq -A(t).\label{A stat}
\end{aligned}\end{equation}
Indeed, if this were not the case, there would be sequences $\xi_m\in \R^{n\times d}$ and  $v_{1,m}\in S^{n-1}$ and $v_{2,m}\in S^{d-1}$, depending on $t$, with \begin{align*}
\mu_1(v_{1,m})\otimes \mu_2(v_{2,m})-\mathrm{D}_\xi f(\xi_m)\rightarrow 0.
\end{align*}
and 
\begin{align}\limsup_{m\rightarrow +\infty} \scalar{\mathrm{D}_\xi f(\xi_m)-\mathrm{D}_\xi f^\infty(\xi_m)}{\xi_m}<0.\label{bu1}
\end{align}
However, by \eqref{lim df}, this is only possible if $\xi_m$ is a bounded sequence and hence these sequences have a limit, at least along a subsequence. Furthermore, $|\xi_m|$ must have a positive lower bound by \eqref{bu1} and because the derivatives of $f$ and $f^\infty$ are uniformly bounded (see e.g.\ \eqref{ball subdiff}).

Since $\mu_1,\mu_2,\mathrm{D}_\xi f$ and $\mathrm{D}_\xi f^\infty$ are all continuous (away from $0$) by assumption, this yields that there are $v_{1,\infty},v_{2,\infty}$ and $\xi_{\infty}\neq 0$ with \begin{align*}
\mu_1(v_{1,\infty})\otimes \mu_2(v_{2,\infty })=\mathrm{D}_\xi f^\infty(v_{1,\infty}\otimes v_{2,\infty})=\mathrm{D}_\xi f(\xi_\infty)
\end{align*}
and \begin{align*}
\mathrm{D}_\xi f(\xi_\infty)\neq \mathrm{D}_\xi f^\infty(\xi_\infty),
\end{align*}
which is a contradiction to the last point in Assumption \ref{B2}.

The proof of the Lemma now proceeds by first noting that the singular part of the measure is non-negative for the same reason as above in the previous proof with the slight change that, because $\bar{z}_x$ is not $C^1$, one has to use that by \eqref{anz rep}  it holds that \begin{align*}
\frac{\mathrm{d}(\bar{z}_x,\mathrm{D}u)}{\mathrm{d}|\mathrm{D}^s u|}(y)\in\left\{z_0:\frac{\mathrm{d}\mathrm{D}^s u}{\mathrm{d}|\mathrm{D}^s u|}(y)\,\big|\, z_0\in \overline{\ran(\de_\xi f)}\right\}
\end{align*} 
(up to a zero set) which still yields the non-negativity of the singular part by \eqref{doal}.

%
%
We now estimate the integral over the absolutely continuous part, using \eqref{cv2}, as \begin{align*}
\mel\int_{\tilde{D}_x^\eps}\left|\scalar{z(y)-\bar{z}_x(y)}{\mathrm{D}^a u(y)}_-\right|\dy\geq \int_{\tilde{D}_x^\eps}\scalar{\mathrm{D}_\xi f(\mathrm{D}^au(y))-\mathrm{D}_\xi f^\infty(\mathrm{D}^au(y))}{\mathrm{D}^au(y)}\dy\\
&\geq-\int_{\tilde{D}_x^\eps}A(|z-\mu_1(v)\otimes \zeta_x|)\dy.
\end{align*}
Here we have used \eqref{easy est} in the first step, as well as \eqref{A stat} in the second step.

This shows the Lemma with $k(t)=A(t)$.\hfill\qedsymbol

\subsection{Proof of the trace Lemma \ref{trace lemma}}\label{S56}
We note first that for any $w\in BV(D_x^\eps)$ we can partially integrate $|w|$ against the function $\eps^{-2}(\scalar{y-x}{\nu_x}+\frac{1}{2}\kappa\eps^2)$, which vanishes on $E_x^\eps$ by definition (see \eqref{def E1}), hence
 \begin{align*}
\mel\eps^{-2}\int_{D_x^\eps}|w|\dy=\int_{C_x^\eps}\eps^{-2}\left(\scalar{y-x}{\nu_x}+\frac{1}{2}\kappa\eps^2\right)\scalar{\nu_x}{\nu_y}|w|(y)\dH(y)\\
&-\int_{D_x^\eps}\eps^{-2}\left(\scalar{y-x}{\nu_x}+\frac{1}{2}\kappa\eps^2\right)\nu_x\cdot\mathrm{D}|w|(y).
\end{align*} 
Since $D_x^\eps$ is contained in a strip of width $\frac{1}{2}\kappa\eps^2$ in direction $\nu_x$ by definition, the second integral on the right-hand side is $\lesssim |\mathrm{D}|w||(D_x^\eps)\leq|\mathrm{D}w|(D_x^\eps)$. In the first integral, on the  other hand, it holds that $\scalar{\nu_x}{\nu_y}\gtrsim 1$ by continuity of the normal, and in each $C_x^{\rho\eps}$ it holds that \begin{align*}
\eps^{-2}\left(\scalar{y-x}{\nu_x}+\frac{1}{2}\kappa\eps^2\right)\geq \frac{1}{2}\kappa(1-\rho^2)\gtrsim_{\rho} 1\end{align*}
by some straightforward geometric computations. Hence \begin{align*}
\int_{C_x^{\rho\eps}}|w|\dH\lesssim_\rho\eps^{-2}\int_{D_x^\eps}|w|\dy+|\mathrm{D}w|(D_x^\eps),
\end{align*}
which shows \eqref{tr1}.
%
%
\eqref{tr2} follows in the same way.


For \eqref{tr3} on the other hand, we estimate \begin{align*}
\int_{D_x^\eps}|w -h|\dy\leq\int_{D_x^\eps}|w-h(x)|+|h-h(x)|\dy,\label{h split}
\end{align*}
the first integrand is again dealt with using \eqref{tr2} since adding a constant does not change the total variation, yielding \begin{align}
\eps^{-2}\int_{D_x^\eps}|w-h(x)|\dy\lesssim \int_{C_x^\eps}|w-h(y)|\dH(y)+\int_{C_x^\eps}|h(x)-h(y)|\dH(y)+|\mathrm{D}w|(D_x^\eps), 
\end{align}
where the second term is estimated as \begin{align*}
\int_{C_x^\eps}|h(x)-h(y)|\dH(y)\leq \omega(\eps)\mathcal{H}^{d-1}(C_x^\eps)\lesssim \omega(2\eps)\eps^{d-1},
\end{align*}
since $C_x^\eps$ is contained in a ball of radius $\eps$ by definition and where we used the bound \eqref{est C1}.
The second summand in \eqref{h split} is estimated, using \eqref{est D1}, as \begin{align*}
\int_{D_x^\eps}|h-h(x)|\dy\leq \omega(2\eps)\mathcal{L}^{d}(D_x^\eps)\approx\omega(2\eps)\eps^{d+1},
\end{align*}
since $\diam(D_x^\eps)\leq 2\eps$ by definition.\hfill\qedsymbol

\section{Proof of Corollary \ref{cor sans l2}}\label{S6}
We use the following Lemma.

\begin{lemma}
Consider the relaxed problem \eqref{gen rel prob} for $g=h=\lambda=0$ and $f(x,\xi)=|\xi|$.
Let $u$ be a minimizer of the relaxed problem \eqref{gen rel prob} for some given $u_0\in L^1(\Omega)$.

 Then for every $b\in \R_{\geq 0}$, the function $T_b(u)$ is a minimizer of the relaxed problem \eqref{gen rel prob} with boundary datum $T_b(u_0)$, where $T_b$ is the truncation as defined in \eqref{def trunc}.
\end{lemma}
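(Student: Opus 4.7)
The plan is to use the coarea formula to rewrite both $\mathcal{F}_{u_0}$ and $\mathcal{F}_{T_b(u_0)}$ as integrals of a suitable ``relative perimeter'' over level sets, and to observe that truncation only removes the level sets outside $(-b,b)$, where the contributions of $u$ and $T_b(u)$ agree identically with those of a general competitor after it, too, has been truncated.

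First I would record the generalized coarea identity that, for $f(x,\xi)=|\xi|$ and $\lambda=g=h=0$, the relaxed functional admits the representation
\begin{equation*}
\mathcal{F}_{u_0}(w)=\int_{-\infty}^{\infty}\mathcal{P}_{u_0,t}(\{w>t\})\,dt,\qquad
\mathcal{P}_{u_0,t}(F):=P(F,\Omega)+\mathcal{H}^{d-1}\!\left((F\triangle\{u_0>t\})\cap\de\Omega\right),
\end{equation*}
where $F\subset\Omega$ is measurable and the intersection with $\de\Omega$ is understood via the trace of $\mathds{1}_F$. This combines the classical coarea formula for $|\mathrm{D}w|(\Omega)$ with a layer-cake decomposition of $\int_{\de\Omega}|u_0-w|\dH$.

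Next I would invoke the standard minimality-of-level-sets principle for the least gradient problem: if $u$ minimizes $\mathcal{F}_{u_0}$, then for $\mathcal{L}^1$-a.e.\ $t\in\R$ the set $\{u>t\}$ minimizes $F\mapsto \mathcal{P}_{u_0,t}(F)$ among measurable $F\subset\Omega$. The usual proof — replace the level set by a presumed better competitor and reassemble the level sets via the layer-cake — carries over verbatim in the relaxed setting thanks to the coarea representation above. I expect this to be the main technical step; it is essentially the classical argument of Bombieri--De Giorgi--Giusti adapted to the boundary penalty, as already used in the setting of \cite{sternberg1992existence,mazon2014functions}.

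With these two ingredients in hand the result is immediate. Given any competitor $v\in BV(\Omega)$ for $T_b(u_0)$, note first that $T_b(v)$ has the same or smaller energy, since the chain rule gives $|\mathrm{D}T_b(v)|(\Omega)\le|\mathrm{D}v|(\Omega)$ and the $1$-Lipschitz property of $T_b$, together with $|T_b(u_0)|\le b$, yields $|T_b(u_0)-T_b(v)|\le|T_b(u_0)-v|$ pointwise on $\de\Omega$. Hence I may assume $|v|\le b$. Then for $t\in(-b,b)$ we have $\{T_b(u)>t\}=\{u>t\}$, $\{v>t\}$ is unrestricted, and $\{T_b(u_0)>t\}=\{u_0>t\}$; for $|t|\ge b$ all the relevant level sets are either empty or the whole domain and contribute zero to the coarea integral. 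Therefore
\begin{equation*}
\mathcal{F}_{T_b(u_0)}(T_b(u))=\int_{-b}^{b}\mathcal{P}_{u_0,t}(\{u>t\})\,dt\le\int_{-b}^{b}\mathcal{P}_{u_0,t}(\{v>t\})\,dt=\mathcal{F}_{T_b(u_0)}(v),
\end{equation*}
where the inequality follows from the minimality of level sets applied at a.e.\ $t\in(-b,b)$. This proves that $T_b(u)$ is a minimizer for the boundary datum $T_b(u_0)$.
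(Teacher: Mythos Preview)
Your proof is correct but follows a genuinely different route from the paper. The paper's argument is dual: it invokes the characterisation of relaxed least-gradient minimizers from \cite[Thm.~2.5]{mazon2014functions} via a divergence-free field $z\in X_d(\Omega,\R^d)$ with $\|z\|_{L^\infty}\le 1$, $(z,\mathrm{D}u)=|\mathrm{D}u|$, and $[z,\nu]\in\sgn(u_0-u)$ on $\de\Omega$, and then checks that the \emph{same} $z$ certifies minimality of $T_b(u)$ for the truncated datum $T_b(u_0)$. The only nontrivial verification is the pairing condition $(z,\mathrm{D}T_b(u))=|\mathrm{D}T_b(u)|$, which follows from the coarea-type splitting $|\mathrm{D}u|=|\mathrm{D}T_b(u)|+|\mathrm{D}(u-T_b(u))|$ together with $(z,\mathrm{D}w)\le|\mathrm{D}w|$. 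Your approach instead stays entirely on the primal side: the layer-cake/coarea representation of $\mathcal{F}_{u_0}$ combined with the Bombieri--De Giorgi--Giusti level-set minimality principle (available for the relaxed problem, cf.\ \cite{gorny2024functions}) reduces the comparison to a level-by-level inequality on $t\in(-b,b)$, where the boundary-penalty terms for $u_0$ and $T_b(u_0)$ coincide. Your route is arguably more transparent as a pure variational argument and avoids the Anzellotti pairing altogether, whereas the paper's calibration argument has the virtue of reusing precisely the machinery (the measure $(z,\mathrm{D}u)$ and the normal trace $[z,\nu]$) already set up in Section~\ref{S3} for the main theorems.
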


The corollary follows directly from this Lemma, since we know from Proposition \ref{ex min} that for a given $u_0\in L^1(\Omega)$ a minimizer of the relaxed problem exists and we know from Theorem \ref{T1} and the Lemma that $T_b(u)=T_b(u_0)$ on $\de \Omega$ for every $b$, yielding that in fact $u=u_0$ on $\de \Omega$.



\begin{proof}[Proof of the Lemma]
To show the claim, we use the characterisation of minimizers of the relaxed problem from \cite[Thm.\ 2.5]{mazon2014functions}, which states that $u\in BV(\Omega)$ is a minimizer of the relaxed problem, if and only if there is a  $z\in X_d(\Omega, \R^d)$ such that \begin{align}
&\norm{z}_{L^\infty(\Omega,\R^d)}\leq 1\label{gv 1}\\
&\div(z)=0\label{gv 2}& \text{in $\Omega$}\\
&(z,\mathrm{D}u)=|\mathrm{D}u|& \text{in $\Omega$}\label{gv 3}\\
&[z,\nu]\in \sgn(u_0-u) &\text{a.e.\ on $\de\Omega$,}\label{gv 4}
\end{align}
where $\sgn=\de |\cdot|$. It suffices to show that if \eqref{gv 1}-\eqref{gv 4} hold for $u$ and $u_0$, then they also hold for $T_b(u)$ and $T_b(u_0)$ with the same $z$. For \eqref{gv 1}, \eqref{gv 2}, and \eqref{gv 4} this is elementary to check.
For \eqref{gv 3}, we can use that by the coarea formula it holds that $|\mathrm{D}u|=|\mathrm{D}T_b(u)|+|\mathrm{D}(u-T_b(u))|$ (see e.g.\ \cite[Lemma 3]{andreu2001minimizing} for details), and hence we can estimate, using \eqref{gv 3} and \eqref{anz bas est} \begin{align*}
|\mathrm{D}u|=(z,\mathrm{D}u)=(z,\mathrm{D}T_b(u))+(z,\mathrm{D}(u-T_b(u)))\leq |\mathrm{D}T_b(u)|+|\mathrm{D}(u-T_b(u))|=|\mathrm{D}u|.
\end{align*}
Since the first and last terms are the same, equality must hold in every step, which in particular implies that $(z,\mathrm{D}T_b(u))=|\mathrm{D}T_b(u)|$, showing the lemma.
\end{proof}


\section{Proof of Theorem \ref{T3}}\label{S7}
The theorem is a consequence of the following Proposition.

\begin{proposition}\label{ex bad f}
There exists a norm $f_0:\R^{2\times 2}\rightarrow \R_{\geq 0}$, fulfilling all the assumptions in \ref{B2} except \eqref{r1 split} and such that additionally $f_0\in C^\infty(\R^{2\times 2}\backslash \{0\})$, and there is an $\eps>0$ such that whenever $0\leq |b|<\eps|a|$, we have that \begin{align}
\mathrm{D}_{\xi}f_0((ae_1+be_2)\otimes (ae_1+be_2))=a\frac{(ae_1+be_2)\otimes e_1}{f_0((ae_1+be_2)\otimes (ae_1+be_2))}.\label{bad grad}
\end{align}
\end{proposition}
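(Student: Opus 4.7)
The plan is to recognize that the seminorm
\[
A(\xi) := |\xi e_1| = \sqrt{\xi_{11}^2 + \xi_{21}^2}
\]
already satisfies \eqref{bad grad} on the entire cone of rank-one symmetric matrices. Indeed, for $v = ae_1 + be_2$ with $a \neq 0$, a direct computation gives $A(v \otimes v) = |a||v|$ and
\[
\mathrm{D}_\xi A(v \otimes v) = \tfrac{\sgn(a)}{|v|}\, v \otimes e_1 = a\, \frac{v \otimes e_1}{A(v \otimes v)},
\]
which is exactly the right-hand side of \eqref{bad grad}. The issue is that $A$ vanishes on the $2$-plane $\{\xi e_1 = 0\}$, so it is only a seminorm, with highly degenerate Hessian. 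The task is thus to construct a genuine norm $f_0$ that agrees with $A$, in both value and first derivative, on a conical neighborhood of the arc $\{v(\theta) \otimes v(\theta) : |\theta| \leq \theta_0\}$ (giving $\eps = \tan \theta_0$), while being $C^\infty$ off the origin and uniformly convex in the sense of \eqref{uni conv}.

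I would carry this out in the dual picture via Legendre--Fenchel duality. Prescribing $f_0$ and $\mathrm{D}_\xi f_0$ on the arc is equivalent to prescribing a smooth, strictly convex, origin-symmetric convex body $K^* \subset \R^{2 \times 2}$ whose boundary contains the arc $\gamma(\theta) := v(\theta) \otimes e_1$ (together with $-\gamma$) and has outer normal at $\gamma(\theta)$ equal to $v(\theta) \otimes v(\theta)$. The compatibility conditions hold: the tangent $\gamma'(\theta) = v'(\theta) \otimes e_1$ is orthogonal to $v(\theta) \otimes v(\theta)$, and $\tfrac{d}{d\theta}(v(\theta) \otimes v(\theta)) : \gamma'(\theta) = \cos\theta > 0$ for $|\theta| < \pi/2$, so the prescribed normal rotates in the correct direction. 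Build $K^*$ by first specifying $\partial K^*$ in a tubular neighborhood of the arc via a second-order model
\[
(\theta, s, t) \mapsto \gamma(\theta) + s\, w_1(\theta) + t\, w_2(\theta) - \tfrac{1}{2}(\alpha s^2 + \beta t^2)\, v(\theta) \otimes v(\theta),
\]
with $w_1(\theta), w_2(\theta)$ smoothly spanning the $2$-plane transverse to $\mathrm{span}\{\gamma'(\theta), v(\theta) \otimes v(\theta)\}$ and $\alpha, \beta > 0$ encoding positive transverse principal curvatures, and then completing to a global smooth, strictly convex, origin-symmetric body by smooth convex interpolation with a large round ball outside the tubular neighborhood (equivalently, by prescribing the restriction of the support function to $S^3$ on the image arc and smoothly extending while ensuring a positive spherical-Hessian-type expression elsewhere).

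Define $f_0$ as the support function of $K^*$. Then $f_0$ inherits from $K^*$: convexity, positive $1$-homogeneity (hence $f_0 = f_0^\infty$), evenness (from origin-symmetry of $K^*$), $C^\infty$ regularity on $\R^{2\times 2} \setminus \{0\}$, and uniform convexity $\mathrm{D}_\xi^2(f_0^2) \geq C_2\, \mathrm{Id}$ (equivalent to strictly positive Gaussian curvature of $\partial K^*$); the growth bound \eqref{bd gf2} follows from $K^*$ being sandwiched between two Euclidean balls, and the last axiom of \ref{B2} is trivial because $f_0 = f_0^\infty$ is $1$-homogeneous. By construction, for $\xi = v(\theta) \otimes v(\theta)$ with $|\theta| < \theta_0$, the supremum defining $f_0(\xi)$ is uniquely attained at $\gamma(\theta)$, yielding $\mathrm{D}_\xi f_0(\xi) = v(\theta) \otimes e_1$, which proves \eqref{bad grad}. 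Finally, \eqref{r1 split} must fail: if it held for odd, positively $0$-homogeneous $\mu_1, \mu_2$, evaluating at $v_1 = v_2 = v(\theta)$ would force $\mu_2(v(\theta)) \parallel e_1$ throughout the open arc $(-\theta_0, \theta_0)$, a rigid condition which can be directly violated at a non-symmetric rank-one matrix $v_1 \otimes v_2$ with $v_1, v_2$ linearly independent, by a generic choice of the extension of $K^*$ away from $\gamma$.

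The main obstacle is the global extension of $K^*$: producing a smooth, strictly convex, origin-symmetric body in $\R^4$ whose boundary contains the prescribed arc with the prescribed outer normals, while retaining strictly positive Gaussian curvature everywhere. The interpolation between the local second-order model and a large round body must be arranged so that the prescribed boundary data is preserved (if necessary on a slightly shrunken parameter range) and so that strict convexity is uniform in the gluing region. This is a local-to-global convex-geometric construction, standard in principle but requiring careful bookkeeping; it is really the only delicate step in the proof.
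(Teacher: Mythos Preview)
Your approach is sound and shares the same skeleton as the paper's: both construct $f_0$ by duality, first building the dual unit ball $K^*$ (equivalently, the dual norm $f_{0,*}$) so that the arc $\gamma(\theta)=v(\theta)\otimes e_1$ lies on $\partial K^*$ with outer normal proportional to $v(\theta)\otimes v(\theta)$, and then taking $f_0$ as the support function. The difference is entirely in execution. You describe $K^*$ geometrically and appeal to a smooth strictly-convex interpolation to close up the body globally; you correctly identify this extension as the one delicate step, and it is the only place where your argument is a sketch rather than a proof. The paper sidesteps this entirely by writing down an explicit formula,
\[
f_{0,*}(x)=\sqrt{Ax\cdot x+\mathfrak q(x)},
\]
where $A$ is a concrete positive-definite matrix with $A\gamma(\theta)\cdot\gamma(\theta)=1$ along the arc, and $\mathfrak q$ is a small $2$-homogeneous correction (essentially $2x_{21}^2x_{22}/x_{11}$, cut off near $\pm e_1\otimes e_1$) that tilts the normal on the arc to exactly $v(\theta)\otimes v(\theta)$. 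Because $\mathfrak q$ and its derivatives are $O(\eps)$, for small $\eps$ the function $f_{0,*}^2$ is a $C^2$-small perturbation of the positive-definite quadratic $Ax\cdot x$, so smoothness and uniform convexity of $f_{0,*}$ (and hence of $f_0$, via the Lindenstrauss--Day duality between uniform smoothness and uniform convexity) are immediate with no global patching required. The identity \eqref{bad grad} then follows from a one-line calculation using $\mathrm D(\tfrac12 f_0^2)=(\mathrm D(\tfrac12 f_{0,*}^2))^{-1}$.

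Your route has the conceptual merit of isolating the key observation---the degenerate seminorm $|\xi e_1|$ already has the right gradient on the symmetric rank-one cone, so one only needs to round it off---but leaves the rounding as a black box in four dimensions. The paper's explicit perturbation is less illuminating but entirely self-contained. One minor point: you need not argue directly that \eqref{r1 split} fails; once Theorem~\ref{T3} is established from this $f_0$, failure of \eqref{r1 split} is forced a posteriori by Theorem~\ref{T2}, and the paper does not verify it separately either.
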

\begin{proof}[Proof of the theorem using Proposition \ref{ex bad f}]
We use $f_0$ as the integrand, by the proposition, it has the desired regularity and convexity properties. Since it is homogeneous, we trivially have $f_0=f_0^\infty$.

We then define the boundary datum $u_0:\de B_1(0)\rightarrow \R^2$ as follows: We set \begin{align*}
u_0(x)=\mathds{1}_{x_1>0}\eta(x_2)x
\end{align*}
where $\eta$ is smooth, non-negative, supported in a small neighborhood of size $<<\eps$ of $0$ and not identically $0$.
Then, by the $0$-homogeneity of $\mathrm{D}_\xi f_0$ and \eqref{bad grad} it holds that \begin{equation}\begin{aligned}
\mel\mathrm{D}_{\xi}f_0(u_0(x)\otimes \nu_x)=\mathrm{D}_{\xi}f_0((x_1e_1+x_2e_2)\otimes (x_1e_1+x_2e_2))\\
&=x_1\frac{(x_1e_1+x_2e_2)\otimes e_1}{f_0((x_1e_1+x_2e_2)\otimes (x_1e_1+x_2e_2))},\label{z grad}
\end{aligned}\end{equation}
whenever $u_0\neq 0$ and where $x=(x_1,x_2)\in \de B_1(0)$. Since the boundary can be smoothly parametrized by $x_2$ in a neighborhood of $e_1$, there is a smooth function $\mathfrak{g}:\R\rightarrow \R^{2\times 2}$, such that \begin{align*}
\mathfrak{g}(x_2)=x_1\frac{(x_1e_1+x_2e_2)\otimes e_1}{f_0((x_1e_1+x_2e_2)\otimes (x_1e_1+x_2e_2))} \quad \text{ for  $(x_1,x_2)\in B_\eps(e_1)\cap \de B_1(0)$.}
\end{align*}
We now define a $z$, fulfilling the conditions in the characterisation of the subdifferential in Lemma \ref{vec char} for $v=0$ and $w=0$, which shows that $u=0$ is indeed a minimizer for this datum. We set \begin{align*}
z(x)=\bar{\eta}(x_2)\mathfrak{g}(x_2)
\end{align*}
where $\bar{\eta}$ is another smooth cutoff function, with values in $[0,1]$, which equals $1$ on the support of $\eta$ and is also supported in a neighborhood of size $<<\eps$ of $0$.

This $z$ is trivially divergence free, smooth and it fulfills \eqref{cv4} because $\mathfrak{g}$ takes values in $\ran(\de f_0)$ by definition and by the convexity of $\ran(\de f_0)$. It fulfills \eqref{cv2} because $u=0$ and because every element of $\ran(\de f_0)$ lies in the subdifferential at $0$ as explained in Section \ref{Sec conv ana}.
It fulfills the boundary condition \eqref{cv5} on the support of $u_0$ by \eqref{doal} and the construction of $z$, while on the set where $u_0=u=0$ the condition \eqref{cv5} is an empty statement.
 The condition \eqref{cv3} is trivial since $u=0$.
\end{proof}

\begin{proof}[Proof of the Proposition \ref{ex bad f}]
We denote the components of vectors $x\in \R^{2\times 2}$ by $(x_{11},x_{21},x_{12},x_{22})$ such that $x_{ij}$ corresponds to the direction $e_i\otimes e_j$. 
We first define a auxiliary function $\mathfrak{q}$ (depending on $\eps$) by restricting $2\frac{x_{21}^2x_{22}}{x_{11}}$ to the unit ball, multiplying it by a smooth non-negative cutoff function supported in a (symmetric) neighborhood of $\pm e_1\otimes e_1$ of size $ 20\eps$ which equals $1$ in a neighborhood of size $10\eps$, and then taking the $2$-homogeneous extension of this again.
This function has the following properties \begin{itemize}
\item $\mathfrak{q}(x_{11},x_{12},x_{21},x_{22})=2\frac{x_{21}^2x_{22}}{x_{11}}$  in a neighborhood of size $5\eps$ of $x=e_1\otimes e_1$ 
\item$ \mathfrak{q}(x_{11},x_{12},x_{21},x_{22})=0$ if  $40\eps |x_{11}|\leq \min(|x_{12}|,|x_{21}|,|x_{22}|)$
\item $\mathfrak{q}$ is $2$-homogeneous
\item $\mathfrak{q}$ is $C^\infty$ except at $0$ and \begin{align}
|\mathfrak{q}(x)|\lesssim |x|^2\eps^3\quad |\mathrm{D}^m\mathfrak{q}(x)|\lesssim_m \eps^{3-m}|x|^{2-m}\label{est q}
\end{align}
for $x\neq 0$ and all $m\in \N_{\geq 0}$.
\end{itemize}
%
%
We define \begin{align*}
Ax=x+x_{21}e_1\otimes e_2+x_{12}e_2\otimes e_1+x_{12}e_1\otimes e_2.
\end{align*}
The matrix $A$ is symmetric positive definite by direct calculation.

We then define \begin{align*}
f_{0,*}(x):=\sqrt{Ax\cdot x+\mathfrak{q}(x)}.
\end{align*}
We define $f_0$ as the dual norm of this, i.e., \begin{align*}
f_{0}(x):=\sup_{x^*:\, f_{0,*}(x^*)\leq 1}\scalar{x}{x^*},
\end{align*}
which both depends on $\eps>0$.

We now check the different properties.

\textbf{Step 1. $f_{0,*}$ is a well-defined norm for $\eps$ small enough.} We first note that, because $Ax\cdot x\gtrsim |x|^2$ and because of \eqref{est q}, we have $f_{0,*}(x)>0$ if $x\neq 0$ and $\eps$ is small enough. In particular, the square root is well-defined. Homogeneity is clear from the definition. 
%

Furthermore, $f_{0,*}^2$ is convex, thanks to \eqref{est q}, if $\eps$ is small enough and hence the set $\{x: f_{0,*}(x)\leq 1\}$ is convex, as it is a sublevel set of $f_{0,*}^2$, which implies the triangle inequality for $f_{0,*}$.

By classical duality theory, this shows that $f_{0}$ is a norm too. It further follows from the Lindenstrauss-Day theorem \cite[Section 1.e]{Lindenstrauss} and the fact that $f_{0,*}$ is smooth away from $0$ that $f_0$ is uniformly convex with a quadratic modulus, which in particular implies that $f_0^2$ is uniformly convex.

We also note that \eqref{almost hom} and the last condition in \ref{B2} trivially hold for this $f_0$ since it is $1$-homogeneous and, in particular, $f_0=f_0^\infty$ holds.

\textbf{Step 2. Regularity of $f_0$.}
It is classical that $\frac{1}{2}f_0^2$ is the Legendre transform of $\frac{1}{2}f_{0,*}^2$ see e.g.\ \cite[Example 3.27]{boyd2004convex}. We furthermore have that $\mathrm{D}^2(\frac{1}{2}f_{0,*}^2)=A+\frac{1}{2}\mathrm{D}^2\mathfrak{q}$ is an invertible matrix at every point thanks to \eqref{est q} for small enough $\eps>0$, which by e.g.\ the Hadamard global inverse function theorem \cite[Thm.\ 1]{ruzhansky2015global} shows that  $\mathrm{D}(\frac{1}{2}f_{0,*}^2)$ is invertible with an inverse which is $C^\infty$ away from $0$.

We can now use the formula \begin{align}
\mathrm{D}f^*(x)=(\mathrm{D}f)^{-1}(x)\label{grad dual}
\end{align}
which holds for any convex function $f$ with Legendre transform $f^*$, to conclude that $\mathrm{D}(\frac{1}{2}f_0^2)$ is $C^\infty$ away from $0$, showing that $f_0$ is $C^\infty$ away from $0$, since it is positive away from $0$ by Step 1.

\textbf{Step 3. \eqref{bad grad} holds.}
Using again \eqref{grad dual}, we see that   \begin{align*}
\mathrm{D}\left(\frac{1}{2}f_{0}^2\right)\big((ae_1+be_2)\otimes (ae_1+be_2)\big)=\mathrm{D}\left(\frac{1}{2}f_{0,*}^2\right)^{-1}\big((ae_1+be_2)\otimes (ae_1+be_2)\big).
\end{align*}
On the other hand whenever the condition $0\leq |b|<\eps a$ holds, we calculate \begin{align*}
\mel\mathrm{D}\left(\frac{1}{2}f_{0,*}^2\right)\big((a^2e_1+abe_2)\otimes e_1\big)=aA\left((ae_1+be_2)\otimes e_1\right)+\frac{a}{2}\mathrm{D}\mathfrak{q}((ae_1+be_2)\otimes e_1)\\
&=a^2e_1\otimes e_1+ab(e_2\otimes e_1+e_1\otimes e_2)+b^2e_2\otimes e_2,
\end{align*}
where we used that around such vectors it holds that $\mathfrak{q}(x)=2\frac{x_{21}^2x_{22}}{x_{11}}$ by definition. Together both equations show that \begin{align*}
\mathrm{D}\left(\frac{1}{2}f_{0}^2\right)\big((ae_1+be_2)\otimes (ae_1+be_2)\big)=(a^2e_1+abe_2)\otimes e_1
\end{align*}
and the chain rule reveals that \begin{align*}
\mel\mathrm{D}f_0\big((ae_1+be_2)\otimes (ae_1+be_2) \big)=\frac{\mathrm{D}(\frac{1}{2}f_{0}^2)((ae_1+be_2)\otimes (ae_1+be_2))}{f_0((ae_1+be_2)\otimes (ae_1+be_2))}\\
&=\frac{(a^2e_1+abe_2)\otimes e_1}{f_0((ae_1+be_2)\otimes (ae_1+be_2))}.
\end{align*}

\end{proof}

\appendix
\section{ Proof of Lemma \ref{fen lemma}}\label{appendix}

\renewcommand{\theequation}{a.\arabic{equation}}

We define the dual of $f^\infty(y,\cdot)$ as \begin{align}
f_\star(y,\xi^*)=\sup_{\xi:\, f^\infty(y,\xi)\leq 1} \scalar{\xi^*}{\xi}.\label{def pol}
\end{align}
By classical convex analysis (see e.g.\ \cite[Lemma 4.2]{han1999plasticity}), this can equivalently be written as \begin{align}
f_\star(y,\xi^*)=\inf \left\{t\,\big|\, \xi^*=\frac{1}{t}\mathrm{D}_\xi f^\infty(y,\xi) \text{ for some $\xi$}\right\}.\label{desc d ball}
\end{align}
Furthermore, by applying the bipolar theorem \cite[Thm.\ 4.1.5]{borwein2006convex} to the set $\{f^\infty(x,\cdot)\leq 1\}$ we see that, \begin{align}
f^\infty(y,\xi)=\sup_{\xi^*: f_\star(y,\xi^*)\leq 1}\scalar{\xi^*}{\xi}.\label{bipo}
\end{align}
Finally, by the growth bound on $f$ in \ref{A2} and the induced bound  \eqref{gb rec} for $f^\infty$, we have
\begin{align}
f^\infty(y,\xi)\approx |\xi|\label{l bd fin}
\end{align}
and
 \begin{align}
f_\star(y,\xi^*)\approx |\xi^*|\label{bd pol}
\end{align}
uniformly in $y$.
Then, even though these quantities are not norms as they are not necessarily even, the analogue of the statement ``uniform smoothness implies uniform convexity of the dual norm'' does still hold here, more precisely, for any given $t\in (0,1)$ we can estimate \begin{align*}
\mel\sup\left\{ f_\star(y,\xi_1^*+\xi_2^*)+tf_\star(y,\xi_1^*-\xi_2^*)\,\big|\, f_\star(y,\xi_1^*),f_\star(y,\xi_2^*)\leq 1\right\}\\
&=\sup\left\{ \scalar{\xi_1}{\xi_1^*+\xi_2^*}+t\scalar{\xi_2}{\xi_1^*-\xi_2^*}\,\big|\, f_\star(y,\xi_1^*),f_\star(y,\xi_2^*),f^\infty(y,\xi_1),f^\infty(y,\xi_2)\leq 1\right\}\\
&=\sup\left\{ \scalar{\xi_1+t\xi_2}{\xi_1^*}+\scalar{\xi_1-t\xi_2}{\xi_2^*}\,\big|\, f_\star(y,\xi_1^*),f_\star(y,\xi_2^*),f^\infty(y,\xi_1),f^\infty(y,\xi_2)\leq 1\right\}\\
&=\sup\left\{ f^\infty(y,\xi_1+t\xi_2)+f^\infty(y,\xi_1-t\xi_2)\,\big|\, f^\infty(y,\xi_1),f^\infty(y,\xi_2)\leq 1\right\},
\end{align*}
here we have used \eqref{bipo} in the final step. We claim that this supremum is $\leq 2+Ct^2$ for some $C$ not depending on $y$ or $t$.

To estimate the supremum we distinguish the three cases $f^\infty(y,\xi_1)\geq \frac{1}{2}$ and $f^\infty(y,\xi_1)\leq \frac{1}{2}$ and $t\geq \frac{1}{3}$. In the latter case, we have a trivial upper bound of $2+Ct^2$, simply by picking $C$ large enough.
In the second case, it trivially holds that \begin{align*}
f^\infty(y,\xi_1+t\xi_2)+f^\infty(y,\xi_1-t\xi_2)\leq \frac{5}{3}
\end{align*}
 thanks to the convexity and homogeneity if $t<\frac{1}{3}$. In the case $f^\infty(y,\xi_1)\geq \frac{1}{2}$ and $t\leq \frac{1}{3}$ we may use the lower bound \eqref{l bd fin} for $f^\infty$ to conclude that the line between $\xi_1+t\xi_2$ and $\xi_1-t\xi_2$ lies outside of some $B_{\frac{1}{C}}(0)$, with a $C$ not depending on $y$. This enables us to estimate the sum from above with $2+Ct^2$, thanks to $f^\infty$ being $C^2$ uniformly in $y$ away from $0$ by assumption and because $\xi_2\lesssim 1$ by assumption and the bound \eqref{l bd fin}.

Either way, we have obtained that \begin{align*}
\sup\left\{ f_\star(y,\xi_1^*+\xi_2^*)+tf_\star(y,\xi_1^*-\xi_2^*)\,\big|\, f_\star(y,\xi_1^*),f_\star(y,\xi_2^*)\leq 1\right\}\leq 2+Ct^2.\label{uni conv1}
\end{align*}
If we pick $t=\frac{1}{2C}f_\star(y,\xi_1^*-\xi_2^*)$, then we see that \begin{align}
f_\star(y,\frac{1}{2}\left(\xi_1^*+\xi_2^*\right))\leq 1-\frac{1}{8C}f_\star(y,\xi_1^*-\xi_2^*)^2.
\end{align}
for all $\xi_1,\xi_2$ with $f_\star(y,\xi_1^*),f_\star(y,\xi_2^*)\leq 1$. Coming back to the estimate \eqref{quant fen} in the lemma, we see that $f_*(y,\mathrm{D}_\xi f^\infty(y,v))=1$, because of \eqref{desc d ball} and \eqref{bipo}.
Therefore, using the definition \eqref{def pol} of $f_*$, we can estimate  \begin{align*}
\mel\scalar{\mathrm{D}_\xi f^\infty(y,v)-v^*}{v}=2\scalar{\mathrm{D}_\xi f^\infty(y,v)-\frac{v^*+\mathrm{D}_\xi f^\infty(y,v)}{2}}{v}\\
&\geq 2f^\infty(y,v)\left(1-f_\star(y,\frac{v^*+\mathrm{D}_\xi f^\infty(y,v)}{2})\right)\\
&\gtrsim f_\star(y,\mathrm{D}_\xi f^\infty(y,v)-v^*)^2\\
&\gtrsim |\mathrm{D}_\xi f^\infty(y,v)-v^*|^2.
\end{align*}
Here we have used \eqref{uni conv1} and that $f^\infty$ and $f_\star$ are equivalent to the modulus of their argument up to a constant by \eqref{l bd fin}-\eqref{bd pol} in the last two steps.

The vectorial case proceeds by exactly the same argument.\hfill\qedsymbol

\subsection*{Acknowledgment}
The author has received funding from the European Research Council (ERC) under the European Union’s Horizon 2020 research and innovation programme through the grant agreement 862342.

\bibliographystyle{abbrv}
  \bibliography{LeastGradient}


\end{document}